\newtheorem{theorem}{Theorem}[section]
\newtheorem{lemma}[theorem]{Lemma}
\newtheorem{proposition}[theorem]{Proposition}
\newtheorem{definition}[theorem]{Definition}
\newtheorem{remark}[theorem]{Remark}
\newcommand{\filledbox}{\leavevmode
  \hbox to.77778em{%
  \hfil\vbox to.675em{\hrule width.6em height.6em}\hfil}}
\newcommand{\EE}{\mathcal E}
\newcommand{\MM}{\mathcal M}
\newcommand{\Rm}{{\mathbb R}}
\newcommand{\Hm}{{\mathbb H}}
\newcommand{\Db}{{\mathbf D}}
\newcommand{\eps}{\varepsilon}
\begin{document}
\tabulinesep=1.0mm
\title{A Semi-linear Shifted Wave Equation on the Hyperbolic Spaces with Application on a Quintic Wave Equation on $\Rm^2$}


\author{Ruipeng Shen\\
Department of Mathematics and Statistics\\
McMaster University\\
Hamilton, ON, Canada
\and Gigliola Staffilani\footnote{ The second author is funded in part by NSF
DMS 1068815.}\\
Department of Mathematics\\
Massachusetts Institute of Technology\\
Cambridge, MA, USA}





\maketitle
\begin{abstract}
  In this paper we consider a semi-linear, defocusing, shifted wave equation on the hyperbolic space
  \[
   \partial_t^2 u - (\Delta_{\Hm^n} + \rho^2) u = - |u|^{p-1} u, \quad (x,t)\in \Hm^n \times \Rm;
  \]
  and introduce a Morawetz-type inequality
  \[
   \int_{-T_-}^{T_+} \int_{\Hm^n} |u|^{p+1} d\mu dt < C E,
  \]
  where $E$ is the energy. Combining this inequality with a well-posedness theory, we can establish a scattering result for solutions with initial data in $H^{1/2,1/2} \times H^{1/2,-1/2}(\Hm^n)$ if $2 \leq n \leq 6$ and $1<p<p_c = 1+ 4/(n-2)$. As another application we show that a solution to the quintic wave equation $\partial_t^2 u - \Delta u = - |u|^4 u$ on $\Rm^2$ scatters if its initial data are radial and satisfy the conditions
  \[
   |\nabla u_0 (x)|, |u_1 (x)| \leq A(|x|+1)^{-3/2-\eps};\quad |u_0 (x)| \leq A(|x|)^{-1/2-\eps};\quad \eps >0.
  \]
\end{abstract}

\section{Introduction}

\paragraph{Wave equation on Euclidean spaces:} The question of well-posedness and scattering of solutions to the  non-linear wave equation
\begin{equation} \label{waveequRn}
 \left\{\begin{array}{l} \partial_t^2 u - \Delta u = F(u),\quad (x,t)\in \Rm^n \times \Rm,\\
 v |_{t=0} = u_0, \\
\partial_t u |_{t=0} = u_1\end{array}\right.
\end{equation}
in the Euclidean spaces $\Rm^n$ has been extensively studied, especially in three or higher dimensional spaces. In this work the nonlinearity $F(u)$ above is defined for $p>1$ as
\[
 F(u) = \zeta |u|^{p-1} u,
\]
where $\zeta = \pm 1$. If $\zeta = -1$, then the equation is called defocusing, otherwise focusing. Suitable solutions to this equation satisfy an energy conservation law:
\[
 E(u,\partial_t u) = \int_{\Rm^n} \left(\frac{1}{2}|\nabla u|^2 + \frac{1}{2} |\partial_t u|^2 -\zeta \frac{1}{p+1} |u|^{p+1} \right) dx = E(u_0,u_1).
\]
This equation also has a natural scaling property. Namely if $u(x,t)$ is a solution of this equation with initial data $(u_0,u_1)$, then for any $\lambda>0$, the function
\begin{equation} \label{dilation1}
 \frac{1}{\lambda^{\frac{2}{p-1}}} u \left(\frac{x}{\lambda}, \frac{t}{\lambda}\right)
\end{equation}
is another solution of the same equation with the initial data
\[
 (u_{0,\lambda}, u_{1,\lambda}) := \left(\frac{1}{\lambda^{\frac{2}{p-1}}} u_0 \left(\frac{x}{\lambda}\right),
\frac{1}{\lambda^{ \frac{2}{p-1}+ 1}} u_1 \left(\frac{x}{\lambda}\right)\right).
\]
One can check that $(u_0, u_1)$ and $(u_{0,\lambda}, u_{1, \lambda})$ share the same $\dot{H}^{s_p} \times \dot{H}^{s_p-1}$ norm if one choose $s_p = \frac{n}{2} - \frac{2}{p-1}$. This space is referred to as the critical Sobolev space for the equation.

\paragraph{Shifted wave equation on hyperbolic spaces} In the first part of the paper we consider a semi-linear shifted wave equation on the hyperbolic space $\Hm^n$
\begin{equation}\label{(CP1)}
\left\{\begin{array}{l} \partial_t^2 u - (\Delta_{\Hm^n} + \rho^2) u = \zeta |u|^{p-1} u,\quad (x,t)\in \Hm^n \times \Rm;\\
u |_{t=0} = u_0; \\
\partial_t u |_{t=0} = u_1.\end{array}\right.\qquad \qquad \qquad
\end{equation}
Here the constant $\rho=(n-1)/2$ and $\zeta = \pm 1$. As on the Euclidean spaces we call the equation \eqref{(CP1)} defocusing if $\zeta = -1$,
 otherwise we call it focusing. This is the $\Hm^n$ analogue of the wave equation (\ref{waveequRn}) defined in Euclidean space $\Rm^n$. We can understand this similarity in two different ways.
\begin{itemize}
  \item [(I)] The operator $-\Delta_{\Hm^n}-\rho^2$ in the hyperbolic space and the Laplace operator $-\Delta$ in $\Rm^n$ share the same Fourier symbol $\lambda^2$, see Definition \ref{Sobo}.
    \item [(II)] There is a transformation between solutions of a linear wave equation defined in a forward light cone of $\Rm^n \times \Rm$ and solutions of a linear shifted wave equation defined in the whole space-time $\Hm^n \times \Rm$. This transformation introduced by Tataru \cite{tataru} will help us to translate the same results that can be proved in the hyperbolic spaces back to the  Euclidean space set up later in this paper, see Section \ref{application1}.
\end{itemize}
Solutions to  \eqref{(CP1)} that are smooth enough satisfy the  energy conservation law
\begin{equation} \label{def of energy b}
 \EE(u,\partial_t u) = \int_{\Hm^n} \left[\frac{1}{2}(|\nabla u|^2 - \rho^2 |u|^2) + \frac{1}{2}|\partial_t u|^2 -\zeta \frac{1}{p+1}|u|^{p+1} \right] d\mu = \EE(u_0,u_1),
\end{equation}
where $d \mu$ is the volume element on $\Hm^n$. Since the spectrum of $-\Delta_{\Hm^n}$ is $[\rho^2, \infty)$, it follows that the integral of $|\nabla u|^2 - \rho^2 |u|^2$ above is always nonnegative. If we use  Definition \ref{Sobo} below, we can rewrite the energy in terms of certain norms of solutions
\[
 \EE = \frac{1}{2}\|u\|_{H^{0,1}(\Hm^n)}^2 + \frac{1}{2}\|\partial_t u\|_{L^2 (\Hm^n)}^2 -\zeta \frac{1}{p+1} \|u\|_{L^{p+1}(\Hm^n)}^{p+1}.
\]
As in the case of the wave equation in $\Rm^n$, a solution in the defocusing case always has a positive energy unless it is identically zero. However, a solution in the focusing case may come with a negative energy. The exponent $p_c = 1 + \frac{4}{n-2}$ is called the energy-critical exponent as in the Euclidean spaces, because
\begin{itemize}
  \item The exponent $p_c$ is the largest $p$ so that the $L^{p+1}$ norm of $u$, which appears in the definition of the energy, can be dominated by the $H^1$ norm of $u$ via the Sobolev embedding $H^1 \hookrightarrow L^{p+1}$.
  \item In the case of the wave equation in $\Rm^n$, the dilation (\ref{dilation1}) leaves the energy invariant if and only if $p = p_c$. The shifted wave equation \eqref{(CP1)} on the hyperbolic spaces, however, does not possess a similar natural dilation. This is one of the major differences between these two equations.
\end{itemize}
As in  $\Rm^n$, if $p < p_c$, then we call the equation \eqref{(CP1)} energy-subcritical; otherwise if $p > p_c$, we call it energy-supercritical.

\paragraph{Previous results on Euclidean spaces:} Semi-linear wave equations on $\Rm^n$ have been studied extensively in many works. For example, almost complete results about Strichartz estimates can be found in \cite{strichartz, endpointStrichartz}. Local and global well-posedness has been considered for example in \cite{loc1, ls}. In particular, global existence and well-posedness of solutions with small initial data was proved in the papers\footnote{These four original papers work for even worse nonlinear term $|u|^p$. In contrast, any nontrivial solution to the equation $\partial_t^2 u - \Delta u = |u|^p,\, (x,t)\in \Rm^n \times \Rm$ blows up in finite time if $1 <p <p_0$, as proved in \cite{blowup2, blowup3, counter}.} \cite{smallgs1, gwpwrn, smallgs2, smallgs3}, provided that the exponent $p$ satisfies
\begin{equation} \label{def of p0}
 p > p_0 = \frac{1}{2} + \frac{1}{n-1} + \sqrt{\left(\frac{1}{2} + \frac{1}{n-1}\right)^2 +
 \frac{2}{n-1}},
\end{equation}
as conjectured by Strauss \cite{strcon} in 1989. Questions on global behavior of larger solutions, such as scattering and blow-up, are usually considered more subtle.  Grillakis \cite{mg1, mg2} and Shatah-Struwe \cite{ss1,ss2} proved the global existence and scattering of  solutions in the energy-critical, defocusing case for any $\dot{H}^1 \times L^2$ initial data. The focusing, energy-critical case has been the subject of several more recent papers such as \cite{kenig} (dimension $3\leq n \leq 5$) and \cite{secret, tkm1} (dimension 3). The cases with energy-subcritical ($p < p_c$) or energy-supercritical ($p > p_c$) nonlinearity have been also studied, especially in dimension $3$, usually under an additional assumption on boundedness of the critical Sobolev norm of the solutions, in the papers \cite{dkm2, km, kv2} (supercritical, dimension 3), \cite{ab1} (supercritical, higher dimensions), \cite{kv3} (supercritical, all dimensions) and \cite{kv1, shen2} (subcritical), for instance.

\paragraph{Previous results on hyperbolic spaces:} Much less has been proved  in the case of hyperbolic spaces. Fontaine considered the case $n=2,3$ in \cite{wavehyper97}. Strichartz-type estimates have been discussed by Tataru in \cite{tataru} and  Ionescu in \cite{SThyper}. More recently  Anker,  Pierfelice and Vallarino gave a wide range of Strichartz estimates as well as a brief description on the local well-posedness theory in  \cite{wavehyper}. Global well-posedness is also considered in \cite{wavedr}.

\paragraph{Goal of this paper:} This paper is divided into two parts. The first is concerned with the analysis of the shifted wave equation \eqref{(CP1)} in hyperbolic spaces $\Hm^n$ with $2 \leq n \leq 6$. For this equation we prove a Morawetz-type inequality in the defocusing case similar to the one proved in \cite{hypersdg}. We then use this inequality to obtain scattering type results even in $\Hm^2$. In the second part we transfer the scattering results obtained in $\Hm^n$ to the wave equation (\ref{waveequRn}), with $\zeta = -1$, and we obtain some long time asymptotic behavior for certain large data even in $\Rm^2$. Here is a detailed list of topics discussed in this work.
\begin{itemize}
  \item We first improve a local well-posedness theory for \eqref{(CP1)} previously obtained in \cite{wavehyper}. In comparison with the results in \cite{wavehyper}  we  expand the range of $p$. In the two dimensional case, for example, our local theory works for all $p > 1$, in contrast with the old result $1 < p < 3+\sqrt{6}$ given in Remark 7.4 of \cite{wavehyper}.
      Besides the expanded range of the exponent $p$, we also add more details to the local theory, for example, long time perturbation theory, which is a powerful tool in the discussion of global well-posedness and scattering of solutions, see Section \ref{sec:localth} for details.
  \item The Morawetz-type inequalities, which give global space-time integral estimates, are powerful tools to understand the global behavior of solutions in the defocusing case. We recall that Perthame and  Vega \cite{benoit} established a Morawetz-type inequality
      \[
       \int_{-T_-}^{T_+} \int_{\Rm^n} \frac{|u(x,t)|^{p+1}}{|x|} dx dt < C \EE
      \]
      for the wave equation in the Euclidean space $\Rm^n$ if  dimension $n \geq 3$ with energy $\EE$. It turns out that a similar  Morawetz inequality can be established even  better in the hyperbolic spaces. The Morawerz-type inequality
      \begin{equation} \label{Morawetz}
        \int_{-T_-}^{T_+} \int_{\Hm^n} |u(x,t)|^{p+1} d\mu dt < C \EE
      \end{equation}
      for solutions $u$ to \eqref{(CP1)} proved in this work holds even in the  lower dimension $n=2$, see Theorem \ref{Morawetz1} for details. The advantage of the hyperbolic spaces when looking for a Morawetz-type inequality in $\Hm^n$ has been illustrated
       by the second author in  \cite{hypersdg} with Ionescu, where a similar Morawetz-type inequality for solutions of a semi-linear Schr\"{o}dinger equation in the hyperbolic spaces has been proved.
  \item As the first application of our Morawetz inequality, we show global existence in time and scattering of solutions with  finite energy for the  defocusing problem \eqref{(CP1)} when  $1 < p < p_c$, see Propositions \ref{scattering01},  \ref{scatteringn2pgeq5} and \ref{scattering3n6}. As far as the authors know, the scattering of a semi-linear equation under the same assumption is still unknown in the Euclidean spaces $\Rm^n$, although scattering results have been proved under various additional assumptions in a lot of papers such as \cite{dkm2, km, shen2} (dimension 3) and \cite{conformal2, conformal} (all dimensions).

      A solution in the focusing case, however, may blow up in finite time. For example, we show in the Appendix that a solution for \eqref{(CP1)} with a negative energy always blows up in both time directions. This is similar to the results for the wave equation on the Euclidean spaces, see \cite{kv1, negativeenergy}.
  \item The second application is a scattering result on the following defocusing quintic wave equation in $\Rm^2$
  \begin{equation}\label{(CP02)}
  \left\{\begin{array}{l} \partial_t^2 u - \Delta u = - |u|^4 u, \,\,\,\, (x,t)\in \Rm^2 \times \Rm; \\
  u |_{t=0} = u_0;  \\
  \partial_t u |_{t=0} = u_1. \end{array}\right.
    \end{equation}
  We show in the main Theorem \ref{main1} that the solution scatters if the initial data is radial and satisfies the conditions
  \begin{equation}
   |\nabla u_0 (x)|, |u_1 (x)| \leq A(|x|+1)^{-3/2-\eps};\;\;\; |u_0 (x)| \leq A(|x|)^{-1/2-\eps}. \label{condition1}
  \end{equation}
  One of the key ingredients of the proof is a transformation between solutions of \eqref{(CP02)} and those of \eqref{(CP1star)}
\begin{equation}\label{(CP1star)}
   \left\{\begin{array}{l} \partial_t^2 u - (\Delta_\Hm^2+\rho^2) u = - |u|^4 u, \,\,\,\, (y,t)\in \Hm^2 \times \Rm; \\
  u |_{t=0} = u_0;  \\
  \partial_t u |_{t=0} = u_1; \end{array}\right.
  \end{equation}
   which has been employed by Tataru in \cite{tataru}. This enables us to take advantage of the Morawetz-type inequality on the hyperbolic plane obtained in the first part of this paper, see Theorem \ref{Morawetz2}, which is not available in the Euclidian space $\Rm^2$.
  \end{itemize}
\paragraph{Main Results} For the convenience of readers, we briefly describe our main results as follows. We always assume that the spatial dimension $n$ and the exponent $p$ satisfy $2 \leq n \leq 6$ and $1 < p < p_c = 1 + 4/(n-2)$.
\begin{itemize}
  \item[(I)] The pair $(n,p)$ determines a minimal real number $\sigma(n,p) \in [0,1)$, such that for any positive number $\sigma > \sigma (n,p)$ and initial data $(u_0,u_1) \in H^{\sigma -\frac{1}{2}, \frac{1}{2}} \times H^{\sigma-\frac{1}{2}, -\frac{1}{2}} (\Hm^n)$, there exists a unique solution to the equation \eqref{(CP1)} in a maximal time interval $(-T_-,T_+)$. The choice $\sigma = \sigma (n,p)$ also works for some pairs $(n,p)$, as shown in Proposition \ref{localn36} and Proposition \ref{localpgeq5n2}. Here the definition of Sobolev space $H^{\sigma, \tau} (\Hm^n)$ is given in Definition \ref{Sobo} and the definition of solutions is introduced in Theorem \ref{localtheory}.
  \item[(II)] In addition, if $u$ is a solution to \eqref{(CP1)} in the defocusing case with initial data $(u_0,u_1) \in H^{\frac{1}{2},\frac{1}{2}}\times H^{\frac{1}{2},-\frac{1}{2}} (\Hm^n)$, then it exists globally in time, satisfies
      \[
       \int_{-\infty}^\infty \int_{\Hm^n} |u(x,t)|^{p+1} d\mu (x) dt < \infty
      \]
      and scatters. By scattering we mean that there exist two pairs $(u_0^{\pm}, u_1^{\pm}) \in H^{\frac{1}{2}, \frac{1}{2}} \times H^{\frac{1}{2}, -\frac{1}{2}} (\Hm^n)$, such that
      \[
       \lim_{t \rightarrow \pm \infty} \left\|\left(u(t) - S(t)(u_0^\pm, u_1^\pm), \partial_t u(t) - \partial_t S(t)(u_0^\pm, u_1^\pm)\right)\right\|_{H^{\sigma-\frac{1}{2}, \frac{1}{2}}\times H^{\sigma-\frac{1}{2}, -\frac{1}{2}}(\Hm^n)} = 0
      \]
      holds for each $\sigma \in [\sigma(n,p),1)$. Here $S(t)(u_0^\pm, u_1^\pm)$ is the solution to the linear shifted wave equation on $\Hm^n$ with initial data $(u_0^\pm, u_1^\pm)$.
  \item[(III)] Let $(u_0,u_1)$ be a pair of radial initial data defined on $\Rm^2$ satisfying the inequalities
      \begin{align*}
        |\nabla u_0|, |u_1| & \leq A (1+|x|)^{-\frac{3}{2} -\eps};\\
        |u_0| & \leq A (1+|x|)^{-\frac{1}{2}-\eps};
      \end{align*}
      with two positive constants $A$ and $\eps$. Then the solution $u$ of the wave equation \eqref{(CP02)} exists globally in time and scatters with a finite space-time norm
      \[
       \|u\|_{L^6 L^6 (\Rm \times \Rm^2)} < C(\eps,A) <\infty.
      \]
      Moreover there exists two pairs $(u_0^\pm, u_1^\pm) \in \dot{H}^{\frac{1}{2}} \times \dot{H}^{-\frac{1}{2}} (\Rm^2)$, such that
      \[
       \lim_{t \rightarrow \pm \infty} \left\|\left(u(t) - S(t)(u_0^\pm, u_1^\pm),
       \partial_t u(t) - \partial_t S(t)(u_0^\pm, u_1^\pm)\right)\right\|_{\dot{H}^{\frac{1}{2}} \times \dot{H}^{-\frac{1}{2}}} = 0.
      \]
\end{itemize}
\begin{remark}
A Morawetz inequality similar to \eqref{Morawetz} also holds for suitable solutions to the defocusing Klein-Gordon equations on hyperbolic spaces
\[
 \partial_t^2 u - \Delta_{\Hm^n} u + c u = - |u|^{p-1} u,
\]
where  $c > -\rho^2$ is  constant. As a result, scattering of solutions can be proved in a similar way as in this present work if $1 < p < p_c$. See \cite{wkghyper} for Strichartz estimates and local theory for  this equation.
\end{remark}
\begin{remark}
We recall that there are other and similar results addressing global well-posedness and scattering for problems such as \eqref{(CP02)}.
\begin{itemize}
  \item In \cite{conformal2, conformal} one can find a similar result to the one proved here obtained using the conformal conservation laws. This method works for a large number of defocusing problems like \eqref{(CP02)} if the initial data satisfies the following condition
  \[
    \int_{\Rm^2} \left[(|x|^2+1) (|\nabla u_0 (x)|^2 + |u_1|^2) + |u_0|^2 + |x|^2|u_0|^{p+1}\right] dx < \infty.
  \]
      It turns out that the conformal conservation laws require higher decay rate than we need to assume in our argument. More precisely, a pair $(u_0,u_1) = ((|x|+1)^{-1/2-\eps}, (|x|+1)^{-3/2-\eps})$ satisfies the finite integral condition above only if $\eps > 1/2$. On the other hand, our assumption is $\eps > 0$, which is also the minimal requirement to guarantee $(u_0,u_1) \in \dot{H}^{1/2} \times \dot{H}^{-1/2}(\Rm^2)$. This is in fact relevant since $\dot{H}^{1/2} \times \dot{H}^{-1/2}(\Rm^2)$ is the critical Sobolev space for \eqref{(CP02)}, although $\eps > -1/2$ is sufficient to guarantee that the energy is finite.
  \item  Tsutaya \cite{ktsutaya} also proves scattering results under similar point-wise assumptions on derivatives of the initial data as we did in (\ref{condition1}), and in his work the radial condition is not assumed. However, his work only applies to small data. The reason is that although the fixed-point argument used there applies to more general non-linear terms, it is usually insufficient to deal with global in time solutions generated by large data.
\end{itemize}
\end{remark}

\paragraph{Structure of this paper:} We  review some  preliminary results such as Fourier analysis, Sobolev spaces and Strichartz estimates in Section 2. We then discuss the local well-posedness  theory for all exponents $1 < p <p_c$ in Section 3. Next in Section 4 we prove a Morawetz-type inequality for the shifted wave equation in $\Hm^n$, which immediately leads to the scattering theory in Section 5. Finally in Sections 6 we show another application of the Morawetz inequality, namely, large data scattering results for a quintic wave equation on $\Rm^2$.

\section{Preliminary Results}

We start with a notation. Throughout this work the notation $A \lesssim B$ means that the inequality $A \leq cB$ holds for some constant $c$. Furthermore, a subscript of the symbol $\lesssim$ implies that the constant $c$ depends on the parameter(s) mentioned in the subscript but nothing else.

\subsection{Fourier Analysis}
\paragraph{Model for  hyperbolic space:} Let us first select the following model for the hyperbolic space $\Hm^n$. We consider the Minkowski space $\Rm^{n+1}$ equipped with the standard Minkowski metric $-(dx^0)^2 + (dx^1)^2 + \cdots + (dx^n)^2$ and the bilinear form $[x,y] = x_0 y_0 - x_1 y_1 - \cdots - x_n y_n$. The hyperbolic space $\Hm^n$ is defined as the hyperboloid $x_0^2 - x_1^2 -\cdots -x_n^2 =1$. The Minkowski metric then induces the metric, covariant derivative and measure on the hyperbolic space.

\paragraph{Radial functions:} We can introduce polar coordinates $(r, \Theta)$ on the hyperbolic spaces. More precisely, we use the pair $(r, \Theta) \in [0,\infty) \times {\mathbb S}^{n-1}$ to represent the point $(\cosh r, \Theta \sinh r) \in \Rm^{n+1}$ in the hyperboloid model above. One can check that the $r$ coordinate of a point in $\Hm^n$ represents the distance to the ``origin'', which is the point $(1,\mathbf{0})$ in the Minkiwski space. A function $f$ defined on $\Hm^n$ is radial if it is independent of $\Theta$. By convention we can use the notation $f(r)$ to mention a radial function $f$.

\paragraph{Fourier transform:} (See \cite{fourier1, fourier2} for details) The Fourier transform takes suitable functions defined on $\Hm^n$ to functions defined on $(\lambda, \omega) \in \Rm \times {\mathbb S}^{n-1}$. If we define $b(\omega) = (1,\omega)\in \Rm^{n+1}$ for $\omega \in {\mathbb S}^{n-1}$, we can write down the Fourier transform and its inverse as
\begin{align*}
 \tilde{f} (\lambda,\omega) &= \int_{\Hm^n} f(x) [x,b(\omega)]^{i\lambda -\rho} d\mu,\\
 f(x) & = \int_0^\infty \int_{{\mathbb S}^{n-1}} \tilde{f}(\lambda,\omega) [x,b(\omega)]^{-i\lambda -\rho}
 |\mathbf{c}(\lambda)|^{-2} d\lambda d\omega.
\end{align*}
Here $\mathbf{c}(\lambda)$ is the Harish-Chandra $\mathbf{c}$-function defined by
\[
 \mathbf{c}(\lambda) = C \frac{\Gamma (i \lambda)}{\Gamma (i \lambda + \rho)}
\]
for a constant $C$ determined by the dimension $n$. It is well-known that the Harish-Chandra $\mathbf{c}$-function satisfies the inequality $|\mathbf{c}(\lambda)|^{-2} \lesssim |\lambda|^2 (1+|\lambda|)^{n-3}$. The Fourier transform $f \rightarrow \tilde{f}$ then extends to an isometry from $L^2(\Hm^2)$ to $L^2 (\Rm^+\times {\mathbb S}^{n-1}, |\mathbf{c}(\lambda)|^{-2} d\lambda d\omega)$ with the Plancherel identity
\[
 \int_{\Hm^n} f_1 (x) \overline{f_2 (x)} dx = \int_0^\infty \int_{{\mathbb S}^{n-1}} \tilde{f}_1(\lambda,\omega) \overline{\tilde{f}_2 (\lambda, \omega)} |\mathbf{c}(\lambda)|^{-2} d\lambda d\omega.
\]
In addition, we also have the following identity for the Laplace operator $\Delta_{\Hm^n}$.
\[
 \widetilde{-\Delta_{\Hm^n} f} = (\lambda^2 + \rho^2) \tilde{f}.
\]
This implies that the operator $(-\Delta_{\Hm^n})$ is strictly positive. As a result, the Sobolev spaces defined below are actually inhomogeneous Sobolev spaces unless $\sigma =0$.

\subsection{Sobolev Spaces}

\begin{definition} \label{Sobo}
Let $D = (-\Delta_{\Hm^n} -\rho^2)^{1/2}$ and $\tilde{D} = (-\Delta_{\Hm^n} +1)^{1/2}$.  These operators can also be defined by Fourier multipliers $m_1(\lambda) =\lambda$ and $m_2(\lambda) = (\lambda^2 + \rho^2 +1)^{1/2}$, respectively. We define the following Sobolev spaces and norms for $\gamma < 3/2$.
\begin{align*}
 &H_q^{\sigma}(\Hm^n) = \tilde{D}^{-\sigma} L^q (\Hm^n),&
 &\|u\|_{H_q^{\sigma}(\Hm^n)}= \|\tilde{D}^\sigma u\|_{L^q (\Hm^n)};&\\
 &H^{\sigma,\gamma}(\Hm^n) = \tilde{D}^{-\sigma} D^{-\gamma} L^2 (\Hm^n),&
 &\|u\|_{H^{\sigma,\gamma}(\Hm^n)}= \|D^{\gamma}\tilde{D}^\sigma u\|_{L^2 (\Hm^n)}.&
\end{align*}
\end{definition}
\begin{remark}
If $\sigma$ is a positive integer, one can also define Sobolev spaces by the Riemannian structure. For example, we can first define the $W^{1,q}$ norm as
\[
 \|u\|_{W^{1,p}} = \left(\int_{\Hm^n} |\nabla u|^{q} d\mu\right)^{1/q}
\]
for suitable functions $u$ and then take the closure. Here $|\nabla u| = (\Db_\alpha u \Db^\alpha u)^{1/2}$. It turns out that these two definitions are equivalent to each other if $1 < q < \infty$, see \cite{tataru}. In other words, we have $\|u\|_{H_q^\sigma} \simeq \|u\|_{W^{\sigma,q}}$.
\end{remark}

\begin{definition}
Let $I$ be a time interval. The space-time norm is defined by
\[
 \|u(x,t)\|_{L^q L^r (I \times \Hm^n)} = \left( \int_{I} \left(\int_{\Hm^n} |u(x,t)|^r d\mu\right)^{q/r} dt \right)^{1/q}.
\]
\end{definition}
\begin{proposition} [Sobolev embedding]  Assume $1 < q_1 \leq q_2 < \infty$ and $\sigma_1, \sigma_2 \in \Rm$. If $\sigma_1 -\frac{n}{q_1} \geq \sigma_2 - \frac{n}{q_2}$, then we have Sobolev embedding $H_{q_1}^{\sigma_1}(\Hm^n) \hookrightarrow H_{q_2}^{\sigma_2}(\Hm^n)$.
\end{proposition}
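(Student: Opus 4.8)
The plan is to reduce the embedding to a single $L^{q_1}\to L^{q_2}$ bound for a Bessel-type potential and then analyze its convolution kernel. Write $s=\sigma_1-\sigma_2$; the hypothesis $\sigma_1-n/q_1\ge \sigma_2-n/q_2$ together with $q_1\le q_2$ gives $s\ge n(1/q_1-1/q_2)\ge 0$. For $u$ in a dense subclass, set $v=\tilde{D}^{\sigma_1}u\in L^{q_1}(\Hm^n)$, so that $\|u\|_{H_{q_1}^{\sigma_1}}=\|v\|_{L^{q_1}}$ and $\tilde{D}^{\sigma_2}u=\tilde{D}^{-s}v$. Thus the claimed embedding $\|u\|_{H_{q_2}^{\sigma_2}}=\|\tilde{D}^{-s}v\|_{L^{q_2}}\lesssim \|v\|_{L^{q_1}}$ is equivalent to the single boundedness statement
\[
 \tilde{D}^{-s}:L^{q_1}(\Hm^n)\longrightarrow L^{q_2}(\Hm^n),\qquad s\ge n\left(\frac{1}{q_1}-\frac{1}{q_2}\right),
\]
which I would establish once and for all; the general statement then follows by density.

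Since $\tilde{D}^{-s}=(-\Delta_{\Hm^n}+1)^{-s/2}$ is a function of the Laplacian on the symmetric space $\Hm^n$, it acts as radial convolution against a kernel $k_s(\varrho)$ depending only on the geodesic distance $\varrho$. The heart of the proof is to pin down two features of $k_s$: a local singularity and a global decay. I would obtain both from the subordination formula
\[
 \tilde{D}^{-s}=\frac{1}{\Gamma(s/2)}\int_0^\infty t^{s/2-1}e^{-t}\,e^{t\Delta_{\Hm^n}}\,dt,
\]
combined with the sharp Gaussian-type bounds for the hyperbolic heat kernel $p_t(\varrho)$ (Davies--Mandouvalos). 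Near $\varrho=0$ the space is asymptotically Euclidean and the small-$t$ contribution produces the same singularity $k_s(\varrho)\lesssim \varrho^{s-n}$ (with the usual logarithmic or bounded behaviour when $s\ge n$) as the Euclidean Bessel potential. For large $\varrho$, the factors $e^{-\rho^2 t}e^{-\rho\varrho-\varrho^2/4t}$ in $p_t$ together with $e^{-t}$ force, after a saddle-point evaluation of the $t$-integral at $t\approx \varrho/(2\sqrt{\rho^2+1})$, the exponential decay $k_s(\varrho)\lesssim e^{-\alpha\varrho}$ with rate $\alpha=\rho+\sqrt{\rho^2+1}$.

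With these two bounds in hand I would split $k_s=k_s\mathbf{1}_{\{\varrho\le 1\}}+k_s\mathbf{1}_{\{\varrho>1\}}$ and treat the two convolution operators separately. The local piece has a compactly supported, Euclidean-type singularity, so its $L^{q_1}\to L^{q_2}$ boundedness follows from the Hardy--Littlewood--Sobolev inequality at the endpoint $s=n(1/q_1-1/q_2)>0$ (which forces $q_1<q_2$) and from Young's inequality whenever $s>n(1/q_1-1/q_2)$, the remaining case $s=0$ being the trivial bound for the identity operator. The global piece is where the geometry of $\Hm^n$ actually helps: the volume element grows like $e^{(n-1)\varrho}=e^{2\rho\varrho}$, but the decay rate satisfies $\alpha=\rho+\sqrt{\rho^2+1}>2\rho$, so $k_s\mathbf{1}_{\{\varrho>1\}}\in L^r(\Hm^n)$ for every $r\in[1,\infty]$; Young's inequality on the unimodular group underlying $\Hm^n$ then yields $L^{q_1}\to L^{q_2}$ boundedness for all $q_1\le q_2$.

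I expect the global part to be the main obstacle, or at least the point that genuinely departs from the Euclidean argument: one must verify that the exponential decay of the Bessel kernel strictly beats the exponential volume growth, i.e. $\alpha>2\rho$. This strict inequality --- a consequence of the spectral gap, which places the bottom of the $L^2$ spectrum of $-\Delta_{\Hm^n}$ at $\rho^2>0$ and hence the ``mass'' $\sqrt{\rho^2+1}$ strictly above it --- is exactly what makes the global kernel integrable against the exponentially growing measure and, in fact, renders the embedding more robust than its Euclidean counterpart. A minor technical point I would also address is the justification of the convolution-kernel representation and of the saddle-point asymptotics uniformly in $s$, for which I would either invoke the standard spherical-transform description of radial Fourier multipliers on $\Hm^n$ or quote the known $L^p$ mapping properties of Bessel potentials on rank-one symmetric spaces.
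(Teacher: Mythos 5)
The paper itself contains no proof of this proposition: it simply points to \cite{wavehyper, sob}, so the real comparison is with those references, and your argument is essentially theirs --- Cowling--Giulini--Meda prove exactly such $L^{q_1}\to L^{q_2}$ bounds for functions of the Laplace--Beltrami operator on noncompact symmetric spaces by the kernel analysis you sketch. Your outline is correct at every key point: the reduction to boundedness of $\tilde{D}^{-s}\colon L^{q_1}\to L^{q_2}$ with $s=\sigma_1-\sigma_2\ge n\left(\tfrac{1}{q_1}-\tfrac{1}{q_2}\right)\ge 0$ is immediate from the definition $\|u\|_{H^{\sigma}_{q}}=\|\tilde{D}^{\sigma}u\|_{L^{q}}$; the subordination formula plus the Davies--Mandouvalos heat kernel bounds do produce a radial kernel with Euclidean-type singularity $\varrho^{s-n}$ near the origin; your saddle-point computation is right, since $\min_{t>0}\left[(1+\rho^2)t+\varrho^2/(4t)\right]=\varrho\sqrt{1+\rho^2}$ is attained at $t=\varrho/(2\sqrt{1+\rho^2})$, giving decay rate $\alpha=\rho+\sqrt{\rho^2+1}$; and the strict inequality $\alpha>2\rho$ is precisely what places the global piece of the kernel in every $L^r$, $1\le r\le\infty$, so that Young's inequality on the underlying unimodular group finishes the global part for all $q_1\le q_2$. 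This is also the correct structural explanation of why the hyperbolic embedding is more robust than the Euclidean one.

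One step is stated more casually than it deserves: Hardy--Littlewood--Sobolev is a Euclidean theorem, so it does not apply verbatim to the local convolution on $\Hm^n$ at the endpoint $s=n\left(\tfrac{1}{q_1}-\tfrac{1}{q_2}\right)>0$. You need a localization argument: cover $\Hm^n$ by balls of radius $1$ with bounded overlap, use that on such balls the hyperbolic metric and measure are uniformly comparable to Euclidean ones, apply HLS on each ball, and then sum the pieces, using the bounded overlap together with the embedding $\ell^{q_1}\hookrightarrow\ell^{q_2}$ (this is a second, hidden, use of the hypothesis $q_1\le q_2$). This is routine, but it is the only place in your proof where a genuine argument, rather than a citation, is still required; with it supplied, the proof is complete.
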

For the proof see \cite{wavehyper, sob} and the references cited therein.
\begin{proposition} \label{Sobolevem1}
 If $q >2$, $0 < \tau < \frac{3}{2}$ and $\sigma + \tau \geq \frac{n}{2} - \frac{n}{q}$, then we have the Sobolev embedding
\[
 H^{\sigma, \tau}(\Hm^n) \hookrightarrow L^{q}(\Hm^n).
\]
\end{proposition}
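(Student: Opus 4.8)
The plan is to prove the equivalent inequality $\|u\|_{L^q(\Hm^n)} \lesssim \|D^\tau \tilde D^\sigma u\|_{L^2(\Hm^n)}$, and the first thing I would stress is why one cannot simply compare $D^\tau\tilde D^\sigma$ with $\tilde D^{\sigma+\tau}$ and quote the preceding Sobolev embedding $H_2^{\sigma+\tau}\hookrightarrow L^q$. The ratio of Fourier symbols $(m_2(\lambda)/m_1(\lambda))^\tau = ((\lambda^2+\rho^2+1)/\lambda^2)^{\tau/2}$ is bounded for $\lambda\gtrsim1$ but blows up like $\lambda^{-\tau}$ as $\lambda\to0^+$. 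This is exactly the spectral gap: $D=(-\Delta_{\Hm^n}-\rho^2)^{1/2}$ has symbol vanishing at the bottom of the spectrum while $\tilde D$ stays bounded below. So I would fix a smooth spectral cut-off $\chi(\lambda)$ with $\chi\equiv1$ on $[0,1]$ and $\mathrm{supp}\,\chi\subset[0,2]$, and split $u=u_L+u_H$ with $u_L=\chi(D)u$ and $u_H=(1-\chi(D))u$. Being Fourier multipliers in $\lambda$, these projections commute with $D$ and $\tilde D$ and are bounded on every $H^{s,\gamma}$.

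For the high-frequency piece the geometry is Euclidean-like and the stated embedding suffices. On $\{\lambda\ge1\}$ one has $m_2(\lambda)=\sqrt{\lambda^2+\rho^2+1}\le\sqrt{\rho^2+2}\,\lambda$, so the symbol ratio $(m_2/m_1)^\tau$ is bounded there and hence $\|\tilde D^{\sigma+\tau}u_H\|_{L^2}\lesssim\|D^\tau\tilde D^\sigma u_H\|_{L^2}\le\|D^\tau\tilde D^\sigma u\|_{L^2}$. Applying the preceding Sobolev embedding with $q_1=2$, $\sigma_1=\sigma+\tau$, $q_2=q$, $\sigma_2=0$ gives $\|u_H\|_{L^q}\lesssim\|\tilde D^{\sigma+\tau}u_H\|_{L^2}$, and this is precisely the step that consumes the hypothesis $\sigma+\tau\ge\frac n2-\frac nq$.

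The low-frequency piece is where the real work lies, and I expect it to be the main obstacle. On $\mathrm{supp}\,\chi$ one has $m_2(\lambda)\in[\sqrt{\rho^2+1},\sqrt{\rho^2+2}]$, so $\tilde D^\sigma$ is comparable to the identity and it suffices to prove $\|u_L\|_{L^q}\lesssim\|D^\tau u_L\|_{L^2}$. Writing $w=D^\tau u_L\in L^2$, whose frequency support lies in $[0,2]$, we have $u_L=k_\tau * w$, where $k_\tau$ is the radial convolution kernel on $\Hm^n$ whose spherical transform is $\psi(\lambda)\lambda^{-\tau}$ for a fixed compactly supported smooth $\psi$ with $\psi\equiv1$ on $[0,2]$. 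The Plancherel formula for radial kernels together with the bound $|\mathbf c(\lambda)|^{-2}\lesssim|\lambda|^2(1+|\lambda|)^{n-3}$ gives
\[
 \|k_\tau\|_{L^2(\Hm^n)}^2 \simeq \int_0^\infty \psi(\lambda)^2\lambda^{-2\tau}|\mathbf c(\lambda)|^{-2}\,d\lambda \lesssim \int_0^{C}\lambda^{2-2\tau}\,d\lambda,
\]
which is finite precisely when $\tau<3/2$; this is the sole origin of the exponent threshold.

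Finally, since $q>2$, i.e. $q'<2$, the genuinely hyperbolic improving estimate — the Kunze--Stein phenomenon $\|f*g\|_{L^2(\Hm^n)}\lesssim\|f\|_{L^2}\|g\|_{L^{q'}}$ for $1\le q'<2$ — yields, after dualizing and using that $k_\tau$ is real and radial,
\[
 \|u_L\|_{L^q}=\sup_{\|h\|_{L^{q'}}\le1}|\langle k_\tau * w,h\rangle|\le\|w\|_{L^2}\sup_{\|h\|_{L^{q'}}\le1}\|k_\tau * h\|_{L^2}\lesssim\|k_\tau\|_{L^2}\|w\|_{L^2}.
\]
Combining the two regimes proves the embedding. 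The crux, which I would emphasize, is that ordinary Young and Hausdorff--Young inequalities only deliver the strictly weaker range $\tau<\frac32-\frac3q$, and it is the exponential volume growth of $\Hm^n$, encoded in Kunze--Stein, that relaxes the requirement on $k_\tau$ to mere square-integrability and hence to the clean threshold $\tau<3/2$, uniformly in $q$.
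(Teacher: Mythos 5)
Your proof is correct and takes essentially the same approach as the paper's: the same low/high spectral splitting, with the high-frequency piece handled by the ordinary Sobolev embedding $H_2^{\sigma+\tau}(\Hm^n)\hookrightarrow L^q(\Hm^n)$ (which is where $\sigma+\tau\geq \frac{n}{2}-\frac{n}{q}$ enters), and the low-frequency piece written as convolution with a radial kernel whose spherical transform is $\lambda^{-\tau}$ times a cut-off, square-integrable precisely when $\tau<3/2$ by the Plancherel formula and the bound on $|\mathbf{c}(\lambda)|^{-2}$, then estimated via the Kunze--Stein phenomenon. The only difference is cosmetic: you derive the $L^2\ast L^2\to L^q$ convolution bound from the standard Kunze--Stein inequality by duality, whereas the paper quotes it directly in that form.
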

\begin{proof}
 Choose $\phi: \Rm \rightarrow [0,1]$ be an even, smooth cut-off function so that
 \[
  \phi(r) = \left\{\begin{array}{ll} 1, & r<1;\\
  0, & r>2.
  \end{array}\right.
 \]
 We can define an operator $\mathbf{P}_1$ by the Fourier Multiplier $\lambda \rightarrow \phi(\lambda)$. Then the low frequency part of a function $u$ can be written as
 \[
  \mathbf{P}_1 u = \mathbf{P}_2 \tilde{D}^{\sigma} D^\tau u = (\tilde{D}^{\sigma} D^\tau u) \ast k.
 \]
 Here the operator $ \mathbf{P}_2$ is defined by the Fourier multiplier $\lambda \rightarrow \phi(\lambda) |\lambda|^{-\tau}(\lambda^2 + \rho^2 + 1)^{-\frac{1}{2}\sigma}$ and the function $k$ is the kernel of $ \mathbf{P}_2$. Fourier analysis shows $k \in L^2 (\Hm^n)$ if $\tau < 3/2$. Kunze-Stein phenomenon (see \cite{ksph1,ksph2}) gives the inequality
 \[
  \|(\tilde{D}^{\sigma} D^\tau u) \ast k\|_{L^{q}(\Hm^n)} \lesssim_{q} \|\tilde{D}^{\sigma} D^\tau u\|_{L^2(\Hm^n)} \|k\|_{L^2(\Hm^2)}.
 \]
In other words, we have $\|\mathbf{P}_1 u\|_{L^{q}} \lesssim_q \|u\|_{H^{\sigma, \tau}}$. The high frequency part can be handled by the regular Sobolev embedding
\[
 \|(1-\mathbf{P}_1) u\|_{L^{q}} \lesssim_q \|(1-\mathbf{P}_1) u\|_{H^{\sigma+\tau} (\Hm^n)} \lesssim \|u\|_{H^{\sigma, \tau}(\Hm^n)}.
\]
Combining both low and high frequency parts, we finish the proof.
\end{proof}
\subsection{Strichartz Estimates}

\begin{definition}
 Let $n \geq 2$. A couple $\left(p_1, q_1\right)$ is called admissible if $\left(\frac{1}{p_1}, \frac{1}{q_1}\right)$ belongs to the set
 \begin{align*}
  & T_n = \left\{\left(\frac{1}{p_1}, \frac{1}{q_1}\right) \in \left(0,\frac{1}{2}\right] \times \left(0, \frac{1}{2}\right) \quad \mbox{and} \quad  \frac{2}{p_1} + \frac{n-1}{q_1} \geq \frac{n-1}{2}\right\},& &\quad \hbox{if}\quad \; n\geq 3;&\\
  & T_2 = \left\{\left(\frac{1}{p_1}, \frac{1}{q_1}\right) \in \left(0,\frac{1}{2}\right] \times \left(0, \frac{1}{2}\right)\quad \mbox{and} \quad\frac{2}{p_1} + \frac{1}{q_1} > \frac{1}{2}\right\},& &\quad \hbox{if} \quad \; n=2.&\\
 \end{align*}
\end{definition}
\begin{theorem} [Strichartz estimates]
 Let $(p_1,q_1)$ and $(p_2, q_2)$ be two admissible pairs. Assume that the real numbers $\sigma_1$ and $\sigma_2$ satisfy
 \begin{align*}
  &\sigma_1 \geq \beta (q_1) = \frac{n+1}{2} \left(\frac{1}{2} - \frac{1}{q_1}\right);&
  &\sigma_2 \geq \beta (q_2) = \frac{n+1}{2} \left(\frac{1}{2} - \frac{1}{q_2}\right);&
 \end{align*}
Assume $u(x,t)$ is solution to the linear shifted wave equation
\[
 \left\{\begin{array}{l} \partial_t^2 u - (\Delta_{\Hm^n}+\rho^2) u = F(x,t), \,\,\,\, (x,t)\in \Hm^n \times I;\\
u |_{t=0} = u_0; \\
\partial_t u |_{t=0} = u_1.\end{array}\right.
\]
Then we have
\begin{align}\label{strichartz0}
 \|u\|_{L^{p_1} L^{q_1}(I \times \Hm^n)}&
  + \|(u, \partial_t u)\|_{C(I; H^{\sigma_1 -\frac{1}{2},\frac{1}{2}}\times H^{\sigma_1 -\frac{1}{2},-\frac{1}{2}}(\Hm^n))} \\\notag
 & \leq C \left( \|(u_0, u_1)\|_{H^{\sigma_1 -\frac{1}{2},\frac{1}{2}}\times H^{\sigma_1 -\frac{1}{2},-\frac{1}{2}}(\Hm^n)} + \|F\|_{L^{p'_2} (I; H_{q'_2}^{\sigma_1 + \sigma_2 - 1}(\Hm^n))} \right).
\end{align}
The constant $C$ above does not depend on the time interval $I$.
\end{theorem}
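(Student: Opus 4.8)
The plan is to decompose the claimed estimate into its two constituent pieces---the dispersive space-time norm $\|u\|_{L^{p_1}L^{q_1}}$ and the energy-type norm $\|(u,\partial_t u)\|_{C(I;H^{\sigma_1-1/2,1/2}\times H^{\sigma_1-1/2,-1/2})}$---and to treat the homogeneous evolution and the Duhamel contribution of the forcing term $F$ separately, following the approach of Ionescu \cite{SThyper} and of Anker--Pierfelice--Vallarino \cite{wavehyper}. Writing the homogeneous solution as $u(t)=\cos(tD)u_0+D^{-1}\sin(tD)u_1$ with $D=(-\Delta_{\Hm^n}-\rho^2)^{1/2}$, I would first dispose of the energy norm. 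Applying $D^{1/2}\tilde D^{\sigma_1-1/2}$ to $u$ and $D^{-1/2}\tilde D^{\sigma_1-1/2}$ to $\partial_t u$ and invoking the Plancherel identity of Section 2.1, the identity $\cos^2+\sin^2=1$ together with the commutativity and self-adjointness of $\cos(tD),\sin(tD)$ shows that this norm is \emph{exactly} conserved along the free flow, so for the homogeneous part the energy piece is an equality rather than an inequality; the forcing contribution is then controlled by Minkowski's inequality in time applied to Duhamel's formula.

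The substance of the theorem lies in the $L^{p_1}L^{q_1}$ estimate, and here the plan is the standard $TT^*$ machinery adapted to $\Hm^n$. I would reduce to the half-wave propagator $e^{itD}$, insert a Littlewood--Paley decomposition in the $D$-variable, and separate the low-frequency regime $\lambda\lesssim 1$ from the high-frequency regime $\lambda\gtrsim 1$, since the two behave quite differently on hyperbolic space. For each frequency-localized piece I would establish the pointwise dispersive (kernel) estimate for the corresponding dyadic block of $e^{itD}$ using the Harish--Chandra expansion of the spherical function together with the bound $|\mathbf{c}(\lambda)|^{-2}\lesssim |\lambda|^2(1+|\lambda|)^{n-3}$ recorded in Section 2.1 and a stationary-phase analysis in $\lambda$. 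Interpolating these decay estimates against the $L^2\to L^2$ bound and feeding the result into the abstract Keel--Tao $TT^*$ theorem yields the homogeneous Strichartz estimate for each block; the admissibility condition on $(1/p_1,1/q_1)$ and the derivative loss $\beta(q_1)=\frac{n+1}{2}(\frac12-\frac1{q_1})$ emerge precisely from balancing the dispersive decay rate against the number of frequencies summed, after which a square-function argument reassembles the dyadic pieces.

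Finally, the full inhomogeneous estimate with the two possibly distinct admissible pairs $(p_1,q_1)$ and $(p_2,q_2)$ is obtained by combining the homogeneous estimate with its dual, which controls the forcing in $L^{p_2'}H_{q_2'}^{\sigma_1+\sigma_2-1}$; the retarded (time-ordered) Duhamel integral that actually appears is recovered from the untruncated one via the Christ--Kiselev lemma, valid because $p_1>p_2'$ off the double endpoint. The assertion that $C$ is independent of $I$ is automatic in this scheme, since the underlying dispersive estimates on $\Hm^n$ hold globally in time---indeed with better (in fact exponential) decay than their Euclidean analogues---so no time-localization is ever required.

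The hard part will be the frequency-localized dispersive estimate and, in particular, its sharp dependence on the frequency scale in the two regimes: getting the low-frequency decay right is what forces the $n=2$ case to carry the strict inequality $\frac{2}{p_1}+\frac{1}{q_1}>\frac12$ rather than the non-strict condition available for $n\ge 3$, and correctly tracking the derivative count through the interpolation is what pins down the threshold $\beta(q)$. This is exactly the place where the geometry of $\Hm^n$, through the $\mathbf{c}$-function and the Kunze--Stein phenomenon, must be used carefully rather than treated as a black box.
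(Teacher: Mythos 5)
The paper itself gives no proof of this theorem: it is quoted directly from Anker--Pierfelice--Vallarino \cite{wavehyper} (Theorem 6.3 and Remark 6.5). Your plan---frequency-localized dispersive estimates for the half-wave propagator obtained from spherical-function/$\mathbf{c}$-function analysis, Keel--Tao $TT^*$ plus duality and Christ--Kiselev for the inhomogeneous term, with the exponential large-time decay on $\Hm^n$ accounting for both the enlarged admissible range and the $I$-independence of $C$---is essentially the argument carried out in that cited reference, so it matches the source's approach.
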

For the proof see Theorem 6.3 and Remark 6.5 in \cite{wavehyper}.
\noindent Now let us rewrite \eqref{strichartz0} in a more convenient form. Applying the operator $\tilde{D}^{\sigma-\sigma_1}$ on both sides of the equation, we obtain
\begin{align}\label{strichartz1}
\|u\|_{L^{p_1} (I; H_{q_1}^{\sigma - \sigma_1}(\Hm^n))}& +  \|(u, \partial_t u)\|_{C(I; H^{\sigma-\frac{1}{2},\frac{1}{2}}\times H^{\sigma-\frac{1}{2},-\frac{1}{2}})}\\\notag
 & \leq C \left( \|(u_0, u_1)\|_{H^{\sigma-\frac{1}{2},\frac{1}{2}}\times H^{\sigma-\frac{1}{2},-\frac{1}{2}}} + \|F\|_{L^{p'_2} (I; H_{q'_2}^{\sigma + \sigma_2 - 1}(\Hm^n))} \right).
\end{align}
In order to make \eqref{strichartz1} as strong as possible, we will always choose $\sigma_1 = \beta (q_1)$ and $\sigma_2 = \beta (q_2)$. Then  \eqref{strichartz1}  becomes
\begin{align}\label{strichartz2}
\|u\|_{L^{p_1} (I; H_{q_1}^{\sigma - \beta(q_1)}(\Hm^n))}& +  \|(u, \partial_t u)\|_{C(I; H^{\sigma-\frac{1}{2},\frac{1}{2}}\times H^{\sigma-\frac{1}{2},-\frac{1}{2}})}\\\notag
 & \leq C \left( \|(u_0, u_1)\|_{H^{\sigma-\frac{1}{2},\frac{1}{2}}\times H^{\sigma-\frac{1}{2},-\frac{1}{2}}} + \|F\|_{L^{p'_2} (I; H_{q'_2}^{\sigma + \beta(q_2) - 1}(\Hm^n))} \right).
\end{align}
Let us fix a number $\sigma\in (0,1)$. If $\sigma - \beta(q_1) \geq 0$, then we can use Sobolev embedding $H_{q_1}^{\sigma - \beta(q_1)}(\Hm^n) \hookrightarrow L^{\tilde{q}_1}$ on the left hand side of \eqref{strichartz2} and obtain 
\begin{align*}
\|u\|_{L^{p_1} (I; L^{\tilde{q}_1} (\Hm^n))}& +  \|(u, \partial_t u)\|_{C(I; H^{\sigma-\frac{1}{2},\frac{1}{2}}\times H^{\sigma-\frac{1}{2},-\frac{1}{2}})}\\
 & \leq C \left( \|(u_0, u_1)\|_{H^{\sigma-\frac{1}{2},\frac{1}{2}}\times H^{\sigma-\frac{1}{2},-\frac{1}{2}}} + \|F\|_{L^{p'_2} (I; H_{q'_2}^{\sigma + \beta(q_2) - 1}(\Hm^n))} \right).
\end{align*}
Here $\tilde{q}_1$ is an arbitrary positive number satisfying
\[
 \frac{1}{\tilde{q}_1} \in \left\{ \begin{array}{ll} \left(0, \frac{1}{q_1}\right], & \hbox{if}\; n=2\; \hbox{and}\; \frac{1}{q_1} \leq 2\sigma - \frac{3}{2};\\
 \left[\frac{n-1}{2n}\frac{1}{q_1} + \frac{n+1}{4n} - \frac{\sigma}{n}, \frac{1}{q_1}\right], & \hbox{otherwise}. \end{array} \right.
\]
The new admissible pair $(p_1,\tilde{q}_1)$ satisfies the inequality
\begin{equation*}
 \frac{1}{p_1} + \frac{n}{\tilde{q}_1} \geq \frac{n}{2} - \sigma.
\end{equation*}

If $\sigma + \beta(q_2) - 1 \leq 0$, we can also apply Sobolev embedding $L^{\tilde{q}'_2} \rightarrow H_{q'_2}^{\sigma + \beta(q_2) - 1}$ on the right hand side of \eqref{strichartz2} and obtain 
\begin{align}
\|u\|_{L^{p_1} (I; L^{\tilde{q}_1} (\Hm^n))}& +  \|(u, \partial_t u)\|_{C(I; H^{\sigma-\frac{1}{2},\frac{1}{2}}\times H^{\sigma-\frac{1}{2},-\frac{1}{2}})}\nonumber \\
 & \leq C \left( \|(u_0, u_1)\|_{H^{\sigma-\frac{1}{2},\frac{1}{2}}\times H^{\sigma-\frac{1}{2},-\frac{1}{2}}} + \|F\|_{L^{p'_2} (I; L^{\tilde{q}'_2}(\Hm^n))} \right) \label{stri01}.
\end{align}
Here $\tilde{q}'_2$ is an arbitrary positive number satisfying
\[
 \frac{1}{\tilde{q}'_2} \in \left\{ \begin{array}{ll} \left[\frac{1}{q'_2},1\right), & \hbox{if}\; n=2\; \hbox{and}\; \frac{1}{q'_2} \geq \frac{1}{2} + 2 \sigma;\\
 \left[\frac{1}{q'_2}, \frac{n-1}{2n} \frac{1}{q'_2} + \frac{n+5}{4n} - \frac{\sigma}{n} \right], & \hbox{otherwise}. \end{array} \right.
\]
By collecting all possible pairs $(p_1, \tilde{q}_1)$ and $(p_2, \tilde{q}_2)$ in the inequality (\ref{stri01}), we have
\begin{proposition}[New version of Strichartz estimates] \label{newStri}
Assume that $0<\sigma<1$,
and the pairs $(p_1,q_1)$, $(p_2,q_2)$ satisfy all conditions listed in the table
\begin{center}
\begin{tabu}{|c|c|c|}
  \hline
  dimension & $n\geq 3$ & $n=2$\\
  \hline
  & (PQ1)\space$\frac{1}{p_1} + \frac{n}{q_1} \geq \frac{n}{2} -\sigma$ & (PQ1)\space$\frac{1}{p_1} + \frac{2}{q_1} > 1 -\sigma $\\
  Left hand & $\frac{1}{p_1} \in \left(0,\frac{1}{2}\right]$ & $\frac{1}{p_1} \in \left(0,\frac{1}{2}\right]$\\
  & (Q1)\space$\frac{1}{q_1} \in \left[\frac{1}{2} - \frac{2 \sigma}{n+1},\frac{1}{2}\right)$ & (Q1)\space$\frac{1}{q_1} \in \left[\frac{1}{2} - \frac{2 \sigma}{3},\frac{1}{2}\right)$ and $\frac{1}{q_1}>0$\\
  \hline
  & (PQ2)\space$\frac{1}{p_2} + \frac{n}{q_2} \geq \frac{n}{2} -1 + \sigma$ & (PQ2)\space$\frac{1}{p_2} + \frac{2}{q_2} > \sigma $\\
  Right hand & $\frac{1}{p_2} \in \left(0,\frac{1}{2}\right]$ & $\frac{1}{p_2} \in \left(0,\frac{1}{2}\right]$\\
  & (Q2)\space$\frac{1}{q_2} \in \left[\frac{n-3}{2(n+1)} + \frac{2 \sigma}{n+1},\frac{1}{2}\right)$ & (Q2)\space$\frac{1}{q_2} \in \left[ \frac{2 \sigma}{3} - \frac{1}{6},\frac{1}{2}\right)$ and $\frac{1}{q_2}>0$\\
  \hline
\end{tabu}
\end{center}
then we have the Strichartz estimate
\begin{align}
\|u\|_{L^{p_1} (I; L^{q_1}(\Hm^n))}& +  \|(u, \partial_t u)\|_{C(I; H^{\sigma-\frac{1}{2},\frac{1}{2}}\times H^{\sigma-\frac{1}{2},-\frac{1}{2}})}\nonumber\\
 & \leq C \left( \|(u_0, u_1)\|_{H^{\sigma-\frac{1}{2},\frac{1}{2}}\times H^{\sigma-\frac{1}{2},-\frac{1}{2}}} + \|F\|_{L^{p'_2} L^{q'_2}(I \times \Hm^n)} \right). \label{newStrichartz}
\end{align}
\end{proposition}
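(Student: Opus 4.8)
The plan is to obtain Proposition \ref{newStri} directly from the rescaled Strichartz estimate \eqref{strichartz2} by inserting Sobolev embeddings on both sides and then performing the bookkeeping that passes from the admissible pairs $(p_1,q_1)$, $(p_2,q_2)$ appearing there to the wider class of pairs described in the table. Estimate \eqref{strichartz2} holds for every admissible pair with the sharp choices $\sigma_1 = \beta(q_1)$ and $\sigma_2 = \beta(q_2)$, and it is precisely this sharp choice that makes the subsequent embeddings as strong as possible.

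First I would treat the left-hand term. When $\sigma - \beta(q_1) \geq 0$ the space $H_{q_1}^{\sigma-\beta(q_1)}(\Hm^n)$ embeds into $L^{\tilde q_1}(\Hm^n)$ for every $\tilde q_1 \geq q_1$ whose exponent satisfies $(\sigma-\beta(q_1)) - n/q_1 \geq -n/\tilde q_1$; applying this under the time integral replaces $\|u\|_{L^{p_1}(I;\,H_{q_1}^{\sigma-\beta(q_1)})}$ by $\|u\|_{L^{p_1}(I;\,L^{\tilde q_1})}$ and produces exactly the admissible range of $1/\tilde q_1$ recorded just before the proposition. Symmetrically, on the right-hand side, when $\sigma+\beta(q_2)-1 \leq 0$ the dual embedding $L^{\tilde q_2'} \hookrightarrow H_{q_2'}^{\sigma+\beta(q_2)-1}$ lets me dominate the forcing term $\|F\|_{L^{p_2'}(I;\,H_{q_2'}^{\sigma+\beta(q_2)-1})}$ by $\|F\|_{L^{p_2'} L^{\tilde q_2'}}$, which is exactly \eqref{stri01}.

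The substantive step, and the one I expect to be the main obstacle, is the elimination of the auxiliary exponents. In \eqref{stri01} the pair $(p_1,\tilde q_1)$ is constrained both by the admissibility $(p_1,q_1)\in T_n$ of the underlying pair and by the Sobolev relation between $\tilde q_1$ and $q_1$; to describe all achievable $(p_1,\tilde q_1)$ I would treat $q_1$ as a free parameter and optimize over it, and similarly optimize over $q_2$ on the right. Eliminating these hidden variables converts the two admissibility inequalities together with the two Sobolev inequalities into the single scaling inequalities (PQ1), (PQ2) and the interval constraints (Q1), (Q2). This amounts to a linear-programming computation in the coordinates $(1/p_i,1/q_i)$, which I would carry out separately for $n\geq 3$ and for $n=2$: the set $T_2$ carries the strict inequality $\frac{2}{p_1}+\frac{1}{q_1} > \frac12$ in place of the closed condition valid for $n\geq 3$, and this is exactly what produces the strict forms of (PQ1), (PQ2) and the extra requirement $1/q_1,1/q_2 > 0$ in the $n=2$ column. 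Along the way I would verify that the relevant Sobolev exponents have the correct sign (so that the embeddings established above apply), that the quoted intervals for $1/q_1$ and $1/q_2$ are nonempty, and that the fixed regularity $\pm 1/2$ of the first slot stays within the range $\gamma < 3/2$ of Definition \ref{Sobo}.

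Finally, renaming $\tilde q_1 \mapsto q_1$ and $\tilde q_2 \mapsto q_2$ in \eqref{stri01} and collecting every pair obtained in this way yields \eqref{newStrichartz} for all $(p_1,q_1)$ and $(p_2,q_2)$ meeting the tabulated conditions, which is the assertion of Proposition \ref{newStri}.
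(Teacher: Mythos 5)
Your proposal is correct and follows essentially the same route as the paper: the paper also starts from the sharpened estimate \eqref{strichartz2} with $\sigma_1=\beta(q_1)$, $\sigma_2=\beta(q_2)$, applies the Sobolev embedding $H_{q_1}^{\sigma-\beta(q_1)}\hookrightarrow L^{\tilde q_1}$ on the left and the dual embedding $L^{\tilde q_2'}\hookrightarrow H_{q_2'}^{\sigma+\beta(q_2)-1}$ on the right, and then collects all pairs obtainable this way into the tabulated conditions, with the strict inequalities in the $n=2$ column coming from the strict admissibility condition in $T_2$. The elimination of the auxiliary exponents that you describe as a linear-programming step is exactly the bookkeeping the paper performs implicitly when it records the admissible ranges of $1/\tilde q_1$ and $1/\tilde q_2'$ before stating the proposition.
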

At the end of the paper  we attach 5 figures, which exhibit the admissible pairs $(p_1,q_1)$ we can choose in the left hand side of the Strichartz estimates, with different spatial dimensions and regularity $\sigma$ of the initial data. The lighter region shows the original admissible pairs that are still available for use with the given regularity $\sigma$. While the darker region  illustrates new admissible pairs available.
\begin{remark}
It is necessary to choose $q_1, q_2$ so that $\beta (q_1) \leq \sigma $ and $\beta(q_2) \leq 1 -\sigma$ in order to apply the Sobolev embedding in the argument above. Therefore our assumption $\sigma \in (0,1)$ is necessary.
\end{remark}
\begin{remark}
 In the Strichartz estimates for wave equations on $\Rm^n$ (see \cite{strichartz}), both the inequalities (PQ1) and (PQ2) are replaced by the corresponding identities, which come from the natural dilation of solutions. The lack of dilation is a major difference of hyperbolic spaces from Euclidean spaces when we are discussing wave equations.
\end{remark}
\begin{remark}
 The name of inequalities (PQ1), (Q1), (PQ2) and (Q2) as shown in the table will be used later in this paper.
\end{remark}
\begin{definition}
 If a pair $(p_1,q_1)$ satisfies the conditions listed in  Proposition \ref{newStri}, namely (PQ1), (Q1) and $(1/p_1, 1/q_1) \in (0,1/2] \times (0,1/2)$, then we say $(p_1,q_1)$ is a {\bf(left) $\sigma$-admissible pair}. In particular, if the pair $(p_1,q_1)$ satisfies the strict version of the inequalities mentioned above, then we say $(p_1,q_1)$ is an {\bf open $\sigma$-admissible pair}.
\end{definition}
\begin{remark} \label{openadmissible}
 The definition of an open $\sigma$-admissible pair $(p_1,q_1)$ is equivalent to saying $(1/p_1,1/q_1)$ is in the interior of the admitted region shown in the figures attached at the end of the paper. An open $\sigma$-admissible pair guarantees every pair in a small neighborhood is also $\sigma$-admissible.
\end{remark}
\begin{definition}
 A pair $(p_1,q_1)$ is called a {\bf $(p,\sigma)$-control pair}, if and only if the pair $(p_2,q_2)$ defined by
\[
 (\frac{1}{p_2}, \frac{1}{q_2}) = (1 - \frac{p}{p_1}, 1 -\frac{p}{q_1}).
\]
satisfies $(p_2,q_2) \in (0,\frac{1}{2}]\times (0,\frac{1}{2})$ and the inequalities (Q2), (PQ2).
\end{definition}
\begin{remark} Assume the pair $(p_1,q_1)$ is a $(p,\sigma)$-control pair.
Given a solution to \eqref{(CP1)} with a $\sigma$-admissible pair $(\tilde{p},\tilde{q})$, we can control its norm by
\begin{equation}\label{strichartz100}
 \|u\|_{L^{\tilde{p}} L^{\tilde{q}}(I \times \Hm^n)} \leq C \left( \|(u_0, u_1)\|_{H^{\sigma-\frac{1}{2},\frac{1}{2}}\times H^{\sigma-\frac{1}{2},-\frac{1}{2}}} + \|u\|_{L^{p_1} L^{q_1}(I \times \Hm^n)}^{p} \right).
\end{equation}
\end{remark}

\begin{remark}
 The $L^{p_1} L^{q_1}$ norm of the left hand side of \eqref{strichartz100} could be substituted with $L^{p_1} (I; H_{q_1}^{\kappa})$ if we ``save'' the unused derivative. The range of $\kappa$ is given by
\[
\begin{array}{ll}
 \kappa \in [0, \sigma - \beta (q_1)], & \hbox{if}\; (p_1,q_1)\; \hbox{is an original admissible pair};\\
 \kappa \in [0, \frac{n}{q_1} + \frac{1}{p_1} - \frac{n}{2} + \sigma], & \hbox{if}\; (p_1,q_1)\; \hbox{is not an original admissible pair and}\; n \geq 3;\\
 \kappa \in [0, \frac{2}{q_1} + \frac{1}{p_1} - 1 + \sigma), & \hbox{if}\; (p_1,q_1)\; \hbox{is not an original admissible pair and}\; n = 2.
\end{array}
\]
\end{remark}
\section{Local Theory} \label{sec:localth}
We start with a summary of already know facts.
\subsection{Local Theory Basics}
\begin{definition}
Let $p>1$. We call a pair $(p_1,q_1)$ to be $(p,\sigma)$-compatible if it satisfies both of the following conditions.
\begin{itemize}
  \item The pair $(p_1,q_1)$ is $\sigma$-admissible.
  \item The pair $(p_1,q_1)$ is a $(p,\sigma)$-control pair.
\end{itemize}
\end{definition}
\noindent By the Strichartz estimates, we immediately have
\begin{lemma} Assume $(p_1,q_1)$ is $(p,\sigma)$-compatible. There exists a constant $C$, such that if $u(x,t)$ is a solution to the linear shifted wave equation $\partial_t^2 u -(\Delta_{\Hm^n}+\rho^2) u = F(x,t)$ in a time interval $I$ with initial data $(u_0,u_1)$, then the following inequality holds
\begin{align*}
 \|u\|_{L^{p_1} L^{q_1}(I \times \Hm^n)}&
 +\|(u, \partial_t u)\|_{C(I; H^{\sigma -\frac{1}{2},\frac{1}{2}}\times H^{\sigma -\frac{1}{2},-\frac{1}{2}})} \\
 & \leq C \left( \|(u_0, u_1)\|_{H^{\sigma -\frac{1}{2},\frac{1}{2}}\times H^{\sigma -\frac{1}{2},-\frac{1}{2}}} + \|F\|_{L^{p_1/p} L^{q_1/p}(I \times \Hm^n)} \right).
\end{align*}
\end{lemma}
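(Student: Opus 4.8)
The plan is to read this statement as a direct specialization of the new Strichartz estimate in Proposition \ref{newStri}; the word ``immediately'' in the text is accurate, so the only real work is to check that the two defining conditions of $(p,\sigma)$-compatibility translate precisely into the hypotheses of that proposition and that the dual exponents produce the stated forcing norm on $F$.

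First I would unwind the definitions. Since $(p_1,q_1)$ is $\sigma$-admissible, it satisfies (PQ1), (Q1), and $(1/p_1,1/q_1)\in(0,1/2]\times(0,1/2)$; these are exactly the ``left hand'' hypotheses on $(p_1,q_1)$ appearing in Proposition \ref{newStri}. Since $(p_1,q_1)$ is moreover a $(p,\sigma)$-control pair, the pair $(p_2,q_2)$ determined by $(1/p_2,1/q_2)=(1-p/p_1,\,1-p/q_1)$ satisfies $(1/p_2,1/q_2)\in(0,1/2]\times(0,1/2)$ together with (Q2) and (PQ2); these are exactly the ``right hand'' hypotheses on $(p_2,q_2)$. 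Thus both columns of the table in Proposition \ref{newStri} are verified for this choice of pairs.

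Next I would compute the dual exponents appearing on the right-hand side of \eqref{newStrichartz}. From $1/p_2=1-p/p_1$ one gets $1/p'_2=1-1/p_2=p/p_1$, hence $p'_2=p_1/p$, and in the same way $q'_2=q_1/p$. Therefore $\|F\|_{L^{p'_2}L^{q'_2}(I\times\Hm^n)}=\|F\|_{L^{p_1/p}L^{q_1/p}(I\times\Hm^n)}$, which is precisely the forcing norm in the claimed inequality. Invoking Proposition \ref{newStri} with this $(p_1,q_1)$ and $(p_2,q_2)$ then yields \eqref{newStrichartz}, which is word-for-word the asserted estimate, with a constant $C$ that (by the statement of the Strichartz estimate) is independent of the time interval $I$.

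There is no substantial obstacle here: the entire argument is a matching of definitions, and the control-pair terminology was deliberately designed so that its conditions mirror the right-hand column of the table. The only point deserving a moment of care is that the control-pair conditions are imposed on $(p_2,q_2)$ itself rather than on its conjugate $(p'_2,q'_2)$, so one should confirm that the endpoint ranges $(0,1/2]\times(0,1/2)$ for $(1/p_2,1/q_2)$ are exactly what Proposition \ref{newStri} requires before dualizing; once this is checked, the estimate follows with no further computation.
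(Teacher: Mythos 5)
Your proposal is correct and is essentially the paper's own argument: the paper states this lemma as an immediate consequence of Proposition \ref{newStri}, exactly as you do, since $(p,\sigma)$-compatibility was defined precisely so that $(p_1,q_1)$ satisfies the left-hand hypotheses and the associated $(p_2,q_2)$ the right-hand hypotheses of that proposition, with the duality computation $p'_2=p_1/p$, $q'_2=q_1/p$ giving the stated forcing norm. Your added remark about checking the ranges for $(1/p_2,1/q_2)$ before dualizing is a fair point of care but does not change the substance.
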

\noindent 
Combing the lemma above with the inequalities
\begin{align*}
 \|F(u)\|_{L^{p_1/p} L^{q_1/p}(I \times \Hm^n)} &\leq  \|u\|_{L^{p_1} L^{q_1}(I \times \Hm^n)}^p;\\
 \|F(u_1)-F(u_2)\|_{L^{p_1/p} L^{q_1/p}(I \times \Hm^n)} &\leq C_p \left(\|u_1\|_{L^{p_1} L^{q_1}(I \times \Hm^n)}^{p-1} + \|u_2\|_{L^{p_1} L^{q_1}(I \times \Hm^n)}^{p-1} \right)\\
 & \qquad \times \|u_1-u_2\|_{L^{p_1} L^{q_1}(I \times \Hm^n)},
\end{align*}
we obtain the following theorem via a fixed-point argument. (Our argument is standard, see for instance, \cite{bahouri, locad1, kenig, kenig1, ls, local1, ss2} for more details.)
\begin{theorem}[Local theory] \label{localtheory}
 Assume $(p_1,q_1)$ is a $(p,\sigma)$-compatible pair. We say $u(t)$ is a solution of the equation \eqref{(CP1)} in a time interval $I$, if $(u(t),\partial_t u(t)) \in C(I;{H^{\sigma-1/2, 1/2}}\times{H^{\sigma -1/2,-1/2}})$, with a finite norm $\|u\|_{L^{p_1} L^{q_1}(J \times \Hm^n)}$ for any bounded closed interval $J \subseteq I$ so that the integral equation
\[
 u(t) = S(t)(u_0,u_1) + \int_0^t \frac{\sin ((t-\tau)\sqrt{-\Delta_{\Hm^n}-\rho^2})} {\sqrt{-\Delta_{\Hm^n}-\rho^2}} F(u(\tau)) d\tau
\]
holds for all time $t \in I$. Here $S(t)(u_0,u_1)$ is the solution of the linear shifted wave equation with initial data $(u_0,u_1)$. Our local well-posedness results include
 \begin{itemize}
   \item [(a)] (Existence) For any initial data $(u_0,u_1) \in H^{\sigma-1/2,1/2} \times H^{\sigma-1/2,-1/2}(\Hm^n)$, there is a maximal interval $(-T_{-}(u_0,u_1), T_{+}(u_0,u_1))$ in which the equation \eqref{(CP1)} has a solution.
   \item [(b)] (Uniqueness) If there is another solution $\tilde{u}$ with the same initial data in the time interval $J$, then we have $J \subseteq (-T_-,T_+)$ and $u=\tilde{u}$ in $J$.
	   \item [(c)] (Scattering with small data) There exists a constant $\delta_1 > 0$ such that if the norm of the initial data $\|(u_0,u_1)\|_{H^{\sigma-1/2 ,1/2} \times H^{\sigma-1/2 ,-1/2}(\Hm^n)} < \delta_1$, then the Cauchy problem \eqref{(CP1)} has a solution $u$ defined globally in time with $\|u\|_{L^{p_1} L^{q_1} (\Rm \times \Hm^n)} < \infty$.
   \item [(d)] (Standard finite time blow-up criterion) If $T_{+} < \infty$, then $\|u\|_{L^{p_1} L^{q_1}([0,T_{+})\times \Hm^n)} = \infty$.
   \item [(e)] (Finite $L^{p_1} L^{q_1}$ norm implies scattering) If $\|u\|_{L^{p_1} L^{q_1}(\Rm^+ \times \Hm^n)} < \infty$, then there exists a pair $(u_0^+,u_1^+) \in H^{\sigma-1/2,1/2} \times H^{\sigma-1/2,-1/2}(\Hm^n)$, such that
         \[
          \lim_{t \rightarrow + \infty} \left\|\left(u(t) - S(t)(u_0^+, u_1^+),
         \partial_t u(t) - \partial_t S(t)(u_0^+, u_1^+)\right)\right\|_{H^{\sigma-\frac{1}{2},\frac{1}{2}} \times H^{\sigma-\frac{1}{2},-\frac{1}{2}}} = 0.
         \]
       A similar result also holds in the negative time direction.
   \item [(f)] (Long-time perturbation theory, see also \cite{kenig1, shen2, pertao}) Let $M$ be a positive constant. There exists a constant $\eps_0 = \eps_0 (M)>0$, such that if $\eps < \eps_0$, then for any approximation solution $\tilde{u}$ defined on $\Hm^n \times I$ ($0\in I$) and any initial data $(u_0,u_1) \in H^{\sigma-1/2, 1/2} \times H^{\sigma-1/2, -1/2}(\Hm^n)$ satisfying
         \begin{align*}
           &\partial_t^2 \tilde{u} - (\Delta_{\Hm^n} + \rho^2) \tilde{u}=  F(\tilde{u}) + e(x,t), \qquad (x,t) \in \Hm^n \times I; \\
           &\|\tilde{u}\|_{L^{p_1} L^{q_1} (I \times \Hm^n)} < M; \qquad \|(\tilde{u}(0),\partial_t\tilde{u}(0))\|_{H^{\sigma-\frac{1}{2},\frac{1}{2}} \times H^{\sigma-\frac{1}{2},-\frac{1}{2}}}< \infty;\\
           &\|e(x,t)\|_{L^{p_1/p} L^{q_1/p}(I \times \Hm^n)}+ \|S(t)(u_0-\tilde{u}(0),u_1 - \partial_t \tilde{u}(0))\|_{L^{p_1} L^{q_1} (I\times \Hm^n)} \leq \eps;
         \end{align*}
       there exists a solution $u(x,t)$ of \eqref{(CP1)} defined in the interval $I$ with the initial data $(u_0,u_1)$ and satisfying
         \[
           \|u(x,t) - \tilde{u}(x,t)\|_{L^{p_1} L^{q_1} (I\times \Hm^n)} < C(M) \eps.
         \]
         \[
           \sup_{t \in I} \left\|\left(\begin{array}{c} u(t)\\ \partial_t u(t)\end{array}\right)
            - \left(\begin{array}{c} \tilde{u}(t)\\ \partial_t \tilde{u}(t)\end{array}\right)
            - S(t)\left(\begin{array}{c} u_0 - \tilde{u}(0)\\ u_1 -\partial_t \tilde{u}(0)\end{array}\right)
           \right\|_{H^{\sigma-\frac{1}{2},\frac{1}{2}} \times H^{\sigma-\frac{1}{2},-\frac{1}{2}}} < C(M)\eps.
         \]
        Here the notation $S(t)$ with a column vector entry represents
        \[
          S(t) \left(\begin{array}{c} v_0\\ v_1\end{array}\right) = \left(\begin{array}{c} v(t)\\ \partial_t v(t)\end{array}\right)
        \]
        if $v(t)$ is the solution to the linear shifted wave equation with the given initial data $(v_0,v_1)$.
 \end{itemize}
\end{theorem}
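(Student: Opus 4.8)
The plan is to derive all six assertions from the Banach fixed-point principle applied to the Duhamel map
\[
 \Phi(u)(t) = S(t)(u_0,u_1) + \int_0^t \frac{\sin((t-\tau)\sqrt{-\Delta_{\Hm^n}-\rho^2})}{\sqrt{-\Delta_{\Hm^n}-\rho^2}} F(u(\tau))\,d\tau,
\]
relying only on the Strichartz estimate of the preceding Lemma together with the two nonlinear estimates displayed just above it. I would work in the resolution space consisting of functions $u$ with finite norm $\|u\|_{L^{p_1}L^{q_1}(I\times\Hm^n)}$ and with $(u,\partial_t u)\in C(I;H^{\sigma-1/2,1/2}\times H^{\sigma-1/2,-1/2})$. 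For parts (a) and (b), I fix the data and use that the global norm $\|S(t)(u_0,u_1)\|_{L^{p_1}L^{q_1}(\Rm\times\Hm^n)}$ is finite, so by absolute continuity I may shrink $I\ni 0$ until $\|S(t)(u_0,u_1)\|_{L^{p_1}L^{q_1}(I\times\Hm^n)}=\eta$ is as small as desired. On the ball of radius $2\eta$ the Lemma and the estimate $\|F(u)\|_{L^{p_1/p}L^{q_1/p}}\le\|u\|_{L^{p_1}L^{q_1}}^p$ show $\Phi$ is a self-map, while the Lipschitz estimate makes it a contraction; the fixed point is the local solution, a standard continuity argument upgrades ball-uniqueness to uniqueness in the full solution class, and gluing maximal intervals yields $(-T_-,T_+)$.

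Parts (c), (d), (e) are quick consequences. For (c), smallness of the data gives $\|S(t)(u_0,u_1)\|_{L^{p_1}L^{q_1}(\Rm\times\Hm^n)}\le C\delta_1$ by Strichartz, so the same contraction now runs on $I=\Rm$ and produces a global solution of finite norm. For (d) I argue by contradiction: if $T_+<\infty$ while the $L^{p_1}L^{q_1}$ norm on $[0,T_+)$ were finite, then this norm is small on some terminal interval $[t_0,T_+)$, and rerunning the local construction from $t_0$ would extend the solution past $T_+$. For (e), finiteness of the forward norm makes the Duhamel integral Cauchy in $H^{\sigma-1/2,1/2}\times H^{\sigma-1/2,-1/2}$; I define $(u_0^+,u_1^+)$ as the limit of $S(-t)(u(t),\partial_t u(t))$ and estimate the tails over $[t,\infty)$, where the $L^{p_1}L^{q_1}$ norm is small, using Strichartz once more.

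The main obstacle is part (f). The difficulty is that the nonlinear difference estimate is only effective on intervals where the $L^{p_1}L^{q_1}$ norms are small, so I first partition $I$ into $N=N(M)$ consecutive subintervals $I_1,\dots,I_N$ on each of which $\|\tilde u\|_{L^{p_1}L^{q_1}(I_j\times\Hm^n)}<\eta$, with $\eta$ a threshold fixed once and for all by the Strichartz constant and $p$. Writing $w=u-\tilde u$, the remainder $w$ solves the linear shifted wave equation with source $F(\tilde u+w)-F(\tilde u)-e$, and on the first subinterval the Strichartz estimate and the Lipschitz bound control both $\|w\|_{L^{p_1}L^{q_1}(I_1\times\Hm^n)}$ and the energy-type norm of $w$ by the data mismatch and by $\|e\|$. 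Propagating across the subintervals requires an induction in which the mismatch at the left endpoint of $I_{j+1}$ is itself bounded by the error accumulated on $I_1,\dots,I_j$; the errors compound geometrically with a ratio governed by the Strichartz constant, so after the $N$ steps they stay bounded provided $\eps_0=\eps_0(M)$ is small enough, the dependence on $M$ entering precisely through the number of pieces $N$. Keeping this bookkeeping uniform over the subintervals is the crux; once it closes, summing the subinterval bounds gives the two displayed estimates of (f).
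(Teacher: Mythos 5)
Your proposal is correct and follows essentially the same route as the paper: the paper itself proves this theorem only by invoking the standard fixed-point argument based on the preceding Strichartz lemma and the two displayed nonlinear estimates (citing the standard references for the details, including the Kenig--Merle/Tao--Visan style induction-on-subintervals for the long-time perturbation part (f)), which is exactly the construction you carry out.
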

\begin{remark}
 The long time perturbation theory implies  immediately the continuous dependence of solutions with respect to the  initial data.
\end{remark}
\begin{remark}
 Given two different $(p,\sigma)$-compatible pairs, we can build a local theory with initial data in $H^{\sigma-\frac{1}{2},\frac{1}{2}} \times H^{\sigma-\frac{1}{2}, -\frac{1}{2}}$ by using either one  of them. However, a basic application of Strichartz estimates shows that these two theories are exactly the same. Namely, a solution in one theory is still the unique solution given by the other. Thus it is absolutely not necessary to point out which $(p,\sigma)$-compatible pair is being used when we mention a local theory. We will simply mention the local theory with (initial data) regularity $\sigma$ in the rest of this paper.
\end{remark}
\begin{remark} \label{admissiblef}
If $u$ is a solution to \eqref{(CP1)} in a local theory with initial data regularity $\sigma$ and a maximal lifespan $(-T_-,T_+)$, then for any closed time interval $J \subset (-T_-,T_+)$ and any $\sigma$-admissible pair $(\tilde{p},\tilde{q})$, we have $\|u\|_{L^{\tilde{p}} L^{\tilde{q}}(J \times \Hm^n)} < \infty$ by the Strichartz estimates.
\end{remark}

According to Theorem \ref{localtheory}, it is sufficient to find a $(p,\sigma)$-compatible pair in order to build a local theory on the equation \eqref{(CP1)}  with exponent $p$ and initial data in the space $H^{\sigma-\frac{1}{2},\frac{1}{2}} \times H^{\sigma-\frac{1}{2},-\frac{1}{2}}(\Hm^n)$. Our main goals now  include
\begin{itemize}
  \item Given an exponent $p$, we are interested in finding a local theory so that the assumption on the regularity of the initial data is as weak as possible. In other words, we are seeking minimal ``working $\sigma$''.
  \item We also try to establish a local theory with regularity $\sigma = 1^-$ for each suitable $p$. According to Remark \ref{admissiblef}, a local theory with initial data regularity $\sigma$ can help us estimate the $L^{\tilde{p}} L^{\tilde{q}}(J \times \Hm^n)$ norm of a solution for a suitable time interval $J$ as long as the pair $(\tilde{p}, \tilde{q})$ is $\sigma$-admissible. By the fact that a $\sigma_1$-admissible pair is also $\sigma_2$-admissible if $\sigma_2 > \sigma_1$, we know that a local theory with a higher regularity level gives us more finite $L^{\tilde{p}} L^{\tilde{q}}$ space-time norms of solutions if the initial data permit. This is the primary reason why we are interested in a local theory with regularity $\sigma=1^-$.
\end{itemize}
\subsection{Local Theory for a Smaller $p$}
The simplest case to consider is when $1 < p < p_{conf} = 1 + 4/(n-1)$ and when $n \geq 3$ we also allow $p = p_{conf}$. In this case we can choose a universal coefficient  $\sigma = \frac{1}{2}$ and
\[
 \left(\frac{1}{p_1}, \frac{1}{q_1}\right) 
 = \left(\frac{1}{p+1}, \frac{1}{p+1}\right).
\]
One can check this pair is a $\left(p,\frac{1}{2}\right)$-compatible pair. In the following proposition we  summarize for the readers' convenience the  allowed $p$ in different dimensions.
\begin{proposition}
 If the exponent $p>1$ satisfies the condition
\[
 \left\{\begin{array}{ll} p \leq p_{conf} = 1 + \frac{4}{n-1}, & n \geq 3;\\
 p < p_{conf} = 5, & n =2;\end{array}\right.
\]
then the pair $(p+1,p+1)$ is a $\left(p,\frac{1}{2}\right)$-compatible pair.
\end{proposition}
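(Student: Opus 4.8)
The plan is to verify directly that the diagonal pair $(p_1,q_1)=(p+1,p+1)$ meets every requirement in the definition of a $\left(p,\tfrac12\right)$-compatible pair, namely $\tfrac12$-admissibility (the range constraint together with (PQ1) and (Q1)) and the $\left(p,\tfrac12\right)$-control property (conditions (PQ2) and (Q2) for the associated pair $(p_2,q_2)$). Setting $\sigma=\tfrac12$, I first record the coordinates $1/p_1=1/q_1=1/(p+1)$ and then compute the control pair from its definition: since $1/p_2=1-p/p_1=1-p/(p+1)=1/(p+1)$ and likewise $1/q_2=1/(p+1)$, the pair $(p_2,q_2)$ is again the diagonal pair $(p+1,p+1)$. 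The key structural observation is that at the symmetric regularity $\sigma=\tfrac12$ the right-hand exponents in the table coincide with the left-hand ones: the right-hand side of (PQ2) equals $n/2-1+\tfrac12=(n-1)/2$, which is exactly the right-hand side of (PQ1), and the lower endpoint in (Q2) equals $(n-3)/(2(n+1))+1/(n+1)=(n-1)/(2(n+1))$, which is exactly the lower endpoint in (Q1). Hence all four families of inequalities collapse onto a single constraint on $1/(p+1)$, and it suffices to check one of each pair.

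Next I would carry out that single verification. The range constraint $(1/p_1,1/q_1)\in(0,\tfrac12]\times(0,\tfrac12)$ holds because $1/(p+1)<1/2$ for every $p>1$, and the same bound handles $(p_2,q_2)$. For $n\ge 3$ the binding conditions are (Q1) and (PQ1): the lower endpoint of (Q1) reads $1/(p+1)\ge (n-1)/(2(n+1))$, which rearranges to $p\le (n+3)/(n-1)=p_{conf}$, while (PQ1) reads $(n+1)/(p+1)\ge (n-1)/2$, giving the identical inequality. Thus for $n\ge 3$ all conditions are equivalent to $1<p\le p_{conf}$, matching the hypothesis exactly, and the symmetry above supplies (PQ2) and (Q2) for free.

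It remains to treat $n=2$ and to note where strictness enters, which is the only point requiring care. Here (Q1) reads $1/(p+1)\in[\tfrac16,\tfrac12)$, a closed lower bound that would permit $p=5$; however (PQ1) is the strict inequality $3/(p+1)>\tfrac12$, i.e.\ $p<5=p_{conf}$, and this is the binding constraint. The analogous (PQ2) and (Q2) reduce to the same bounds by the $\sigma=\tfrac12$ symmetry. This accounts for the discrepancy between the closed condition $p\le p_{conf}$ in dimensions $n\ge 3$ and the open condition $p<p_{conf}$ in dimension $n=2$. I do not anticipate a genuine obstacle: the whole argument is a substitution of $\sigma=\tfrac12$ into the table followed by elementary algebra, and the only subtlety is keeping track of which inequalities in the table are closed and which are strict, so as to reproduce the stated hypotheses precisely.
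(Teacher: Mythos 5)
Your verification is correct and is exactly the check the paper leaves implicit (the paper simply states ``one can check this pair is a $\left(p,\frac{1}{2}\right)$-compatible pair''): substituting $\sigma=\tfrac12$ into the table, noting that $(p_2,q_2)=(p+1,p+1)$ so that (PQ2)/(Q2) collapse onto (PQ1)/(Q1), and reducing everything to $p\le p_{conf}$ for $n\ge 3$ and to the strict bound $p<5$ forced by the strict inequality (PQ1) when $n=2$. This matches the paper's intended argument, including the correct explanation of why the endpoint $p=p_{conf}$ is allowed only in dimensions $n\ge 3$.
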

\begin{remark}
 If $p < p_{conf}$, the choice $\sigma =\frac{1}{2}$ is not the optimal regularity, as we will  find out later in this section.
 However, the $L^{p+1} L^{p+1}$ norm used here turns out to be among the most {\it friendly} ones to work with in the defocusing case, since, as  the Morawetz inequality will indicate,  it can be dominated solely by the energy.
\end{remark}

\subsection{Local Theory for $3 \leq n \leq 6$}
Let us assume\footnote{The restriction $n\leq 6$ is explained in Remark \ref{remark1}.} $3 \leq n \leq  6$. Our goal is to establish a local theory for the shifted wave equation \eqref{(CP1)}  when the exponent $p$ is greater than $1$ but smaller than the energy-critical exponent $p_c = 1 + 4/(n-2)$. In order to seek possible $(p,\sigma)$-compatible pairs, we first write down all the necessary conditions.
\begin{align}
 &(PQ1)\quad \;\frac{1}{p_1} + \frac{n}{q_1} \geq \frac{n}{2} -\sigma; &
 &(Q1)\quad \;\frac{1}{q_1} \geq \frac{1}{2} - \frac{2 \sigma}{n+1};& \nonumber\\
 &(PQ2)\quad \;\frac{1}{p_2} + \frac{n}{q_2} \geq \frac{n}{2} -1 + \sigma; &
 &(Q2)\quad \;\frac{1}{q_2} \geq \frac{n-3}{2(n+1)} + \frac{2 \sigma}{n+1}; & \nonumber\\
 &p \cdot \frac{1}{p_1} = 1 - \frac{1}{p_2},& & p \cdot \frac{1}{q_1} = 1 - \frac{1}{q_2};& \nonumber\\
 &(\frac{1}{p_i},\frac{1}{q_i}) \in \left(0,\frac{1}{2}\right] \times \left(0, \frac{1}{2}\right).& &&\nonumber
\end{align}
It seems that there are a lot of parameters to be determined given a value $p\in (1,p_c)$. So we  invert the problem. We fix the regularity of the initial data $\sigma \in (0,1)$ and then we figure out the maximum  value of $p$ so that a $(p,\sigma)$-compatible pair can be found. 
\paragraph{Large $\sigma$:} If $\sigma$ is relatively large, we will choose $(p_2,q_2)$ so that both inequalities (PQ2) and (Q2) are identities. A simple computation shows
\[
 \left(\frac{1}{p_2}, \frac{1}{q_2}\right) = \left(\frac{(n-1)(1-\sigma)}{n+1},\frac{n-3 + 4\sigma}{2(n+1)}\right)
 = \left(\frac{n-1}{n+1}(1-\sigma), \frac{1}{2} - \frac{2 (1-\sigma)}{n+1}\right).
\]
As indicated above, this pair can also be given by the point ``A'' in the Figure \ref{drawingd41} at the end of the paper  if we substitute $\sigma$ there by $1 -\sigma$. This is admissible if
\[
 \left\{\begin{array}{ll} \sigma \in (0,1), & n=3;\\
 \sigma \in [(n-3)/2(n-1) ,1), & 4 \leq n\leq 6. \end{array}\right.
\]
The pair $\left(\frac{1}{p_1}, \frac{1}{q_1}\right)$ is chosen as
\[
 \left(\frac{1}{p_1}, \frac{1}{q_1}\right) = \frac{1}{p}\left(\frac{1}{p'_2}, \frac{1}{q'_2}\right)
 =\left( \frac{2 + (n-1)\sigma}{(n+1)p}, \frac{n+5 -4\sigma}{2(n+1)p} \right).
\]
Now let us check which values of $p$ produce admissible pairs $(p_1,q_1)$ satisfying the conditions listed above.
The inequalities (PQ1) and (Q1) are equivalent to
\[
 p \leq p_4(\sigma) = 1 + \frac{4}{n -2\sigma},\qquad \qquad p \leq p_3 (\sigma) = 1 + \frac{4}{n+1 -4\sigma},
\]
respectively. The second upper bound is greater than the first one if and only if $\sigma>\frac{1}{2}$.
We also need to check whether the pair $\left(\frac{1}{p_1},\frac{1}{q_1}\right)$ is contained in $\left(0,\frac{1}{2}\right]\times \left(0,\frac{1}{2}\right)$. The final result is that $p$ must satisfies
\[
 p \geq \frac{4 + 2(n-1)\sigma}{n+1}, \qquad \qquad p > 1 + \frac{4-4\sigma}{n+1}.
\]
A careful computation shows that if $3\leq n \leq 6$ and $\sigma > (n-3)/2(n-1)$, the maximal ``working'' $p = \min\{p_3(\sigma), p_4(\sigma)\}$ always satisfies these inequalities.
\paragraph{Smaller $\sigma$:} When $4\leq n \leq 6$ and $\sigma \leq \frac{(n-3)}{2(n-1)}$, we choose
\[
 \left(\frac{1}{p_2},\frac{1}{q_2}\right) = \left(\frac{1}{2}, \frac{n-3}{2n}+\frac{\sigma}{n}\right).
\]
Thus we have
\[
 \left(\frac{1}{p_1},\frac{1}{q_1}\right)= \left( \frac{1}{2p}, \frac{1}{p}\left(\frac{n+3}{2n}-\frac{\sigma}{n}\right) \right).
\]
As we did for larger $\sigma$, finally we can determine that the  maximum``working $p$'' is
\[
 p_2 (\sigma) = 1 + \frac{3n+3 +2(n-1)\sigma}{n(n+1 -4\sigma)}.
\]
We have the limit $\lim_{\sigma \rightarrow 0^+} p_2 (\sigma) = 1 + \frac{3}{n}$. An exponent $p$ smaller or equal to this value enables us to establish a local theory with arbitrarily small $\sigma > 0$. We have
\begin{proposition} \label{localn36}
Assume $3 \leq n \leq 6$. We can summarize our local well-posedness theory results in two tables, one for $n=3$ and one for $3<n\leq 6$. The first column of each table shows different ranges of $p$, the second column gives minimal assumption on the regularity of initial data $(u_0,u_1) \in H^{\sigma- \frac{1}{2}, \frac{1}{2}} \times H^{\sigma-\frac{1}{2}, -\frac{1}{2}}$, while the last column displays a possible $(p,\sigma)$-compatible pair when $\sigma$ is equal (or close) to the minimal value $\sigma_p$.
\begin{center}
\begin{tabu}{|c|c|c|}
 \hline
 \multicolumn{3}{|c|}{Dimension $n=3$} \\
 \hline
 Range of $p$ & Minimal regularity assumption & $(p,\sigma)$-compatible pair with minimal $\sigma$\\
 \hline
 $3 \leq p <5$ & $\sigma \geq \sigma_3(p) = \frac{3}{2} - \frac{2}{p-1}$ &  $\left(\frac{2p}{1+\sigma_p},\frac{2p}{2 -\sigma_p}\right)$\\
 \hline
 $2 < p \leq 3$ & $\sigma \geq \sigma_2(p) = 1 - \frac{1}{p-1}$ &  $\left(\frac{2p}{1+\sigma_p},\frac{2p}{2 -\sigma_p}\right)$ \\
 \hline
 $1 <p\leq 2$ & $\sigma > \sigma_0(p) = 0$ & $\left(2p, \frac{2}{1-\sigma}\right)$ if $\sigma < \frac{p-1}{p}$ \\
 \hline
 \end{tabu}
\end{center}
\begin{center}
\begin{tabu}{|c|c|c|}
 \hline
 \multicolumn{3}{|c|}{Dimension $4 \leq n \leq 6$} \\
 \hline
 Range of $p$ & Minimal regularity assumption & $(p,\sigma)$-compatible \\
 & & pair with minimal $\sigma$ \\
 \hline
 $1 + \frac{4}{n-1} \leq p < 1 + \frac{4}{n-2}$ & $\sigma \geq \sigma_3(p) = \frac{n}{2} - \frac{2}{p-1}$ &
 $\left(\frac{(n+1)p}{2+(n-1)\sigma_p}, \frac{2(n+1)p}{n+5 -4\sigma_p}\right)$ \\
 \hline
 $1+ \frac{4(n-1)}{(n-1)^2 + 4} \leq p \leq 1 + \frac{4}{n-1}$ & $\sigma \geq \sigma_2(p) = \frac{n+1}{4} - \frac{1}{p-1}$ & $\left(\frac{(n+1)p}{2+(n-1)\sigma_p}, \frac{2(n+1)p}{n+5 -4\sigma_p}\right)$ \\
 \hline
 $1 + \frac{3}{n} < p \leq 1+ \frac{4(n-1)}{(n-1)^2 + 4}$ & $\sigma \geq \sigma_1(p)= \frac{(n+1)(np-n-3)}{4np -2n -2}$ & $\left(2p, \frac{2np}{n+3-2\sigma_p}\right)$ \\
 \hline
 $1 < p \leq 1 + \frac{3}{n}$ & $\sigma > \sigma_0(p) = 0$ & $\left(2p, \frac{2(n+1)}{n+1 -4\sigma}\right)$ if $\sigma < \frac{p-1}{2p}$ \\
 \hline
\end{tabu}
\end{center}
\end{proposition}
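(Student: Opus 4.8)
The plan is to read each row of the two tables as the inversion of the computations already carried out in the ``Large $\sigma$'' and ``Smaller $\sigma$'' paragraphs, supplemented by a verification that the displayed pairs are genuinely $(p,\sigma)$-compatible and an argument that the stated $\sigma_p$ cannot be lowered. For a fixed regularity $\sigma \in (0,1)$ those paragraphs identify, for each of the two competing constructions, the largest exponent $p$ for which a $(p,\sigma)$-compatible pair exists: the generic construction (with (PQ2) and (Q2) both forced to be identities) yields $\min\{p_3(\sigma), p_4(\sigma)\}$, where $p_4(\sigma) = 1 + 4/(n - 2\sigma)$ and $p_3(\sigma) = 1 + 4/(n+1 - 4\sigma)$, while the endpoint construction (with $1/p_2 = 1/2$) yields $p_2(\sigma) = 1 + (3n+3 + 2(n-1)\sigma)/(n(n+1-4\sigma))$. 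Each of $p_2, p_3, p_4$ is strictly increasing in $\sigma$ on the relevant interval, hence invertible; I would define $\sigma_i(p)$ as the inverse and observe that the minimal working $\sigma$ for a given $p$ is the smallest $\sigma$ for which $p$ does not exceed the maximal working exponent, i.e.\ $\max\{\sigma_2(p), \sigma_3(p)\}$ for the generic construction.

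Concretely, I would first invert the three relations. Setting $p = p_4(\sigma)$ gives $\sigma = \sigma_3(p) = n/2 - 2/(p-1)$; setting $p = p_3(\sigma)$ gives $\sigma_2(p) = (n+1)/4 - 1/(p-1)$ (which becomes $1 - 1/(p-1)$ when $n=3$); and setting $p = p_2(\sigma)$ gives $\sigma_1(p) = (n+1)(np - n - 3)/(4np - 2n - 2)$. Because $p_3 \geq p_4$ exactly when $\sigma \geq 1/2$, as recorded above, the binding constraint at the top of the range is $p_4$ and below it is $p_3$, which produces the $\sigma_3$ versus $\sigma_2$ rows with their transition at $p = 1 + 4/(n-1)$ (where $\sigma_3 = \sigma_2 = 1/2$). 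The transition between the $\sigma_2$ and $\sigma_1$ rows is located instead by the opening of the admissibility window for $(p_2,q_2)$, i.e.\ the condition $\sigma \geq (n-3)/(2(n-1))$ governing when the generic construction is admissible: equating $\sigma_2(p)$ to this boundary gives precisely $p = 1 + 4(n-1)/((n-1)^2+4)$. Finally, in the lowest range the tables assert $\sigma_0(p) = 0$; here I would use $\lim_{\sigma \to 0^+} p_2(\sigma) = 1 + 3/n$ (and $\lim_{\sigma \to 0^+} p_3(\sigma) = 2$ when $n=3$), so that any $p$ at or below that threshold admits a compatible pair for arbitrarily small positive $\sigma$.

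For the existence half it then remains to confirm that the pairs displayed in the last column are $(p,\sigma)$-compatible when $\sigma = \sigma_p$ (or slightly larger). This is the verification already sketched above: substitute the formulas for $(1/p_1, 1/q_1)$ into (PQ1), (Q1), the control relations, and the containment $(1/p_i, 1/q_i) \in (0, 1/2] \times (0, 1/2)$, and check each holds. For the generic rows the relevant pair is $\left(\tfrac{(n+1)p}{2 + (n-1)\sigma_p}, \tfrac{2(n+1)p}{n+5 - 4\sigma_p}\right)$, reducing to $\left(\tfrac{2p}{1+\sigma_p}, \tfrac{2p}{2-\sigma_p}\right)$ when $n=3$, and for the endpoint rows it is $\left(2p, \tfrac{2np}{n+3 - 2\sigma_p}\right)$; I would verify that the ``careful computation'' quoted above indeed places each of these in the admitted region. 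For minimality I would invoke that (PQ1), (Q1), (PQ2), (Q2) together with the range restrictions are necessary for compatibility, so the bounds $p \leq p_i(\sigma)$ are genuine upper bounds; hence for $\sigma < \sigma_p$ no compatible pair can exist.

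The main obstacle will be the bookkeeping in the case analysis, namely correctly determining which of $p_2, p_3, p_4$ is binding in each $p$-regime and verifying the containment conditions $(1/p_i, 1/q_i) \in (0,1/2] \times (0,1/2)$ at the endpoints, especially near the regime transitions where the active construction switches. The dimension dependence is also delicate: the extra ``smaller $\sigma$'' regime appears only for $4 \leq n \leq 6$, since $(n-3)/(2(n-1)) = 0$ when $n=3$, so the $n=3$ and $4 \leq n \leq 6$ tables must be assembled separately, and the low-$p$ endpoint construction for $n=3$ (giving the pair $\left(2p, \tfrac{2}{1-\sigma}\right)$, valid when $\sigma < (p-1)/p$) must be checked on its own rather than read off from the higher-dimensional formulas. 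Tracking the windows of admissibility carefully is what guarantees that no range of $p$ is left uncovered.
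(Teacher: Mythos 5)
Your proposal is correct and follows essentially the same route as the paper: the paper's "proof" of this proposition is precisely the preceding "Large $\sigma$" / "Smaller $\sigma$" computation (choosing $(p_2,q_2)$ to saturate (PQ2)--(Q2), or to have $1/p_2 = 1/2$ in the low-regularity regime), after which the tables are obtained by inverting $p_4(\sigma) = 1+\frac{4}{n-2\sigma}$, $p_3(\sigma) = 1+\frac{4}{n+1-4\sigma}$ and $p_2(\sigma)$ to get $\sigma_3$, $\sigma_2$, $\sigma_1$, locating the regime transitions at $\sigma = \frac{1}{2}$ (i.e. $p=p_{conf}$) and $\sigma = \frac{n-3}{2(n-1)}$ (i.e. $p = 1+\frac{4(n-1)}{(n-1)^2+4}$), exactly as you describe. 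Your treatment of minimality (deriving $p \le p_i(\sigma)$ as genuine upper bounds from the necessity of (PQ1), (Q1), (PQ2), (Q2), the control relation, and the range restrictions) is sound and is at the same level of rigor as the paper's own assertion.
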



\begin{remark} \label{perturbationn35}
 Let us assume $3 \leq n\leq 5$ and $p_{conf}< p <p_c$. Then for any $\sigma \in (\sigma_p, 1)$, one
can check that the $(p,\sigma)$-compatible pair $(p_1,q_1) =\left(\frac{(n+1)p}{2+(n-1)\sigma},\frac{2(n+1)p}{n+5 -4\sigma}\right)$ is an open $\sigma$-admissible pair.
\end{remark}

\subsection{Local Theory for  $n=2$}

In order to establish a local theory for the equation \eqref{(CP1)}  in two spatial dimension, we need to find admissible pairs $(p_1,q_1)$ and $(p_2,q_2)$ satisfying
\begin{align*}
 &(PQ1)\quad \; \frac{1}{p_1} + \frac{2}{q_1} > 1 -\sigma; &
 &(Q1)\quad \; \frac{1}{q_1} \geq \frac{1}{2} - \frac{2}{3} \sigma; & \\
 &(PQ2)\quad \; \frac{1}{p_2} + \frac{2}{q_2} > \sigma; &
 &(Q2)\quad \; \frac{1}{q_2} \geq \frac{2}{3} \sigma - \frac{1}{6}; &\\
 & p \cdot \frac{1}{p_1} = 1 - \frac{1}{p_2},& & p \cdot \frac{1}{q_1} = 1 - \frac{1}{q_2};&\\
 &(AP)\quad \;(\frac{1}{p_i}, \frac{1}{q_i}) \in (0,\frac{1}{2}]\times (0,\frac{1}{2}). & &&
\end{align*}
The idea is exactly the same as the one used for $3 \leq n \leq 6$. Therefore we only show the final conclusion here.
\begin{proposition}\label{localpgeq5n2} Let $n=2$. The local theory of \eqref{(CP1)}  with exponent $p$ is shown in the table
\begin{center}
\begin{tabu}{|c|c|c|}
\hline
 \multicolumn{3}{|c|}{Dimension $n=2$} \\
\hline
 Range of $p$ & Minimal regularity assumption & A possible $(p,\sigma)$-compatible \\
 & & pair with minimal $\sigma$ \\
 \hline
 $5 \leq p <\infty$ & $\sigma > \sigma_3 (p) = 1 -\frac{2}{p-1}$ & $\left(\frac{3p}{2 + \sigma - 3\eps}, \frac{6p}{7- 4\sigma}\right)$ \\
 & & if $0 < \eps < \min\left\{2-(1-\sigma)(p-1),  \frac{1}{3}\right\}$\\
 \hline
 $3< p <5$ & $\sigma \geq \sigma_2 (p) = \frac{3}{4}-\frac{1}{p-1}$ & $\left(\frac{3p}{1+3\sigma_p}, \frac{6}{3-4\sigma_p}\right)$ \\
 \hline
 $2 <p \leq 3$& $\sigma > \sigma_1(p) = \frac{3}{4} - \frac{3}{2p}$ & $\left(\frac{p}{1-\sigma_p},\frac{p}{1-\sigma + \sigma_p}\right)$ if $\sigma - \sigma_p \in (0,1/2)$ \\
 \hline
 $1 < p \leq 2$ & $\sigma > 0$ & $\left(2p, \frac{6}{3-4\sigma}\right)$ if $\sigma < \frac{3(p-1)}{4p}$\\
 \hline
\end{tabu}
\end{center}
\end{proposition}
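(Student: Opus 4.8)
The plan is to repeat, with the $n=2$ numerology, the inversion carried out for Proposition~\ref{localn36}. By Theorem~\ref{localtheory} it is enough, for each listed range of $p$ and each $\sigma$ exceeding the stated threshold, to produce one $(p,\sigma)$-compatible pair $(p_1,q_1)$; recall this means $(p_1,q_1)$ is $\sigma$-admissible and the pair $(1/p_2,1/q_2)=(1-p/p_1,\,1-p/q_1)$ obeys (PQ2), (Q2) and lies in $(0,1/2]\times(0,1/2)$. First I would use the two scaling identities to eliminate $(p_2,q_2)$, so that all of (PQ1), (Q1), (PQ2), (Q2) and the membership (AP) become constraints on the single point $(1/p_1,1/q_1)$, carving out a feasible region inside the square $(0,1/2]\times(0,1/2)$. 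Rather than fixing $p$, I would fix $\sigma\in(0,1)$ and compute the supremum of $p$ for which this region is nonempty; reading that supremum as a function of $\sigma$ and solving for $\sigma$ then produces the minimal regularity $\sigma_p$ displayed in each row.

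The core of the argument is deciding which two constraints are active at the optimum in each band of $p$. For the largest exponents ($5\le p<\infty$) I would saturate (Q2), so $1/q_2=2\sigma/3-1/6$, and approach (PQ2) up to a small slack, $1/p_2=(1-\sigma)/3+\eps$; the scaling relations then give precisely the pair $(p_1,q_1)=\big(3p/(2+\sigma-3\eps),\,6p/(7-4\sigma)\big)$ recorded in the table, and a one-line computation turns (PQ1) into the inequality $\eps<2-(1-\sigma)(p-1)$, which admits a positive $\eps$ exactly when $\sigma>\sigma_3(p)=1-2/(p-1)$. For the band $3<p<5$ I would instead saturate (Q1), i.e.\ take $1/q_1=1/2-2\sigma/3$; at the minimal regularity (Q2) is then saturated as well, and the compatibility of the two identities under the relation $p/q_1=1-1/q_2$ rearranges to $\sigma\ge\sigma_2(p)=3/4-1/(p-1)$. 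In the two lowest bands I would keep (Q1) saturated but now combine it with an edge of the admissibility square: the corner $1/q_2\to0^+$ forces $1/q_1=1/p$ and, through (Q1), the threshold $\sigma>\sigma_1(p)=3/4-3/(2p)$ on $2<p\le3$; the edge $1/p_2=1/2$ yields the pair $(2p,\,6/(3-4\sigma))$, valid for every $\sigma<3(p-1)/4p$ and hence down to $\sigma_0(p)=0$, on $1<p\le2$. In each case the remaining three constraints, together with $(1/p_i,1/q_i)\in(0,1/2]\times(0,1/2)$, are to be checked to hold automatically throughout the stated interval.

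The essential difference from the $3\le n\le6$ case, and the main source of care, is that for $n=2$ the conditions (PQ1) and (PQ2) are strict inequalities rather than equalities. Consequently they cannot be saturated outright; one must approach their boundaries through the auxiliary parameter $\eps$, which is exactly why the top row of the table carries an explicit $\eps$ and why several thresholds are strict (``$\sigma>\sigma_p$'') rather than attained. I expect the bulk of the work to be the bookkeeping: verifying that all six conditions plus the open-square membership genuinely hold for the exhibited pair across each $p$-interval, and checking that no feasible pair survives for $\sigma<\sigma_p$ so that the regularity is indeed minimal for this construction. A cheap but reassuring consistency check, which I would perform, is that consecutive thresholds agree at the shared endpoints, namely $\sigma_3(5)=\sigma_2(5)=1/2$, $\sigma_2(3)=\sigma_1(3)=1/4$ and $\sigma_1(2)=\sigma_0=0$, so that the four pieces glue into a single continuous minimal-regularity curve.
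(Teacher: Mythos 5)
Your proposal is correct and takes essentially the same approach as the paper: the paper's own proof merely writes down the constraint system (PQ1), (Q1), (PQ2), (Q2), (AP) and states that ``the idea is exactly the same as the one used for $3 \leq n \leq 6$,'' which is precisely the fix-$\sigma$/maximize-$p$ inversion you describe. Your identification of the active constraints in each band --- (Q2) saturated with an $\eps$-slack in the strict (PQ2) for $p \geq 5$, (Q1) and (Q2) both saturated for $3<p<5$, and (Q1) combined with the open edge $1/q_2>0$ respectively the edge $1/p_2 = 1/2$ in the two lowest bands, with strict thresholds exactly where an open constraint is binding --- reproduces the paper's table, so no gap remains.
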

\begin{remark} \label{perturbationallowance}
 If $p \geq 5$, a careful calculation shows the choice $(p_1,q_1) = (\frac{3p}{2 + \sigma - 3\eps}, \frac{6p}{7- 4\sigma})$ above is an open $\sigma$-admissible pair.
\end{remark}

\subsection{Local theory for $\sigma = 1^-$} \label{sec:localoneminus}
The admissible pairs mentioned in the lemmas below can be used to construct a local theory with a regularity level of initial data close to $1$. However, the primary role of these pairs is to serve as one of the two endpoints in an interpolation of admissible pairs, which will be used frequently in our discussion.
\begin{lemma} Let $3 \leq n \leq 6$. We have
\begin{itemize}
  \item If $1 < p < 2$, then the pair $\left(2, \frac{2(n+1)p}{n+3-2\sigma}\right)$ is both a $(p,\sigma)$-compatible pair and a $(p,\frac{1+\sigma}{2})$-control pair when $\sigma$ is sufficiently close to $1$.
  \item If $2 \leq p < p_c$, then the pair $\left(\frac{2(n+1)p}{n+3+(n-1)\sigma}, \frac{2(n+1)p}{n+3-2\sigma}\right)$ is both a $(p,\sigma)$-compatible pair and a $(p,\frac{1+\sigma}{2})$-control pair when $\sigma$ is sufficiently close to $1$.
\end{itemize} \label{s1minuslocalfor3}
\end{lemma}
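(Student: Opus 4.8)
The plan is to verify both asserted properties straight from the definitions of $\sigma$-admissible pair, $(p,\sigma)$-control pair and $(p,\sigma)$-compatible pair, by computing the four coordinates involved and checking the explicit inequalities (PQ1), (Q1), (PQ2), (Q2) together with the membership requirement $(1/p_i,1/q_i)\in(0,\tfrac12]\times(0,\tfrac12)$. The only real content is a bookkeeping computation; the statement is engineered so that every relevant inequality becomes tight exactly at $\sigma=1$ or $\tau=\tfrac{1+\sigma}{2}$, and then continuity delivers the result for $\sigma$ in a left-neighborhood of $1$.

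First I would record the coordinates. In \emph{both} cases the second component is the same, $\tfrac{1}{q_1}=\tfrac{n+3-2\sigma}{2(n+1)p}$, so the dual coordinate is $\tfrac{1}{q_2}=1-\tfrac{p}{q_1}=\tfrac{n-1+2\sigma}{2(n+1)}$ in both cases; only $p_1$ (hence $p_2$) differs. For $2\le p<p_c$ one has $\tfrac{1}{p_1}=\tfrac{n+3+(n-1)\sigma}{2(n+1)p}$ and $\tfrac{1}{p_2}=\tfrac{(n-1)(1-\sigma)}{2(n+1)}$; for $1<p<2$ one has $\tfrac{1}{p_1}=\tfrac12$ and $\tfrac{1}{p_2}=\tfrac{2-p}{2}$. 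A short computation gives the aggregate quantities that feed (PQ1) and (PQ2); for instance in the case $2\le p<p_c$,
\[
 \frac{1}{p_1}+\frac{n}{q_1}=\frac{n+3-\sigma}{2p},\qquad \frac{1}{p_2}+\frac{n}{q_2}=\frac{n-1+\sigma}{2}.
\]

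The heart of the argument is that these formulas are chosen so that each condition degenerates at an endpoint. Condition (PQ1) reduces, after evaluating at $\sigma=1$, to $p\le p_c$ (in the case $p\ge2$) or to $p\le \tfrac{n}{n-3}$ (in the case $p<2$), both of which hold \emph{strictly} on the respective open ranges for $3\le n\le6$; (Q1) is the least binding and is handled the same way. For the control properties, substituting $\tau=\tfrac{1+\sigma}{2}$ turns (Q2) into $\tfrac{1}{q_2}\ge\tfrac{n-3+4\tau}{2(n+1)}=\tfrac{n-1+2\sigma}{2(n+1)}$, an \emph{identity}, and likewise makes (PQ2) an identity since $\tfrac{n}{2}-1+\tau=\tfrac{n-1+\sigma}{2}$; this is precisely why the pair is designed to be a $(p,\tfrac{1+\sigma}{2})$-control pair. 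For the plain $(p,\sigma)$-control property the same quantities carry a positive slack of order $(1-\sigma)$, so those inequalities hold strictly. Continuity in $\sigma$ then extends all the strict inequalities to $\sigma$ sufficiently close to $1$, which proves both claims.

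The step I expect to be the main obstacle is the borderline membership bookkeeping rather than any single inequality. Several coordinates degenerate exactly at $\sigma=1$: $\tfrac{1}{p_2}=\tfrac{(n-1)(1-\sigma)}{2(n+1)}$ is positive only for $\sigma<1$, and $\tfrac{1}{q_2}=\tfrac{n-1+2\sigma}{2(n+1)}$ is $<\tfrac12$ only for $\sigma<1$. This is exactly why the statement requires ``$\sigma$ sufficiently close to $1$'' (meaning $\sigma<1$ but near $1$) and not $\sigma=1$. The delicate part is therefore to track which conditions are strict for all $\sigma<1$, which become equalities under $\tau$, and which collapse at $\sigma=1$, and then to confirm that, with $p$ fixed in the open range, a single threshold $\sigma_0<1$ can be chosen below $1$ so that all conditions hold simultaneously on $(\sigma_0,1)$.
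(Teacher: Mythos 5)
The paper offers no proof of this lemma at all --- it is stated as a routine verification of the definitions --- so your direct check is exactly the intended argument, and the computations you display for the case $2\le p<p_c$ are correct: the dual coordinates are indeed $\tfrac{1}{p_2}=\tfrac{(n-1)(1-\sigma)}{2(n+1)}$, $\tfrac{1}{q_2}=\tfrac{n-1+2\sigma}{2(n+1)}$, the aggregates are $\tfrac{1}{p_1}+\tfrac{n}{q_1}=\tfrac{n+3-\sigma}{2p}$ and $\tfrac{1}{p_2}+\tfrac{n}{q_2}=\tfrac{n-1+\sigma}{2}$, the endpoint evaluations of (PQ1) at $\sigma=1$ give $p\le p_c$ (resp.\ $p\le\tfrac{n}{n-3}$, which is where $n\le 6$ enters), and the membership conditions $\tfrac1{p_2}>0$, $\tfrac1{q_2}<\tfrac12$ are precisely what forces $\sigma<1$.

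One correction: your assertion that substituting $\tau=\tfrac{1+\sigma}{2}$ turns \emph{both} (Q2) and (PQ2) into identities is true only in the second case. For $1<p<2$ one has $\tfrac{1}{p_2}=\tfrac{2-p}{2}$ rather than $\tfrac{(n-1)(1-\sigma)}{2(n+1)}$, so (Q2) is still an identity but (PQ2) becomes
\[
 \frac{2-p}{2}+\frac{n(n-1+2\sigma)}{2(n+1)}\ \geq\ \frac{n-1+\sigma}{2},
 \qquad\text{equivalently}\qquad (2-p)(n+1)\ \geq\ (n-1)(1-\sigma),
\]
which is not an identity: it carries slack $\tfrac{2-p}{2}>0$ at $\sigma=1$ and genuinely requires $\sigma$ close to $1$, with a threshold depending on how close $p$ is to $2$. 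Since your plan already closes every such case by continuity from the endpoint $\sigma=1$, this slip does not affect the conclusion, but the phrase ``makes (PQ2) an identity'' should be confined to the case $2\le p<p_c$.
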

\begin{lemma} Let $n=2$. We have
\begin{itemize}
  \item If $1 < p < 2$, then the pair $\left(2, \frac{6p}{5-2\sigma}\right)$ is both a $(p,\sigma)$-compatible pair and a $(p,\frac{1+\sigma}{2})$-control pair when $\sigma$ is sufficiently close to $1$.
  \item If $2 \leq p < p_c = +\infty$, then the pair $\left(\frac{6p}{5+\sigma}, \frac{6p}{5-2\sigma}\right)$ is both a $(p,\sigma)$-compatible pair and a $(p,\sigma_1)$-control pair for all $\sigma_1 \in (0, \frac{\sigma+1}{2})$ when $\sigma$ is sufficiently close to $1$.
\end{itemize} \label{s1minuslocalfor2}
\end{lemma}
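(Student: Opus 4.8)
The plan is to verify the two assertions directly from the definitions, since once the exponents are written in reciprocal form everything reduces to checking a short list of linear inequalities in $\sigma$ and $p$. For a pair $(p_1,q_1)$ to be $\sigma$-admissible I must confirm $(1/p_1,1/q_1)\in(0,1/2]\times(0,1/2)$ together with (Q1) and the strict inequality (PQ1); to be a $(p,\tau)$-control pair (with $\tau=\tfrac{1+\sigma}{2}$ in the first bullet or $\tau=\sigma_1$ in the second) I form $(1/p_2,1/q_2)=(1-p/p_1,\,1-p/q_1)$ and must confirm $(1/p_2,1/q_2)\in(0,1/2]\times(0,1/2)$ together with (Q2) and the strict inequality (PQ2). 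I would organize the proof as two parallel computations, one per bullet, each recording the four reciprocals and then running down this checklist.

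For the first bullet, with $(p_1,q_1)=\bigl(2,\tfrac{6p}{5-2\sigma}\bigr)$ one has $1/p_1=\tfrac12$ and $1/q_1=\tfrac{5-2\sigma}{6p}$, so the induced control pair at $\tau=\tfrac{1+\sigma}{2}$ is $1/p_2=\tfrac{2-p}{2}$ and $1/q_2=1-\tfrac{5-2\sigma}{6}=\tfrac{1+2\sigma}{6}$. The key observation I would highlight is that (Q2) is satisfied with equality, since $\tfrac{2\tau}{3}-\tfrac16=\tfrac{1+2\sigma}{6}=1/q_2$. The only genuinely binding condition is the strict inequality (PQ2), which after clearing denominators becomes $\sigma>3p-5$; because $p<2$ forces $3p-5<1$, this holds once $\sigma$ is close enough to $1$, and this is precisely the source of the hypothesis ``$\sigma$ sufficiently close to $1$''. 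The remaining membership constraints ($1/q_1<\tfrac12$, i.e.\ $\sigma>\tfrac{5-3p}{2}$, where $p>1$ again leaves room, and $1/q_2<\tfrac12$, i.e.\ $\sigma<1$) hold in the same regime, while (PQ1) and (Q1) are automatic because their right-hand sides $1-\sigma$ and $\tfrac12-\tfrac{2\sigma}{3}$ are nonpositive near $\sigma=1$.

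For the second bullet, with $(p_1,q_1)=\bigl(\tfrac{6p}{5+\sigma},\tfrac{6p}{5-2\sigma}\bigr)$ one gets $1/p_1=\tfrac{5+\sigma}{6p}$, $1/q_1=\tfrac{5-2\sigma}{6p}$, and hence $1/p_2=1-\tfrac{5+\sigma}{6}=\tfrac{1-\sigma}{6}$ and $1/q_2=\tfrac{1+2\sigma}{6}$. Here the clean identity $\tfrac{1}{p_2}+\tfrac{2}{q_2}=\tfrac{1-\sigma}{6}+\tfrac{2+4\sigma}{6}=\tfrac{1+\sigma}{2}$ does all the work: (PQ2) reads $\tfrac{1+\sigma}{2}>\sigma_1$, which is exactly the asserted range $\sigma_1\in\bigl(0,\tfrac{\sigma+1}{2}\bigr)$, and (Q2) reads $\tfrac{1+2\sigma}{6}\ge\tfrac{2\sigma_1}{3}-\tfrac16$, whose right-hand side is increasing in $\sigma_1$ and saturates the left-hand side exactly at $\sigma_1=\tfrac{\sigma+1}{2}$, so it holds strictly throughout the admissible range. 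The membership conditions $1/p_2=\tfrac{1-\sigma}{6}\in(0,\tfrac12)$ and $1/q_2=\tfrac{1+2\sigma}{6}<\tfrac12$ again merely require $\sigma<1$.

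The main obstacle, such as it is, is bookkeeping rather than analysis: I must track the half-open nature of the admissible boxes and the \emph{strictness} of (PQ1)/(PQ2) peculiar to $n=2$, and confirm that $(1/p_1,1/q_1)$ itself is $\sigma$-admissible — in particular $1/p_1=\tfrac{5+\sigma}{6p}\le\tfrac12$, which uses $p\ge2$ (so $3p-5\ge1>\sigma$), together with the (PQ1)/(Q1) checks for $(p_1,q_1)$, which are once more trivial for $\sigma$ near $1$. No step is deep; the whole content lies in noticing the two identities above, which pin down why the control threshold is $\tfrac{1+\sigma}{2}$ and why the hypotheses must be read in the limit $\sigma\to1^-$.
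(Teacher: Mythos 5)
Your verification is correct, and it is exactly the kind of direct check of (PQ1), (Q1), (PQ2), (Q2) and the box constraints $(1/p_i,1/q_i)\in(0,\tfrac12]\times(0,\tfrac12)$ that the paper leaves implicit (the lemma is stated there without proof as part of the local-theory bookkeeping); your two key identities — (Q2) holding with equality at level $\tfrac{1+\sigma}{2}$ in the first bullet, and $\tfrac{1}{p_2}+\tfrac{2}{q_2}=\tfrac{1+\sigma}{2}$ in the second — pin down the thresholds exactly as intended. One cosmetic slip: near $\sigma=1$ the right-hand side $1-\sigma$ of (PQ1) is small and \emph{positive}, not nonpositive, but the inequality still holds trivially since $\tfrac{1}{p_1}+\tfrac{2}{q_1}\ge\tfrac12>1-\sigma$ once $\sigma>\tfrac12$.
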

\begin{remark} \label{existenceloc}
An interpolation between $\sigma = \sigma_p$ (or $\sigma = \sigma_p^+$ if $\sigma_p$ is forbidden) and $\sigma = 1^{-}$ shows that a local theory with a regularity level $\sigma$ is always available whenever $\sigma \in (\sigma_p,1)$.
\end{remark}
\begin{remark} \label{perturbationn6}
In the case $n=6$ and $p_{conf} < p < p_c = 2$, an interpolation between $\sigma= \sigma_p$ and $\sigma = 1^-$ gives a $(p, \sigma)$-compatible pair that is also open $\sigma$-admissible for each $\sigma \in (\sigma_p,1)$. Here we use the admissible pairs given in Proposition \ref{localn36} and Lemma \ref{s1minuslocalfor3}, respectively.
\end{remark}

\subsection{Uniqueness of solutions on different regularity levels}
As one may find in the argument above, given $p \in (1,p_c)$, we can establish different local well-posedness theories on the same equation \eqref{(CP1)}  if we choose two different Sobolev spaces $H^{\sigma_1-\frac{1}{2}, \frac{1}{2}} \times H^{\sigma_1-\frac{1}{2}, -\frac{1}{2}}$ and $H^{\sigma_2-\frac{1}{2}, \frac{1}{2}} \times H^{\sigma_2-\frac{1}{2}, -\frac{1}{2}}$ with two different allowed coefficients $\sigma_1 < \sigma_2$. If the initial data permit, we might obtain two solutions, $(u_1, \partial_t u_1) \in C(I_1; H^{\sigma_1-\frac{1}{2}, \frac{1}{2}} \times H^{\sigma_1-\frac{1}{2}, -\frac{1}{2}})$ with a maximal lifespan $I_1$ and $(u_2, \partial_t u_2) \in C(I_2; H^{\sigma_2-\frac{1}{2}, \frac{1}{2}} \times H^{\sigma_2-\frac{1}{2}, -\frac{1}{2}})$ with a maximal lifespan $I_2$. The natural question is whether they are exactly the same. First of all, Fourier analysis gives the embedding
\[
 H^{\sigma_2-\frac{1}{2}, \frac{1}{2}} \times H^{\sigma_2-\frac{1}{2}, -\frac{1}{2}}(\Hm^n) \hookrightarrow
 H^{\sigma_1-\frac{1}{2}, \frac{1}{2}} \times H^{\sigma_1-\frac{1}{2}, -\frac{1}{2}}(\Hm^n).
\]
In addition, the Strichartz estimates give that $\|u_2\|_{L^{p_1} L^{q_1}(J \times \Hm^n)} < \infty$ for any $J$ compactly supported in $I_2$ and any $\sigma_1$-admissible pair $(p_1,q_1)$. Thus the solution $u_2$ defined on a higher regularity level $\sigma_2$ remains a solution in the local theory with a lower regularity level $\sigma_1$. This means $I_2 \subseteq I_1$ and $u_1 = u_2$ in $I_2$. But the question is whether $I_2 = I_1$, or the solution might blow-up in a space of higher regularity at some time but continue to exist after this time in a space of lower regularity . The answer is exactly as we expect it: Local theories in Sobolev spaces of different regularity levels do share the same solution in the same interval of time. Let us start with a lemma.
\begin{lemma}\label{lemma1g}
  Assume $2\leq n\leq 6$ and $1 < p < p_c$. Let $\sigma_p$ be the critical level of regularity as we found it  in the Proposition \ref{localn36} and Proposition \ref{localpgeq5n2}. Given a closed interval $K = [a,b] \subset (\sigma_p,1)$, there exists a constant $\delta_0 = \delta_0 (n,p,K) > 0$, such that if the initial data $(u_0,u_1)$ are in the space $H^{\sigma + \delta -\frac{1}{2}, \frac{1}{2}} \times H^{\sigma + \delta-\frac{1}{2}, -\frac{1}{2}}(\Hm^n)$ for a regularity level $\sigma \in K$ and a positive coefficient $\delta < \delta_0$, then the lifespan $I_{\sigma+\delta}$ of the solution $u$ in the higher regularity setting $\sigma +\delta$ is the same as its lifespan $I_\sigma$ in the lower regularity setting $\sigma$.
\end{lemma}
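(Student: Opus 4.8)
The plan is to argue by contradiction, combining the finite-time blow-up criterion with a persistence-of-regularity bootstrap. By time reversal it suffices to compare the forward endpoints, so write $I_\sigma = (-T_-,T_+)$ and $I_{\sigma+\delta} = (-T_-',T_+')$; the discussion preceding the lemma already supplies $I_{\sigma+\delta}\subseteq I_\sigma$ together with the coincidence of the two solutions on the smaller interval, so in particular $T_+'\le T_+$. Suppose, for contradiction, that $T_+'<T_+$. Then $[0,T_+']$ is a compact subinterval of $I_\sigma$, so Remark \ref{admissiblef} guarantees that $M:=\|u\|_{L^{p_1}L^{q_1}([0,T_+']\times\Hm^n)}$ is finite for a fixed $(p,\sigma)$-compatible pair $(p_1,q_1)$. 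Because $K=[a,b]$ is compact and contained in $(\sigma_p,1)$, I can select such a pair, and fix all the smallness thresholds appearing below, uniformly in $\sigma\in K$.

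The heart of the argument is to show that the finiteness of $M$ forces the $H^{\sigma+\delta-\frac12,\frac12}\times H^{\sigma+\delta-\frac12,-\frac12}$ norm of $(u,\partial_t u)$ to remain bounded on $[0,T_+']$, and that $u$ retains a finite $L^{\bar p}L^{\bar q}$ norm for a $(p,\sigma+\delta)$-compatible pair $(\bar p,\bar q)$. I apply $\tilde D^\delta$ to the Duhamel formula of Theorem \ref{localtheory} and run the Strichartz estimate of Proposition \ref{newStri} at regularity $\sigma+\delta$, using the pair $(p_1,q_1)$ with its $\delta$ worth of ``saved'' derivatives; this step requires the fractional Leibniz/chain-rule bound
\[
 \|\tilde D^\delta F(u)\|_{L^{p_1/p}L^{q_1/p}} \lesssim \|u\|_{L^{p_1}L^{q_1}}^{p-1}\,\|\tilde D^\delta u\|_{L^{p_1}L^{q_1}},
\]
valid for $\delta$ small. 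Partitioning $[0,T_+']$ into finitely many subintervals $J_1,\dots,J_m$ on each of which $\|u\|_{L^{p_1}L^{q_1}(J_k)}^{p-1}$ lies below a fixed threshold $\eta$, the nonlinear term can be absorbed into the left-hand side on each $J_k$, yielding a bound for $\|\tilde D^\delta u\|_{L^{p_1}L^{q_1}(J_k)}$ and for the $C(J_k;\,\cdot\,)$ norm of $(\tilde D^\delta u,\tilde D^\delta\partial_t u)$ in terms of the higher-regularity norm of the data at the left endpoint of $J_k$ alone. Since $m$ depends only on $M$ and $\eta$, and the Strichartz constant is independent of the interval, iterating across $J_1,\dots,J_m$ produces a finite bound on all of $[0,T_+']$.

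This contradicts the finite-time blow-up criterion (Theorem \ref{localtheory}(d)) applied at regularity $\sigma+\delta$: if $T_+'<\infty$, then $\|u\|_{L^{\bar p}L^{\bar q}([0,T_+')\times\Hm^n)}$ must be infinite, whereas the previous step shows it is finite. Hence $T_+'=T_+$, and the identical argument run backward in time gives $T_-'=T_-$, so $I_{\sigma+\delta}=I_\sigma$. (The case $T_+=\infty$ is included: if $T_+'<\infty=T_+$, then $[0,T_+']$ is still compact in $I_\sigma$ and the same contradiction applies.)

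The constant $\delta_0$ is dictated by three requirements, which is why it depends on $n$, $p$ and $K$: first, $\sigma+\delta<1$ for every $\sigma\in K$, i.e.\ $\delta<1-b$, so that the $(\sigma+\delta)$-theory lives inside the admissible range $(0,1)$; second, the chosen $(p,\sigma)$-compatible pair must retain at least $\delta$ of its unused derivatives, which is possible uniformly over $K$ precisely because $a>\sigma_p$ leaves a positive gap above the critical regularity; and third --- the main obstacle --- the fractional chain rule displayed above must hold for the merely $C^1$ nonlinearity $|u|^{p-1}u$, which when $p<2$ is only $(p-1)$-H\"older in its derivative and thus forces $\delta\le p-1$ in the rough regime, on top of requiring $\tilde D^\delta$-type boundedness on the hyperbolic Sobolev scale. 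Establishing this fractional Leibniz estimate on $\Hm^n$, with the exponents matched so that the resulting $L^{p_1/p}L^{q_1/p}$ norm feeds correctly into the inhomogeneous Strichartz estimate, is the delicate technical point; once it is available the bootstrap closes mechanically.
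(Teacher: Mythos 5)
Your overall skeleton (contradiction via the blow-up criterion plus persistence of regularity) is sound, but the step your argument hinges on is precisely the one this paper is structured to avoid, and it is a genuine gap: the fractional chain rule
\[
 \|\tilde{D}^\delta F(u)\|_{L^{p_1/p}L^{q_1/p}} \lesssim \|u\|_{L^{p_1}L^{q_1}}^{p-1}\,\|\tilde{D}^\delta u\|_{L^{p_1}L^{q_1}}
\]
for $\tilde{D}^\delta = (-\Delta_{\Hm^n}+1)^{\delta/2}$ in mixed space-time norms on $\Hm^n$ is nowhere established in the paper, is not a one-line citation (Leibniz/chain rules for fractional powers of the Laplace--Beltrami operator on hyperbolic space are a nontrivial piece of harmonic analysis in their own right, and the exponents must also mesh with the paper's Strichartz framework), and you yourself flag it as ``the delicate technical point'' without supplying it. A proof whose central estimate is deferred to an unproved lemma of comparable difficulty is not complete. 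There is also a secondary soft spot: on the final subinterval $J_m$, whose right endpoint is the putative blow-up time $T_+'$, the quantity $\|\tilde{D}^\delta u\|_{L^{p_1}L^{q_1}(J_m)}$ you wish to absorb is not known to be finite a priori, so the absorption must be run on $[t_m,b]$ with $b<T_+'$, with bounds uniform in $b$, followed by a limiting/continuity argument --- the paper does exactly this, explicitly, in the proof of Lemma \ref{lemma2g}.

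The paper's own proof shows that no derivative ever needs to fall on the nonlinearity: the result follows from exponent bookkeeping alone. Interpolating between a $(p,\sigma_1)$-compatible pair with $\sigma_1<a$ and a $(p,1^-)$-compatible pair from Section \ref{sec:localoneminus} produces a single pair $(\tilde{p},\tilde{q})$ that is simultaneously $\sigma$-admissible and a $(p,\sigma+\delta)$-control pair, uniformly for $\sigma\in K$ and $\delta<\delta_0$. The first property makes $\|u\|_{L^{\tilde{p}}L^{\tilde{q}}([0,T]\times\Hm^n)}$ finite on every compact $[0,T]\subset I_\sigma$ (Remark \ref{admissiblef}); the second feeds $\|F(u)\|_{L^{\tilde{p}/p}L^{\tilde{q}/p}} \le \|u\|_{L^{\tilde{p}}L^{\tilde{q}}}^p$ directly into the inhomogeneous Strichartz estimate at regularity $\sigma+\delta$, since the forcing term in Proposition \ref{newStri} is a pure Lebesgue norm --- raising the regularity only tightens the constraints (PQ2)/(Q2) on the dual exponents and never requires differentiating $F(u)$. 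One application of Strichartz then bounds every $(\sigma+\delta)$-admissible norm of $u$ on $[0,T]$ and contradicts the blow-up criterion: no partition, no absorption, no fractional Leibniz rule. If you want to salvage your route, you would need to prove the chain-rule estimate on $\Hm^n$; the cleaner fix is to replace it with the control-pair mechanism above.
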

\begin{proof}
 Choosing a regularity level $\sigma_1 < a$ with a $(p,\sigma_1)$-compatible pair $(p_1,q_1)$ and another $\sigma_2 = 1^- > b$ with a $(p,\sigma_2)$-compatible pair $(p_2,q_2)$ as described in the subsection \ref{sec:localoneminus}. If we consider the pair $(\tilde{p},\tilde{q})$ defined by the interpolation
\[
 \left(\frac{1}{\tilde{p}},\frac{1}{\tilde{q}}\right) = \frac{\sigma_2 -\sigma}{\sigma_2 -\sigma_1} \left(\frac{1}{p_1}, \frac{1}{q_1}\right)
 + \frac{\sigma -\sigma_1}{\sigma_2 -\sigma_1}\left(\frac{1}{p_2}, \frac{1}{q_2}\right),
\]
then it is both a $\sigma$-admissible pair and a $(p,\sigma + \delta)$-control pair for any positive number $\delta < \delta_0$, where the constant $\delta_0 \in (0, 1-b)$ is independent of $\sigma \in K$. If the initial data is in the space $H^{\sigma + \delta -\frac{1}{2}, \frac{1}{2}} \times H^{\sigma + \delta-\frac{1}{2}, -\frac{1}{2}}(\Hm^n)$, Strichartz estimates immediately give
\[
 \|u\|_{L^{p_3} L^{q_3}([0,T] \times \Hm^n)} \lesssim \|(u_0,u_1)\|_{H^{\sigma + \delta -\frac{1}{2}, \frac{1}{2}} \times H^{\sigma + \delta-\frac{1}{2}, -\frac{1}{2}}(\Hm^n)} + \|u\|_{L^{\tilde{p}} L^{\tilde{q}}([0,T]\times \Hm^n)}^p < \infty,
\]
for any $[0,T]$ compactly supported in the lifespan $I_\sigma$ and any $(\sigma+\delta)$-admissible pair $(p_3,q_3)$. But this implies that the solution has not yet blown up in the space of higher regularity by the finite time blow-up criterion. This finishes our proof.
\end{proof}
This lemma settles the question  above the critical regularity level. The remaining task is to show if a similar result is still true  exactly at the critical level of regularity.
\begin{lemma}\label{lemma2g}
  Assume $2\leq n\leq 6$ and $1 < p < p_c$. Let $\sigma_p$ be the critical level of regularity with a $(p,\sigma_p)$-compatible pair $(\tilde{p},\tilde{q})$ as in the Proposition \ref{localn36} and Proposition \ref{localpgeq5n2}, if applicable. Given any initial data $(u_0,u_1) \in H^{\sigma -\frac{1}{2}, \frac{1}{2}} \times H^{\sigma-\frac{1}{2}, -\frac{1}{2}}(\Hm^n)$ with $\sigma \in(\sigma_p,1)$, we can always find a number $\sigma_1 \in (\sigma_p,\sigma]$, such that the lifespan $I_{\sigma_1}$ of the solution $u$ in the higher regularity setting $\sigma_1$ is the same as the lifespan $I_{\sigma_p}$ in the lower regularity setting $\sigma_p$.
\end{lemma}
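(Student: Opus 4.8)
I plan to prove this by a persistence-of-regularity argument: raising the regularity from the critical level $\sigma_p$ to a nearby level $\sigma_1$ should cost nothing in lifespan, because the extra derivative of $u$ solves a linear equation whose effective potential is controlled by the critical Strichartz norm. The embedding $H^{\sigma-\frac12,\frac12}\times H^{\sigma-\frac12,-\frac12}\hookrightarrow H^{\sigma_1-\frac12,\frac12}\times H^{\sigma_1-\frac12,-\frac12}$ (valid for $\sigma_1\le\sigma$) already gives $I_{\sigma_1}\subseteq I_{\sigma_p}$, so it suffices to establish the reverse inclusion. First I would fix an arbitrary closed interval $[0,T]\subset I_{\sigma_p}$ and show that the solution cannot blow up at the level $\sigma_1$ before time $T$; letting $T$ approach the endpoint of $I_{\sigma_p}$ then forces the forward lifespans to agree, and the negative time direction is identical.

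The engine is the following mechanism. Set $s=\sigma_1-\sigma_p$ and $w=\tilde D^{s}u$. Since $\tilde D^{s}$ is a function of $-\Delta_{\Hm^n}$, it commutes with the linear propagator, so $w$ solves the linear shifted wave equation with forcing $\tilde D^{s}F(u)$ and initial data $\tilde D^{s}(u_0,u_1)$, whose $H^{\sigma_p-\frac12,\frac12}\times H^{\sigma_p-\frac12,-\frac12}$ norm equals the $H^{\sigma_1-\frac12,\frac12}\times H^{\sigma_1-\frac12,-\frac12}$ norm of $(u_0,u_1)$ and is therefore finite. Applying the Strichartz estimate at the critical level with the given $(p,\sigma_p)$-compatible pair $(\tilde p,\tilde q)$ to $w$, and bounding the forcing by a fractional Leibniz (Kato--Ponce) inequality
\[
 \bigl\|\tilde D^{s}F(u)\bigr\|_{L^{\tilde p/p}L^{\tilde q/p}(J\times\Hm^n)}
 \lesssim \|u\|_{L^{\tilde p}L^{\tilde q}(J\times\Hm^n)}^{\,p-1}\,\|w\|_{L^{\tilde p}L^{\tilde q}(J\times\Hm^n)},
\]
I obtain, on any subinterval $J=[a_J,b_J]\subseteq[0,T]$,
\[
 \|w\|_{L^{\tilde p}L^{\tilde q}(J)}+\bigl\|(w,\partial_t w)\bigr\|_{C(J;\,H^{\sigma_p-\frac12,\frac12}\times H^{\sigma_p-\frac12,-\frac12})}
 \le C\Bigl(\bigl\|(w,\partial_t w)(a_J)\bigr\|_{H^{\sigma_p-\frac12,\frac12}\times H^{\sigma_p-\frac12,-\frac12}}+\|u\|_{L^{\tilde p}L^{\tilde q}(J)}^{\,p-1}\|w\|_{L^{\tilde p}L^{\tilde q}(J)}\Bigr).
\]
Here the Hölder exponents close \emph{exactly} because the single pair $(\tilde p,\tilde q)$ is used for every factor, so no openness of the critical pair is required.

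Because $[0,T]$ is compact inside $I_{\sigma_p}$, Remark \ref{admissiblef} guarantees $\|u\|_{L^{\tilde p}L^{\tilde q}([0,T])}<\infty$, so I can partition $[0,T]$ into finitely many pieces $J_1,\dots,J_N$ on each of which $\|u\|_{L^{\tilde p}L^{\tilde q}(J_k)}^{\,p-1}\le 1/(2C)$. On each $J_k$ the last term is absorbed into the left side, bounding the higher-regularity norm of $u$ on $J_k$ in terms of its value at the left endpoint; iterating across the $N$ subintervals keeps $\|(u,\partial_t u)\|_{C([0,T];\,H^{\sigma_1-\frac12,\frac12}\times H^{\sigma_1-\frac12,-\frac12})}$ and the $(\tilde p,\tilde q)$ space-time norm of $u$ finite. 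Since $(\tilde p,\tilde q)$ is also $\sigma_1$-admissible, this exhibits $u$ as a genuine level-$\sigma_1$ solution on $[0,T]$, so by the finite-time blow-up criterion (Theorem \ref{localtheory}(d)) no blow-up at level $\sigma_1$ occurs up to $T$, giving $I_{\sigma_p}\subseteq I_{\sigma_1}$.

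The main obstacle I anticipate is the fractional Leibniz estimate on $\Hm^n$ for the genuinely non-smooth nonlinearity $|u|^{p-1}u$. For non-integer $p$ this map is only Hölder-regular, so $\tilde D^{s}$ can be distributed as above only when $s=\sigma_1-\sigma_p$ is small relative to $p-1$; this is precisely why the statement asks only for \emph{some} $\sigma_1\in(\sigma_p,\sigma]$ rather than $\sigma_1=\sigma$, letting me choose $\sigma_1$ as close to $\sigma_p$ as the chain rule demands. The hyperbolic geometry enters only through the mapping properties of the Riesz-type operators attached to $\tilde D=(-\Delta_{\Hm^n}+1)^{1/2}$, which are available from the Sobolev-space equivalences recorded after Definition \ref{Sobo}; once this Leibniz bound is in hand, the remaining steps are the routine absorption-and-iteration described above.
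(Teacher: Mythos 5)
Your overall strategy (split the nonlinearity into a critical factor times a higher-regularity factor, make the critical factor small by partitioning time, absorb) is the same skeleton as the paper's, but your implementation diverges in a way that creates a genuine gap. The paper never lets a derivative touch the nonlinearity: it perturbs the exponent pair itself, defining $(1/p_1,1/q_1)=(1/\tilde p+\frac{n-1}{2}\eps,\,1/\tilde q-\eps)$ and a dual pair $(p_2,q_2)$, sets $\sigma_1=\sigma_p+\frac{n+1}{2}\eps$, and then uses only the pointwise H\"older bound $\|F(u)\|_{L^{p_2'}L^{q_2'}}\le\|u\|_{L^{\tilde p}L^{\tilde q}}^{p-1}\|u\|_{L^{p_1}L^{q_1}}$ inside the level-$\sigma_1$ Strichartz inequality \eqref{newStrichartz}. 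Your version instead commutes $\tilde D^{s}$ through the Duhamel formula and invokes a fractional chain rule (Kato--Ponce/Christ--Weinstein type) for the merely H\"older-differentiable nonlinearity $|u|^{p-1}u$ \emph{on hyperbolic space}. No such inequality is proved or cited anywhere in this paper, and it is not a routine consequence of the Sobolev-space equivalences after Definition \ref{Sobo}: it is a substantive piece of harmonic analysis on $\Hm^n$. The paper's $\eps$-perturbation construction exists precisely to avoid needing it, so your proof as written rests on an unproven ingredient.

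There is a second, structural problem: your absorption-and-iteration is circular about finiteness. To conclude $\|w\|_{L^{\tilde p}L^{\tilde q}(J_k)}\le 2C\|(w,\partial_t w)(a_k)\|$ from an inequality of the form $X\le C A+\frac12 X$, you must already know $X<\infty$; on a subinterval $J_k\subset[0,T]\subset I_{\sigma_p}$ this finiteness is only guaranteed (via Remark \ref{admissiblef}, in its derivative-saving form) when $J_k\subset I_{\sigma_1}$ --- which is exactly what you are trying to prove. The paper handles this by arguing by contradiction: it assumes the level-$\sigma_1$ blow-up time $T_1$ is finite and interior to $I_{\sigma_p}$, works only on $[a,b]\subset I_{\sigma_1}$ where all left-hand norms are known finite, obtains a bound uniform in $b<T_1$, and then contradicts the blow-up criterion of Theorem \ref{localtheory}(d). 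Your direct forward induction over $J_1,\dots,J_N$ skips this, and it also glosses the final step: criterion (d) at level $\sigma_1$ is stated in terms of the $L^{p_3}L^{q_3}$ norm of a $(p,\sigma_1)$-compatible pair, and passing from your bounds on $\|\tilde D^{s}u\|_{L^{\tilde p}L^{\tilde q}}$ and $\|(u,\partial_t u)\|_{C(H^{\sigma_1-\frac12,\frac12}\times H^{\sigma_1-\frac12,-\frac12})}$ to that norm requires an extra Sobolev/Strichartz step you do not supply (note that the level-$\sigma_p$ dual pair need not satisfy the more restrictive conditions (PQ2), (Q2) at level $\sigma_1$, which is again why the paper perturbs the pairs). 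Both defects are repairable --- replace the induction by the paper's contradiction-plus-continuity device, and either prove the hyperbolic fractional chain rule or switch to the paper's exponent-perturbation trick --- but as written the argument does not close.
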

\begin{proof}
 Given a $(p,\sigma_p)$-compatible pair $(\tilde{p},\tilde{q})$ as in Proposition \ref{localn36} or Proposition \ref{localpgeq5n2}, we can always find a small positive number $\eps \leq \frac{2}{n+1} (\sigma - \sigma_p)$, such that the pairs $(p_1,q_1)$ and $(p_2,q_2)$ defined by
\begin{align*}
 \left(\frac{1}{p_1},\frac{1}{q_1}\right) & = \left(\frac{1}{\tilde{p}} + \frac{n-1}{2}\eps, \frac{1}{\tilde{q}}-\eps\right);\\
 \left(\frac{1}{p_2},\frac{1}{q_2}\right) & = \left(1 - \frac{p}{\tilde{p}} - \frac{n-1}{2}\eps, 1 - \frac{p}{\tilde{q}} +\eps\right)
\end{align*}
satisfy $(\frac{1}{p_1},\frac{1}{q_1}), (\frac{1}{p_2},\frac{1}{q_2}) \in (0,\frac{1}{2})\times (0,\frac{1}{2})$. The point here is that we never choose $\tilde{p} =2$ when $\sigma_p$ is the critical regularity.
Let $\sigma_1 = \sigma_p + \frac{n+1}{2}\eps \in (\sigma_p, \sigma]$ and $(p_3,q_3)$ be a $(p,\sigma_1)$-compatible pair. Now we claim $I_{\sigma_1} = I_{\sigma_p}$. If this were false, we could assume the right endpoint (blow-up time) $T_1$ of $I_{\sigma_1}$ is finite and still contained in $I_{\sigma_p}$, since the wave equation is time-reversible. The choices of $(p_1,q_1)$ and $(p_2,q_2)$ enable us to apply Strichartz estimates and obtain
\begin{align*}
 \|u\|_{L^{p_3} L^{q_3}([a,b]\times \Hm^n)} + \|u\|_{L^{p_1} L^{q_1}([a,b]\times \Hm^n)} \leq & C \|(u(a),\partial_t u(a))\|_{H^{\sigma_1-\frac{1}{2},\frac{1}{2}}\times H^{\sigma_1-\frac{1}{2},\frac{1}{2}}(\Hm^n)}\\
  &\qquad + C \|F(u)\|_{L^{p'_2}L^{q'_2}([a,b]\times \Hm^n)}
\end{align*}
for any time interval $[a,b]\subseteq I_{\sigma_1}$. In addition, since
\begin{align*}
 &\frac{1}{p'_2} = \frac{p-1}{\tilde{p}} + \frac{1}{p_1},& &\frac{1}{q'_2} = \frac{p-1}{\tilde{q}} + \frac{1}{q_1};&
\end{align*}
we have
\begin{align}
 \|u\|_{L^{p_3} L^{q_3}([a,b]\times \Hm^n)} + \|u\|_{L^{p_1} L^{q_1}([a,b]\times \Hm^n)} \leq  & C \|(u(a),\partial_t u(a))\|_{H^{\sigma_1-\frac{1}{2},\frac{1}{2}}\times H^{\sigma_1-\frac{1}{2},\frac{1}{2}}(\Hm^n)}\nonumber\\
 & \qquad +  C \|u\|_{L^{\tilde{p}} L^{\tilde{q}}([a,b]\times \Hm^n)}^{p-1} \|u\|_{L^{p_1} L^{q_1}([a,b]\times \Hm^n)}. \label{mixedStrichartz}
\end{align}
Since $T_1 \in I_{\sigma_p}$, we can always find a time $a < T_1$ so that $C\|u\|_{L^{\tilde{p}} L^{\tilde{q}}([a,T_1]\times \Hm^n)}^{p-1} < \frac{1}{2}$. Combining this estimate with (\ref{mixedStrichartz}) and the fact that the left hand norms there are finite for any $b \in (a,T_1)$, we immediately obtain
\[
 \|u\|_{L^{p_3} L^{q_3}([a,b]\times \Hm^n)} + \|u\|_{L^{p_1} L^{q_1}([a,b]\times \Hm^n)} \leq 2C  \|(u(a),\partial_t u(a))\|_{H^{\sigma_1-\frac{1}{2},\frac{1}{2}}\times H^{\sigma_1-\frac{1}{2},\frac{1}{2}}(\Hm^n)}
\]
for any time $b \in (a, T_1)$. Sending $b$ to $T_1$, we have $\|u\|_{L^{p_3} L^{q_3}([a,T_1)\times \Hm^n)} < \infty$.
This immediately gives a contradiction with the finite time blow-up criterion.
\end{proof}
The combination of Lemma \ref{lemma1g} and \ref{lemma2g}  yields
\begin{proposition} \label{uniquesolution}
 Although we can define solutions to the equation \eqref{(CP1)}  on different levels of regularity if the initial data permit, the solution with given initial data is always unique.
\end{proposition}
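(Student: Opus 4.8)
The plan is to deduce the statement directly by combining Lemma \ref{lemma1g} and Lemma \ref{lemma2g} through a chaining argument in the regularity parameter. First I would recall what has already been established in the discussion preceding these two lemmas: for any two admissible levels $\sigma_1 < \sigma_2$ in $[\sigma_p,1)$ and initial data lying in the higher-regularity space $H^{\sigma_2-\frac12,\frac12}\times H^{\sigma_2-\frac12,-\frac12}$, the Sobolev embedding places the data in the lower-regularity space as well, and by the Strichartz estimates (Remark \ref{admissiblef}) the $\sigma_2$-solution has finite $L^{p_1}L^{q_1}$ norm on every compact subinterval of its lifespan for every $\sigma_1$-admissible pair $(p_1,q_1)$. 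Hence the $\sigma_2$-solution is automatically a solution in the $\sigma_1$-theory, so $I_{\sigma_2}\subseteq I_{\sigma_1}$ and the two solutions agree on $I_{\sigma_2}$. The whole problem therefore reduces to the reverse inclusion of lifespans, namely $I_{\sigma_1}=I_{\sigma_2}$.

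The second step treats the case when both levels lie strictly above the critical level. Fix the data in the top space with $\sigma_2\in(\sigma_p,1)$ and let $\sigma_1\in(\sigma_p,\sigma_2)$ be arbitrary. I would set $K=[\sigma_1,\sigma_2]\subset(\sigma_p,1)$ and invoke Lemma \ref{lemma1g} to obtain the uniform constant $\delta_0=\delta_0(n,p,K)>0$. Partitioning $K$ as $\sigma_1=\tau_0<\tau_1<\cdots<\tau_N=\sigma_2$ with each gap $\tau_{j+1}-\tau_j<\delta_0$, I apply Lemma \ref{lemma1g} at each step with $\sigma=\tau_j$ and $\delta=\tau_{j+1}-\tau_j$; the hypothesis is met because the data lies in the $\sigma_2$-space and hence, by embedding, in every intermediate $\tau_{j+1}$-space. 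This yields $I_{\tau_{j+1}}=I_{\tau_j}$ for each $j$, and chaining the finitely many equalities gives $I_{\sigma_1}=I_{\sigma_2}$, so the two solutions coincide throughout the common interval.

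The last step, and the genuine obstacle, is to descend to the critical endpoint $\sigma_p$ itself, where Lemma \ref{lemma1g} degenerates since its constant $\delta_0$ is only available on compact subsets strictly interior to $(\sigma_p,1)$. This is exactly what Lemma \ref{lemma2g} is designed to bypass: given data in some $H^{\sigma-\frac12,\frac12}\times H^{\sigma-\frac12,-\frac12}$ with $\sigma\in(\sigma_p,1)$, it produces a level $\sigma_1\in(\sigma_p,\sigma]$ with $I_{\sigma_1}=I_{\sigma_p}$. Combining this with the chaining equality $I_{\sigma_1}=I_{\sigma_2}$ of the previous step forces $I_{\sigma_p}=I_{\sigma_2}$, and the already-known coincidence of solutions on the common lifespan then upgrades to the full interval. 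Thus all the lifespans $I_\sigma$ with $\sigma\in[\sigma_p,\sigma_2]$ are equal and the corresponding solutions are identical, which is the asserted uniqueness. I expect the only delicate point to be organizing the chaining so that the uniform step size from Lemma \ref{lemma1g} is respected while the critical level is reached solely through Lemma \ref{lemma2g}; everything else is bookkeeping on the embeddings and the finite-time blow-up criterion from Theorem \ref{localtheory}.
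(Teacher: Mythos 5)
Your proposal is correct and follows essentially the same route as the paper: the paper's proof is precisely the combination of Lemma \ref{lemma1g} and Lemma \ref{lemma2g}, together with the preceding observation that a higher-regularity solution is automatically a solution at lower regularity. Your chaining with the uniform constant $\delta_0(n,p,K)$ on compact subintervals of $(\sigma_p,1)$, followed by the descent to the critical level via Lemma \ref{lemma2g}, is exactly the intended bookkeeping behind that combination.
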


\section{A Morawetz Inequality in the Hyperbolic Spaces}\label{section4}

In this section, we show that a solution to \eqref{(CP1)} in the defocusing case satisfies a Morawetz inequality. the approach is similar to the one proposed in \cite{hypersdg} for the NLS equation.
\begin{theorem} \label{Morawetz1}
 Assume $2 \leq n \leq 6$ and $1 < p < p_c$. Let $u$ be a solution of \eqref{(CP1)}  in the defocusing case with initial data $(u_0,u_1)\in H^1 \times (H^{\frac{1}{2},-\frac{1}{2}})(\Hm^n)$ and a maximal lifespan $(-T_-,T_+)$. Then $u$ satisfies the following inequality
\[
 \int_{-T_-}^{T_+} \int_{\Hm^n} |u|^{p+1} d\mu dt < \frac{4(p+1)}{p-1} \EE.
\]
\end{theorem}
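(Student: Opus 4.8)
The plan is to prove the estimate by the Morawetz multiplier (``Morawetz action'') method, based on the geodesic distance $r=d(\cdot,o)$ to the origin and the radial multiplier
\[
 \mathcal{M}u=\partial_r u+\rho\coth r\,u,
\]
which is the symmetrized radial derivative attached to the weight $a(r)=r$ (so that $\tfrac12\Delta_{\Hm^n}a=\rho\coth r$). Before computing anything I would reduce to a smooth, rapidly decaying solution on a fixed closed subinterval $[-T_1,T_2]\subset(-T_-,T_+)$: the local theory of Theorem~\ref{localtheory} and the continuous dependence on the data let me approximate the $H^1\times H^{\frac12,-\frac12}$ data, justify every integration by parts, and then recover the full statement by sending the approximation to the data and $T_1\to T_-$, $T_2\to T_+$. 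A useful device throughout is the substitution $v=(\sinh r)^{\rho}u$, under which $L^2(\Hm^n)\cong L^2(\Rm\times\mathbb S^{n-1},dr\,d\Theta)$ and $\mathcal{M}u=(\sinh r)^{-\rho}\partial_r v$, so $\mathcal{M}$ is conjugate to the flat radial derivative and $-(\Delta_{\Hm^n}+\rho^2)$ becomes a one-dimensional Schr\"odinger operator in $r$; this is where the spectral shift $\rho^2$ compensates the exponential volume growth.

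The core computation differentiates the action $M(t)=\int_{\Hm^n}\partial_t u\,\mathcal{M}u\,d\mu$ and inserts $\partial_t^2u=(\Delta_{\Hm^n}+\rho^2)u-|u|^{p-1}u$. The contribution of $\partial_t u\,\mathcal{M}(\partial_t u)$ cancels after one integration by parts in $r$, leaving
\[
 \frac{d}{dt}M(t)=\int_{\Hm^n}(\Delta_{\Hm^n}+\rho^2)u\,\mathcal{M}u\,d\mu-\rho\,\frac{p-1}{p+1}\int_{\Hm^n}\coth r\,|u|^{p+1}\,d\mu.
\]
The defocusing sign of the nonlinearity produces the favorable coefficient $\rho\tfrac{p-1}{p+1}$ after integrating $-|u|^{p-1}u\,\mathcal{M}u$ by parts and using $\partial_r(\sinh^{n-1}r)=(n-1)\coth r\,\sinh^{n-1}r$; since $\coth r\ge1$, this term already dominates a fixed multiple of the un-weighted $\int|u|^{p+1}d\mu$, which is the hyperbolic gain that removes the $1/|x|$ weight of the Euclidean Perthame--Vega inequality. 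It then remains to show the linear bulk $\int(\Delta_{\Hm^n}+\rho^2)u\,\mathcal{M}u\,d\mu$ has the favorable (nonpositive) sign, which I would establish by integrating by parts and organizing the outcome into a radial square, a nonnegative angular-gradient contribution, and a zeroth-order term proportional to $\Delta_{\Hm^n}(\coth r)$.

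Integrating over $[-T_1,T_2]$ telescopes the left-hand side to $M(T_2)-M(-T_1)$, which I bound by the conserved energy. Cauchy--Schwarz gives $|M(t)|\le\|\partial_t u\|_{L^2}\|\mathcal{M}u\|_{L^2}$, and for $n\ge3$ the $v$-substitution yields $\|\mathcal{M}u\|_{L^2}^2=\int|\partial_r v|^2\le\|u\|_{H^{0,1}}^2$; combined with $\EE\ge\tfrac12\|\partial_t u\|_{L^2}^2+\tfrac12\|u\|_{H^{0,1}}^2$ and $ab\le\tfrac12(a^2+b^2)$, this gives $|M(t)|\le\EE$, hence $|M(T_2)-M(-T_1)|\le2\EE$. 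Feeding in the bulk sign, $\coth r\ge1$, and $\rho=(n-1)/2\ge\tfrac12$ produces
\[
 \rho\,\frac{p-1}{p+1}\int_{-T_1}^{T_2}\!\!\int_{\Hm^n}|u|^{p+1}\,d\mu\,dt\le\rho\,\frac{p-1}{p+1}\int_{-T_1}^{T_2}\!\!\int_{\Hm^n}\coth r\,|u|^{p+1}\,d\mu\,dt\le 2\,\EE,
\]
so that $\int\!\!\int|u|^{p+1}\le\tfrac{2(p+1)}{\rho(p-1)}\EE\le\tfrac{4(p+1)}{p-1}\EE$; letting $T_1\to T_-$, $T_2\to T_+$ gives the claim, and the constant is sharpest precisely in the borderline dimension $n=2$, where $\rho=\tfrac12$.

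The hard part is exactly this borderline case $n=2$, where $\rho=\tfrac12<1$, and two difficulties coincide. First, the zeroth-order piece of the bulk, $-\tfrac14\Delta_{\Hm^n}^2 r$, acquires the \emph{unfavorable} sign when $n=2$, so its positivity is no longer automatic and must be recovered by absorbing it into the positive angular-gradient term through a Poincar\'e (Wirtinger) inequality on $\mathbb S^{n-1}$ together with a Hardy-type inequality $\int_0^\infty|\partial_r v|^2\,dr\ge\tfrac14\int_0^\infty\sinh^{-2}r\,|v|^2\,dr$. Second, and for the same reason, the clean bound $\|\mathcal{M}u\|_{L^2}\le\|u\|_{H^{0,1}}$ fails for the spherically symmetric part when $n=2$ (the defect is exactly $\tfrac14\int\sinh^{-2}r\,|v|^2$), so the boundary estimate $|M(t)|\le\EE$ must be replaced by a more careful argument in which this excess is charged against the accumulated bulk term rather than against the energy alone. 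Making this bookkeeping of borderline-positive terms close, so that bulk, boundary, and nonlinear term balance with the stated constant, is where the essential work lies; the shift $\rho^2$ and the pointwise bound $\coth r\ge1$, both unavailable in $\Rm^n$, are the structural facts that make it possible and that let the inequality survive all the way down to $n=2$.
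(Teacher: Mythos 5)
Your approach is the classical Morawetz multiplier with weight $a(r)=r$ (so $\tfrac12\Delta a=\rho\coth r$ and $\mathcal{M}u=\partial_ru+\rho\coth r\,u$), which is genuinely different from the paper's, and it does work for $n\ge 3$ — there the bulk identity
\[
 \int_{\Hm^n}(\Delta_{\Hm^n}+\rho^2)u\,\mathcal{M}u\,d\mu=-\int_{\Hm^n}\Db^2r(\nabla u,\nabla u)\,d\mu+\frac14\int_{\Hm^n}u^2\,\Delta\Delta r\,d\mu
\]
has both terms favorable, since $\Db^2r\ge0$ and $\Delta\Delta r=4\rho(1-\rho)\coth r\,\sinh^{-2}r\le0$ when $\rho\ge1$, and your boundary bound $\|\mathcal{M}u\|_{L^2}\le\|u\|_{H^{0,1}}$ holds; you even get the better constant $\tfrac{2(p+1)}{\rho(p-1)}$. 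But the theorem includes $n=2$ (the case the paper actually needs for its application to the quintic equation on $\Rm^2$), and there your argument has a genuine gap that the proposed repairs cannot close. When $\rho=\tfrac12$ the zeroth-order term $\tfrac14\int u^2\Delta\Delta r\,d\mu$ is \emph{positive} ($\Delta\Delta r=\coth r\,\sinh^{-2}r>0$), i.e.\ it enters with the wrong sign, and for spherically symmetric $u$ (e.g.\ a bump supported in an annulus) it is the \emph{entire} bulk: the angular term $\int\Db^2r(\nabla u,\nabla u)\,d\mu$ vanishes identically on radial functions, so the Poincar\'e/Wirtinger inequality on $\mathbb{S}^{1}$ gives nothing to absorb it into; and since $\Db^2r$ has no radial--radial component ($\partial_r^2r=0$), the bulk contains no positive $\int|\partial_rv|^2$ term for your Hardy inequality to borrow from. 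The bad term is quadratic in $u$ while the good nonlinear term is of order $p+1$, so for small-amplitude data it dominates and the differential inequality $-M'(t)\ge\rho\tfrac{p-1}{p+1}\int|u|^{p+1}d\mu$ simply fails. The same degeneracy breaks your boundary estimate: for radial $u$ in $n=2$ one has $\|\mathcal{M}u\|_{L^2}^2=\|u\|_{H^{0,1}}^2+\tfrac14\int\sinh^{-2}r\,v^2\,dr-\tfrac12u(0)^2$, and bounding the defect by Hardy just reproduces $\|\mathcal{M}u\|_{L^2}^2\le\|u\|_{H^{0,1}}^2+\|\mathcal{M}u\|_{L^2}^2$, which is vacuous; "charging the excess to the accumulated bulk" cannot work either, because the bulk is itself the problem.

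The paper sidesteps all of this by a different choice of weight, taken from the Ionescu--Staffilani argument it cites: a smooth convex radial $a$ with $\Delta a\equiv1$ and $|\nabla a|\le\tfrac1{2\rho}$ (Lemma \ref{def of a}), a function that exists on $\Hm^n$ but not on $\Rm^n$. With $\Delta a$ exactly constant one gets $\Delta\Delta a\equiv0$, so the troublesome zeroth-order term is absent in \emph{every} dimension $2\le n\le6$, the nonlinear term still comes out with the fixed coefficient $\tfrac{p-1}{2(p+1)}$, and the gradient bound $|\nabla a|\le\tfrac1{2\rho}$ yields $|M(t)|\le\EE$ directly, giving the stated constant $\tfrac{4(p+1)}{p-1}$ uniformly. (The paper also spends real effort on the regularization — smoothing operators $\mathbf{\tilde P}_\eps$, time averaging, and cutoffs $\psi_\delta$ — which you defer to an approximation remark; that part of your plan is acceptable in spirit.) To salvage your route you would either have to restrict the theorem to $n\ge3$ or switch to a weight whose bi-Laplacian is nonpositive in dimension $2$, which is essentially what the paper's choice $\Delta a\equiv1$ accomplishes.
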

The main ingredients of the proof is  the following informal computation for a solution $u$
\begin{align*}
   & -\frac{d}{dt} \int_{\Hm^n} \left( u_t \Db^\alpha a \Db_\alpha u + u_t u \cdot \frac{\Delta a}{2} \right) d\mu \\
 = & \int_{\Hm^n} \left(\Db_\beta u \Db^\beta \Db^\alpha a \Db_\alpha u \right) d\mu
 - \frac{1}{4} \int_{\Hm^n} \left( |u|^2 \Delta \Delta a\right) d\mu
 + \frac{p-1}{2(p+1)} \int_{\Hm^n} \left( |u|^{p+1}\Delta a \right) d\mu.
\end{align*}
and Lemma \ref{def of a}. We consider real-valued solutions throughout this section for convenience, but one can consider complex-valued solutions as well in exactly the same way. The full proof of Theorem \ref{Morawetz1} is given in Subsection \ref{T1proof}.
\begin{remark}
 Fourier analysis and Sobolev embedding gives us
\begin{align*}
 &H^1 \times (H^{\frac{1}{2},-\frac{1}{2}})(\Hm^n) \hookrightarrow (H^{0,1}\cap L^{p+1}) \times L^2,&
 &H^1 \times (H^{\frac{1}{2},-\frac{1}{2}})(\Hm^n) \hookrightarrow H^{\sigma-\frac{1}{2}, \frac{1}{2}}\times H^{\sigma-\frac{1}{2}, -\frac{1}{2}}
\end{align*}
for all $\sigma \in (0,1)$. Thus the assumptions on the initial data enable us to apply the local theory discussed in the previous Section \ref{sec:localth} and guarantee that the energy is finite.
\end{remark}
\subsection{Preliminary Results} In this subsection we collect some technical lemmata we will need to prove Theorem \ref{Morawetz1} above.
\begin{lemma} \label{def of a} (See Lemma 4.2 in \cite{hypersdg})
There is a smooth, radial function $a : \Hm^n \rightarrow [0, \infty)$ with the following properties:
\[
\left\{\begin{array}{ll}
  \Delta a = 1, &\hbox{in}\; \Hm^n;\\
  |\nabla a| = |\Db^\alpha a \Db_\alpha a|^{1/2} \leq C, & \hbox{in}\; \Hm^n;\\
  \Db^2 a \geq 0,&  \hbox{in}\; \Hm^n.
 \end{array}\right.
\]
\end{lemma}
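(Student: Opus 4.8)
The plan is to look for $a$ as a radial function $a=a(r)$, where $r$ is the geodesic distance to the origin from the polar coordinates introduced above, and to reduce all three requirements to a single ordinary differential equation. On $\Hm^n$ the Laplace--Beltrami operator acts on a radial function by
\[
 \Delta a = a''(r) + (n-1)\coth(r)\, a'(r),
\]
so that $\Delta a = 1$ becomes a first order linear ODE for $g=a'$, namely $g' + (n-1)\coth(r)\, g = 1$, which I would solve with the integrating factor $\sinh^{n-1}(r)$. This gives
\[
 a'(r) = \frac{1}{\sinh^{n-1}(r)}\int_0^r \sinh^{n-1}(s)\, ds,
\]
and $a(r)=\int_0^r a'(\tau)\,d\tau$. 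A Taylor expansion near $r=0$ yields $a'(r) = r/n + O(r^3)$, so $a'$ vanishes to odd order at the origin and $a$ extends to a smooth radial function on all of $\Hm^n$ with $a\ge 0$; by construction $\Delta a = 1$.

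Next I would record the Hessian of a radial function. Writing the metric in geodesic polar coordinates as $dr^2 + \sinh^2(r)\,d\Theta^2$, the Hessian $\Db^2 a$ is diagonal with eigenvalue $a''(r)$ in the radial direction and eigenvalue $a'(r)\coth(r)$, of multiplicity $n-1$, in the angular directions. Since $a'(r)\ge 0$ and $\coth(r)>0$ for $r>0$, the angular eigenvalues are automatically nonnegative, so the positivity $\Db^2 a \ge 0$ reduces to the single scalar inequality $a''(r)\ge 0$.

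The main obstacle is therefore to establish $a''(r)\ge 0$. Differentiating the formula for $a'$ (equivalently, using $\Delta a = 1$) gives
\[
 a''(r) = 1 - (n-1)\coth(r)\,a'(r) = 1 - \frac{(n-1)\cosh(r)}{\sinh^{n}(r)}\int_0^r \sinh^{n-1}(s)\,ds,
\]
so the claim is equivalent to the elementary but nonobvious inequality
\[
 (n-1)\cosh(r)\int_0^r \sinh^{n-1}(s)\,ds \le \sinh^{n}(r), \qquad r\ge 0.
\]
I would prove this via the comparison function $R(r) = \sinh^{n}(r)\big/\big((n-1)\cosh(r)\big)$: a direct computation shows
\[
 R'(r) = \sinh^{n-1}(r)\left(1 + \frac{1}{(n-1)\cosh^2(r)}\right) \ge \sinh^{n-1}(r),
\]
and since $R(0)=0$ this forces $R(r) \ge \int_0^r \sinh^{n-1}(s)\,ds$, which is exactly the desired inequality after multiplying by $(n-1)\cosh(r)$. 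This yields $a''\ge 0$ and hence $\Db^2 a \ge 0$. Finally the bound on $|\nabla a| = |a'(r)|$ comes essentially for free: because $a''\ge 0$ the function $a'$ is nondecreasing, and an application of L'Hôpital's rule gives $\lim_{r\to\infty} a'(r) = 1/(n-1)$, so $0\le a'(r)\le 1/(n-1)$ and we may take $C = 1/(n-1)$.
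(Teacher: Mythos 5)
Your proposal is correct, and it is in fact more complete than what the paper itself provides: the paper does not prove this lemma at all, but cites Lemma 4.2 of the Ionescu--Staffilani paper \cite{hypersdg} for the existence of $a$, and only records in Remark \ref{gradient of a} the explicit formula $\partial_r a(r) = (\sinh r)^{1-n}\int_0^r(\sinh s)^{n-1}ds$ together with the gradient bound, obtained by the one-line pointwise estimate $\sinh^{n-1}s \le \sinh^{n-2}s\,\cosh s$ inside the integral, which gives $\partial_r a \le \frac{1}{n-1} = \frac{1}{2\rho}$ directly. Your construction of $a$ by solving the radial ODE coincides with the formula in that remark, but you then supply the part the paper defers to the citation, namely the convexity $\Db^2 a \ge 0$: your reduction of the Hessian to the two eigenvalues $a''$ (radial) and $a'\coth r$ (angular, multiplicity $n-1$), and your proof of $a''\ge 0$ via the comparison function $R(r)=\sinh^n r/((n-1)\cosh r)$ with $R'\ge \sinh^{n-1}r$, are both correct (indeed $R' = \sinh^{n-1}r\,(1+\frac{1}{(n-1)\cosh^2 r})$ as you claim, since $n\cosh^2 r - \sinh^2 r = (n-1)\cosh^2 r + 1$). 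Your derivation of the gradient bound is also slightly different from the paper's: instead of bounding the integrand, you use the monotonicity of $a'$ (a consequence of the convexity you just proved) together with L'H\^{o}pital's rule to identify $\lim_{r\to\infty}a'(r) = \frac{1}{n-1}$; both routes yield the same constant $C=\frac{1}{2\rho}$, which is the value the paper needs later in the proof of Theorem \ref{Morawetz1}. The only points stated somewhat informally are the smoothness of $a$ at the origin (your Taylor expansion argument is fine, and could alternatively be replaced by elliptic regularity for $\Delta a = 1$) and the positivity of the Hessian at $r=0$ itself, which follows by continuity since both eigenvalues tend to $\frac{1}{n}$; neither is a genuine gap.
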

\begin{remark} \label{gradient of a} By considering the polar representation of $\Delta a = 1$ we have that the radial function $a(r)$ is defined by the equation
\[
 \left(\partial_r^2 +(n-1) \frac{\cosh r}{\sinh r}\partial_r \right) a (r) = 1.
\]
From here
\begin{align*}
 \partial_r a (r) & = \frac{1}{(\sinh r)^{n-1}} \int_0^r (\sinh s)^{n-1} ds \\
 & \leq \frac{1}{(\sinh r)^{n-1}} \int_0^r (\sinh s)^{n-2} (\cosh s) ds \\
  &= \frac{1}{n-1}
  = \frac{1}{2\rho}.
\end{align*}
Thus we can choose $C =\frac{1}{2\rho}$ in Lemma \ref{def of a}.
\end{remark}
\begin{remark}
As it was remarked in \cite{hypersdg}, such a function $a(r)$ does not exist in the Euclidean spaces $\Rm^n$. This is the main reason why we are able to prove a more convenient Morawetz inequality in the hyperbolic spaces and it reflects the difference in the geometrical nature of these two spaces.
\end{remark}
\noindent  An alternative estimate to the forbidden endpoint $(p_2,q_2)=(\infty,2)$ of the regular Strichartz estimates addressed in Proposition \ref{newStri} is given in the following Lemma \ref{Strichartzbasic}. This helps us obtaining  an estimate on the norm $\|(u,\partial_t u)\|_{H^1 \times L^2(\Hm^n)}$ in Lemma \ref{CH1} below.
\begin{lemma} \label{Strichartzbasic}
 If $v$ is the solution to the linear shifted wave equation $\partial_t^2 v - (\Delta_{\Hm^n}+\rho^2) v = F$ in the time interval $[0,T]$ with initial data $(v_0,v_1)$, then
\[
 \|(v, \partial_t v)\|_{C([0,T]; H^{0,1} \times L^2 (\Hm^n))} \lesssim \|(v_0,v_1)\|_{H^{0,1} \times L^2 (\Hm^n)} +
 \|F\|_{L^1 L^2 ([0,T]\times \Hm^n)}.
\]
\end{lemma}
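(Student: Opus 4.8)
The plan is to prove this by a standard energy estimate adapted to the shifted operator. Recall from Definition \ref{Sobo} that $D=(-\Delta_{\Hm^n}-\rho^2)^{1/2}$ is the self-adjoint operator with Fourier multiplier $m_1(\lambda)=\lambda\ge 0$, so that $D^2=-(\Delta_{\Hm^n}+\rho^2)$ is a nonnegative operator and the equation can be rewritten as $\partial_t^2 v + D^2 v = F$. Since $\|v\|_{H^{0,1}}=\|Dv\|_{L^2}$ by definition, the quantity to control is exactly $\sqrt{2E(t)}$, where
\[
 E(t)=\frac{1}{2}\|Dv(t)\|_{L^2(\Hm^n)}^2+\frac{1}{2}\|\partial_t v(t)\|_{L^2(\Hm^n)}^2 .
\]
The nonnegativity of $D^2$, guaranteed by the spectral fact that its multiplier is $\lambda^2\ge 0$, is precisely what makes $E(t)$ a genuine squared norm, and one has $\sqrt{2E(t)}=\|(v,\partial_t v)(t)\|_{H^{0,1}\times L^2(\Hm^n)}$.

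First I would differentiate $E(t)$ in time. Using the self-adjointness of $D$ together with $\partial_t^2 v = -D^2 v + F$, the cross terms $\langle D^2 v,\partial_t v\rangle$ and $\langle \partial_t v,-D^2 v\rangle$ cancel, leaving only the contribution of the forcing term:
\[
 \frac{d}{dt}E(t)=\langle \partial_t v(t),F(t)\rangle_{L^2}\le \|\partial_t v(t)\|_{L^2}\,\|F(t)\|_{L^2}\le \sqrt{2E(t)}\;\|F(t)\|_{L^2}.
\]
Dividing by $\sqrt{E(t)}$ (replacing $E$ by $E+\eps^2$ and letting $\eps\to 0^+$ to handle the zeros of $E$) gives $\frac{d}{dt}\sqrt{E(t)}\le \frac{1}{\sqrt 2}\|F(t)\|_{L^2}$, and integrating from $0$ to $t$ yields
\[
 \sqrt{E(t)}\le \sqrt{E(0)}+\frac{1}{\sqrt 2}\int_0^T\|F(s)\|_{L^2(\Hm^n)}\,ds .
\]
Taking the supremum over $t\in[0,T]$ and recalling the norm identification above together with $\int_0^T\|F\|_{L^2}\,ds=\|F\|_{L^1 L^2([0,T]\times\Hm^n)}$ produces the claimed estimate.

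The only genuine issue is justifying these manipulations at the stated regularity, since the energy identity is a formal computation. I would make it rigorous by first establishing the identity for smooth data and smooth forcing (or by approximating $(v_0,v_1)$ in $H^{0,1}\times L^2$ and $F$ in $L^1 L^2$), and then passing to the limit using the continuity of the linear solution map; because the estimate is linear and its right-hand side dominates every term, this density argument is routine. The continuity-in-time statement encoded in the $C([0,T];\cdot)$ norm follows from the same framework, as the solution map into the energy space is continuous. I expect no serious obstacle beyond this approximation step; the essential mechanism is simply the exact cancellation produced by the shifted operator $D^2=-(\Delta_{\Hm^n}+\rho^2)$, which is why the endpoint forcing norm $L^1 L^2$, forbidden in the Strichartz estimates of Proposition \ref{newStri}, is nonetheless available here.
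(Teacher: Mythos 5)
Your proof is correct, but it follows a genuinely different route from the paper. The paper proves this lemma in one stroke on the Fourier side: it writes the explicit solution formula for $\tilde{v}(\lambda,\omega,t)$ and $\widetilde{\partial_t v}(\lambda,\omega,t)$ (cosine and sine propagators with symbol $\lambda$, plus the Duhamel term), observes that after multiplying by the relevant weight every Fourier symbol appearing is bounded by $1$ in modulus, and then concludes by the Plancherel identity together with Minkowski's integral inequality for the forcing term. This exploits the fact that the $H^{0,1}$ and $L^2$ norms are exactly weighted $L^2$ norms in frequency, and it has the advantage that the Fourier formula \emph{is} the definition of the solution, so no separate density or regularization step is needed for rough data. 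Your argument instead runs the classical energy method for $\partial_t^2 v + D^2 v = F$: differentiate $E(t)=\frac{1}{2}\|Dv\|_{L^2}^2+\frac{1}{2}\|\partial_t v\|_{L^2}^2$, use self-adjointness of $D$ to cancel the cross terms, apply Cauchy--Schwarz and a Gronwall-type integration (with the $E+\eps^2$ regularization to divide safely), and then justify the formal computation by approximation. This buys generality — it works for any nonnegative self-adjoint $D^2$ without any explicit spectral resolution, which is precisely why the same scheme reappears later in the paper's proof of energy conservation for the nonlinear problem — at the cost of the extra (routine, but genuinely needed) approximation step to make the identity rigorous at the stated regularity and to identify the limiting object with the Duhamel solution. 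Both arguments correctly capture why the endpoint $L^1L^2$ forcing norm, unavailable in Proposition \ref{newStri}, is admissible here.
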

\begin{proof}
The lemma immediately follows the Plancherel identity and the below.
\begin{align*}
\tilde{v}(\lambda, \omega, t) = & [\cos (t \lambda)] \tilde{v}_0 (\lambda, \omega) + \frac{\sin (t \lambda)}{\lambda} \tilde{v}_1 (\lambda, \omega) + \int_0^t \frac{\sin (t-\tau)\lambda}{\lambda} \tilde{F} (\lambda, \omega, \tau) d\tau;\\
\widetilde{\partial_t v} (\lambda, \omega, t) = & -[\lambda \sin (t\lambda)] \tilde{v}_0 (\lambda, \omega) + [\cos (t\lambda)] \tilde{v}_1 (\lambda, \omega) + \int_0^t [\cos (t-\tau)\lambda] \tilde{F} (\lambda, \omega, \tau) d\tau.
\end{align*}
\end{proof}
\begin{lemma} \label{CH1} Let $u$ be a solution as in Theorem \ref{Morawetz1}. Then for any time interval $[-T_1,T_2]\subset (-T_-,T_+)$, we have
\[
 M := \|(u, \partial_t u)\|_{ C([-T_1,T_2]; H^1 \times L^2 (\Hm^n))}  < \infty.
\]
\end{lemma}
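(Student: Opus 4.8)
The plan is to separate the $H^1\times L^2$ bound into the part that the energy-level Strichartz estimate sees directly and an extra $L^2$ mass of $u$ that is recovered by integrating $\partial_t u$ in time. The algebraic starting point, immediate from Definition \ref{Sobo} and Plancherel (the multipliers of $D$ and $\tilde D$ are $\lambda$ and $(\lambda^2+\rho^2+1)^{1/2}$), is the splitting
\[
 \|u(t)\|_{H^1(\Hm^n)}^2 = \|u(t)\|_{H^{0,1}(\Hm^n)}^2 + (\rho^2+1)\,\|u(t)\|_{L^2(\Hm^n)}^2 .
\]
Thus it suffices to bound $\|u\|_{C([-T_1,T_2];H^{0,1})}$, $\|\partial_t u\|_{C([-T_1,T_2];L^2)}$ and $\|u\|_{C([-T_1,T_2];L^2)}$ separately, and then recombine through this identity.

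First I would control the first two norms by the low-regularity estimate of Lemma \ref{Strichartzbasic}, applied on $[0,T_2]$ and, after time reversal, on $[-T_1,0]$, with forcing $F(u)=-|u|^{p-1}u$:
\[
 \|(u,\partial_t u)\|_{C([0,T_2];H^{0,1}\times L^2)} \lesssim \|(u_0,u_1)\|_{H^{0,1}\times L^2} + \|F(u)\|_{L^1 L^2([0,T_2]\times\Hm^n)} .
\]
The data term is finite because $H^1\times H^{1/2,-1/2}\hookrightarrow H^{0,1}\times L^2$, and the forcing term equals $\|u\|_{L^p L^{2p}([0,T_2]\times\Hm^n)}^p$, so this step reduces to the finiteness of $\|u\|_{L^pL^{2p}}$ on the compact interval. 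Once $\partial_t u\in C([-T_1,T_2];L^2)$ is known, the remaining norm $\|u\|_{C([-T_1,T_2];L^2)}$ comes from the fundamental theorem of calculus read in the $L^2$-valued sense: since $u_0\in L^2$ and $u(t)=u_0+\int_0^t\partial_t u(s)\,ds$ as a continuous $L^2$-valued curve,
\[
 \|u(t)\|_{L^2} \leq \|u_0\|_{L^2} + |t|\,\|\partial_t u\|_{C([-T_1,T_2];L^2)} \leq \|u_0\|_{L^2} + \max\{T_1,T_2\}\,\|\partial_t u\|_{C([-T_1,T_2];L^2)} .
\]
This is the \emph{only} place where compactness of the time interval enters, and it explains why $M$ need not be bounded uniformly in time; combining the three bounds through the displayed identity gives $M<\infty$.

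The main obstacle is therefore the forcing estimate $\|u\|_{L^pL^{2p}([-T_1,T_2]\times\Hm^n)}<\infty$. Since $p>1$ forces $2p>p+1$, this integrability is strictly stronger than the $L^{p+1}$ bound the energy would supply, and trying to get it from $H^{0,1}\hookrightarrow L^{p_c+1}$ (Proposition \ref{Sobolevem1}) would be circular, as $u\in C(I;H^{0,1})$ is exactly what we are establishing. Instead I would obtain it from the local theory itself: by Remark \ref{admissiblef} and the embedding $H^1\hookrightarrow H^{\sigma-\frac12,\frac12}$ valid for every $\sigma\in(0,1)$, the solution has finite $L^{p_1}(J;H_{q_1}^{\kappa})$ norms on every compact $J\subset(-T_-,T_+)$ for all $\sigma$-admissible pairs with $\sigma$ arbitrarily close to $1$. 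A routine check — using $p<p_c$ together with $\sigma\to 1^-$, Hölder in time on the compact interval, and the spatial Sobolev embedding $H_{q_1}^{\kappa}\hookrightarrow L^{2p}$ — then places $\|u\|_{L^pL^{2p}(J)}$ within reach of these estimates. Carrying out this verification uniformly over $2\leq n\leq 6$ and $1<p<p_c$, especially near the endpoint $p\to p_c$ where $2p\to 2p_c>p_c+1$, is the one genuinely fiddly point; the rest is the short energy-free Strichartz-plus-time-integration argument sketched above.
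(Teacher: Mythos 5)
Your proposal is correct and follows essentially the same route as the paper: reduce to $\|u\|_{L^pL^{2p}([-T_1,T_2]\times\Hm^n)}<\infty$ via the local theory at regularity $\sigma$ close to $1$, feed $F(u)\in L^1L^2$ into Lemma \ref{Strichartzbasic} to get the $H^{0,1}\times L^2$ bound, and recover the $L^2$ norm of $u$ by integrating $\partial_t u$ in time over the compact interval. The ``routine check'' you defer is exactly the paper's explicit two-case verification — that $(p,2p)$ is $\sigma$-admissible for $p\geq 2$, and that $(2,2p)$ is $\sigma$-admissible (followed by $L^2_t\hookrightarrow L^p_t$ on the compact interval) for $1<p<2$ — as $\sigma\to 1^-$, which is precisely where the restrictions $p<p_c$ and $n\leq 6$ enter (cf.\ Remark \ref{remark1}).
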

\begin{proof} By Remark \ref{admissiblef}, Remark \ref{existenceloc} and Proposition \ref{uniquesolution}, our choice of initial data  enables us to claim that the space-time norm $\|u\|_{L^{p_1} L^{q_1}([-T_1,T_2] \times \Hm^n)}$ is finite as long as $(p_1,q_1)$ is a $\sigma$-admissible pair with $\sigma \in (\sigma_p,1)$. Our goal is to show
\[
 \|u\|_{L^{p} L^{2p}([-T_1,T_2]\times \Hm^n)} < \infty.
\]
This is equivalent to $F(u) \in L^1 L^2 ([-T_1,T_2]\times \Hm^n)$. There are two cases
\begin{itemize}
  \item If $p\geq 2$, the pair $(p,2p)$ is $\sigma$-admissible if
  \begin{align*}
     &(PQ1)\quad \;\frac{1}{p} + \frac{n}{2p} \geq \frac{n}{2}- \sigma = (1 +\frac{n}{2})\frac{1}{p_c} +1 - \sigma;& &\frac{1}{2p} \geq \frac{1}{2} - \frac{2 \sigma}{n+1}& &\hbox{if}\; n \geq 3;&\\
     &\qquad\qquad\frac{1}{p} + \frac{2}{2p} > 1- \sigma;& &\frac{1}{2p} \geq \frac{1}{2} - \frac{2 \sigma}{3}& &\hbox{if}\; n = 2;&
  \end{align*}
  This is always true if $\sigma$ is sufficiently close to $1$ as long as $p < p_c$.
  \item If $1 < p < 2$, we have to use the pair $(2,2p)$ instead. It turns out that this pair is still $\sigma$-admissible for $\sigma$ sufficiently close to $1$ as long as $p < p_c$ and $n \leq 6$. By the embedding $L^2([-T_1,T_2]) \hookrightarrow L^p([-T_1,T_2])$, we still have $u \in L^p L^{2p}([-T_1,T_2]\times \Hm^n)$.
\end{itemize}
Now we can  apply Lemma \ref{Strichartzbasic} and obtain
\begin{equation}\label{l21}
 \|(u, \partial_t u)\|_{C([-T_1,T_2]; H^{0,1} \times L^2 (\Hm^n))} < \infty.
\end{equation}
Integrating $\partial_t u$ in $t$, we immediately obtain that $u \in C([-T_1,T_2]; L^2(\Hm^n))$. Combining this $L^2$ estimate with (\ref{l21}), we finish the proof.
\end{proof}
\begin{remark}\label{remark1} Above we have been restricting the dimension  to $n<7$.
 In fact, the main obstacle when $n \geq 7$ is the inequality (PQ1). If we plug in $(2,2p)$, it reads
\begin{equation}
  \frac{1}{2} + \frac{n}{2} \cdot \frac{1}{p} \geq \left(1 + \frac{n}{2}\right)\cdot \frac{1}{p_c} + (1-\sigma). \label{PQ1star}
\end{equation}
 If $3 \leq n \leq 6$, then the first term above $\frac{1}{2} \geq \frac{1}{p_c}\equiv \frac{n-2}{n+2}$. When $p <p_c$ one can easily check that
\[
 \frac{1}{2} + \frac{n}{2} \cdot \frac{1}{p} > \left(1 + \frac{n}{2}\right)\cdot \frac{1}{p_c},
\]
thus the inequality (\ref{PQ1star}) holds as $\sigma \rightarrow 1^-$. However, if $n \geq 7$, we have $\frac{1}{2}  < \frac{1}{p_c}$, which makes the inequality (\ref{PQ1star}) fail for all $\sigma < 1$ if $p$ is sufficiently close to $p_c$.
\end{remark}
\begin{definition} \label{smoothcutoff}
 Let $\psi: \Rm \rightarrow [0,1]$ be a smooth cut-off function satisfying
\[
 \psi(r) = \left\{\begin{array}{ll}
  1, & r < 1;\\
  0, & r > 2.
 \end{array}\right.
\]
If $\delta \in \left(0,\frac{1}{10}\right]$, we define a radial smooth cut-off function on $\Hm^n$
\[
 \psi_{\delta}(r) = \psi (\delta r).
\]
It is clear that $|\nabla \psi_\delta| \lesssim \delta$.
\end{definition}
\begin{lemma}\label{Lqmultiplier} (See Theorem 5.1 of \cite{staten}, and \cite{ap1}) Assume $q \in (1,\infty)$. If $m(\lambda)$ is an even analytic function defined in the region
\[
 S = \{\lambda \in {\mathbb C}: |\hbox{Im}\, \lambda| < \rho\},
\]
and satisfying the following symbol-type bounds in $S$
\begin{equation} \label{symbol bound1}
 |\partial_x^\alpha m(x+yi)| \leq C_{\alpha}(1 + |x|)^{-\alpha},\quad \alpha = 0,1,2,\cdots, N;
\end{equation}
then the operator $\mathbf{T}_m$ defined by the Fourier multiplier $\lambda \rightarrow m(\lambda)$ is bounded from $L^q (\Hm^n)$ to itself. Here $N$ is an integer determined by the dimension $n$. In fact, an upper bound for the norm $\|\mathbf{T}\|_{L^q \rightarrow L^q}$ can be determined by the constants $C_\alpha$.
\end{lemma}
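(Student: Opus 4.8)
This is a Hörmander–Mikhlin multiplier theorem for the shifted Laplacian $D^2=-\Delta_{\Hm^n}-\rho^2$, and since the result is classical I would recover it by the kernel approach of Clerc–Stein and Anker that underlies the cited references \cite{staten, ap1}. Because $m(\lambda)$ is even it is genuinely a function of the operator $D^2$, so $\mathbf T_m$ commutes with all isometries of $\Hm^n$ and is therefore a radial convolution operator, $\mathbf T_m f = f \ast k$, whose kernel is recovered by the inverse spherical transform
\[
 k(x) = c_n \int_0^\infty m(\lambda)\, \varphi_\lambda(x)\, |\mathbf c(\lambda)|^{-2}\, d\lambda,
\]
with $\varphi_\lambda$ the elementary spherical function. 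Since $m$ is bounded on $\Rm$ (case $\alpha=0$ of \eqref{symbol bound1}), the Plancherel identity of Section 2 gives at once $\|\mathbf T_m\|_{L^2\to L^2}=\sup_\lambda|m(\lambda)|\lesssim C_0$. Moreover the adjoint $\mathbf T_m^{\ast}$ is the multiplier operator attached to $\overline{m(\bar\lambda)}$, which is again even, analytic on $S$, and satisfies \eqref{symbol bound1} with the same constants; hence by duality it suffices to establish the $L^q$ bound for $1<q<2$.

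Next I would split $k=k^{\mathrm{loc}}+k^{\mathrm{glob}}$ with a cut-off at distance $r=1$ from the origin and treat the two pieces by different mechanisms. For $k^{\mathrm{loc}}$ I would use that $|\mathbf c(\lambda)|^{-2}\lesssim|\lambda|^2(1+|\lambda|)^{n-3}$ and that, after stationary phase, $\varphi_\lambda$ behaves near the origin like the Euclidean kernel of the corresponding order; the symbol bounds \eqref{symbol bound1} up to order $N=\lfloor n/2\rfloor+1$ then show that $k^{\mathrm{loc}}$ is a Calderón–Zygmund kernel satisfying Hörmander's integral condition at unit scale, with constants controlled by $C_0,\dots,C_N$. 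This yields a weak-$(1,1)$ bound for the local part, hence an $L^q$ bound for $1<q<2$ after interpolation with the $L^2$ estimate above.

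The global part is where the hyperbolic geometry and the analyticity hypothesis are essential. One has $|\varphi_\lambda(r)|\lesssim(1+r)e^{-\rho r}$ while the volume element grows like $e^{2\rho r}$, so a naive bound on $k^{\mathrm{glob}}$ is not integrable against the measure. The remedy is to use the analyticity of $m$ on the strip $S$ of width exactly $\rho$: shifting the $\lambda$-contour toward the boundary lines $\mathrm{Im}\,\lambda=\pm(\rho-\eps)$ produces an additional factor $e^{-\rho r}$ in the kernel, and the symbol-type bounds \eqref{symbol bound1}, assumed on all of $S$, are exactly what guarantees that the shifted integral still converges and stays controlled by the $C_\alpha$. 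This gives a pointwise estimate of the form $|k^{\mathrm{glob}}(r)|\lesssim e^{-2\rho r}P(r)$ with $P$ of polynomial growth, so that $k^{\mathrm{glob}}$ is integrable against the Harish-Chandra weight $\varphi_0$. The Kunze–Stein phenomenon (see \cite{ksph1, ksph2}) then shows that such a radial, suitably weighted $L^1$ kernel acts boundedly on every $L^q(\Hm^n)$, $1<q<\infty$, with norm controlled by that weighted integral. Combining the local and global bounds, and dualizing to cover $2<q<\infty$, yields $\mathbf T_m:L^q\to L^q$ with the quantitative dependence on $C_0,\dots,C_N$ asserted in the statement, and exhibits $N=\lfloor n/2\rfloor+1$ as an admissible choice.

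The main obstacle is precisely this global estimate and the matching of scales it requires: the width $\rho$ of the analyticity strip is forced to equal the spectral gap, i.e. the $e^{-\rho r}$ decay rate of $\varphi_\lambda$ and half the $e^{2\rho r}$ volume growth, so that one contour shift buys exactly enough decay to offset the exponential growth of the measure and let Kunze–Stein close; a narrower strip, or the Euclidean Mikhlin condition on the real axis alone, would not suffice. Equivalently, in Taylor's wave-equation formulation one writes $m(D)=\frac{1}{2\pi}\int_{\Rm}\hat m(t)\cos(tD)\,dt$, where analyticity of $m$ on $S$ translates into $e^{-\rho|t|}$ decay of $\hat m$; this cancels the $e^{\rho|t|}$ growth of $\|\cos(tD)\|_{L^q\to L^q}$ caused by the exponential volume growth, and keeping track of this balance, rather than the essentially Euclidean local analysis, is the crux of the proof.
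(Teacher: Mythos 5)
The paper offers no proof of this lemma: it is quoted from the literature, with the argument deferred to Theorem 5.1 of Stanton--Tomas \cite{staten} and to Anker \cite{ap1}, so there is no in-paper proof to compare yours against. Your sketch is a correct reconstruction of the approach taken in precisely those references: realize $\mathbf{T}_m$ as convolution with the inverse spherical transform of $m$, get $L^2$ from Plancherel and reduce to $1<q<2$ by duality, treat the local part of the kernel by Calder\'on--Zygmund theory, and handle the global part by shifting the contour into the analyticity strip and invoking a Herz/Kunze--Stein majorant argument.

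One point to tighten in the global step: integrability of $k^{\mathrm{glob}}$ against $\varphi_0$ is the criterion for $L^2$ boundedness only; for $L^q$ with $1<q<2$ the majorant principle requires integrability against the larger weight $\varphi_{i(2/q-1)\rho}(r)\asymp e^{(2/q-2)\rho r}$, i.e.\ finiteness of $\int_0^\infty |k^{\mathrm{glob}}(r)|\,e^{(2/q)\rho r}\,dr$ after accounting for the volume growth $e^{2\rho r}$, and this weight becomes more demanding as $q\to 1^+$. Your contour-shift bound $|k^{\mathrm{glob}}(r)|\lesssim P(r)\,e^{-(2\rho-\eps)r}$ does satisfy it for each fixed $q\in(1,2)$ provided $\eps<(2-2/q)\rho$, so the argument closes, but the contour (hence $\eps$) must be chosen depending on $q$; the $\varphi_0$-integrability claim as you stated it would not by itself deliver the full range $1<q<\infty$.
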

\begin{lemma}\label{smoothing}  Assume $\eps \in \left(0,\frac{1}{10}\right)$.
 Let $\mathbf{\tilde{P}}_\eps$ be the smoothing operator defined by the Fourier multiplier $\lambda \rightarrow e^{-\eps^2 \lambda^2}$. Given any $2 \leq q < \infty$, we have $\|\mathbf{\tilde{P}}_\eps\|_{L^q(\Hm^n) \rightarrow L^q(\Hm^n)} \leq C_q < \infty$ for each $\eps$. Furthermore, if $v \in L^q (\Hm^n)$, then $\|v - \mathbf{\tilde{P}}_\eps v\|_{L^q} \rightarrow 0$ as $\eps \rightarrow 0$.
\end{lemma}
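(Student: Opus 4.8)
The plan is to deduce both assertions from the $L^q$ multiplier criterion of Lemma \ref{Lqmultiplier}, applied first to the multiplier $m_\eps(\lambda)=e^{-\eps^2\lambda^2}$ itself and then to a renormalized multiplier for the convergence statement.

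For the uniform boundedness, first I would note that $m_\eps$ is even and entire, hence analytic on the strip $S=\{|\mathrm{Im}\,\lambda|<\rho\}$, so only the symbol-type bounds \eqref{symbol bound1} remain to be checked. Differentiating $\alpha$ times gives $m_\eps^{(\alpha)}(\lambda)=(-\eps)^\alpha H_\alpha(\eps\lambda)\,e^{-\eps^2\lambda^2}$, where $H_\alpha$ is the Hermite polynomial of degree $\alpha$. On $S$ one has $|e^{-\eps^2(x+yi)^2}|=e^{\eps^2y^2}e^{-\eps^2x^2}\le e^{\eps^2\rho^2}e^{-\eps^2x^2}$, and since $\eps<1/10$ and $|y|<\rho$ both the factor $e^{\eps^2\rho^2}$ and the polynomial $|H_\alpha(\eps(x+yi))|\lesssim_\alpha(1+\eps|x|)^\alpha$ are harmless. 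Writing $t=\eps|x|$ and using $\eps<1$ one finds
\[
 \eps^\alpha(1+\eps|x|)^\alpha(1+|x|)^\alpha e^{-\eps^2x^2}=(1+t)^\alpha(\eps+t)^\alpha e^{-t^2}\le (1+t)^{2\alpha}e^{-t^2}\le \tilde{C}_\alpha,
\]
so that $|m_\eps^{(\alpha)}(x+yi)|\le C_\alpha(1+|x|)^{-\alpha}$ with constants $C_\alpha$ independent of $\eps$. Lemma \ref{Lqmultiplier} then yields $\|\mathbf{\tilde{P}}_\eps\|_{L^q\to L^q}\le C_q$ uniformly in $\eps\in(0,1/10)$, which is the first claim; the uniformity in $\eps$ is exactly what the convergence argument below requires.

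For the convergence, the standard reduction is to combine this uniform bound with a density argument: for $w$ in a dense subclass of $L^q(\Hm^n)$ and arbitrary $v$,
\[
 \|v-\mathbf{\tilde{P}}_\eps v\|_{L^q}\le (1+C_q)\|v-w\|_{L^q}+\|(\mathbf{\tilde{P}}_\eps-I)w\|_{L^q},
\]
so it suffices to prove $\|(\mathbf{\tilde{P}}_\eps-I)w\|_{L^q}\to 0$ for nice $w$. I would take $w\in C_c^\infty(\Hm^n)$, for which $\tilde{D}^s w\in L^q$ for every $s>0$, and factor $\mathbf{\tilde{P}}_\eps-I=T_{n_\eps}\circ\tilde{D}^s$ through the multiplier
\[
 n_\eps(\lambda)=\frac{e^{-\eps^2\lambda^2}-1}{(\lambda^2+\rho^2+1)^{s/2}},
\]
which is again even and analytic on $S$ because the branch points $\pm i\sqrt{\rho^2+1}$ of the denominator lie outside the strip. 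Then $\|(\mathbf{\tilde{P}}_\eps-I)w\|_{L^q}\le\|T_{n_\eps}\|_{L^q\to L^q}\,\|\tilde{D}^s w\|_{L^q}$, and it remains to show the operator norm of $T_{n_\eps}$ tends to $0$.

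The main obstacle, and the only part requiring genuine care, is verifying that the symbol-type constants of $n_\eps$ decay to $0$ as $\eps\to 0$, so that Lemma \ref{Lqmultiplier} forces $\|T_{n_\eps}\|_{L^q\to L^q}\to 0$. For the size bound I would split the strip: when $|\lambda|\le\eps^{-1/2}$ one uses $|e^{-\eps^2\lambda^2}-1|\lesssim\eps^2|\lambda|^2\lesssim\eps$ against a denominator $\gtrsim 1$, while when $|\lambda|>\eps^{-1/2}$ the numerator is bounded and the denominator is $\gtrsim|\lambda|^s>\eps^{-s/2}$; together these give $\sup_{S}|n_\eps|\lesssim\max(\eps,\eps^{s/2})\to 0$. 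The derivative bounds for $\alpha=1,\dots,N$ follow by the same low/high frequency split, the differentiation producing only extra polynomial factors absorbed exactly as in the first paragraph. Feeding these vanishing constants into Lemma \ref{Lqmultiplier} gives $\|T_{n_\eps}\|_{L^q\to L^q}\to 0$, hence $\|(\mathbf{\tilde{P}}_\eps-I)w\|_{L^q}\to 0$ for $w\in C_c^\infty$; choosing $w$ close to $v$ in the displayed inequality and letting $\eps\to 0$ then completes the proof.
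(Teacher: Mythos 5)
Your proof is correct. Its first half coincides with the paper's argument: the paper also verifies the symbol bounds \eqref{symbol bound1} for $m_\eps$ with constants independent of $\eps$ and invokes Lemma \ref{Lqmultiplier}; you have simply written out the ``simple calculation'' (via Hermite polynomials and the substitution $t=\eps|x|$) that the paper omits. For the convergence statement, however, you take a genuinely different route. The paper chooses its dense subclass to be $H^{\sigma}\cap L^q$ with $\sigma$ large, where $\|v-\mathbf{\tilde{P}}_\eps v\|_{L^q}\lesssim \|v-\mathbf{\tilde{P}}_\eps v\|_{H^{\sigma}}\rightarrow 0$ by Sobolev embedding together with Plancherel on the Fourier side, and then concludes by density and the uniform bound. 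You instead stay entirely inside the multiplier calculus: on the dense class $C_c^\infty$ you factor $\mathbf{\tilde{P}}_\eps-I=T_{n_\eps}\circ\tilde{D}^{s}$ and show, by a low/high frequency split, that the symbol constants of $n_\eps$ vanish as $\eps\rightarrow 0$. Your version is more quantitative (it yields an explicit rate $\max(\eps,\eps^{s/2})$ for smooth compactly supported data and never leaves the $L^q$ framework), at the price of a second, more delicate application of Lemma \ref{Lqmultiplier}; the paper's version is softer and shorter. Two implicit steps in your argument deserve a sentence each. First, Lemma \ref{Lqmultiplier} as stated only says the operator norm is bounded in terms of the constants $C_\alpha$, so to deduce $\|T_{n_\eps}\|_{L^q\rightarrow L^q}\rightarrow 0$ you should normalize: if all constants of $n_\eps$ are at most $\delta_\eps$, apply the lemma to $n_\eps/\delta_\eps$, whose constants are at most $1$, obtaining a fixed bound $B$, and use linearity of $m\mapsto T_m$ to get $\|T_{n_\eps}\|\leq B\delta_\eps\rightarrow 0$. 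Second, for the analyticity of $n_\eps$ on $S$ it is cleaner to note that $\mathrm{Re}\,(\lambda^2+\rho^2+1)=x^2-y^2+\rho^2+1>1$ on the strip, so the principal branch of $(\lambda^2+\rho^2+1)^{s/2}$ is well defined there and $|\lambda^2+\rho^2+1|\geq 1$, which is exactly the lower bound on the denominator you use. Neither point is a gap, just bookkeeping.
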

\begin{proof}
A simple calculation shows the symbols $m_\eps (\lambda) = e^{-\eps^2 \lambda^2}$ satisfy the condition (\ref{symbol bound1}) with the constants $C_{\alpha}$ independent of $\eps$. As a result, we immediately obtain a universal upper bound $C_q$ for all norms $\|\mathbf{\tilde{P}}_\eps\|_{L^q \rightarrow L^q}$ independent of $\eps$ according to Lemma \ref{Lqmultiplier}. If $v \in H^{\sigma}$ for a large number $\sigma$, then Sobolev embedding gives us
\[
 \|v - \mathbf{\tilde{P}}_\eps v\|_{L^q} \lesssim \|v- \mathbf{\tilde{P}}_\eps v\|_{H^{\sigma}} \rightarrow 0.
\]
Since $H^{\sigma} \cap L^q$ is dense in $L^q$, we can prove the convergence $\|v - \mathbf{\tilde{P}}_\eps v\|_{L^q} \rightarrow 0$ for a general $L^q$ function $v$ by basic approximation techniques.
\end{proof}
\paragraph{Space-time smoothing operator} Choose a smooth, nonnegative, even function $\phi(t)$ compactly supported in $[-1,1]$ with $\int_{-1}^{1} \phi (t) dt =1$. Given a closed interval $[-T_1,T_2] \subset (-T_-,T_+)$, let $\eps < \eps_0 = \frac{1}{2}\min\{1/10, T_+ - T_2, T_{-} -T_1\}$. We can smooth out the solution $u$ and the non-linear term $F(u)$ by defining $u_\eps$ and $F_\eps$ as
\begin{equation}\label{newequation}
 u_\eps (\cdot, t) = \int_{-1}^{+1} \phi(s) \mathbf{\tilde{P}}_\eps u(\cdot,t+s\eps) ds;\quad
 F_\eps (\cdot, t) = \int_{-1}^{+1} \phi(s) \mathbf{\tilde{P}}_\eps F(u(\cdot,t+s\eps)) ds.
\end{equation}
The function $u_\eps$ is a smooth solution to the shifted wave equation
\[
 \partial_t^2 u_\eps - (\Delta_{\Hm^n}+ \rho^2) u_\eps = F_{\eps}
\]
in the time interval $[-T_1,T_2]$. Combining Lemma \ref{CH1}, Lemma \ref{smoothing}, the fact $u \in L^p L^{2p} ([-T_1-\eps_0, T_2 + \eps_0])$ and the inequality
\begin{align*}
 \|F(u_\eps) - F_\eps\|_{L^1 L^2} \leq & \|F(u_\eps) - F(u)\|_{L^1 L^2} + \|F(u) - F_\eps\|_{L^1 L^2}\\
 \leq & C_p \|u_\eps - u\|_{L^p L^{2p}} \left(\|u_\eps\|_{L^p L^{2p}}^{p-1} + \|u\|_{L^p L^{2p}}^{p-1}\right) + \|F(u) - F_\eps\|_{L^1 L^2},
\end{align*}
we immediately have the following lemma.
\begin{lemma} \label{convergenceueps} Let $u$ be a solution as in Theorem \ref{Morawetz1} and $u_\epsilon, F_\epsilon$ as above.  Then for any
$t_0$   in $[-T_1,T_2]$
\begin{align*}
  &\lim_{\eps \rightarrow 0} \|F(u_\eps) - F_\eps \|_{L^1 L^2 ([-T_1,T_2]\times \Hm^n)} = 0;\\
  &\lim_{\eps \rightarrow 0} \|(u_\eps (t_0), \partial_t u_\eps (t_0))- (u (t_0),\partial_t u(t_0))\|_{H^1 \times L^2 (\Hm^n)} =0;\\
  &\lim_{\eps \rightarrow 0} \|u_\eps (t_0)- u (t_0)\|_{L^{p+1} (\Hm^n)} =0;\\
  &M_1 := \sup_{\eps < \eps_0} \|(u_\eps, \partial_t u_\eps)\|_{C([-T_1,T_2]; H^1\times L^2 (\Hm^n))} < \infty.
\end{align*}
\end{lemma}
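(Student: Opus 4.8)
The plan is to exploit the product structure of the smoothing. Writing $A_\eps$ for the temporal mollification $(A_\eps w)(\cdot,t) = \int_{-1}^{1} \phi(s)\, w(\cdot, t+s\eps)\, ds$ and $\mathbf{\tilde{P}}_\eps$ for the spatial frequency smoothing, we have $u_\eps = A_\eps \mathbf{\tilde{P}}_\eps u$ and $F_\eps = A_\eps \mathbf{\tilde{P}}_\eps F(u)$, and $A_\eps$ and $\mathbf{\tilde{P}}_\eps$ commute because they act on different variables. The choice $\eps < \eps_0$ guarantees that every sample point $t+s\eps$ stays inside $(-T_-,T_+)$, so all quantities above can be controlled on the slightly enlarged interval $[-T_1-\eps_0, T_2+\eps_0]$, on which, by Lemma \ref{CH1} and the fact $u \in L^p L^{2p}$, the relevant norms of $u$, $\partial_t u$ and $F(u)$ are finite. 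Every estimate will then split a difference into a frequency-smoothing error plus a temporal-mollification error through the identity
\[
 A_\eps \mathbf{\tilde{P}}_\eps w - w = A_\eps\big(\mathbf{\tilde{P}}_\eps w - w\big) + \big(A_\eps w - w\big),
\]
and treat each term separately.

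For the uniform bound $M_1$ I would use that $A_\eps$ is an average of translates ($\phi\ge 0$, $\int\phi=1$) and is therefore a contraction on every Bochner space $L^r_t X$ by Minkowski's inequality, while $\mathbf{\tilde{P}}_\eps$ is a contraction on $H^1$ and $L^2$ since its Fourier symbol satisfies $0< e^{-\eps^2\lambda^2}\le 1$ and commutes with $\tilde{D}$; composing with the finite bound $M$ from Lemma \ref{CH1} gives $M_1\le M$, and the same reasoning bounds $\|u_\eps\|_{L^pL^{2p}}$ by $C\|u\|_{L^pL^{2p}}$. For the pointwise-in-time convergences I would apply the displayed decomposition at a fixed $t_0$: the frequency error $\|\mathbf{\tilde{P}}_\eps u(t_0)-u(t_0)\|$ tends to $0$ in $H^1$ and $L^2$ by Plancherel and dominated convergence in the Fourier variable, and in $L^{p+1}$ by Lemma \ref{smoothing} (using $H^1\hookrightarrow L^{p+1}$ so that $u(t_0)\in L^{p+1}$), while the temporal error $A_\eps w(t_0)-w(t_0)=\int_{-1}^{1}\phi(s)[w(t_0+s\eps)-w(t_0)]\,ds$ tends to $0$ because $t\mapsto w(t)$ is continuous, hence uniformly continuous on the compact interval, into $H^1$, $L^2$ and $L^{p+1}$ respectively (the last again via the Sobolev embedding applied to the conclusion of Lemma \ref{CH1}).

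Finally, for the first limit I would follow the inequality recorded before the statement, which reduces the claim to $\|u_\eps-u\|_{L^pL^{2p}}\to 0$ (the factors $\|u_\eps\|_{L^pL^{2p}}^{p-1}$ and $\|u\|_{L^pL^{2p}}^{p-1}$ being uniformly bounded as above) together with $\|F(u)-F_\eps\|_{L^1L^2}\to 0$. Both are handled by the same decomposition in the space-time norms $L^pL^{2p}$ and $L^1L^2$: the frequency error converges to $0$ by Lemma \ref{smoothing} (applied with the spatial exponents $2p$ and $2$, both $\ge 2$) combined with the dominated convergence theorem in $t$, and the temporal error converges to $0$ by continuity of translation in the Bochner spaces $L^p_tL^{2p}_x$ and $L^1_tL^2_x$. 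I expect the only genuinely delicate point to be this last combination: because the two mollifications share the single parameter $\eps$ and act in different variables, one must feed the strong (but non-uniform) convergence $\mathbf{\tilde{P}}_\eps\to I$ into a dominated convergence argument in time, which is exactly what forces the bookkeeping on the enlarged interval $[-T_1-\eps_0,T_2+\eps_0]$ that the choice of $\eps_0$ was designed to supply; everything else is routine mollification.
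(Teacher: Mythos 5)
Your proposal is correct and follows essentially the same route as the paper: the paper's proof amounts precisely to combining Lemma \ref{CH1}, Lemma \ref{smoothing}, the membership $u \in L^p L^{2p}$ on the slightly enlarged interval $[-T_1-\eps_0, T_2+\eps_0]$, and the displayed Lipschitz inequality for $F$, with the routine mollification bookkeeping (which you spell out via Minkowski's inequality, the contraction property of the multiplier, dominated convergence, and continuity of translation) left to the reader. The one point to tidy is that your ``frequency error'' at a fixed $t_0$ is really $A_\eps\bigl(\mathbf{\tilde{P}}_\eps u - u\bigr)(t_0)$, which samples times near $t_0$ rather than $t_0$ alone, so you should either split in the opposite order, $\mathbf{\tilde{P}}_\eps(A_\eps u - u) + (\mathbf{\tilde{P}}_\eps u - u)$, or combine the uniform boundedness of $\mathbf{\tilde{P}}_\eps$ from Lemma \ref{smoothing} with the continuity of $t \mapsto (u(t),\partial_t u(t))$ --- both tools you already invoke.
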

Note that the third limit is a combination of the Sobolev embedding $H^1 \hookrightarrow L^{p+1}$ and of  the second limit.

\subsection{Energy Conservation Law}
\begin{proposition} [Invariance of Energy] Let $u$ be a solution as in Theorem \ref{Morawetz1}. Then we have
\[
 \EE(t) = \frac{1}{2}\|Du(\cdot,t)\|_{L^2(\Hm^n)}^2 + \frac{1}{2}\|\partial_t u(\cdot,t)\|_{L^2(\Hm^n)}^2 + \frac{1}{p+1}\|u(\cdot,t)\|_{L^{p+1}(\Hm^n)}^{p+1}
\]
is a (finite) constant for each $t \in (-T_-, T_+)$.
\end{proposition}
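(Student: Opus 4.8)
The plan is to first record finiteness and then prove constancy by approximating $u$ with the smooth functions $u_\eps$ introduced in \eqref{newequation}, for which the energy can be differentiated classically. Finiteness is immediate: Lemma \ref{CH1} gives $(u,\partial_t u)\in C([-T_1,T_2];H^1\times L^2(\Hm^n))$ on any closed subinterval, while $\|Du\|_{L^2}^2=\int_{\Hm^n}(|\nabla u|^2-\rho^2|u|^2)\,d\mu\le \|u\|_{H^1}^2$ and the Sobolev embedding $H^1\hookrightarrow L^{p+1}$ control the remaining two terms. Thus $\EE(t)<\infty$ for every $t$, and it suffices to show $\EE(t_1)=\EE(t_2)$ for arbitrary $t_1,t_2\in(-T_-,T_+)$; fixing a closed interval $[-T_1,T_2]$ containing both reduces everything to that interval.

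Next I would carry out the energy computation for the smooth approximation. For $\eps<\eps_0$ the function $u_\eps$ is smooth and solves $\partial_t^2 u_\eps-(\Delta_{\Hm^n}+\rho^2)u_\eps=F_\eps$, so setting
\[
 \EE_\eps(t)=\frac12\|Du_\eps\|_{L^2}^2+\frac12\|\partial_t u_\eps\|_{L^2}^2+\frac{1}{p+1}\|u_\eps\|_{L^{p+1}}^{p+1},
\]
differentiating in $t$ and integrating by parts (legitimate since $u_\eps$ is smooth with uniformly finite $H^1\times L^2$ norms by Lemma \ref{convergenceueps}, so no boundary contributions arise and $D^2=-\Delta_{\Hm^n}-\rho^2$ may be moved onto $\partial_t u_\eps$) gives
\[
 \frac{d}{dt}\EE_\eps(t)=\int_{\Hm^n}\partial_t u_\eps\Big[(-\Delta_{\Hm^n}-\rho^2)u_\eps+\partial_t^2 u_\eps+|u_\eps|^{p-1}u_\eps\Big]\,d\mu.
\]
Using the equation to replace $(-\Delta_{\Hm^n}-\rho^2)u_\eps+\partial_t^2 u_\eps$ by $F_\eps$, and recalling $F(u_\eps)=-|u_\eps|^{p-1}u_\eps$ in the defocusing case, the bracket collapses to $F_\eps-F(u_\eps)$, whence
\[
 |\EE_\eps(t_2)-\EE_\eps(t_1)|\le \int_{-T_1}^{T_2}\|\partial_t u_\eps\|_{L^2}\,\|F_\eps-F(u_\eps)\|_{L^2}\,dt\le M_1\,\|F_\eps-F(u_\eps)\|_{L^1 L^2([-T_1,T_2]\times\Hm^n)},
\]
with $M_1<\infty$ from Lemma \ref{convergenceueps}.

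Finally I would pass to the limit $\eps\to0$. The convergences in Lemma \ref{convergenceueps}—namely $(u_\eps(t_0),\partial_t u_\eps(t_0))\to(u(t_0),\partial_t u(t_0))$ in $H^1\times L^2$ and $u_\eps(t_0)\to u(t_0)$ in $L^{p+1}$, together with $\|D(u_\eps-u)\|_{L^2}\le\|u_\eps-u\|_{H^1}\to0$—show that $\EE_\eps(t_0)\to\EE(t_0)$ for each fixed $t_0$. Combining this with the displayed bound and the fact that $\|F_\eps-F(u_\eps)\|_{L^1 L^2}\to0$ yields
\[
 |\EE(t_2)-\EE(t_1)|=\lim_{\eps\to0}|\EE_\eps(t_2)-\EE_\eps(t_1)|=0,
\]
so $\EE$ is constant on $(-T_-,T_+)$. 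The main obstacle is precisely that $u_\eps$ solves the equation with the mollified source $F_\eps$ rather than with $F(u_\eps)$, so its energy is not conserved on the nose; the entire argument hinges on the error $F_\eps-F(u_\eps)$ vanishing in $L^1 L^2$, which is where the low regularity of $u$ enters and which is exactly the content prepared in Lemma \ref{convergenceueps}. A secondary point requiring care is justifying the differentiation under the integral sign and the integration by parts uniformly in $\eps$, which is handled by the uniform bound $M_1$ and the smoothing properties of $\mathbf{\tilde{P}}_\eps$.
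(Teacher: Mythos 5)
Your proposal is correct and follows the same overall strategy as the paper's proof: approximate $u$ by the smoothed solutions $u_\eps$ of \eqref{newequation}, observe that the time derivative of the approximate energy collapses to a term driven by the error $F_\eps - F(u_\eps)$, bound the resulting increment by $M_1\|F_\eps-F(u_\eps)\|_{L^1L^2}$, and pass to the limit via Lemma \ref{convergenceueps}. The one place where you genuinely diverge is the integration by parts: the paper never integrates by parts globally on $\Hm^n$; it first multiplies the energy density by the compactly supported cutoff $\psi_\delta$ of Definition \ref{smoothcutoff}, so that every integration by parts is against a compactly supported weight, at the price of commutator terms of size $C\delta M_1^2$ that are removed in a second limit $\delta\to 0$. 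You instead integrate by parts globally, justifying it by $u_\eps$ being ``smooth with uniformly finite $H^1\times L^2$ norms''; as stated, that inference is not valid on a noncompact manifold, since smoothness plus finite energy does not by itself exclude contributions from infinity. The step is nonetheless correct for your $u_\eps$, for a better reason: the Gaussian multiplier $\mathbf{\tilde{P}}_\eps$ places $u_\eps(t)$ and $\partial_t u_\eps(t)$ in $H^s(\Hm^n)$ for every $s$, and since $\Hm^n$ is complete, $C_c^\infty$ is dense in $H^2$, so $\int_{\Hm^n}\Delta u_\eps\,\partial_t u_\eps\,d\mu=-\int_{\Hm^n}\nabla u_\eps\cdot\nabla\partial_t u_\eps\,d\mu$ holds (equivalently, one can invoke self-adjointness of $D^2$ and the functional calculus, as you implicitly do when moving $D^2$ onto $\partial_t u_\eps$). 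With that justification inserted, your argument is complete and in fact slightly leaner than the paper's, dispensing with the double limit $\delta,\eps\to 0$; the paper's cutoff device buys the same conclusion by more pedestrian means, avoiding any appeal to density of $C_c^\infty$ in Sobolev spaces on $\Hm^n$.
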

\begin{proof}
 By Lemma \ref{CH1} and Sobolev embedding, we have known $\EE(t) < \infty$ for each $t \in (-T_-,T_+)$. Let us define
\[
 E_{\eps, \delta}(t) = \int_{\Hm^n} \left(\frac{1}{2} |\nabla u_\eps|^2 - \frac{\rho^2}{2} |u_\eps|^2 + \frac{1}{2} |\partial_t u_\eps|^2 + \frac{1}{p+1} |u_\eps|^{p+1}\right)\psi_\delta d\mu
\]
for each $t\in [-T_1,T_2]$. Differentiation in $t$ gives
\begin{align*}
 E_{\eps,\delta}'(t) = & \int_{\Hm^n} \left(\Db_\alpha u_\eps \Db^\alpha (\partial_t u_\eps) - \rho^2 u_\eps \partial_t u_\eps + \partial_t u_\eps \partial_t^2 u_\eps - F(u_\eps) \partial_t u_\eps\right)\psi_\delta d\mu\\
 = & \int_{\Hm^n}\ \left(-(\partial_t u_\eps) \Db^\alpha \Db_\alpha u_\eps  - \rho^2 u_\eps \partial_t u_\eps + \partial_t u_\eps \partial_t^2 u_\eps - F(u_\eps) \partial_t u_\eps\right)\psi_\delta d\mu\\
  & + \int_{\Hm^n} (-\Db^\alpha \psi_{\delta} \Db_\alpha u_\eps)  \partial_t u_\eps d\mu\\
 = & \int_{\Hm^n} \left[\partial_t^2 u_\eps - (\Delta_{\Hm^n}+ \rho^2) u_\eps - F_{\eps} \right](\partial_t u_\eps) \psi_\delta d\mu  + \int_{\Hm^n} \left[F_\eps - F(u_\eps) \right](\partial_t u_\eps) \psi_\delta d\mu\\
 & - \int_{\Hm^n} (\Db^\alpha \psi_{\delta} \Db_\alpha u_\eps)  \partial_t u_\eps d\mu\\
 = & \int_{\Hm^n} \left[F_\eps - F(u_\eps) \right](\partial_t u_\eps) \psi_\delta d\mu
 - \int_{\Hm^n} (\Db^\alpha \psi_{\delta} \Db_\alpha u_\eps)  \partial_t u_\eps d\mu.
\end{align*}
Without loss of generality let us now assume that $t_0 \in [0,T_2]$. By integrating we obtain
\begin{align*}
 \left|E_{\eps,\delta}(t_0) - E_{\eps,\delta}(0)\right| \leq & \int_0^{t_0} \int_{\Hm^n} \left|F_\eps - F(u_\eps) \right||\partial_t u_\eps| \psi_\delta d\mu dt
+\int_0^{t_0} \int_{\Hm^n} \left|(\Db^\alpha \psi_{\delta} \Db_\alpha u_\eps)  \partial_t u_\eps\right| d\mu dt\\
\lesssim & \|F_\eps - F(u_\eps)\|_{L^1 L^2([0,t_0] \times \Hm^n)} \|\partial_t u_\eps\|_{L^\infty L^2([0,t_0] \times \Hm^n)}\\
& \qquad + \delta t_0 \|u_\eps\|_{L^\infty ([0,t_0]; H^1(\Hm^n))} \|\partial_t u_\eps\|_{L^\infty L^2 ([0,t_0] \times \Hm^n)}\\
\leq & M_1 \|F_\eps - F(u_\eps)\|_{L^1 L^2} + \delta t_0 M_1^2.
\end{align*}
Here we use the universal bound found in Lemma \ref{convergenceueps}. Sending both $\delta$ and $\eps$ to zero, we obtain $E(u(\cdot, t_0),\partial_t u(\cdot, t_0)) = E(u_0,u_1)$. This finishes the proof.
\end{proof}
\begin{remark} \label{invariance of energy focus}
 The energy of a solution to \eqref{(CP1)}  in the focusing case is also a constant under the same assumptions, because the defocusing assumption has not been used in the argument above.
\end{remark}
\subsection{Proof of Theorem \ref{Morawetz1}}\label{T1proof}
We start by defining
\[
 M(t) = -\int_{\Hm^n}  \partial_t u_\eps (x,t) \left(\Db^\alpha a \Db_\alpha u_\eps (x,t) + u_\eps(x,t) \cdot \frac{\Delta a}{2} \right)\psi_\delta d\mu(x)
\]
for any $t\in [-T_1,T_2]$, where $u_\eps$ and $F_\eps$ are as in \eqref{newequation}, the function $a$ is the smooth function given in Lemma \ref{def of a} and  the function $\psi_{\delta}$ is the smooth cut-off function introduced in Definition \ref{smoothcutoff}. By differentiating in $t$ we obtain
\begin{align*}
 M'(t) = & -\int_{\Hm^n}\!  \partial_t^2 u_\eps \left(\Db^\alpha a \Db_\alpha u_\eps + u_\eps \cdot \frac{\Delta a}{2} \right)\psi_\delta d\mu - \!\int_{\Hm^n}\!  \partial_t u_\eps \left(\Db^\alpha a \Db_\alpha (\partial_t u_\eps) + \partial_t u_\eps \cdot \frac{\Delta a}{2} \right)\psi_\delta d\mu\\
 = & -\int_{\Hm^n} \left((\Delta_{\Hm^n}+ \rho^2) u_\eps + F_\eps \right) \left(\Db^\alpha a \Db_\alpha u_\eps + u_\eps \cdot \frac{\Delta a}{2} \right)\psi_\delta d\mu\\
 & - \int_{\Hm^n}  \left(\frac{1}{2}\Db^\alpha a \Db_\alpha (\partial_t u_\eps)^2 + (\partial_t u_\eps)^2 \cdot \frac{\Delta a}{2} \right)\psi_\delta d\mu\\
 = & - \int_{\Hm^n} \Delta_{\Hm^n} u_\eps \left(\Db^\alpha a \Db_\alpha u_\eps + u_\eps \cdot \frac{\Delta a}{2} \right)\psi_\delta d\mu - \int_{\Hm^n} \rho^2 u_\eps \left(\Db^\alpha a \Db_\alpha u_\eps + u_\eps \cdot \frac{\Delta a}{2} \right)\psi_\delta d\mu\\
 & - \int_{\Hm^n} F(u_\eps) \left(\Db^\alpha a \Db_\alpha u_\eps + u_\eps \cdot \frac{\Delta a}{2} \right)\psi_\delta d\mu + \frac{1}{2} \int_{\Hm^n} (\partial_t u_\eps)^2 \Db_\alpha \psi_\delta \Db^{\alpha} a d\mu\\
 & - \int_{\Hm^n} \left[F_\eps - F(u_\eps)\right] \left(\Db^\alpha a \Db_\alpha u_\eps + u_\eps \cdot \frac{\Delta a}{2} \right)\psi_\delta d\mu\\
 = & I_1 + I_2 + I_3 + I_4 + I_5.
\end{align*}
Let us keep in mind that $\Delta a =1$ and integrate by parts in $I_1$:
\begin{align*}
 I_1  =& - \int_{\Hm^n} \Db^\beta \Db_\beta u_\eps \left(\Db^\alpha a \Db_\alpha u_\eps + u_\eps \cdot \frac{\Delta a}{2} \right)\psi_\delta d\mu\\
 = & \int_{\Hm^n} \left(\Db_\beta u_\eps \Db^\beta \Db^\alpha a \Db_\alpha u_\eps + \Db_\beta u_\eps \Db^\alpha a \Db^\beta  \Db_\alpha u_\eps + \Db_\beta u_\eps \Db^\beta u_\eps \frac{\Delta a}{2}\right) \psi_\delta d\mu\\
  & + \int_{\Hm^n} \Db^\beta \psi_\delta \Db_\beta u_\eps \left(\Db^\alpha a \Db_\alpha u_\eps + u_\eps \cdot \frac{\Delta a}{2} \right) d\mu\\
  \geq & \int_{\Hm^n} \left( \frac{1}{2}\Db^\alpha a \Db_\alpha (\Db_\beta u_\eps \Db^\beta u_\eps) + \Db_\beta u_\eps \Db^\beta u_\eps \frac{\Delta a}{2}\right) \psi_\delta d\mu - C \delta \|u_\eps\|_{H^1 (\Hm^n)}^2\\
  = & - \frac{1}{2} \int_{\Hm^n} (\Db_\alpha \psi_\delta \Db^\alpha a )(\Db_\beta u_\eps \Db^\beta u_\eps) d\mu - C \delta \|u_\eps\|_{H^1 (\Hm^n)}^2\\
  \geq & - C \delta \|u_\eps\|_{H^1 (\Hm^n)}^2 \geq - C \delta M_1^2.
\end{align*}
The letter $C$ may represent different constants in each step throughout the proof. Similarly we have
\begin{align*}
 I_2 = & - \rho^2 \int_{\Hm^n} u_\eps \left(\Db^\alpha a \Db_\alpha u_\eps + u_\eps \cdot \frac{\Delta a}{2} \right)\psi_\delta d\mu\\
 = & - \frac{\rho^2}{2} \int_{\Hm^n} \left( \Db^\alpha a \Db_\alpha (u_\eps^2) + u_\eps^2 \cdot \Delta a \right)\psi_\delta d\mu\\
 = & \frac{\rho^2}{2} \int_{\Hm^n} u_\eps^2 \Db_\alpha \psi_\delta \Db^\alpha a d\mu\\
 \geq & -C \delta M_1^2.
\end{align*}
The third term gives
\begin{align*}
 I_3 = &\int_{\Hm^n} |u_\eps|^{p-1} u_\eps \left(\Db^\alpha a \Db_\alpha u_\eps + u_\eps \cdot \frac{\Delta a}{2} \right)\psi_\delta d\mu\\
  = & \int_{\Hm^n} \left(\frac{1}{p+1}\Db^\alpha a \Db_\alpha (|u_\eps|^{p+1}) + \frac{1}{2}\;|u_\eps|^{p+1} \cdot \Delta a \right)\psi_\delta d\mu\\
  = & \frac{p-1}{2(p+1)} \int_{\Hm^n} |u_\eps|^{p+1} \psi_\delta d\mu -\frac{1}{p+1} \int_{\Hm^n} (\Db_\alpha \psi_\delta \Db^\alpha a) |u_\eps|^{p+1} d\mu\\
  \geq & \frac{p-1}{2(p+1)} \int_{\Hm^n} |u_\eps|^{p+1} \psi_\delta d\mu - C \delta M_1^{p+1}.
\end{align*}
It is clear that $I_4 \geq - C\delta M_1^2$. Finally we have
\begin{align*}
 \int_{-T_1}^{T_2} I_5(t) dt & = - \int_{-T_1}^{T_2} \int_{\Hm^n} \left[F_\eps - F(u_\eps)\right] \left(\Db^\alpha a \Db_\alpha u_\eps + u_\eps \cdot \frac{\Delta a}{2} \right)\psi_\delta d\mu dt\\
 & \geq - C\left\|F_\eps - F(u_\eps)\right\|_{L^1 L^2 ([-T_1,T_2]\times \Hm^n)} \| |\nabla u_\eps|+|u_\eps|\|_{L^\infty L^2([-T_1,T_2] \times \Hm^n)}\\
  & \geq - C M_1 \left\|F_\eps - F(u_\eps)\right\|_{L^1 L^2 ([-T_1,T_2]\times \Hm^n)}.
\end{align*}
Collecting all terms above, we obtain
\begin{align}
 M(T_2) - M(-T_1) \geq \frac{p-1}{2(p+1)} \int_{-T_1}^{T_2} \int_{\Hm^n} |u_\eps|^{p+1} \psi_\delta d\mu dt & -C M_1 \left\|F_\eps - F(u_\eps)\right\|_{L^1 L^2 ([-T_1,T_2]\times \Hm^n)}\nonumber\\
 &-C \delta (T_2 + T_1)(M_1^{p+1}+M_1^2). \label{inequality1}
\end{align}
On the other hand, we can estimate $M(t_0)$ for any given $t_0 \in [-T_1,T_2]$ by
\begin{align*}
|M(t_0)| =& \left| \int_{\Hm^n} \partial_t u_\eps (x,t_0) \left( \Db^\alpha a \Db_\alpha u_\eps(x,t_0) + \frac{1}{2} u_\eps (x,t_0) \right) \psi_\delta d\mu(x) \right|\\
 \leq &\frac{1}{2} \int_{\Hm^n} \left( |\partial_t u_\eps|^2 + \left(\Db^\alpha a \Db_\alpha u_\eps + \frac{1}{2}u_\eps \right)^2 \right) \psi_\delta d\mu\\
 = &\frac{1}{2} \int_{\Hm^n} \left( |\partial_t u_\eps|^2 + |\Db^\alpha a \Db_\alpha u_\eps|^2 + \frac{1}{4}|u_\eps|^2 + \frac{1}{2} \Db^\alpha a \Db_\alpha (|u_\eps|^2) \right)\psi_\delta d\mu\\
 \leq &\frac{1}{2} \int_{\Hm^n} \left( |\partial_t u_\eps|^2 + |\nabla a|^2 |\nabla u_\eps|^2 + \frac{1}{4}|u_\eps|^2
  - \frac{1}{2} |u_\eps|^2 \Db_\alpha \Db^\alpha a \right)\psi_\delta d\mu\\
 &\qquad -\frac{1}{4} \int_{\Hm^n} (\Db_\alpha \psi_\delta \Db^\alpha a) u_\eps^2 d\mu\\
 \leq &\frac{1}{2} \int_{\Hm^n} \left( |\partial_t u_\eps|^2 + \frac{1}{4\rho^2}(|\nabla u_\eps|^2 - \rho^2 |u_\eps|^2) \right)\psi_\delta d\mu + C\delta M_1^2.
\end{align*}
Here we use the upper bound of $|\nabla a|$ in Remark \ref{gradient of a}. Combining this with the inequality (\ref{inequality1}) and letting $\delta \rightarrow 0$, we obtain
\begin{align*}
 \frac{p-1}{2(p+1)} \int_{-T_1}^{T_2} \int_{\Hm^n} |u_\eps|^{p+1} d\mu dt \leq & \EE_0(u_\eps(\cdot, -T_1),\partial_t u_\eps(\cdot,-T_1)) + \EE_0(u_\eps(\cdot,T_2), \partial_t u_\eps (\cdot,T_2))\\
 & + C M_1 \left\|F_\eps - F(u_\eps)\right\|_{L^1 L^2 ([-T_1,T_2]\times \Hm^n)}.
\end{align*}
Here $\EE_0 (v_0, v_1)$ is the energy for the linear shifted wave equation:
\[
 \EE_0 (v_0, v_1) := \int_{\Hm^n} \left[\frac{1}{2}(|\nabla v_0|^2 - \rho^2 |v_0|^2) + \frac{1}{2}|v_1|^2 \right] d\mu = \frac{1}{2}\|v_0\|_{H^{0,1}(\Hm^n)}^2 + \frac{1}{2}\|v_1\|_{L^2 (\Hm^n)}^2.
\]
Sending $\eps$ to zero gives
\begin{align*}
 \frac{p-1}{2(p+1)} \int_{-T_1}^{T_2} \int_{\Hm^n} |u|^{p+1} d\mu dt \leq & \EE_0(u(\cdot, -T_1),\partial_t u(\cdot,-T_1)) + \EE_0(u(\cdot,T_2), \partial_t u (\cdot,T_2))\\
 \leq & 2 \EE(u_0,u_1).
\end{align*}
Since the argument above is valid for any time interval $[-T_1,T_2]$ satisfying $-T_- < -T_1 < 0 < T_2 <T_+$, we can finish the proof by letting $T_1 \rightarrow T_-$ and $T_2 \rightarrow T_+$.

\subsection{Further Improvement on Morawetz Inequality}
We conclude this section by showing that our Morawetz inequality still holds under a weaker assumption. More precisely, we eliminate the $L^2$ assumption on $u_0$. Although it plays an important role in the process of the proof, it is actually a technical assumption instead of an essential one.
\begin{theorem} \label{Morawetz2}
Let $2 \leq n \leq 6$, $1<p<p_c$ and $(u_0,u_1)\in H^{\frac{1}{2}, \frac{1}{2}} \times H^{\frac{1}{2}, -\frac{1}{2}}(\Hm^n)$ be initial data. Assume $u$ is the solution of \eqref{(CP1)}  in the defocusing case with initial data $(u_0,u_1)$, then the energy
\[
 \EE(u,\partial_t u) = \frac{1}{2}\|u(\cdot,t)\|_{H^{0,1}(\Hm^n)}^2 + \frac{1}{2}\|\partial_t u(\cdot,t)\|_{L^2(\Hm^n)}^2 + \frac{1}{p+1}\|u(\cdot,t)\|_{L^{p+1}(\Hm^n)}^{p+1}
\]
is constant for every $t$ in the maximal lifespan $(-T_-,T_+)$. In addition, we have a Morawetz-type inequality
\begin{equation}\label{equation2}
 \int_{-T_-}^{T_+} \int_{\Hm^n} |u|^{p+1} d\mu dt \leq \frac{4(p+1)}{p-1} \EE.
\end{equation}
\end{theorem}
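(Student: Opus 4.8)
The plan is to deduce Theorem \ref{Morawetz2} from Theorem \ref{Morawetz1} by an approximation argument, the only hypothesis to be removed being that $u_0$ lie in $L^2$. First I would check that the weaker assumption still makes the energy finite. Comparing Fourier symbols, $\lambda \le (\lambda^2+\rho^2+1)^{1/2}$ yields the embedding $H^{1/2,1/2}(\Hm^n) \hookrightarrow H^{0,1}(\Hm^n)$, while Proposition \ref{Sobolevem1} with $\sigma=\tau=1/2$ gives $H^{1/2,1/2}(\Hm^n)\hookrightarrow L^{p+1}(\Hm^n)$ precisely because $p<p_c$; together with $H^{1/2,-1/2}\hookrightarrow L^2$ this makes every term of $\EE$ finite. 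The same symbol comparison shows $H^{1/2,1/2}\times H^{1/2,-1/2}\hookrightarrow H^{\sigma-1/2,1/2}\times H^{\sigma-1/2,-1/2}$ for every $\sigma\in(\sigma_p,1)$, so the data generate a unique solution $u$ with a common maximal lifespan $(-T_-,T_+)$ by Proposition \ref{uniquesolution}.

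Since the only obstruction to applying Theorem \ref{Morawetz1} is the low-frequency behaviour, I would approximate by a high-pass cutoff. Let $\mathbf{Q}_k$ be the Fourier multiplier $\lambda\mapsto 1-\phi(k\lambda)$, with $\phi$ the cutoff of Proposition \ref{Sobolevem1}, and set $u_0^{(k)}=\mathbf{Q}_k u_0$, $u_1^{(k)}=u_1$. Because $\mathbf{Q}_k u_0$ carries no frequencies below $1/k$, while the high-frequency part of any $H^{1/2,1/2}$ function already lies in $L^2$ (there the symbol $\lambda^{1/2}(\lambda^2+\rho^2+1)^{1/4}$ is bounded below), one gets $u_0^{(k)}\in H^{0,1}\cap L^2=H^1$, with $\|u_0^{(k)}-u_0\|_{H^{1/2,1/2}}\to 0$ by dominated convergence. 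Hence $(u_0^{(k)},u_1)\in H^1\times H^{1/2,-1/2}(\Hm^n)$ meets the hypotheses of Theorem \ref{Morawetz1}, so its solution $u^{(k)}$ obeys $\int\!\!\int|u^{(k)}|^{p+1}\,d\mu\,dt\le \tfrac{4(p+1)}{p-1}\EE^{(k)}$ over its maximal lifespan, with $\EE^{(k)}$ constant in time by the Invariance of Energy proposition. Continuity of each energy term under the embeddings above gives $\EE^{(k)}\to\EE$.

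I would then pass to the limit on compact subintervals. Fix $[-T_1,T_2]\subset(-T_-,T_+)$. As $(u_0^{(k)},u_1^{(k)})\to(u_0,u_1)$ in $H^{\sigma-1/2,1/2}\times H^{\sigma-1/2,-1/2}$ for $\sigma$ near $1$, the long-time perturbation theory Theorem \ref{localtheory}(f) and the continuous-dependence remark following it show that for $k$ large $[-T_1,T_2]\subset(-T_-^{(k)},T_+^{(k)})$ and $u^{(k)}\to u$ in $C([-T_1,T_2];H^{\sigma-1/2,1/2}\times H^{\sigma-1/2,-1/2})$ and in $L^{p+1}L^{p+1}([-T_1,T_2]\times\Hm^n)$; here I use that $(p+1,p+1)$ is $\sigma$-admissible once $\sigma$ is close to $1$, together with \eqref{strichartz100}. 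The $C(\ldots)$ convergence upgrades, via $H^{1/2,1/2}\hookrightarrow H^{0,1}\cap L^{p+1}$, to pointwise-in-$t$ convergence of each energy term, so $\EE(u(t),\partial_t u(t))=\lim_k\EE^{(k)}$ is constant, giving energy conservation. Finally $\int_{-T_1}^{T_2}\!\!\int|u|^{p+1}=\lim_k\int_{-T_1}^{T_2}\!\!\int|u^{(k)}|^{p+1}\le \tfrac{4(p+1)}{p-1}\EE$, and letting $T_1\to T_-$, $T_2\to T_+$ by monotone convergence yields \eqref{equation2}.

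The main obstacle, and the step deserving the most care, is this passage to the limit in space-time: one must guarantee that the approximate lifespans do not shrink below $(-T_-,T_+)$ and that the $L^{p+1}L^{p+1}$ norms converge on intervals exhausting the maximal interval. This is exactly what the long-time perturbation theory supplies, but it forces the argument to be run at a level $\sigma<1$ (so that $(p+1,p+1)$ is genuinely $\sigma$-admissible) and relies on the uniqueness across regularity levels of Proposition \ref{uniquesolution} to identify the various solutions. The continuity of the $L^{p+1}$ energy term, resting on the endpoint embedding $H^{1/2,1/2}\hookrightarrow L^{p+1}$ valid only for $p<p_c$, is the other place where the hypotheses are used sharply.
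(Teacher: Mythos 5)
Your overall strategy --- cutting off the low frequencies of $u_0$, invoking Theorem \ref{Morawetz1} for the regularized data, and passing to the limit on compact subintervals via long-time perturbation theory at a regularity level $\sigma$ close to $1$ --- is exactly the paper's argument, and your treatment of the Morawetz inequality itself is sound: that inequality only needs one-sided control of $\int\int|u^{(k)}|^{p+1}$ in the limit, which your $L^{p+1}L^{p+1}$ convergence (or even just a.e.\ convergence of a subsequence plus Fatou) supplies.

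The gap is in the energy-conservation step. You claim that the convergence $u^{(k)}\to u$ in $C([-T_1,T_2];H^{\sigma-\frac12,\frac12}\times H^{\sigma-\frac12,-\frac12})$ ``upgrades, via $H^{\frac12,\frac12}\hookrightarrow H^{0,1}\cap L^{p+1}$, to pointwise-in-$t$ convergence of each energy term.'' That embedding lives at regularity $\sigma=1$, but the perturbation theory is necessarily run at some $\sigma<1$, and for $\sigma<1$ there is no embedding $H^{\sigma-\frac12,\frac12}\hookrightarrow H^{0,1}$ nor $H^{\sigma-\frac12,-\frac12}\hookrightarrow L^2$: at high frequency the relevant Fourier symbols behave like $\lambda^{\sigma}$ versus $\lambda$, and $\lambda^{\sigma-1}$ versus $1$, respectively. (The $L^{p+1}$ term is indeed fine for $\sigma$ close to $1$, by Proposition \ref{Sobolevem1} and $p<p_c$; it is the two kinetic terms that are not controlled.) Consequently your argument yields only, via the uniform bound on $\EE^{(k)}$ together with weak compactness and lower semicontinuity of norms, the one-sided estimate $\EE(u(t_0),\partial_t u(t_0))\le \EE(u_0,u_1)$: energy could a priori leak to high frequencies in the limit. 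The paper closes exactly this gap with a time-reversal argument: from the one-sided estimate and the membership $(u(t_0),\partial_t u(t_0))\in H^{\sigma-\frac12,\frac12}\times H^{\sigma-\frac12,-\frac12}$ one deduces $(u(t_0),\partial_t u(t_0))\in H^{\frac12,\frac12}\times H^{\frac12,-\frac12}$ (low frequencies controlled by the latter norm, high frequencies by the finite energy), and then the same approximation argument applied to the backward Cauchy problem with data at $t_0$ gives $\EE(u_0,u_1)\le\EE(u(t_0),\partial_t u(t_0))$, whence equality. Your proof needs this, or an equivalent step ruling out high-frequency energy loss, to conclude that the energy is conserved rather than merely non-increasing from the initial time.
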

\begin{proof}
The idea is to use approximation techniques. By cutting off the lower frequency part of $u_0$, we can always manufacture a sequence of initial data\footnote{We may choose $u_{1,\eps} =u_1$ for each $\eps$.} $(u_{0,\eps}, u_{1,\eps})$ such that
\begin{align}
 & (u_{0,\eps}, u_{1,\eps}) \in H^1 \times H^{\frac{1}{2},-\frac{1}{2}}(\Hm^n);&
 & (u_{0,\eps}, u_{1,\eps}) \rightarrow (u_0,u_1)\; \hbox{in}\; H^{\frac{1}{2},\frac{1}{2}} \times H^{\frac{1}{2},-\frac{1}{2}}(\Hm^n).& \label{convergence01}
\end{align}
The convergence in the Sobolev spaces above also implies $(u_{0,\eps},u_{1,\eps}) \rightarrow (u_{0},u_1)$ in the space $(H^{0,1} \cap L^{p+1}) \times L^2 (\Hm^n)$ by basic Fourier analysis and Proposition \ref{Sobolevem1}. Thus we have
\begin{equation} \label{convergence0}
 \EE(u_{0,\eps},u_{1,\eps}) \rightarrow \EE(u_0,u_1).
\end{equation}
Now choose $\sigma$ sufficiently close to $1$ with a $(p,\sigma)$-compatible pair $(p_1,q_1)$ so that the local theory is available for the given exponent $p$. By Fourier analysis, the convergence (\ref{convergence01}) also implies
\[
 (u_{0,\eps}, u_{1,\eps}) \rightarrow (u_0,u_1)\; \hbox{in}\; H^{\sigma - \frac{1}{2},\frac{1}{2}} \times H^{\sigma - \frac{1}{2},-\frac{1}{2}}(\Hm^n).
\]
Given any $[-T_1,T_2] \subset (-T_-, T_+)$, by long time perturbation theory, we know the solution $u_\eps$ to \eqref{(CP1)} with initial data $(u_{0,\eps},u_{1,\eps})$ exists in the time interval $[-T_1,T_2]$ if $\eps$ is sufficiently small. Furthermore, we have
\begin{equation} \label{convergence1}
  \|u(x,t) - u_\eps (x,t)\|_{L^{p_1} L^{q_1} ([-T_1,T_2]\times \Hm^n)} \rightarrow 0.
\end{equation}
\begin{equation} \label{convergence2}
  \sup_{t \in [-T_1,T_2]} \left\|(u(t),\partial_t u(t))-(u_\eps (t), \partial_t u_\eps (t) )\right\|_{H^{\sigma-\frac{1}{2},\frac{1}{2}} \times H^{\sigma-\frac{1}{2},-\frac{1}{2}}(\Hm^n)} \rightarrow 0.
\end{equation}
Applying Theorem \ref{Morawetz1} to the solutions $u_\eps$, we obtain
\[
 \frac{1}{2}\|u_\eps (\cdot, t)\|_{H^{0,1}(\Hm^n)}^2 + \frac{1}{2}\|\partial_t u_\eps(\cdot, t)\|_{L^2(\Hm^n)}^2 + \frac{1}{p+1}\|u_\eps (\cdot,t)\|_{L^{p+1}(\Hm^n)}^{p+1} = \EE (u_{0,\eps},u_{1,\eps}).
\]
\[
 \int_{-T_1}^{T_2} \int_{\Hm^n} |u_\eps|^{p+1} d\mu dt \leq \frac{4(p+1)}{p-1} \EE(u_{0,\eps},u_{1,\eps}).
\]
Combing these with  (\ref{convergence0}), (\ref{convergence1}), (\ref{convergence2}) and letting $\eps \rightarrow 0$, we have
\[
  \frac{1}{2}\|u(\cdot,t_0)\|_{H^{0,1}(\Hm^n)}^2 + \frac{1}{2}\|\partial_t u(\cdot,t_0)\|_{L^2(\Hm^n)}^2 + \frac{1}{p+1}\|u(\cdot,t_0)\|_{L^{p+1}(\Hm^n)}^{p+1} \leq \EE(u_0,u_1);
\]
\[
 \int_{-T_1}^{T_2} \int_{\Hm^n} |u|^{p+1} d\mu dt \leq \frac{4(p+1)}{p-1} \EE(u_0,u_1).
\]
In the first inequality $t_0$ is an arbitrary time in $[-T_1,T_2]$. Sending $T_1 \rightarrow T_-$, $T_2 \rightarrow T_+$, we obtain the Morawetz inequality and a one-way energy estimate
\[
  \EE(u(\cdot,t_0), \partial_t u(\cdot,t_0)) \leq \EE(u_0,u_1)
\]
for each $t_0 \in (-T_-,T_+)$. The combination of this estimate with the fact that $(u(\cdot,t_0), \partial_t u(\cdot,t_0)) \in H^{\sigma -\frac{1}{2}, \frac{1}{2}} \times H^{\sigma- \frac{1}{2}, -\frac{1}{2}}$ implies $(u(\cdot,t_0), \partial_t u(\cdot,t_0)) \in H^{\frac{1}{2}, \frac{1}{2}} \times H^{\frac{1}{2}, -\frac{1}{2}}$. By considering the backward Cauchy problem \eqref{(CP1)}  with initial data $(u(\cdot,t_0), \partial_t u(\cdot,t_0))$, we obtain $\EE(u_0,u_1) \leq \EE(u(\cdot,t_0), \partial_t u(\cdot,t_0))$ by the same argument above. This finishes the proof of energy conservation law.
\end{proof}
\section{Scattering Results with Initial Data in the Energy Space}\label{section5}

Let $2 \leq n \leq 6$. In this section we show that the solutions to \eqref{(CP1)}  in the defocusing case scatters if
\begin{itemize}
  \item The exponent $p$  is less than the energy critical exponent $p_c = 1 + \frac{4}{n-2}$.
  \item The initial data is in the space $H^{\frac{1}{2},\frac{1}{2}} \times  H^{\frac{1}{2},-\frac{1}{2}}(\Hm^n)$.
\end{itemize}
The idea is to combine the basic local theory with the Morawetz inequality and energy conservation law obtained in the last section. We start with the simplest case: when $p$ does not exceed the conformal exponent $p_{conf} = 1 + \frac{4}{n-1}$. (When $n=2$, we require $p< p_{conf} =5$ instead.)

\subsection{Scattering for $p \in (1, p_{conf}]$}
\begin{proposition} \label{scattering01}
 Let $2 \leq n \leq 6$ and $p > 1$ satisfy
 \[
  \left\{\begin{array}{ll}
          p \leq p_{conf} = 1 + \frac{4}{n-1}, & 3 \leq n \leq 6;\\
          p < p_{conf} = 5, & n =2.
         \end{array}\right.
 \]
 Then the solution $u$ to the equation \eqref{(CP1)} in the defocusing case with initial data $(u_0,u_1) \in H^{\frac{1}{2},\frac{1}{2}}(\Hm^n) \times H^{\frac{1}{2},-\frac{1}{2}}(\Hm^n)$ exists globally in time and scatters. More precisely, there exist two pairs $(u_0^{\pm},u_1^{\pm}) \in H^{0, \frac{1}{2}} \times H^{0, -\frac{1}{2}}(\Hm^n)$, such that
 \begin{equation} \label{scattering01a}
  \lim_{t \rightarrow \pm \infty} \|(u(t),\partial_t u(t))-(S(t)(u_0^\pm,u_1^\pm), \partial_t S(t)(u_0^\pm,u_1^\pm))\|_{H^{0, \frac{1}{2}} \times H^{0, -\frac{1}{2}}(\Hm^n)} = 0.
 \end{equation}
\end{proposition}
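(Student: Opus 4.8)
The plan is to exploit the fact, special to the regime $p\le p_{conf}$, that the space-time norm controlled by the Morawetz inequality is itself the norm governing the local theory at regularity $\sigma=\tfrac12$. Concretely, the pair $(p+1,p+1)$ was already shown to be $(p,\tfrac12)$-compatible, so the finiteness of $\|u\|_{L^{p+1}L^{p+1}}$ is exactly what both the finite-time blow-up criterion (Theorem~\ref{localtheory}(d)) and the scattering criterion (Theorem~\ref{localtheory}(e)) require --- and this is precisely the quantity bounded by the energy in \eqref{equation2}.

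First I would set up the function spaces. Since $(u_0,u_1)\in H^{\frac12,\frac12}\times H^{\frac12,-\frac12}(\Hm^n)$, elementary Fourier analysis gives the embeddings $H^{\frac12,\frac12}\hookrightarrow H^{0,\frac12}$ and $H^{\frac12,-\frac12}\hookrightarrow H^{0,-\frac12}$, so the data lies in the energy space $H^{0,\frac12}\times H^{0,-\frac12}$ of the $\sigma=\tfrac12$ theory and generates a solution on a maximal interval $(-T_-,T_+)$. At the same time $H^{\frac12,\frac12}\times H^{\frac12,-\frac12}\hookrightarrow(H^{0,1}\cap L^{p+1})\times L^2$, so $\EE(u_0,u_1)<\infty$ and the hypotheses of Theorem~\ref{Morawetz2} hold; by Proposition~\ref{uniquesolution} the solution is independent of the regularity level used, so there is no ambiguity in speaking of ``the'' solution $u$.

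The key step is to feed the Morawetz inequality into the blow-up criterion. By Theorem~\ref{Morawetz2}, the energy is a finite constant and
\[
 \|u\|_{L^{p+1}L^{p+1}((-T_-,T_+)\times\Hm^n)}^{p+1}=\int_{-T_-}^{T_+}\int_{\Hm^n}|u|^{p+1}\,d\mu\,dt\le\frac{4(p+1)}{p-1}\,\EE<\infty .
\]
If $T_+$ were finite, Theorem~\ref{localtheory}(d) would force $\|u\|_{L^{p+1}L^{p+1}([0,T_+)\times\Hm^n)}=\infty$, contradicting the display; hence $T_+=\infty$, and symmetrically $T_-=\infty$, giving global existence. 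With $u$ now global and $\|u\|_{L^{p+1}L^{p+1}(\Rm^+\times\Hm^n)}<\infty$, Theorem~\ref{localtheory}(e) supplies $(u_0^+,u_1^+)\in H^{0,\frac12}\times H^{0,-\frac12}(\Hm^n)$ realizing the scattering statement \eqref{scattering01a} as $t\to+\infty$; the negative time direction is identical.

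I do not anticipate a genuine obstacle in this range --- this is exactly the ``friendly'' case flagged earlier, in which the Morawetz-controlled norm is itself an admissible Strichartz norm at $\sigma=\tfrac12$, so neither interpolation nor the long-time perturbation machinery is needed. The only points demanding care are bookkeeping: confirming that $\sigma=\tfrac12$ is an admissible level for the given $p$ (equivalently that $(p+1,p+1)$ is $(p,\tfrac12)$-compatible, already checked, with the strict inequality at $n=2$ being responsible for the restriction $p<p_{conf}=5$ there), matching the target scattering space $H^{0,\frac12}\times H^{0,-\frac12}$ to this choice of $\sigma$, and invoking Proposition~\ref{uniquesolution} so that the solution furnished by the Morawetz theorem and the one produced in the $\sigma=\tfrac12$ local theory are recognized as the same object.
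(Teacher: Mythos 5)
Your proposal is correct and follows exactly the paper's argument: Theorem \ref{Morawetz2} bounds $\|u\|_{L^{p+1}L^{p+1}((-T_-,T_+)\times\Hm^n)}$ by the energy, and since $(p+1,p+1)$ is a $(p,\tfrac12)$-compatible pair in this range of $p$, parts (d) and (e) of Theorem \ref{localtheory} yield global existence and scattering in $H^{0,\frac12}\times H^{0,-\frac12}$. Your additional bookkeeping (the embeddings of the data space and the appeal to Proposition \ref{uniquesolution}) only makes explicit what the paper leaves implicit.
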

\begin{proof}
 The Morawetz inequality proved in Theorem \ref{Morawetz2} claims $u \in L^{p+1}L^{p+1}((-T_-,T_+)\times \Hm^n)$. Since $(p+1, p+1)$ is a $(p,\frac{1}{2})$-compatible pair by our assumption, this implies global existence of the solution in time and the scattering in $H^{0,\frac{1}{2}} \times H^{0,-\frac{1}{2}}$ by part (d) and (e) of Theorem \ref{localtheory}.
\end{proof}
\begin{remark} \label{strongerh}
The conclusion of Proposition \ref{scattering01} can be upgraded to a stronger version. In fact, we have $(u_0^\pm, u_1^\pm) \in H^{\frac{1}{2}, \frac{1}{2}} \times H^{\frac{1}{2}, -\frac{1}{2}}(\Hm^n)$ and
\[
 \lim_{t \rightarrow \pm \infty} \left\|(u(t),\partial_t u(t))-(S(t)(u_0^\pm,u_1^\pm), \partial_t S(t)(u_0^\pm,u_1^\pm))\right\|_{H^{\sigma-\frac{1}{2}, \frac{1}{2}} \times H^{\sigma-\frac{1}{2}, -\frac{1}{2}}(\Hm^n)} = 0
\]
holds for any $\sigma \in [1/2, 1)$.
\end{remark}
\begin{proof}
First of all, applying backward linear propagation on (\ref{scattering01a}), we obtain
\[
 \lim_{t \rightarrow \pm \infty} \left\|S(-t)\left(\begin{array}{c} u(t)\\ \partial_t u(t)\end{array}\right) - \left(\begin{array}{c} u_0^\pm\\ u_1^\pm\end{array}\right) \right\|_{H^{0, \frac{1}{2}} \times H^{0, -\frac{1}{2}}(\Hm^n)} = 0.
\]
Here the linear propagation operator $S(t)$ is defined in the same manner as in Theorem \ref{localtheory}. Combining this convergence with the fact
\[
 \left\|S(-t)\left(\begin{array}{c} u(t)\\ \partial_t u(t)\end{array}\right)\right\|_{H^{0,1}\times L^2} =
 \left\|\left(\begin{array}{c} u(t)\\ \partial_t u(t)\end{array}\right)\right\|_{H^{0,1}\times L^2} \lesssim \left(\EE(u_0,u_1)\right)^{1/2} < \infty, \;\; \forall\, t \in \Rm,
\]
we obtain that $(u_0^{\pm}, u_1^\pm) \in H^{0,1} \times L^2 (\Hm^n)$. This implies that $(u_0^\pm, u_1^\pm) \in H^{\frac{1}{2}, \frac{1}{2}} \times H^{\frac{1}{2}, -\frac{1}{2}}(\Hm^n)$ since
\[
   \left(H^{0, \frac{1}{2}} \cap H^{0,1}\right)\times \left(H^{0, -\frac{1}{2}} \cap L^2\right)(\Hm^n) \hookrightarrow H^{\frac{1}{2}, \frac{1}{2}} \times H^{\frac{1}{2}, -\frac{1}{2}}(\Hm^n).
\]
Moreover, we obtain
\[
 \left\|\left(\begin{array}{c} u(t)\\ \partial_t u(t)\end{array}\right) - S(t) \left(\begin{array}{c} u_0^\pm\\ u_1^\pm\end{array}\right) \right\|_{H^{0,1} \times L^2(\Hm^n)} \lesssim \left(\EE(u_0,u_1)\right)^{1/2} + \|(u_0^{\pm}, u_1^\pm)\|_{H^{0,1} \times L^2 (\Hm^n)} < \infty
\]
for each $t \in \Rm$. Using this estimate on the high frequency part and the convergence (\ref{scattering01a}) on the low frequency part, we finish the proof.
\end{proof}
\subsection{Scattering Results for $p\in [5,\infty), n=2$}
In this section, we show the following scattering result.
\begin{proposition} \label{scatteringn2pgeq5}
 Let $n=2$, $p\in [5,\infty)$. Then a solution $u$ to the equation \eqref{(CP1)} in the defocusing case with initial data $(u_0,u_1) \in H^{\frac{1}{2},\frac{1}{2}}(\Hm^2) \times H^{\frac{1}{2},-\frac{1}{2}}(\Hm^2)$ exists globally in time and scatters. More precisely, there exist two pairs $(u_0^{\pm},u_1^{\pm}) \in H^{\frac{1}{2}, \frac{1}{2}} \times H^{\frac{1}{2}, -\frac{1}{2}}(\Hm^2)$, such that
 \[
  \lim_{t \rightarrow \pm \infty} \|(u(t),\partial_t u(t))-(S(t)(u_0^\pm,u_1^\pm), \partial_t S(t)(u_0^\pm,u_1^\pm))\|_{H^{\sigma-\frac{1}{2}, \frac{1}{2}} \times H^{\sigma-\frac{1}{2}, -\frac{1}{2}}(\Hm^2)} = 0
 \]
 for any $\sigma \in (1-\frac{2}{p-1},1)$.
\end{proposition}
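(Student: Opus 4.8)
The plan is to reduce the whole statement, through parts (d) and (e) of Theorem \ref{localtheory}, to the single assertion that $\|u\|_{L^{p_1}L^{q_1}((-T_-,T_+)\times\Hm^2)}$ is finite for some $(p,\sigma)$-compatible pair $(p_1,q_1)$ with $\sigma<1$: part (d) then forces $T_\pm=\infty$, and part (e) produces the scattering profiles together with convergence in $H^{\sigma-\frac12,\frac12}\times H^{\sigma-\frac12,-\frac12}$. The reason this case must be separated from Proposition \ref{scattering01} is that for $p\geq 5$ the Morawetz pair $(p+1,p+1)$ is no longer $(p,\sigma)$-compatible: its associated control pair has $(1/p_2,1/q_2)=(\tfrac1{p+1},\tfrac1{p+1})$, so (PQ2) would force $\sigma<\tfrac{3}{p+1}\leq\sigma_3(p)=1-\tfrac{2}{p-1}$, contradicting the required $\sigma>\sigma_3(p)$. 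Hence the bare $L^{p+1}L^{p+1}$ bound coming from the Morawetz inequality has to be upgraded to a compatible space-time norm.

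The two a priori bounds I would combine are the following. First, energy conservation (Theorem \ref{Morawetz2}) controls $\|u(t)\|_{H^{0,1}(\Hm^2)}$ uniformly in $t$, and since $n=2$ the hyperbolic Sobolev embedding of Proposition \ref{Sobolevem1} gives $H^{0,1}\hookrightarrow L^q$ for every $q\in(2,\infty)$; thus $\|u\|_{L^\infty_t L^q_x}\lesssim_q\EE^{1/2}$ for all such $q$. Second, Theorem \ref{Morawetz2} gives $\|u\|_{L^{p+1}_t L^{p+1}_x((-T_-,T_+))}\lesssim\EE^{1/(p+1)}$. Interpolating these mixed-norm bounds (Hölder in space, then in time), for $\theta\in(0,1)$ and $q\in(2,\infty)$ one obtains
\[
 \|u\|_{L^{p_1}L^{q_1}}\leq\|u\|_{L^\infty L^q}^{1-\theta}\,\|u\|_{L^{p+1}L^{p+1}}^{\theta}<\infty,\qquad \frac{1}{p_1}=\frac{\theta}{p+1},\quad \frac{1}{q_1}=\frac{1-\theta}{q}+\frac{\theta}{p+1}.
\]
So every pair with $(1/p_1,1/q_1)$ on the segment joining $(0,1/q)$ to $(\tfrac1{p+1},\tfrac1{p+1})$ carries a globally finite norm, with no need to subdivide the lifespan since the energy bound is a genuine $L^\infty_t$ bound.

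The crux is to verify that this reachable family actually contains a $(p,\sigma)$-compatible pair. Every reachable point satisfies $1/q_1-1/p_1=(1-\theta)/q\geq0$, i.e. $q_1\leq p_1$, whereas the pairs tabulated in Proposition \ref{localpgeq5n2} have $q_1>p_1$ throughout this range; so I would instead solve (PQ1), (Q1), (PQ2), (Q2) together with $p<p_1\leq2p$, $p<q_1<2p$ directly, imposing $q_1\leq p_1$. A short computation shows that as $\sigma\uparrow1$ the binding constraints (Q2) and (PQ1) push $q_1,p_1\to2p$ with $q_1$ just below $p_1$, and that for every $\sigma^*$ sufficiently close to $1$ such a reachable compatible pair exists (for instance, for $p=7$, $\sigma^*=\tfrac9{10}$ the pair $(p_1,q_1)=(13,\tfrac{25}{2})$ satisfies all four inequalities and is reachable with a finite $q$). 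Fixing such $\sigma^*$ and pair, the interpolation bound gives $\|u\|_{L^{p_1}L^{q_1}}<\infty$, and parts (d), (e) of Theorem \ref{localtheory} yield global existence and scattering at level $\sigma^*$. I expect this tight exponent arithmetic, which only succeeds when $\sigma^*$ is near $1$, to be the main obstacle.

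Finally, to obtain convergence for every $\sigma\in(1-\tfrac{2}{p-1},1)$ rather than only at $\sigma^*$, I would run the previous step along a sequence $\sigma^*_k\uparrow1$. The profiles $(u_0^\pm,u_1^\pm)=\lim_{t\to\pm\infty}S(-t)(u(t),\partial_t u(t))$ do not depend on the level at which the limit is taken (uniqueness of the limit in the weaker topology), so a single pair serves all levels, and for $\sigma<\sigma^*_k$ convergence in the weaker norm follows from $H^{\sigma^*_k-\frac12,\frac12}\hookrightarrow H^{\sigma-\frac12,\frac12}$. To place $(u_0^\pm,u_1^\pm)$ in $H^{\frac12,\frac12}\times H^{\frac12,-\frac12}$ I would argue exactly as in Remark \ref{strongerh}: the energy makes $S(-t)(u(t),\partial_t u(t))$ bounded in $H^{0,1}\times L^2$, fixing the high-frequency part, while the scattering already proved at any level $\sigma\geq\tfrac12$ controls the low-frequency part in $H^{0,\frac12}\times H^{0,-\frac12}$, and $(H^{0,\frac12}\cap H^{0,1})\times(H^{0,-\frac12}\cap L^2)\hookrightarrow H^{\frac12,\frac12}\times H^{\frac12,-\frac12}$.
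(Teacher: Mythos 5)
Your proposal is correct, but it reaches the key finiteness statement by a genuinely different mechanism than the paper. The paper also interpolates the unknown space-time norm against the Morawetz norm, but its second factor is an admissible norm $\|u\|_{L^{\tilde p_1}L^{\tilde q_1}}$ whose global finiteness is \emph{not} known in advance; to close the loop the paper must first prove a uniform-in-time bound on $\|(u(t),\partial_t u(t))\|_{H^{\sigma-\frac12,\frac12}\times H^{\sigma-\frac12,-\frac12}}$ (a separate Strichartz estimate with a special exponent $\sigma_1$, combined with the energy bound by frequency splitting), then partition $[0,T_+)$ into finitely many intervals on which the Morawetz norm is below a threshold $\eta$, and run a continuity argument on each interval. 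You avoid all of this by interpolating against $\|u\|_{L^\infty_t L^q_x}$, which is an a priori global bound: it costs only energy conservation (Theorem \ref{Morawetz2}) and the two-dimensional hyperbolic embedding $H^{0,1}(\Hm^2)\hookrightarrow L^q(\Hm^2)$ for every finite $q>2$, so the compatible-pair norm is finite in one stroke, with no bootstrap and no time partition. The price is that your reachable pairs must satisfy $q_1\le p_1$, and such $(p,\sigma)$-compatible pairs exist only for $\sigma$ near $1$; consequently you must recover the full range $\sigma\in\left(1-\frac{2}{p-1},1\right)$ and the membership $(u_0^\pm,u_1^\pm)\in H^{\frac12,\frac12}\times H^{\frac12,-\frac12}$ by downgrading along embeddings, exactly as you describe, while the paper proves finiteness at every $\sigma$ in the range directly and uses the downgrading only to identify profiles across levels. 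What each approach buys: yours is shorter and more elementary; the paper's scheme is the one that survives in dimensions $3\le n\le 6$ (Proposition \ref{scattering3n6}), where the embedding of the energy space into $L^q$ caps at $q=\frac{2n}{n-2}$ and your $L^\infty_t L^q_x$ endpoint for large $q$ is unavailable. One small gap to fill in a final write-up: you verify the existence of a reachable compatible pair only for $p=7$; this does hold for all $p\ge5$ and all $\sigma$ close to $1$, e.g. the pair $(p_1,q_1)=\bigl(2p,\frac{6p}{7-4\sigma}\bigr)$ is reachable with $\theta=\frac{p+1}{2p}$, $q=\frac{3(p-1)}{4(1-\sigma)}$, and satisfies (PQ1), (Q1), (PQ2), (Q2) together with the range conditions as soon as $\sigma>\max\left\{\frac34,\frac{6p-17}{6p-8}\right\}$, so the argument is uniform in $p$.
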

\begin{proof}
 Fix $\sigma \in (1-\frac{2}{p-1},1)$. Let us choose a $(p,\sigma)$-compatible pair that is also open $\sigma$-admissible as below, (see our discussion in Proposition \ref{localpgeq5n2} and Remark \ref{perturbationallowance} for the choice of $\eps$ and other details):
\[
 (p_1,q_1) = \left(\frac{3p}{2 + \sigma - 3\eps}, \frac{6p}{7- 4\sigma}\right).
\]
For a small positive number $\kappa$, we can define a pair $(\tilde{p}_1,\tilde{q}_1)$ by
\[
 \left(\frac{1}{p_1},\frac{1}{q_1}\right) = (1-\kappa)\left(\frac{1}{\tilde{p}_1},\frac{1}{\tilde{q}_1}\right) + \kappa \left(\frac{1}{p+1},\frac{1}{p+1}\right).
\]
This  means we have the following inequality for any time interval $I = [a,b) \subseteq (-T_-,T_+)$
\begin{equation}\label{interpolation1}
 \|u\|_{L^{p_1} L^{q_1}(I \times \Hm^2)} \leq \|u\|_{L^{p+1} L^{p+1}(I \times \Hm^2)}^\kappa \cdot
 \|u\|_{L^{\tilde{p}_1} L^{\tilde{q}_1}(I \times \Hm^2)}^{1-\kappa}.
\end{equation}
According to Remark \ref{openadmissible}, the pair $(\tilde{p}_1, \tilde{q}_1)$ is still $\sigma$-admissible if we choose a sufficiently small constant $\kappa$. As a result, we have a Strichartz estimate
\begin{equation}
\|u\|_{L^{\tilde{p}_1}L^{\tilde{q}_1}(I \times \Hm^2)} \leq C \left( \|(u(a), \partial_t u(a)) \|_{H^{\sigma-\frac{1}{2}, \frac{1}{2}}\times H^{\sigma-\frac{1}{2},-\frac{1}{2}}} + \|F(u)\|_{L^{p_1/p} L^{q_1/p}(I \times \Hm^2)} \right). \label{Strichartztilde}
\end{equation}
Combining this with (\ref{interpolation1}), we obtain
\begin{equation}
 \|u\|_{L^{\tilde{p}_1}L^{\tilde{q}_1}(I \times \Hm^2)} \leq  C \|(u(a), \partial_t u(a)) \|_{H^{\sigma-\frac{1}{2}, \frac{1}{2}}\times H^{\sigma-\frac{1}{2},-\frac{1}{2}}}
 + C \|u\|_{L^{p+1} L^{p+1}(I \times \Hm^2)}^{p\kappa}  \|u\|_{L^{\tilde{p}_1} L^{\tilde{q}_1}(I \times \Hm^2)}^{p(1-\kappa)}. \label{keyestimaten2}
\end{equation}
Now let us show the norm $\|(u(a), \partial_t u(a)) \|_{H^{\sigma-\frac{1}{2}, \frac{1}{2}}\times H^{\sigma-\frac{1}{2},-\frac{1}{2}}}$ above is uniformly bounded independent of the choice of $a$. We start by choosing
\begin{align*}
 & \sigma_1 = \frac{1}{4} + \frac{3}{2} \cdot \frac{1}{p+1} \in \left(\frac{1}{4}, \frac{1}{2}\right];&
 & \left(\frac{1}{p_2}, \frac{1}{q_2}\right) = \left(\frac{1}{4}, \frac{1}{p+1}\right).&
\end{align*}
One can check that the pair $(p_2,q_2)$ satisfies
\[
 \frac{1}{p_2} + \frac{2}{q_2} > \sigma_1;\qquad \frac{1}{q_2} = \frac{2\sigma_1}{3} - \frac{1}{6}; \qquad \frac{1}{p_2}, \frac{1}{q_2} \in \left(0,\frac{1}{2}\right).
\]
Therefore we are able to apply Strichartz estimates in Proposition \ref{newStri} and obtain
\begin{align}
\|(u, \partial_t u) &\|_{C((-T_-,T_+);H^{\sigma_1-\frac{1}{2}, \frac{1}{2}}\times H^{\sigma_1-\frac{1}{2},-\frac{1}{2}})}\nonumber\\
&\leq C \left( \|(u_0, u_1) \|_{H^{\sigma_1-\frac{1}{2}, \frac{1}{2}}\times H^{\sigma_1-\frac{1}{2},-\frac{1}{2}}} + \|F(u)\|_{L^{4/3} L^{(p+1)/p}((-T_-,T_+) \times \Hm^2)} \right). \label{finitelowernorm}
\end{align}
We claim the right hand side is finite. In fact
\begin{align}\notag
  \|(u_0, u_1) \|_{H^{\sigma_1-\frac{1}{2}, \frac{1}{2}}\times H^{\sigma_1-\frac{1}{2},-\frac{1}{2}}}& \leq
 \|(u_0, u_1) \|_{H^{\frac{1}{2}, \frac{1}{2}} \times H^{\frac{1}{2},-\frac{1}{2}}} < \infty;\\\label{eq3}
  \|F(u)\|_{L^{\frac{4}{3}} L^{\frac{p+1}{p}}((-T_-,T_+) \times \Hm^2)} &\leq \|F(u)\|_{L^{\frac{p+1}{p}} L^{\frac{p+1}{p}}((-T_-,T_+) \times \Hm^2)}^{\frac{3(p+1)}{4p}} \|F(u)\|_{L^{\infty} L^{\frac{p+1}{p}}((-T_-,T_+) \times \Hm^2)}^{\frac{p-3}{4p}}\\\notag
 & \leq \|u\|_{L^{p+1} L^{p+1}((-T_-,T_+) \times \Hm^2)}^{\frac{3(p+1)}{4}} \|u\|_{L^{\infty} L^{p+1}((-T_-,T_+) \times \Hm^2)}^{\frac{p-3}{4}} < \infty.
\end{align}
The last step uses the Morawetz inequality \eqref{equation2} and the energy conservation law. Thus the inequality (\ref{finitelowernorm}) implies
\begin{equation} \label{lowernorm}
 \sup_{t \in (-T_-,T_+)} \|(u(t),\partial_t u(t))\|_{H^{\sigma_1-\frac{1}{2}, \frac{1}{2}}\times H^{\sigma_1-\frac{1}{2},-\frac{1}{2}}} < \infty.
\end{equation}
On the other hand, the energy conservation law also gives the bound
\begin{equation} \label{energynorm}
 \sup_{t \in (-T_-,T_+)} \|(u(t),\partial_t u(t))\|_{H^{0,1}\times L^2 (\Hm^2)} < \infty.
\end{equation}
Using (\ref{lowernorm}) for low frequency part of $u$ and (\ref{energynorm}) for high frequency part of $u$, we obtain
\begin{equation} \label{boundfornorm}
 M := \sup_{t \in (-T_-,T_+)} \|(u(t),\partial_t u(t))\|_{H^{\sigma-\frac{1}{2}, \frac{1}{2}}\times H^{\sigma-\frac{1}{2},-\frac{1}{2}}} < \infty.
\end{equation}
Plugging this into (\ref{keyestimaten2}), we have
\begin{equation} \label{recurrence1}
 \|u\|_{L^{\tilde{p}_1}L^{\tilde{q}_1}(I \times \Hm^2)} \leq  C M
 + C \|u\|_{L^{p+1} L^{p+1}(I \times \Hm^2)}^{p\kappa}  \|u\|_{L^{\tilde{p}_1} L^{\tilde{q}_1}(I \times \Hm^2)}^{p(1-\kappa)}.
\end{equation}
Again using \eqref{equation2}, let $\eta$ be a small positive constant such that $2 CM > CM + C\eta^{p\kappa} (2CM)^{p(1-\kappa)}$ and $0 = a_0<a_1<
\cdots<a_m = T_+$ be a partition of the time interval $[0,T_+)$, such that we have
\[
 \|u\|_{L^{p+1}L^{p+1}([a_j,a_{j+1})\times \Hm^2)} < \eta
\]
for each $0 \leq j\leq m-1$. If we choose $I =[a_j,t') \subset [a_j,a_{j+1})$  we can rewrite (\ref{recurrence1}) into
\[
 \|u\|_{L^{\tilde{p}_1}L^{\tilde{q}_1}([a_j,t') \times \Hm^2)} \leq  C M
 + C \eta^{p\kappa}  \|u\|_{L^{\tilde{p}_1} L^{\tilde{q}_1}([a_j,t') \times \Hm^2)}^{p(1-\kappa)}.
\]
The norm $\|u\|_{L^{\tilde{p}_1}L^{\tilde{q}_1}([a_j,t') \times \Hm^2)}$ above is finite due to the fact that $(\tilde{p}_1,\tilde{q}_1)$ is $\sigma$-admissible, 
as long as $t' \neq T_+$. A continuity argument then immediately gives the  upper bound for each $j$
\[
 \|u\|_{L^{\tilde{p}_1}L^{\tilde{q}_1}([a_j,a_{j+1}) \times \Hm^2)} \leq 2CM.
\]
This means $\|u\|_{L^{\tilde{p}_1}L^{\tilde{q}_1}([0,T_+) \times \Hm^2)} < \infty$. We deduce from  (\ref{interpolation1}) and \eqref{equation2} that
\[
 \|u\|_{L^{p_1}L^{q_1}([0,T_+) \times \Hm^2)} < \infty.
\]
 Recalling the fact that $(p_1,q_1)$ is a $(p,\sigma)$-compatible pair, we obtain  global existence of the solution in time and  scattering in the positive time direction by part (d) and (e) of Theorem \ref{localtheory}. Since the wave equation is time-reversible, the negative time direction can  be handled in the same way. Moreover we can prove that the pairs $(u_0^\pm, u_1^\pm)$ are contained in the space $H^{\frac{1}{2}, \frac{1}{2}} \times H^{\frac{1}{2}, -\frac{1}{2}}(\Hm^2)$, by following the same argument as in the proof of Remark \ref{strongerh}. Finally we claim the pairs $(u_0^\pm,u_1^\pm)$ do not depend on the choice of $\sigma \in (1 - \frac{2}{p-1},1)$, by the embedding
\[
 H^{\sigma_2-\frac{1}{2}, \frac{1}{2}}\times H^{\sigma_2-\frac{1}{2},-\frac{1}{2}} (\Hm^2) \hookrightarrow H^{\sigma_1-\frac{1}{2}, \frac{1}{2}}\times H^{\sigma_1-\frac{1}{2},-\frac{1}{2}} (\Hm^2),
\]
if $\sigma_2 > \sigma_1$.
\end{proof}

\subsection{Scattering for $p_{conf} < p <p_c, 3 \leq n \leq 6$}

\begin{proposition} \label{scattering3n6}
 Let $3 \leq n \leq 6$, $p_{conf} < p < p_c$. Then a solution $u$ to the equation  \eqref{(CP1)} in the defocusing case with initial data $(u_0,u_1) \in H^{\frac{1}{2},\frac{1}{2}}(\Hm^n) \times H^{\frac{1}{2},-\frac{1}{2}}(\Hm^n)$ exists globally in time and scatters. More precisely, there exist two pairs $(u_0^{\pm},u_1^{\pm}) \in H^{\frac{1}{2}, \frac{1}{2}} \times H^{\frac{1}{2}, -\frac{1}{2}}(\Hm^n)$, such that
 \begin{equation}\label{scatconvergence1}
  \lim_{t \rightarrow \pm \infty} \|(u(t),\partial_t u(t))-(S(t)(u_0^\pm,u_1^\pm), \partial_t S(t)(u_0^\pm,u_1^\pm))\|_{H^{\sigma-\frac{1}{2}, \frac{1}{2}} \times H^{\sigma-\frac{1}{2}, -\frac{1}{2}}(\Hm^n)} = 0
 \end{equation}
for any $\sigma \in (\frac{n}{2}-\frac{2}{p-1},1)$.
\end{proposition}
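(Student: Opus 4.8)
The plan is to follow the same bootstrap-plus-partition scheme used for Proposition \ref{scatteringn2pgeq5}, now reading off the relevant pairs from Proposition \ref{localn36} and the openness statements in Remark \ref{perturbationn35} (for $3\le n\le 5$) and Remark \ref{perturbationn6} (for $n=6$). First I would fix $\sigma\in(\frac{n}{2}-\frac{2}{p-1},1)=(\sigma_3(p),1)$ and choose the $(p,\sigma)$-compatible pair $(p_1,q_1)=\left(\frac{(n+1)p}{2+(n-1)\sigma},\frac{2(n+1)p}{n+5-4\sigma}\right)$, which by those remarks is also \emph{open} $\sigma$-admissible. The openness is exactly what drives the argument: interpolating $(p_1,q_1)$ against the Morawetz-controlled endpoint $(p+1,p+1)$, i.e. writing $\left(\frac{1}{p_1},\frac{1}{q_1}\right)=(1-\kappa)\left(\frac{1}{\tilde p_1},\frac{1}{\tilde q_1}\right)+\kappa\left(\frac{1}{p+1},\frac{1}{p+1}\right)$ for small $\kappa>0$, produces a pair $(\tilde p_1,\tilde q_1)$ that is still $\sigma$-admissible, together with the interpolation inequality $\|u\|_{L^{p_1}L^{q_1}}\le\|u\|_{L^{p+1}L^{p+1}}^{\kappa}\|u\|_{L^{\tilde p_1}L^{\tilde q_1}}^{1-\kappa}$ and a Strichartz bound feeding back into $\|u\|_{L^{\tilde p_1}L^{\tilde q_1}}$.

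The crucial step, and the one I expect to be the main obstacle, is a uniform bound $M=\sup_{t\in(-T_-,T_+)}\|(u(t),\partial_t u(t))\|_{H^{\sigma-1/2,1/2}\times H^{\sigma-1/2,-1/2}}<\infty$. As in two dimensions, I would not estimate this norm directly but split it into a high-frequency and a low-frequency part. For the high-frequency part the energy conservation law in Theorem \ref{Morawetz2} gives $\sup_t\|(u,\partial_t u)\|_{H^{0,1}\times L^2}<\infty$, and this dominates the $H^{\sigma-1/2,1/2}$ weight at high frequencies precisely because $\sigma<1$. For the low-frequency part I would choose an auxiliary lower regularity level $\sigma_1\in(0,1)$ together with a pair $(p_2,q_2)$ whose spatial exponent is $q_2=p+1$; saturating (Q2) gives $\sigma_1=\frac{n+1}{2(p+1)}-\frac{n-3}{4}$, and one checks that $\sigma_1\in(0,1)$ and that a valid $p_2$ with $\frac{1}{p_2}\in[\frac{1}{p+1},\frac12]$ satisfying (PQ2) exists for every $3\le n\le6$ and $p_{conf}<p<p_c$. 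The point is that $\|F(u)\|_{L^{p'_2}L^{q'_2}}=\big\|\,\|u\|_{L^{p+1}}^{p}\big\|_{L^{p'_2}}$ factors as a power of the Morawetz norm $\|u\|_{L^{p+1}L^{p+1}}$ times a power of the energy norm $\|u\|_{L^\infty L^{p+1}}$, both finite by Theorem \ref{Morawetz2} and Sobolev embedding; Strichartz then yields the uniform low-frequency bound. The main technical burden is verifying these admissibility inequalities dimension by dimension, which is where the hypothesis $p<p_c$ enters.

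With $M<\infty$ in hand, the bootstrap inequality becomes $\|u\|_{L^{\tilde p_1}L^{\tilde q_1}(I\times\Hm^n)}\le CM+C\|u\|_{L^{p+1}L^{p+1}(I\times\Hm^n)}^{p\kappa}\|u\|_{L^{\tilde p_1}L^{\tilde q_1}(I\times\Hm^n)}^{p(1-\kappa)}$ on any subinterval $I\subseteq(-T_-,T_+)$. Since the Morawetz inequality \eqref{equation2} makes $\|u\|_{L^{p+1}L^{p+1}}$ finite over the whole lifespan, I would partition $[0,T_+)$ into finitely many subintervals on each of which this norm is below a threshold $\eta$ chosen so that $2CM>CM+C\eta^{p\kappa}(2CM)^{p(1-\kappa)}$, and run a continuity argument to get $\|u\|_{L^{\tilde p_1}L^{\tilde q_1}([0,T_+))}<\infty$, hence $\|u\|_{L^{p_1}L^{q_1}([0,T_+))}<\infty$ by the interpolation inequality. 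Parts (d) and (e) of Theorem \ref{localtheory} then give global existence and scattering in positive time, and time reversibility handles the negative direction. Finally, I would upgrade $(u_0^\pm,u_1^\pm)$ to $H^{1/2,1/2}\times H^{1/2,-1/2}$ exactly as in the proof of Remark \ref{strongerh}, and deduce independence of the scattering data from $\sigma$ via the embedding $H^{\sigma_2-1/2,1/2}\times H^{\sigma_2-1/2,-1/2}\hookrightarrow H^{\sigma_1-1/2,1/2}\times H^{\sigma_1-1/2,-1/2}$ for $\sigma_2>\sigma_1$.
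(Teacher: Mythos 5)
Your proposal is correct and is essentially the paper's own proof: the same two ingredients (the uniform-in-time bound obtained from the auxiliary regularity $\sigma_1=\frac{n+1}{2(p+1)}-\frac{n-3}{4}$ with $q_2=p+1$ and the Morawetz/energy control of the nonlinearity, together with an open $(p,\sigma)$-compatible pair), followed by the same interpolation-against-$(p+1,p+1)$ partition/continuity bootstrap as in Proposition \ref{scatteringn2pgeq5}, and the same endgame via parts (d), (e) of Theorem \ref{localtheory} and Remark \ref{strongerh}. One small correction: for $n=6$ the explicit pair $\left(\frac{(n+1)p}{2+(n-1)\sigma},\frac{2(n+1)p}{n+5-4\sigma}\right)$ is not in general admissible --- e.g.\ for $p$ near $p_{conf}$ and $\sigma$ near $1$ one gets $\frac{1}{p_1}=\frac{2+5\sigma}{7p}>\frac{1}{2}$ --- which is exactly why Remark \ref{perturbationn6} instead builds the open $\sigma$-admissible pair by interpolating between the $\sigma_p$-pair of Proposition \ref{localn36} and the $\sigma=1^-$ pair of Lemma \ref{s1minuslocalfor3}; since you already cite that remark for $n=6$, your argument is unaffected once that (rather than the explicit formula) is the pair used in this case.
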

\begin{proof}
 The proof is similar to that of Proposition \ref{scatteringn2pgeq5}. The proof consists of two main ingredients besides the energy conservation law and the Morawetz inequality.
 \begin{itemize}
   \item To show that the norm of the solution is bounded independent of time $t$ in the maximal lifespan. More precisely, for any $\sigma \in (\frac{n}{2}-\frac{2}{p-1},1)$ we have
    \begin{equation} \label{boundfornorm1}
      M := \sup_{t \in (-T_-,T_+)} \|(u(t),\partial_t u(t))\|_{H^{\sigma-\frac{1}{2}, \frac{1}{2}}\times H^{\sigma-\frac{1}{2},-\frac{1}{2}}} < \infty.
    \end{equation}
   \item To show that for any $\sigma \in (\frac{n}{2}-\frac{2}{p-1},1)$ there exists a $(p,\sigma)$-compatible pair $(p_1,q_1)$ which is also open $\sigma$-admissible. In other words, it satisfies
    \begin{align*}
     &\frac{1}{p_1} + \frac{n}{q_1} > \frac{n}{2} -\sigma;& &\frac{1}{q_1} > \frac{1}{2} - \frac{2\sigma}{n+1} ;&\\
     &\frac{1}{p_1}, \frac{1}{q_1} \in (0,\frac{1}{2}).& & &
    \end{align*}
   So we can rewrite it as an interpolation between $(p+1,p+1)$ and $(\tilde{p}_1,\tilde{q}_1)$, here the pair $(\tilde{p}_1,\tilde{q}_1)$ is still $\sigma$-admissible. 
 \end{itemize}
 \paragraph{First Ingredient}
 The combination of the Morawetz inequality and the energy conservation law yields $\|F(u)\|_{L^2 L^{(p+1)/p}((-T_-,T_+)\times \Hm^n)} < \infty$ as in \eqref{eq3}. If we choose
 \[
  \sigma_1 = \frac{n+1}{2(p+1)}-\frac{n-3}{4} \in \left(\frac{n-1}{2n}, \frac{1}{2}\right),
 \]
 then a Strichartz estimate gives the following uniform bound
 \begin{align*}
 \sup_{t \in (-T_-,T_+)}& \|(u(t),\partial_t u(t))\|_{H^{\sigma_1-\frac{1}{2}, \frac{1}{2}}\times H^{\sigma_1-\frac{1}{2},-\frac{1}{2}}}\\
 & \lesssim  \|(u_0,u_1)\|_{H^{\sigma_1-\frac{1}{2}, \frac{1}{2}}\times H^{\sigma_1-\frac{1}{2},-\frac{1}{2}}} + \|F(u)\|_{L^2 L^\frac{p+1}{p}((-T_-,T_+)\times \Hm^n)} < \infty.
 \end{align*}
 Combining this with the energy conservation law, we obtain the bound (\ref{boundfornorm1}).
\paragraph{Second Ingredient} This directly follows from Remark \ref{perturbationn35} and Remark \ref{perturbationn6}.
\end{proof}
\begin{remark} Proposition \ref{scattering3n6} actually implies that the scattering also happens in the critical Sobolev space $H^{\sigma_p-\frac{1}{2}, \frac{1}{2}}\times H^{\sigma_p-\frac{1}{2},-\frac{1}{2}} (\Hm^n)$. Namely, we have
\[
 \lim_{t \rightarrow \pm \infty} \|(u(t),\partial_t u(t))-(S(t)(u_0^\pm,u_1^\pm), \partial_t S(t)(u_0^\pm,u_1^\pm))\|_{H^{\sigma_p-\frac{1}{2}, \frac{1}{2}} \times H^{\sigma_p-\frac{1}{2}, -\frac{1}{2}}(\Hm^n)} = 0.
\]
This is simply a combination of the convergence (\ref{scatconvergence1}) and the embedding of spaces ($\sigma > \sigma_p$)
\[
 H^{\sigma-\frac{1}{2}, \frac{1}{2}}\times H^{\sigma-\frac{1}{2},-\frac{1}{2}} (\Hm^n) \hookrightarrow H^{\sigma_p-\frac{1}{2}, \frac{1}{2}}\times H^{\sigma_p-\frac{1}{2},-\frac{1}{2}} (\Hm^n).
\]
\end{remark}
\section{An Application on a Quintic Wave Equation on $\Rm^2$ }\label{application1}
In this section we consider the defocusing quintic wave equation on $\Rm^2$
\begin{equation}\label{(CP2)}
\left\{\begin{array}{lr} \partial_t^2 u - \Delta u = - |u|^4 u, \,\,\,\, (x,t)\in \Rm^2 \times \Rm; & \\
u |_{t=0} = u_0; & \qquad\qquad\\
\partial_t u |_{t=0} = u_1. & \end{array}\right.
\end{equation}
Suitable solutions satisfy the energy conservation law
\begin{equation} \label{def of energy a}
 E(u,\partial_t u) = \int_{\Rm^2} \left( \frac{1}{2} |u_t|^2 + \frac{1}{2}|\nabla u|^2 + \frac{1}{6}|u|^6 \right) dx =E(u_0,u_1).
\end{equation}
We also recall that the homogeneous space $\dot{H}^\frac{1}{2} \times \dot{H}^{-\frac{1}{2}}(\Rm^2)$ is critical for this problem. We show that if the initial data $(u_0,u_1)$ are radial, sufficiently smooth and small near infinity, then the solution to the equation \eqref{(CP2)} scatters.
Unlike the scattering results in higher dimensional spaces we mentioned in the introduction, the 2-dimensional case seems a little more difficult to deal with, since the scattering of the linear propagation is weaker in lower dimensions. As in the hyperbolic spaces, the main ingredients of our proof include
\begin{itemize}
  \item [(I)] A local well-posedness theory in a suitable space of functions, which usually depends on corresponding Strichartz estimates. This will be discussed in Subsection \ref{sec:localtheory} below.
  \item [(II)] An appropriate global space-time integral estimate. In \cite{benoit} a Morawetz-type inequality
      \begin{equation}
       \int_{-T_-}^{T_+} \int_{\Rm^n} \frac{|u|^{p+1}}{|x|} dx dt \leq C E \label{morawetzRn}
      \end{equation}
      was proved for the wave equation $\partial_t^2 u - \Delta u = -|u|^{p-1}u$ in $\Rm^n$ with $n \geq 3$.
      The global estimate was strongly used to gain a scattering theory here. Unfortunately, this does not work in the two dimensional case. Instead we 
      translate the solution to the wave equation \eqref{(CP2)} into a solution to the shifted wave equation \eqref{(CP1star)} on the hyperbolic plane and then apply the Morawetz-type inequality Theorem \ref{Morawetz2} and the scattering result Proposition \ref{scatteringn2pgeq5} there.
\end{itemize}
We start by claiming the main result of this section, which will be further improved in the last subsection.
\begin{theorem} \label{main1}
 Assume  the initial data $(u_0,u_1)$ are radial, smooth and satisfy the following inequalities
 \begin{align*}
  |\nabla u_0 (x)|, |u_1(x)| & \leq A {(|x|+1)^{-\frac{3}{2} - \eps}};\\
  |u_0(x)| & \leq A {(|x|+1)}^{-\frac{1}{2} -\eps};
 \end{align*}
 with two positive constants $A$ and $\eps$. Then the solution $u$ to the wave equation \eqref{(CP2)} with initial data $(u_0,u_1)$ exists globally in time and scatters with a finite space-time norm
 \[
  \|u\|_{L^6 L^6 (\Rm \times \Rm^2)} < \infty.
 \]
 Equivalently there exist two pairs $(u_0^\pm, u_1^\pm) \in \dot{H}^{\frac{1}{2}} \times \dot{H}^{-\frac{1}{2}} (\Rm^2)$, such that
 \[
  \lim_{t \rightarrow \pm \infty} \left\|\left(u(t) - S(t)(u_0^\pm, u_1^\pm),
 \partial_t u(t) - \partial_t S(t)(u_0^\pm, u_1^\pm)\right)\right\|_{\dot{H}^{\frac{1}{2}} \times \dot{H}^{-\frac{1}{2}}} = 0.
 \]
 Here $S(t)(u_0^\pm,u_1^\pm)$ is the solution of the linear wave equation with initial data $(u_0^\pm,u_1^\pm)$.
\end{theorem}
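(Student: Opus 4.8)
The plan is to transport the whole problem from $\Rm^2$ to the hyperbolic plane $\Hm^2$ via Tataru's transformation \cite{tataru}, run the hyperbolic scattering machinery of the previous sections there, and carry the conclusion back. Since the quintic power is energy-subcritical in two dimensions (recall $p_c = +\infty$ when $n=2$), the decay hypotheses \eqref{condition1} already force a finite energy \eqref{def of energy a}, so the local theory of Subsection \ref{sec:localtheory} together with energy conservation yields global existence of $u$ in the energy space $\dot{H}^1 \times L^2(\Rm^2)$. The real content is therefore the global space-time bound $\|u\|_{L^6 L^6(\Rm \times \Rm^2)} < \infty$, from which scattering in $\dot{H}^{1/2} \times \dot{H}^{-1/2}$ follows by the standard Strichartz argument, the pair $(6,6)$ being exactly the $\dot{H}^{1/2}$-critical wave-Strichartz exponent in $\Rm^2$ (indeed $\frac16 + \frac26 = \frac12 = 1 - \frac12$).

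First I would introduce hyperbolic coordinates inside a forward light cone with vertex at $(t,x)=(-1,\mathbf{0})$, writing a point with $(t+1)^2-|x|^2 = s^2 > 0$ as $(t+1,x) = s(\cosh r, \omega \sinh r)$, and set $v(\tau, y) = s^{\rho} u(t,x)$ with $y=(r,\omega)\in\Hm^2$ and $\tau = \log s$ the hyperbolic time. As in \cite{tataru}, this intertwines \eqref{(CP2)} with the shifted quintic equation \eqref{(CP1star)} on $\Hm^2 \times \Rm$. The crucial coincidence is that the weight exponent $\rho = (n-1)/2 = 1/2$ equals $2/(p-1)$ precisely because $p = 5 = p_{conf}$ is the conformal power in dimension two; this makes the transformation exactly conformal for the quintic and, in particular, preserves the density, $|u|^6\,dx\,dt = |v|^6\,d\mu\,d\tau$. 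Consequently $\|u\|_{L^6 L^6}$ over the interior of the cone equals $\|v\|_{L^6 L^6(\Rm \times \Hm^2)}$, which is finite by the Morawetz inequality of Theorem \ref{Morawetz2} and the scattering result of Proposition \ref{scatteringn2pgeq5} (applied with $p=5$), \emph{provided} the transformed Cauchy data $(v_0, v_1)$ lie in $H^{1/2,1/2} \times H^{1/2,-1/2}(\Hm^2)$.

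The crux of the argument, and the only place where the precise rates in \eqref{condition1} are used, is verifying exactly this membership. I would read off $(v_0,v_1)$ as the transform of $u$ on a fixed hyperboloid inside the cone, say $\{(t+1)^2-|x|^2=1\}$; finite speed of propagation together with the decay of $(u_0,u_1)$ transfers pointwise decay onto the hyperboloid, and the radial hypothesis reduces the computation of the $H^{1/2,1/2}\times H^{1/2,-1/2}(\Hm^2)$ norm to one-dimensional weighted integrals in $r$ against the Jacobian of the change of variables and the factor $s^{\rho}$. The rates $|u_0|\lesssim (|x|+1)^{-1/2-\eps}$ and $|\nabla u_0|,|u_1|\lesssim(|x|+1)^{-3/2-\eps}$ are calibrated so that these integrals converge precisely when $\eps>0$, mirroring the fact that $-1/2-\eps$ is the borderline Euclidean decay for $u_0\in\dot{H}^{1/2}(\Rm^2)$. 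I expect this weighted estimate to be the most delicate computation, and the advantage of the hyperbolic Morawetz inequality is exactly that it tolerates this minimal decay.

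Finally I would patch the regions a single cone misses. Backward-in-time behavior is handled by the mirror transformation on the backward cone with vertex $(1,\mathbf{0})$. The portion of space-time lying outside both cones, the spacelike wedge $\{|x| > |t|+1\}$, together with the thin transition layers between each cone and its reference hyperboloid, is controlled directly: there the decay of the data makes the energy in far-out balls small, so finite speed of propagation and the small-data and long-time perturbation theory of Subsection \ref{sec:localtheory} give a finite $L^6 L^6$ contribution. The main obstacle I anticipate is twofold, namely the data-translation estimate of the previous paragraph and the gluing near the light cone itself, where the transformation degenerates as $s \to 0$; there one must check that the contribution from a fixed neighborhood of the cone is harmless using finite energy and the subcritical local theory rather than the transformation. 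Summing the interior, exterior and transition estimates yields $\|u\|_{L^6 L^6(\Rm \times \Rm^2)} < \infty$, and hence the stated scattering in $\dot{H}^{1/2} \times \dot{H}^{-1/2}(\Rm^2)$.
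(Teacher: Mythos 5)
Your overall skeleton --- Tataru's transformation into $\Hm^2$, the hyperbolic Morawetz/scattering theory of Sections \ref{section4}--\ref{section5} applied to $v=e^{\rho\tau}u$, then transforming back (your observation that $\rho=2/(p-1)$ makes $|u|^6\,dx\,dt=|v|^6\,d\mu\,d\tau$ is correct and is exactly the paper's change of variables) and treating the exterior of the cone separately --- is indeed the paper's strategy, but the proposal has a genuine gap at its center. You claim that ``finite speed of propagation together with the decay of $(u_0,u_1)$ transfers pointwise decay onto the hyperboloid.'' This is false as stated, and it conceals the main technical work. A point of the reference hyperboloid with $|x|$ large lies at time $t\approx|x|$, and its backward light cone contains the whole ball $B(x,t)$, hence the region near the origin where the data has size $A$; finite speed of propagation restricts the domain of dependence but does nothing to prevent waves launched from the strong-field region from accumulating near the light cone, which is exactly where the hyperboloid lives. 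What is actually required is a global-in-time a priori decay estimate for the nonlinear \emph{solution} --- and, since the hyperbolic energy of $v$ involves $\partial_s v$ and $\partial_\tau v$, for its \emph{derivatives} --- in the region $\{|x|>t+R\}$. In the paper this is the content of Section \ref{sec:pointwisees}: the representation-formula estimate (Lemma \ref{lm1}), the nonlinear bootstrap along the fixed-point iteration (Proposition \ref{pointwise estimate}) giving $|u|\le B_1|x|^{-1/2}(|x|-t)^{-\delta}$, and the radial null-coordinate estimates for $(\sqrt{r}u)_t\pm(\sqrt{r}u)_r$ (Proposition \ref{lm3}), which are precisely what make the transformed energy density $g(\tau,r)$ integrable (the dangerous quadratic term is rewritten as the good product $\bigl((\sqrt{r}u)_t+(\sqrt{r}u)_r\bigr)\cdot\sqrt{r}(u_t+u_r)$). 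This is roughly half of the proof; your proposal neither supplies it nor identifies it correctly --- it is not a weighted-integral computation on the initial data but a propagation-of-decay argument for the solution itself.

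Two further points. First, your plan to verify $(v_0,v_1)\in H^{1/2,1/2}\times H^{1/2,-1/2}(\Hm^2)$ for the untruncated transformed data is both harder than necessary and problematic: finite hyperbolic energy does not embed into this space (the relevant Fourier multiplier $(\lambda^2+\rho^2+1)^{1/4}\lambda^{-1/2}$ blows up as $\lambda\to 0$), and the sign-indefinite term $-\rho^2|v|^2$ makes even the energy of the full, non-compactly-supported data delicate. The paper sidesteps this entirely: it truncates the data of $v$ at radius $s_0$ (smooth and compactly supported, hence admissible for Proposition \ref{scatteringn2pgeq5} and Theorem \ref{Morawetz2}), shows the energies $\EE(v_{0,s_0},v_{1,s_0})$ are bounded uniformly in $s_0$, uses that the Morawetz bound depends on the energy alone, and passes to the limit by finite speed of propagation. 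Second, your exterior argument has a quantifier problem: with the vertex frozen at $(-1,\mathbf{0})$, the wedge $\{|x|>|t|+1\}$ depends on data whose critical norm is of size $A$, which is not small; smallness holds only on $\{|y|>R\}$ with $R=R(A,\eps)$ large, and then the cone vertex must be lowered accordingly (the paper takes $t_0<-\sqrt{R^2+1}$), since otherwise the slab $\{t+1\le|x|\le t+R\}$ is covered by neither region. The paper in fact controls the exterior directly from the pointwise bound, estimate \eqref{ineq7}, rather than by small-data theory; your small-data route is viable once $R$ and the vertex are coupled, but it does not remove the need for the missing decay estimates, which the interior part of the argument requires in any case.
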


\begin{remark} \label{H12A}
 The assumptions on initial data $(u_0,u_1)$ in Theorem \ref{main1} immediately give
 \[
  \int_{\Rm^2} \left(|\nabla u_0|^{\frac{4}{3}} + |u_1|^{\frac{4}{3}}\right) dx \leq 2 A^{\frac{4}{3}} \int_{\Rm^2} (1+|x|)^{-2-\frac{4}{3} \eps} dx \lesssim_\eps A^{\frac{4}{3}}.
 \]
 Thus we have $(u_0,u_1) \in \dot{W}^{1,\frac{4}{3}} \times L^{\frac{4}{3}} (\Rm^2)$. Sobolev embedding implies that $(u_0,u_1)$ is in the space $\dot{H}^{\frac{1}{2}} \times \dot{H}^{-\frac{1}{2}} (\Rm^2)$, which is the critical Sobolev space for this problem. Furthermore, we can show that this pair of initial data comes with a finite energy
 \begin{align*}
 E(u_0,u_1) &\lesssim \int_{\Rm^2} \left[|\nabla u_0|^2 + |u_1|^2 + |u_0|^6 \right]dx\\
 &\lesssim \int_{\Rm^2} \left[A^2 (|x|+1)^{-3-2\eps} + A^6 (|x|+1)^{-3-6\eps} \right]dx\\
 &\lesssim A^2 + A^6.
\end{align*}
\end{remark}

\subsection{Local theory in $\dot{H}^{\frac{1}{2}} \times \dot{H}^{-\frac{1}{2}}(\Rm^2)$} \label{sec:localtheory}

The basic tool to develop a local theory is the following Strichartz estimate from Proposition 3.1 of \cite{strichartz}.

\begin{lemma} [A Strichartz estimate] 
\label{strichartz}
Let $u$ be the solution to the following linear wave equation in a time interval $I$ containing $0$
\[
 \left\{\begin{array}{l} \partial_t^2 u - \Delta u = F(x,t), \,\,\,\, (x,t)\in \Rm^2 \times I;\\
u |_{t=0} = u_0; \\
\partial_t u |_{t=0} = u_1.\end{array}\right.
\]
Then we have the following space-time norm estimate
\begin{align*}
 \|u\|_{L^6 L^6 (I \times \Rm^2)} + &\|(u,\partial_t u)\|_{C(I;\dot{H}^{\frac{1}{2}} \times \dot{H}^{-\frac{1}{2}}(\Rm^2))}\\
 & \leq C \left[\|(u_0,u_1)\|_{\dot{H}^{\frac{1}{2}} \times \dot{H}^{-\frac{1}{2}}(\Rm^2)} + \|F\|_{L^{\frac{6}{5}} L^{\frac{6}{5}} (I \times \Rm^2)}\right].
\end{align*}
The constant $C$ does not depend on the time interval $I$.
\end{lemma}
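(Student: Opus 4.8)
The plan is to obtain this as a special case of the standard Strichartz theory for the free wave propagator on $\Rm^2$, carried out frequency by frequency in the Keel--Tao framework. First I would reduce to the half-wave group $U(t)=e^{it\sqrt{-\Delta}}$, writing the homogeneous part of the solution as $\cos(t\sqrt{-\Delta})u_0 + \frac{\sin(t\sqrt{-\Delta})}{\sqrt{-\Delta}}u_1$ and the forced part through Duhamel's formula $\int_0^t \frac{\sin((t-s)\sqrt{-\Delta})}{\sqrt{-\Delta}}F(s)\,ds$, so that it suffices to bound $U(t)$ on data normalized in $L^2$. The two inputs are the conservation $\|U(t)f\|_{L^2}=\|f\|_{L^2}$ and the frequency-localized dispersive estimate, which in $\Rm^2$ follows from stationary phase applied to the oscillatory kernel of $U(t)$: for a Littlewood--Paley piece $P_k$ at frequency $|\xi|\sim 2^k$,
\[
 \|P_k U(t)f\|_{L^\infty(\Rm^2)} \lesssim 2^{2k}\bigl(1+2^k|t|\bigr)^{-1/2}\,\|P_k f\|_{L^1(\Rm^2)},
\]
the exponent $1/2=(n-1)/2$ being the wave decay rate in dimension two.

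Next I would rescale each dyadic block to unit frequency. After rescaling, $P_0U(t)$ obeys $\|P_0U(t)\|_{L^2\to L^2}\lesssim 1$ and $\|P_0U(t)P_0\|_{L^1\to L^\infty}\lesssim |t|^{-1/2}$ (the global bound $(1+|t|)^{-1/2}\le|t|^{-1/2}$ suffices), which are precisely the hypotheses of the abstract Strichartz theorem with decay exponent $\sigma=1/2$. Both target pairs $(q,r)=(6,6)$ and the energy pair $(q,r)=(\infty,2)$ satisfy the sharp admissibility identity $\tfrac1q+\tfrac{\sigma}{r}=\tfrac14=\tfrac{\sigma}{2}$, and neither is the forbidden endpoint (both have $q>2$). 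The abstract theorem therefore supplies, at unit frequency, the homogeneous bounds $\|U(t)P_0f\|_{L^6L^6}\lesssim\|P_0f\|_{L^2}$ and $\|U(t)P_0f\|_{L^\infty L^2}\lesssim\|P_0f\|_{L^2}$, together with their inhomogeneous counterparts; the causal truncation $\int_0^t$ in Duhamel's formula is installed by the Christ--Kiselev lemma, which applies since $6>1$, and the dual exponent of $L^6L^6$ is exactly $L^{6/5}L^{6/5}$.

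Then I would undo the scaling and reassemble. The rescaling reinserts a factor $2^{k/2}$ that converts the unit-frequency $L^2$ norm into the $\dot H^{1/2}$ norm of $u_0$ (and $\dot H^{-1/2}$ of $u_1$), so summing the dyadic estimates through the Littlewood--Paley square function --- legitimate because $2<6<\infty$ gives the vector-valued boundedness on $L^6L^6$ --- yields the full homogeneous and inhomogeneous bounds
\[
 \|u\|_{L^6L^6(I\times\Rm^2)}+\|(u,\partial_t u)\|_{C(I;\dot H^{1/2}\times\dot H^{-1/2})} \lesssim \|(u_0,u_1)\|_{\dot H^{1/2}\times\dot H^{-1/2}}+\|F\|_{L^{6/5}L^{6/5}(I\times\Rm^2)}.
\]
Every propagator estimate is stated on the whole line and merely restricted to $I$ afterwards, so the constant $C$ is manifestly independent of $I$.

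The main obstacle, and the reason the argument is run dyadically rather than in one stroke, is the derivative loss in the wave dispersive estimate: unlike the Schr\"odinger flow, $U(t)$ decays only after frequency localization, so the clean Keel--Tao hypotheses hold only for unit-frequency pieces and the global estimate must be recovered by Littlewood--Paley summation. The one place where the precise position of the exponent matters is that $(6,6)$, although sitting on the sharp admissibility line, stays away from the genuinely delicate $(2,\cdot)$ endpoint; this is what lets the $TT^*$ and Christ--Kiselev steps proceed with no extra care.
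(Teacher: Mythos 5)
Your argument is sound, but it takes a genuinely different route from the paper, which in fact offers no proof at all: the lemma is quoted directly as Proposition 3.1 of Ginibre--Velo \cite{strichartz}, and the only remark the paper adds is that the constant is independent of $I$ --- which, as you also note, is automatic because the cited estimates are proved on all of $\Rm \times \Rm^2$ and then restricted. What you have done instead is reconstruct the standard modern proof: the frequency-localized dispersive bound $\|P_k e^{it\sqrt{-\Delta}} f\|_{L^\infty} \lesssim 2^{2k}(1+2^k|t|)^{-1/2}\|P_k f\|_{L^1}$, the Keel--Tao abstract theorem with decay exponent $\sigma = \tfrac{n-1}{2} = \tfrac12$ (under which $(6,6)$ and $(\infty,2)$ are indeed sharp admissible, $\tfrac16 + \tfrac{1/2}{6} = \tfrac14 = \tfrac{\sigma}{2}$, and away from any forbidden endpoint), the Christ--Kiselev lemma for the causal truncation in Duhamel's formula, and Littlewood--Paley reassembly converting the dyadic factors $2^{\pm k/2}$ into the $\dot{H}^{1/2} \times \dot{H}^{-1/2}$ norms; the pair $(6,6)$ with data in $\dot{H}^{1/2}$ also sits correctly on the scaling line $\tfrac1q + \tfrac2r = 1 - s$. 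The trade-off is the expected one: the paper's citation is economical and rests on Ginibre--Velo's earlier (pre-Keel--Tao) interpolation machinery, while your argument is self-contained and makes transparent why precisely these exponents and this regularity appear.

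Two small inaccuracies, neither of which breaks the proof. First, Christ--Kiselev applies because the target time exponent strictly exceeds the dual source exponent, i.e.\ $6 > \tfrac65$; the condition is $q > \tilde{q}'$, not ``$6 > 1$.'' Second, the square-function summation must also be performed on the inhomogeneous side, where one needs $\bigl(\sum_k \|P_k F\|_{L^{6/5}L^{6/5}}^2\bigr)^{1/2} \lesssim \|F\|_{L^{6/5}L^{6/5}}$; this uses Minkowski's inequality in the direction available when the exponent lies in $(1,2]$, which is a separate fact from the vector-valued bound for $2 < 6 < \infty$ that you invoke for the left-hand side.
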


\noindent Before moving to the proof of well-posedness we introduce the definition of a solution in our context. The notation $F(u)$ will be used to represent the non-linear term $-|u|^4 u$ in this whole section.
\begin{definition} [Solutions]
We say $u(t)$ is a solution of the Cauchy problem \eqref{(CP2)} in the time interval $I$,
if $(u(t),\partial_t u(t)) \in C(I;{\dot{H}^{\frac{1}{2}}}\times{\dot{H}^{-\frac{1}{2}}}(\Rm^2))$, with a finite norm $\|u\|_{L^6 L^6(J \times \Rm^2)}$ for any bounded closed interval $J \subseteq I$ so that the integral equation
\[
 u(t) = S(t)(u_0,u_1) + \int_0^t \frac{\sin ((t-\tau)\sqrt{-\Delta})}{\sqrt{-\Delta}} F(u(\tau)) d\tau
\]
holds for all time $t \in I$.
\end{definition}
\noindent By the Strichartz estimates and a fixed-point argument, we have the following results. See, for instance, \cite{bahouri, locad1, kenig, kenig1, ls, local1, ss2} for more details.
\begin{lemma} \label{local1}
There exists a constant $\delta > 0$ such that if the initial data $(u_0,u_1) \in \dot{H}^{1/2} \times \dot{H}^{-1/2}(\Rm^2)$ satisfy
\[
 \|S(t)(u_0,u_1)\|_{L^6 L^6 (I \times \Rm^2)} < \delta
\]
for a time interval $I$ containing $0$, then there exists a solution to \eqref{(CP2)} in the time interval $I$ with the given initial data $(u_0,u_1)$.
\end{lemma}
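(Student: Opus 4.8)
The plan is to set up a standard contraction-mapping argument for the Duhamel operator in the space-time space $L^6 L^6(I \times \Rm^2)$, using the Strichartz estimate of Lemma \ref{strichartz} as the only analytic input. I would define the map
\[
 \Phi(w)(t) = S(t)(u_0,u_1) + \int_0^t \frac{\sin((t-\tau)\sqrt{-\Delta})}{\sqrt{-\Delta}} F(w(\tau))\, d\tau,
\]
whose fixed point is by definition a solution to \eqref{(CP2)}, and work in the complete metric space $X = \{w : \|w\|_{L^6 L^6(I \times \Rm^2)} \le R\}$ with distance $d(w_1,w_2) = \|w_1 - w_2\|_{L^6 L^6(I \times \Rm^2)}$, for a radius $R$ to be fixed. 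The crucial observation is that the Duhamel term is itself the solution of the linear wave equation with zero initial data and forcing $F(w)$, so applying Lemma \ref{strichartz} to it alone controls its $L^6 L^6$ norm by $C\|F(w)\|_{L^{6/5}L^{6/5}}$, while the linear piece $S(t)(u_0,u_1)$ is handled directly by the hypothesis $\|S(t)(u_0,u_1)\|_{L^6 L^6(I \times \Rm^2)} < \delta$.

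The heart of the matter is the elementary fact that the quintic nonlinearity is perfectly tuned to the dual Strichartz exponent, since $5 \cdot \tfrac{6}{5} = 6$. Because $|F(w)| = |w|^5$, Hölder gives exactly
\[
 \|F(w)\|_{L^{6/5} L^{6/5}(I \times \Rm^2)} = \|w\|_{L^6 L^6(I \times \Rm^2)}^5,
\]
and the pointwise bound $|F(w_1)-F(w_2)| \lesssim (|w_1|^4 + |w_2|^4)|w_1-w_2|$ together with Hölder yields
\[
 \|F(w_1)-F(w_2)\|_{L^{6/5}L^{6/5}} \lesssim \left(\|w_1\|_{L^6 L^6}^4 + \|w_2\|_{L^6 L^6}^4\right)\|w_1-w_2\|_{L^6 L^6}.
\]
Feeding these into Lemma \ref{strichartz}, I would obtain, for $w, w_1, w_2 \in X$, the estimates $\|\Phi(w)\|_{L^6 L^6} \le \delta + C R^5$ and $d(\Phi(w_1),\Phi(w_2)) \le C R^4\, d(w_1,w_2)$.

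Taking $R = 2\delta$ and then choosing $\delta$ small enough that $C(2\delta)^4 \le \tfrac12$ makes $\Phi$ simultaneously a self-map of $X$ (since $\delta + 32C\delta^5 \le 2\delta$) and a contraction, so the Banach fixed-point theorem produces a unique fixed point $u \in X$. Finally I would substitute $u$ back into Lemma \ref{strichartz} once more, which upgrades the solution to $(u,\partial_t u) \in C(I; \dot{H}^{1/2}\times \dot{H}^{-1/2}(\Rm^2))$ and shows that the $L^6 L^6$ norm is finite on every bounded closed subinterval, as required by the definition of a solution. I do not expect a genuine obstacle here: the argument is routine once Lemma \ref{strichartz} is available, and the only step needing care is verifying the two nonlinear estimates above. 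It is worth emphasizing that smallness is imposed on the linear evolution's $L^6 L^6$ norm rather than on the data norm, which is precisely what lets the statement apply to arbitrarily large data on suitably short (or otherwise favorable) intervals $I$.
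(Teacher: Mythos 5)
Your proof is correct and is exactly the argument the paper has in mind: the paper does not write out a proof of this lemma, stating only that it follows ``by the Strichartz estimates and a fixed-point argument'' with references to the standard literature, and your contraction-mapping scheme in $L^6L^6$ (exploiting $5\cdot\tfrac{6}{5}=6$ and applying Lemma \ref{strichartz} to the Duhamel term with zero data) is precisely that standard argument, carried out correctly.
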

This is a typical statement for a well-posedness result when the data belong to a critical space.
\begin{theorem} [Local solution]
For any initial data $(u_0,u_1) \in \dot{H}^{\frac{1}{2}} \times \dot{H}^{-\frac{1}{2}}(\Rm^2)$, there is a maximal interval $(-T_{-}(u_0,u_1), T_{+}(u_0,u_1))$ in which the equation has a solution.
\end{theorem}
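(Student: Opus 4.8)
The plan is to bootstrap from the small-data statement of Lemma \ref{local1} to arbitrary critical data by exploiting the finiteness of the global linear Strichartz norm. First I would apply Lemma \ref{strichartz} with $F \equiv 0$ over the whole line, which gives
\[
 \|S(t)(u_0,u_1)\|_{L^6 L^6 (\Rm \times \Rm^2)} \leq C \|(u_0,u_1)\|_{\dot{H}^{\frac{1}{2}} \times \dot{H}^{-\frac{1}{2}}(\Rm^2)} < \infty,
\]
so the free evolution of any such data has finite space-time norm over all of $\Rm$. Since $|S(t)(u_0,u_1)|^6$ is then integrable on $\Rm \times \Rm^2$, absolute continuity of the Lebesgue integral produces an interval $I_0 = (-a,a)$ on which $\|S(t)(u_0,u_1)\|_{L^6 L^6(I_0 \times \Rm^2)} < \delta$, with $\delta$ the threshold of Lemma \ref{local1}. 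That lemma immediately furnishes a solution of \eqref{(CP2)} on $I_0$.

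To pass from this local solution to a maximal one, I would run the standard continuation argument. Define $T_+$ as the supremum of all $T > 0$ for which a solution, in the sense of our definition, exists on $[0,T)$, and define $T_-$ symmetrically for negative times. The key auxiliary fact is local uniqueness, which comes from the contraction estimate underlying Lemma \ref{local1}: using the Lipschitz bound $\|F(u_1)-F(u_2)\|_{L^{6/5}L^{6/5}} \lesssim (\|u_1\|_{L^6 L^6}^4 + \|u_2\|_{L^6 L^6}^4)\|u_1-u_2\|_{L^6 L^6}$ on intervals where the $L^6 L^6$ norms are small, two solutions agreeing on an overlap must coincide. Hence the local solutions glue consistently into a single solution on $(-T_-,T_+)$, and by construction this interval cannot be enlarged, so it is maximal.

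The step I expect to require the most care is the interplay between critical scaling and continuation. Because $\dot{H}^{1/2} \times \dot{H}^{-1/2}(\Rm^2)$ is the critical space for \eqref{(CP2)}, the length of the existence interval produced by Lemma \ref{local1} is \emph{not} controlled by the norm of the data alone, but only through the finer quantity $\|S(t)(u_0,u_1)\|_{L^6 L^6}$ measured on subintervals. One therefore cannot iterate with a uniform time step; the continuation must track how the free-evolution Strichartz mass distributes in time, and in particular the possibility $T_+ < \infty$ reflects a genuine blow-up of this space-time norm rather than of the energy. The remaining ingredients---existence on $I_0$, conditional uniqueness, and the gluing---are routine once Lemma \ref{strichartz} and Lemma \ref{local1} are in hand, and I would only sketch them, referring to the standard critical-space fixed-point scheme in \cite{bahouri, kenig, ls} already cited above.
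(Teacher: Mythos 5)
Your proposal is correct and is essentially the argument the paper intends: the paper states this theorem as a direct consequence of the Strichartz estimate (Lemma \ref{strichartz}), the small-data lemma (Lemma \ref{local1}), and the standard critical fixed-point/continuation scheme, deferring details to the cited references. Your use of the global finiteness of $\|S(t)(u_0,u_1)\|_{L^6L^6}$ plus absolute continuity to shrink the interval, followed by uniqueness-based gluing to a maximal interval, is exactly that standard scheme, including the correct observation that the existence time is controlled by the linear evolution's space-time norm rather than by the size of the critical norm of the data.
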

\begin{theorem} [Scattering with small data]
There exists a constant $\delta_1 > 0$ such that if the norm of the initial data
$\|(u_0,u_1)\|_{\dot{H}^{\frac{1}{2}} \times \dot{H}^{-\frac{1}{2}}(\Rm^2)} < \delta_1$, then the Cauchy problem \eqref{(CP2)} has a global-in-time solution $u$ with $\|u\|_{L^6 L^6 (\Rm \times \Rm^2)} < \infty$.
\end{theorem}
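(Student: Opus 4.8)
The plan is to run a contraction-mapping argument directly on the whole line $I = \Rm$, which is possible because the Strichartz constant in Lemma \ref{strichartz} is independent of the time interval. The mechanism that makes this work is that the quintic nonlinearity closes perfectly on the single pair $(6,6)$: since $|F(u)| = |u|^5$, a direct computation gives
\[
 \|F(u)\|_{L^{6/5} L^{6/5}(I \times \Rm^2)} = \|u\|_{L^6 L^6(I \times \Rm^2)}^5,
\]
and, using $|F(u)-F(v)| \lesssim (|u|^4 + |v|^4)|u-v|$ together with H\"older's inequality ($|u|^4 \in L^{3/2}$, $|u-v| \in L^6$, and $\tfrac{2}{3} + \tfrac{1}{6} = \tfrac{5}{6}$),
\[
 \|F(u) - F(v)\|_{L^{6/5} L^{6/5}} \lesssim \left(\|u\|_{L^6 L^6}^4 + \|v\|_{L^6 L^6}^4\right)\|u - v\|_{L^6 L^6}.
\]
Thus the Duhamel term lands back in the space dual to the one it is measured in, and no auxiliary exponents or Sobolev embeddings are needed.

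First I would apply the homogeneous case ($F = 0$) of Lemma \ref{strichartz} over $I = \Rm$ to obtain
\[
 \|S(t)(u_0,u_1)\|_{L^6 L^6(\Rm \times \Rm^2)} \leq C \|(u_0,u_1)\|_{\dot{H}^{1/2} \times \dot{H}^{-1/2}(\Rm^2)} < C \delta_1.
\]
Choosing $\delta_1 = \delta/C$, with $\delta$ the constant of Lemma \ref{local1}, makes the hypothesis of that lemma hold on all of $\Rm$, which already yields a global-in-time solution. To obtain the stated finiteness of the full-line norm I would instead make the fixed point explicit: set up the solution map $\Phi(u)(t) = S(t)(u_0,u_1) + \int_0^t \frac{\sin((t-\tau)\sqrt{-\Delta})}{\sqrt{-\Delta}} F(u(\tau))\,d\tau$ on the ball $B = \{u : \|u\|_{L^6 L^6(\Rm \times \Rm^2)} \leq 2C\delta_1\}$. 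Feeding the two nonlinear estimates above into the inhomogeneous Strichartz bound shows that, for $\delta_1$ small, $\Phi$ maps $B$ into itself and contracts there, so its unique fixed point is a global solution with $\|u\|_{L^6 L^6(\Rm \times \Rm^2)} \leq 2C\delta_1 < \infty$, which is exactly the assertion.

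I do not expect a genuine obstacle: the entire difficulty is contained in the interval-uniform Strichartz estimate of Lemma \ref{strichartz}, which is quoted, and the one point that must be used with care is precisely that uniformity, since it lets the contraction be run on $I = \Rm$ in a single step rather than on a short interval followed by a continuation argument. Finally, although the theorem only asserts global existence with a finite space-time norm, the scattering promised by the title follows at once: because $\|F(u)\|_{L^{6/5} L^{6/5}(\Rm \times \Rm^2)} = \|u\|_{L^6 L^6}^5 < \infty$, the Duhamel integral forms a Cauchy family in $\dot{H}^{1/2} \times \dot{H}^{-1/2}$ as $t \to \pm\infty$, so that $S(-t)(u(t),\partial_t u(t))$ converges to limits $(u_0^\pm,u_1^\pm)$ which realize the scattering.
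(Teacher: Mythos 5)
Your proposal is correct and is essentially the paper's own argument: the paper does not write out a proof of this theorem, but instead invokes exactly the combination you use, namely the interval-uniform Strichartz estimate of Lemma \ref{strichartz} together with a standard fixed-point argument (citing \cite{bahouri, locad1, kenig, kenig1, ls, local1, ss2}), and your global-in-time contraction on the ball in $L^6 L^6(\Rm \times \Rm^2)$, with the quintic nonlinearity closing on the single pair $(6,6)$, is precisely that argument. The concluding remark on scattering via the Cauchy property of $S(-t)(u(t),\partial_t u(t))$ is also the standard mechanism the paper relies on in its part (e)-type statements, so nothing is missing.
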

\begin{lemma} [Standard finite time blow-up criterion] \label{finite time criterion}
If $T_{+} < \infty$, 
then $\|u\|_{L^6 L^6([0,T_{+})\times \Rm^2)} = \infty$.
\end{lemma}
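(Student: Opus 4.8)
The plan is to argue by contradiction through the contrapositive: I would assume that $T_+ < \infty$ yet $\|u\|_{L^6 L^6([0,T_+)\times\Rm^2)} < \infty$, and then show that the solution can be continued slightly past $T_+$, contradicting the maximality of the interval $(-T_-,T_+)$ supplied by the local existence theorem. The first step exploits the finiteness of the space-time norm to produce a small tail: since $\int_0^{T_+}\|u(\cdot,t)\|_{L^6(\Rm^2)}^6\,dt < \infty$, for any $\eta > 0$ there is a time $t_0 < T_+$ with $\|u\|_{L^6 L^6([t_0,T_+)\times\Rm^2)} < \eta$. I would then treat $(u(t_0),\partial_t u(t_0)) \in \dot{H}^{\frac{1}{2}}\times\dot{H}^{-\frac{1}{2}}(\Rm^2)$ as new initial data, with $\eta$ to be fixed small below.

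Next I would estimate the free evolution of this data. Writing the Duhamel formula on $[t_0,t)$ isolates the linear part as $S(t-t_0)(u(t_0),\partial_t u(t_0)) = u(t) - \int_{t_0}^t \frac{\sin((t-\tau)\sqrt{-\Delta})}{\sqrt{-\Delta}} F(u(\tau))\,d\tau$. Taking the $L^6 L^6([t_0,T_+)\times\Rm^2)$ norm of both sides, bounding the first term by $\eta$, and applying the Strichartz estimate of Lemma \ref{strichartz} to the Duhamel term together with the pointwise identity $|F(u)| = |u|^5$ (which yields $\|F(u)\|_{L^{6/5}L^{6/5}} = \|u\|_{L^6 L^6}^5$), I obtain $\|S(\cdot-t_0)(u(t_0),\partial_t u(t_0))\|_{L^6 L^6([t_0,T_+)\times\Rm^2)} \le \eta + C\eta^5$. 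Hence the free evolution of the data at $t_0$ has small space-time norm up to the putative blow-up time.

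Finally I would run a continuity and extension argument. Because $(u(t_0),\partial_t u(t_0))$ lies in the critical space, Lemma \ref{strichartz} with zero forcing shows the full free evolution has finite $L^6 L^6$ norm on $[t_0,\infty)$, so $T \mapsto \|S(\cdot-t_0)(u(t_0),\partial_t u(t_0))\|_{L^6 L^6([t_0,T)\times\Rm^2)}$ is continuous and nondecreasing. Since its value at $T_+$ is at most $\eta + C\eta^5$, choosing $\eta$ small enough that $\eta + C\eta^5 < \delta$ (the small-data threshold of Lemma \ref{local1}) and invoking continuity yields an extension parameter $\tau > 0$ for which this norm remains below $\delta$ on $[t_0, T_+ + \tau)$. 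Lemma \ref{local1} then produces a solution with data $(u(t_0),\partial_t u(t_0))$ on $[t_0, T_+ + \tau)$; by uniqueness it coincides with $u$ on $[t_0,T_+)$ and so extends $u$ past $T_+$, the desired contradiction.

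I expect the main obstacle to be the bookkeeping in this last continuity step: one must ensure that smallness of the free-evolution tail merely on $[t_0,T_+)$ genuinely transfers to an open extension $[t_0, T_+ + \tau)$ with norm still below $\delta$, which relies on the global finiteness of the linear Strichartz norm for critical data rather than on smallness up to $T_+$ alone. Assembling the Duhamel bound, the small-data existence threshold, and this continuity into a clean extension is where the argument must be carried out with care.
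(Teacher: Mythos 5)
Your proof is correct, and it is exactly the standard argument the paper itself relies on: Lemma \ref{finite time criterion} is stated there without proof as part of the usual Strichartz/fixed-point local theory package (with citations), and your combination of tail smallness, the Duhamel formula with the Strichartz estimate of Lemma \ref{strichartz}, and continuation past $T_+$ via the small-data Lemma \ref{local1} is precisely that standard argument. You also correctly handle the one delicate point, namely that smallness of the free evolution on $[t_0,T_+)$ extends to $[t_0,T_++\tau)$ by continuity of the linear Strichartz norm, which uses its global finiteness for critical data.
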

\begin{proposition} [Long-time perturbation theory] \label{perturbation theory} (see also \cite{kenig1, shen2, pertao}) Let $M$ be a positive constant. There exists a constant $\eps_0 = \eps_0 (M)>0$, such that if $\eps < \eps_0$, then for any approximation solution $\tilde{u}$ defined on $\Rm^2 \times I$ ($0\in I$)
and any initial data $(u_0,u_1) \in \dot{H}^{1/2} \times \dot{H}^{-1/2}(\Rm^2)$ satisfying
\[
 (\partial_t^2 - \Delta) (\tilde{u}) - F(\tilde{u}) = e(x,t), \,\,\,\,\, (x,t) \in \Rm^2 \times I;
\]
\[
 \|\tilde{u}\|_{L^6 L^6 (I \times \Rm^2)} < M;\qquad \|(\tilde{u}(0),\partial_t\tilde{u}(0))\|_{\dot{H}^{1/2}\times \dot{H}^{-1/2}(\Rm^2)}< \infty;
\]
\[
 \|e(x,t)\|_{L^{6/5} L^{6/5}(I \times \Rm^2)}+ \|S(t)(u_0-\tilde{u}(0),u_1 - \partial_t \tilde{u}(0))\|_{L^6 L^6 (I\times \Rm^2)} \leq \eps;
\]
there exists a solution $u(x,t)$ of \eqref{(CP2)} defined in the interval $I$ with the initial data $(u_0,u_1)$ and satisfying
\[
 \|u(x,t) - \tilde{u}(x,t)\|_{L^6 L^6(I\times \Rm^2)} < C(M) \eps.
\]
\end{proposition}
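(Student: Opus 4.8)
The plan is to run a standard subdivision-and-induction argument, reducing the long-time statement to repeated application of the short-time Strichartz estimate in Lemma \ref{strichartz} together with the pointwise difference bound for the quintic nonlinearity. First I would fix a small threshold $\delta_0 > 0$, to be chosen depending only on the Strichartz constant $C$, and split the interval $I$ into $N = N(M,\delta_0)$ consecutive subintervals $I_j = [t_j, t_{j+1}]$, with $0 = t_0 < t_1 < \cdots < t_N$, on each of which $\|\tilde u\|_{L^6 L^6(I_j \times \Rm^2)} \leq \delta_0$. Since $\|\tilde u\|_{L^6 L^6(I)} < M$, the number of pieces satisfies $N \lesssim (M/\delta_0)^6 + 1$, so $N$ depends only on $M$.

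Next I would isolate the free evolution of the data mismatch. Writing $V(t) = S(t)(u_0 - \tilde u(0), u_1 - \partial_t \tilde u(0))$, so that $\|V\|_{L^6 L^6(I)} \leq \eps$ by hypothesis, I set $u = \tilde u + V + R$ and derive the equation $(\partial_t^2 - \Delta) R = F(\tilde u + V + R) - F(\tilde u) - e$ with vanishing Cauchy data $R(0) = \partial_t R(0) = 0$. The goal then becomes to show $\|R\|_{L^6 L^6(I)} \leq C(M)\eps$, since $\|u - \tilde u\|_{L^6 L^6} \leq \|V\| + \|R\| \lesssim C(M)\eps$. The essential analytic input is the quintic difference estimate $|F(a+b) - F(a)| \lesssim (|a|^4 + |b|^4)|b|$, which via H\"older in the dual Strichartz space $L^{6/5}L^{6/5}$ gives $\|F(\tilde u + V + R) - F(\tilde u)\|_{L^{6/5}L^{6/5}(I_j)} \lesssim (\delta_0^4 + \|V\|_{L^6(I_j)}^4 + \|R\|_{L^6(I_j)}^4)(\|V\|_{L^6(I_j)} + \|R\|_{L^6(I_j)})$.

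Then I would run the induction on $j$. Let $a_j = \|(R(t_j), \partial_t R(t_j))\|_{\dot{H}^{1/2} \times \dot{H}^{-1/2}}$, with $a_0 = 0$. Applying Lemma \ref{strichartz} on $I_j$ controls simultaneously $\|R\|_{L^6 L^6(I_j)}$ and the endpoint quantity $a_{j+1}$, via its $L^6L^6$ and its $C(I;\dot{H}^{1/2}\times \dot{H}^{-1/2})$ components, in terms of $a_j$, the nonlinear bound above, and $\|e\|_{L^{6/5}(I_j)} \leq \eps$. Choosing $\delta_0$ so that $C\delta_0^4 \leq 1/2$ lets the linear-in-$R$ contribution $C\delta_0^4\|R\|_{L^6(I_j)}$ be absorbed, and a short continuity argument on each $I_j$, using that $\|R\|_{L^6 L^6([t_j,t])}$ is continuous in $t$ and vanishes at $t_j$, closes the estimate provided the higher-order terms $\|R\|_{L^6(I_j)}^5$ stay negligible. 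This produces a recursion of the form $a_{j+1} \leq K a_j + K\eps$ and $\|R\|_{L^6(I_j)} \leq K(a_j + \eps)$, for a constant $K$ depending only on the Strichartz constant; iterating over the $N$ intervals gives $a_N \lesssim K^N \eps$ and hence $\|R\|_{L^6 L^6(I)} \lesssim N K^N \eps =: C(M)\eps$.

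The main obstacle will be the bookkeeping of this induction: the bound accumulates a factor roughly exponential in $N$, and $N$ itself depends on $M$, so I must choose $\eps_0 = \eps_0(M)$ small enough that at every step the a priori smallness needed to absorb the quintic self-interaction $\|R\|_{L^6(I_j)}^5$ genuinely holds. Concretely, one runs a secondary induction certifying $a_j \leq C_j \eps$ for an explicit increasing sequence $C_j$ and imposes thresholds of the type $\eps_0 \leq (C(M))^{-1}$ that keep $\|R\|_{L^6(I_j)}$ inside the radius where the continuity/bootstrap step is valid. Everything else is routine once the subdivision count $N(M)$ and the threshold $\delta_0$ are fixed.
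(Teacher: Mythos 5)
Your proof is correct and follows essentially the same route the paper relies on: the paper states this proposition without proof, deferring to \cite{kenig1, shen2, pertao}, and the argument there is exactly your scheme of splitting $I$ into $N(M)$ subintervals of small $L^6L^6$ norm for $\tilde{u}$, writing $u = \tilde{u} + V + R$ so that only the hypothesized $L^6L^6$ bound on the free evolution $V$ (and not any Sobolev bound on the data mismatch) is needed, and then closing a Strichartz-plus-quintic-difference bootstrap whose constants grow exponentially in $N$. The only point to keep explicit is that on each subinterval the a priori estimate must be combined with the blow-up criterion of Lemma \ref{finite time criterion} to guarantee that $u$ genuinely extends across all of $I_j$, which your continuity step implicitly supplies.
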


\subsection{Preliminary Estimates on Solution $u$} \label{sec:pointwisees}

The first step of the proof for the main Theorem \ref{main1} is to show that for any given $t>0$ the solution $u(x,t)$ and its derivatives decay at a certain rate when $|x| \rightarrow \infty$ as its initial data does. The first main tool is the following estimate on linear solutions.
\begin{lemma} \label{lm1} Let $u$ be the solution to the following linear wave equation in a time interval $[0,T]$
\[
 \left\{\begin{array}{l} \partial_t^2 u - \Delta u = F(x,t), \,\,\,\, (x,t)\in \Rm^2 \times [0,T];\\
u |_{t=0} = u_0; \\
\partial_t u |_{t=0} = u_1.\end{array}\right.
\]
In addition, we assume that for $R, A, B> 0$ and  $0 < \alpha, \beta < 1/2$,
\begin{align*}
 |u_0(x)| & \leq A |x|^{-1/2-\alpha}, & & \hbox{if}\; |x| > R;\\
 |\nabla u_0 (x)| &\leq A |x|^{-3/2 - \alpha}, & &\hbox{if}\; |x| > R;\\
 |u_1(x)| &\leq A |x|^{-3/2 - \alpha}, & &\hbox{if}\; |x| > R;\\
 |F(x,t)| &\leq B |x|^{-5/2} (|x|-t)^{-\beta}, & &\hbox{if}\; |x|> R + t.
\end{align*}
Then there exists a constant $C = C (\alpha, \beta)\geq 1$ such that the solution $u$ satisfies
\[
 |u(x,t)| \leq C |x|^{-1/2}\left[ A (|x|-t)^{-\alpha} +  B (|x|-t)^{-\beta} \right], \; \hbox{if}\; t\in [0,T]\;\hbox{and}\; |x| > R + t.
\]
\end{lemma}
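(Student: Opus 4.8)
The plan is to use the explicit Kirchhoff--Poisson representation of solutions of the $2$-dimensional wave equation. Writing $u=u_{\mathrm{hom}}+u_{\mathrm{inhom}}$, the homogeneous part is
\[
u_{\mathrm{hom}}(x,t)=\frac{1}{2\pi}\,\partial_t\!\int_{|y-x|\le t}\frac{u_0(y)}{\sqrt{t^2-|y-x|^2}}\,dy+\frac{1}{2\pi}\int_{|y-x|\le t}\frac{u_1(y)}{\sqrt{t^2-|y-x|^2}}\,dy,
\]
and $u_{\mathrm{inhom}}(x,t)$ is the Duhamel time-integral of $F$. The first point is purely geometric: for $|x|=r>R+t$ the backward light cone of $(x,t)$ meets $\{s=s_0\}$ in the ball $|y-x|\le t-s_0$, on which $|y|\ge r-(t-s_0)>R$ and $|y|-s_0\ge r-t>0$, so every pointwise hypothesis holds on the whole region influencing $u(x,t)$. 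I would then rescale $y=x+ts\,\omega$ ($s\in[0,1]$, $\omega\in{\mathbb S}^{1}$) to move the $t$-dependence out of the singular kernel, so that $\partial_t$ acting on the $u_0$-integral produces one term carrying $u_0$ and one carrying $t\,\nabla u_0$, both against the \emph{positive} kernels $s(1-s^2)^{-1/2}$ and $s^2(1-s^2)^{-1/2}$. Since all three data bounds depend only on $|y|$, I may rotate so that $x=(r,0)$ and reduce each contribution to model integrals of the type
\[
K(\gamma)=\int_0^1\!\!\int_0^{2\pi}\frac{s}{\sqrt{1-s^2}}\,\bigl(r^2+2rts\cos\theta+t^2s^2\bigr)^{-\gamma/2}\,d\theta\,ds .
\]

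The heart of the matter is the angular integral $\int_0^{2\pi}(r^2+2ra\cos\theta+a^2)^{-q/2}\,d\theta$ with $a=ts\in(0,r)$. Near the antipodal angle $\theta=\pi$, where $|y|$ attains its minimum $r-a$, one has $r^2+2ra\cos\theta+a^2\approx(r-a)^2+ra\,\phi^2$. For the subcritical exponent $q=\frac12+\alpha<1$ this singularity is integrable and the angular integral stays comparable to $r^{-q}$ uniformly in $a$, so $K(\tfrac12+\alpha)\lesssim r^{-1/2-\alpha}\le r^{-1/2}(r-t)^{-\alpha}$, disposing of the $u_0$-term. For the supercritical exponent $q=\frac32+\alpha>1$ (arising from $\nabla u_0$ and from $u_1$) the angular integral concentrates at $\theta=\pi$ and is of size $(r-a)^{1-q}(ra)^{-1/2}$; feeding this into the $s$-integral, the dangerous region is $s\to1$, where $s(1-s^2)^{-1/2}\sim(2(1-s))^{-1/2}$ and the near edge $r-ts$ approaches $r-t$. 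The substitution $1-s=\frac{r-t}{t}w$ converts the $s$-integral into a convergent $w$-integral (convergent precisely because $\alpha>0$) and yields $t\,K(\tfrac32+\alpha)\lesssim r^{-1/2}(r-t)^{-\alpha}$. This is the step that genuinely produces the characteristic weight $(r-t)^{-\alpha}$ rather than the weaker $r^{-\alpha}$.

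For the forcing term I would read Duhamel's formula as a superposition $u_{\mathrm{inhom}}(x,t)=\int_0^t v_s(x,t)\,ds$, where $v_s$ solves the homogeneous equation with velocity datum $F(\cdot,s)$ switched on at time $s$; thus $v_s$ is treated by the same ``$u_1$-type'' machinery, now with spatial weight $|y|^{-5/2}$ and the extra factor $(|y|-s)^{-\beta}$. After the same angular and near-edge reduction, the two weights combine: the intermediate one-dimensional integral $\int_0^P\frac{dU}{\sqrt{U(P-U)}}=\pi$ collapses the remaining variables and leaves $\int_0^\infty(b+P)^{-5/2}(c+P)^{-\beta}\,dP\approx b^{-3/2-\beta}$, with $b=r-(t-s)$ and $c=r-t$, whence $|v_s(x,t)|\lesssim B\,r^{-1}\,(r-(t-s))^{-1/2-\beta}$. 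Integrating in $s$ gives $\int_0^t(r-\ell)^{-1/2-\beta}\,d\ell\lesssim r^{1/2-\beta}$ because $\beta<\frac12$, so the $F$-contribution is $\lesssim B\,r^{-1/2-\beta}\le B\,r^{-1/2}(r-t)^{-\beta}$. Summing the four pieces gives the claimed bound.

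I expect the main obstacle to be exactly this last estimate: one must \emph{retain} both decay factors of $F$ throughout the space-time integration, since the crude bound $(|y|-s)^{-\beta}\le(r-t)^{-\beta}$ destroys the cancellation and leaves a spurious logarithm $\log\frac{r}{r-t}$. Controlling the joint concentration of $|y|^{-5/2}$ and $(|y|-s)^{-\beta}$ near the light cone, together with extracting the sharp $r^{-1/2}$ (rather than $(r-t)^{-1/2}$) from the angular concentration, is the delicate point; the homogeneous terms are comparatively routine once the angular estimate is established.
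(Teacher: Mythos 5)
Your route is essentially the paper's own: the explicit two--dimensional representation formula, pointwise bounds for integrals of $|y|^{-\kappa}$ over circles centered at $x$ (your angular estimates are precisely the content of the paper's Lemma \ref{sphere}), and one--dimensional integrals in which every decay factor is retained (the paper's Lemmas \ref{lm03} and \ref{lm2}). Your handling of the data terms is correct: the subcritical angular bound $\lesssim r^{-q}$, the supercritical bound $\lesssim (r-a)^{1-q}(ra)^{-1/2}$, and the substitution $1-s=\frac{r-t}{t}w$ (convergent precisely because $\alpha>0$) reproduce, in different packaging, the paper's estimates of $I_1$ and $I_2$.

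The Duhamel term, however, contains a concrete error: the intermediate bound $|v_s(x,t)|\lesssim B\,r^{-1}(r-(t-s))^{-1/2-\beta}$ is false. Set $T=t-s$ and run your own reduction: co-area in $\ell=|y|$ together with the identity $\int_0^P \frac{dU}{\sqrt{U(P-U)}}\,=\pi$ gives
\[
 \int_{\{|y|=\ell\}\cap B(x,T)}\frac{dS(y)}{\sqrt{T^2-|y-x|^2}}\;\asymp\;\sqrt{\ell/r}\,,
\]
so the quantity you are actually bounding (the Duhamel integrand with $|F|$ replaced by its majorant) satisfies
\[
 \int_{B(x,T)}\frac{B|y|^{-5/2}(|y|-s)^{-\beta}}{\sqrt{T^2-|y-x|^2}}\,dy\;\asymp\;B\,r^{-1/2}\int_{r-T}^{r+T}\ell^{-2}(\ell-s)^{-\beta}\,d\ell .
\]
In the regime $T\ge r/3$ and $r-T\ll r$ (allowed by the hypotheses: take $t=r-2R$, $s$ small, $r$ large), the right-hand side is $\gtrsim B\,r^{-1/2}(r-T)^{-1-\beta}$, which exceeds your claimed bound by the unbounded factor $\bigl(r/(r-T)\bigr)^{1/2}$; the slip is that the arc integral carries $\sqrt{\ell/r}$, not $\ell/r$. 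Fortunately, the correct output of your own method, $|v_s|\lesssim_\beta B\,r^{-1/2}(r-(t-s))^{-1-\beta}$ --- which is exactly what the paper obtains for $I_{3,1}$ via its Lemma \ref{lm2} --- still closes the argument, since
\[
 \int_0^t (r-t+s)^{-1-\beta}\,ds\;\le\;\beta^{-1}(r-t)^{-\beta}
\]
(now using $\beta>0$ rather than $\beta<\tfrac12$) yields the desired $B\,r^{-1/2}(r-t)^{-\beta}$. So the Duhamel step needs its exponents repaired, but no new idea is required.
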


\medskip
In order to deal with the Poisson's kernel involved in the proof of this lemma, we need to introduce a few technical lemmata first.
\begin{lemma}\label{sphere}
 Let $|x| > r > 0$. Then we have
\[
 \int_{|y-x|=r} |y|^{-\kappa} dS(y) \leq \left\{ \begin{array}{ll}
 \displaystyle C(\kappa) \min \left\{\frac{1}{(|x|-r)^{\kappa-1}}, \frac{r}{(|x|-r)^{\kappa}} \right\}, & \hbox{if}\;\;\kappa >1;\\
 \displaystyle C(\kappa) \min \left\{|x|^{1-\kappa}, \frac{r}{(|x|-r)^{\kappa}} \right\}, &
 \hbox{if}\;\; 0 < \kappa <1. \end{array} \right.
\]
\end{lemma}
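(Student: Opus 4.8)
The plan is to reduce everything to an explicit one–variable integral and then estimate it by elementary means. By rotational symmetry about the origin we may assume $x = (|x|,0)$, and we parametrize the circle $\{|y-x|=r\}\subset\Rm^2$ by $y=(|x|+r\cos\theta,\,r\sin\theta)$, $\theta\in[0,2\pi)$, so that $dS=r\,d\theta$ and
\[
 |y|^2 = |x|^2 + r^2 + 2|x|r\cos\theta .
\]
Writing $a=|x|$ for brevity, the quantity to estimate becomes
\[
 I := \int_{|y-x|=r}|y|^{-\kappa}\,dS(y) = r\int_0^{2\pi}\bigl(a^2+r^2+2ar\cos\theta\bigr)^{-\kappa/2}\,d\theta ,
\]
and $|y|$ ranges over $[a-r,\,a+r]$, with the minimum attained at $\theta=\pi$. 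Since the integrand is even in $\theta$, I would work on $[0,\pi]$ and double.

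The easy half of the $\min$, namely the bound $r(a-r)^{-\kappa}$, holds in both ranges of $\kappa$ at once: the integrand is dominated by $(a-r)^{-\kappa}$ and the total arc length is $2\pi r$, so $I\le 2\pi r\,(a-r)^{-\kappa}$. This is the crude estimate that is sharp when $r$ is small. For the second, more delicate member of the $\min$ I would split $[0,\pi]=[0,\tfrac\pi2]\cup[\tfrac\pi2,\pi]$. On $[0,\tfrac\pi2]$ one has $\cos\theta\ge 0$, hence $|y|\ge a$ and $|y|^{-\kappa}\le a^{-\kappa}$, giving a contribution $\lesssim r\,a^{-\kappa}\le a^{1-\kappa}$ (using $r\le a$); this is harmless in both cases. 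The real work is near $\theta=\pi$, where $|y|$ gets small.

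On $[\tfrac\pi2,\pi]$, substituting $\theta=\pi-\phi$ and using the concavity bound $1-\cos\phi=2\sin^2(\phi/2)\ge \tfrac{2}{\pi^2}\phi^2$ on $\phi\in[0,\tfrac\pi2]$, I get
\[
 a^2+r^2-2ar\cos\phi = (a-r)^2 + 2ar(1-\cos\phi)\ \ge\ (a-r)^2 + \tfrac{4ar}{\pi^2}\phi^2 .
\]
The rescaling $\phi=\tfrac{\pi(a-r)}{2\sqrt{ar}}\psi$ then converts the contribution into $\;\pi\sqrt{r/a}\,(a-r)^{1-\kappa}\int_0^{\Psi}(1+\psi^2)^{-\kappa/2}\,d\psi$, with cutoff $\Psi=\sqrt{ar}/(a-r)$. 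For $\kappa>1$ the model integral $\int_0^\infty(1+\psi^2)^{-\kappa/2}\,d\psi$ converges to a finite $C(\kappa)$, so I may drop the cutoff and, absorbing the factor $\sqrt{r/a}\le 1$, obtain the claimed bound $C(\kappa)(a-r)^{1-\kappa}=C(\kappa)(a-r)^{-(\kappa-1)}$.

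The main obstacle is the subcritical regime $0<\kappa<1$, where $\int^\infty(1+\psi^2)^{-\kappa/2}$ \emph{diverges} and one cannot extend the integral to infinity; the finite cutoff $\Psi$ must be tracked honestly. Here I would use the uniform incomplete-integral estimate $\int_0^\Psi(1+\psi^2)^{-\kappa/2}\,d\psi\le C(\kappa)\,\Psi^{1-\kappa}$ (valid for all $\Psi>0$ when $0<\kappa<1$, splitting at $\psi=1$). Plugging in $\Psi=\sqrt{ar}/(a-r)$ collapses the $(a-r)$ powers and yields a contribution $\lesssim \sqrt{r/a}\,(ar)^{(1-\kappa)/2}=r^{1-\kappa/2}a^{-\kappa/2}$, which is $\le a^{1-\kappa}$ because $r\le a$ and $1-\kappa/2>0$. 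Combining the two pieces on $[0,\pi]$ gives $I\le C|x|^{1-\kappa}$. Together with the crude bound of the second paragraph, this establishes $I\le C(\kappa)\min\{\,|x|^{1-\kappa},\,r(|x|-r)^{-\kappa}\}$ for $0<\kappa<1$ and $I\le C(\kappa)\min\{(|x|-r)^{-(\kappa-1)},\,r(|x|-r)^{-\kappa}\}$ for $\kappa>1$, as claimed; the delicate point throughout is matching exponents in the truncated $\kappa<1$ computation so that the truncation reproduces exactly $|x|^{1-\kappa}$ rather than a weaker power.
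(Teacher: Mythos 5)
Your proof is correct and follows essentially the same route as the paper's: the same crude bound $2\pi r(|x|-r)^{-\kappa}$ from $|y|\ge |x|-r$, the same quadratic-in-angle lower bound $|y|^2 \ge (|x|-r)^2 + c\,\phi^2$ near the point of the circle closest to the origin, and the same rescaling to the convergent integral $\int_0^\infty (1+\psi^2)^{-\kappa/2}\,d\psi$ when $\kappa>1$. The only real difference is the case $0<\kappa<1$, where the paper simply discards the $(|x|-r)^2$ term and uses the integrability of $\theta^{-\kappa}$ on $[0,\pi]$ to get $C(\kappa)\,r^{1-\kappa}\le C(\kappa)\,|x|^{1-\kappa}$, while you retain the cutoff $\Psi=\sqrt{ar}/(a-r)$ and invoke an incomplete-integral bound; your version is slightly longer but equally valid, and both close the case by the same final observation $r\le |x|$.
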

\begin{proof} Since $|y| \geq |x| -r$, we have
\begin{equation} \label{intes1}
 \int_{|y-x|=r} |y|^{-\kappa} dS(y) \leq \int_{|y-x|=r} (|x|-r)^{-\kappa} dS(y) = \frac{2\pi r}{(|x|-r)^{\kappa}}.
\end{equation}
\begin{figure}[h]
\centering
\includegraphics[scale=.80]{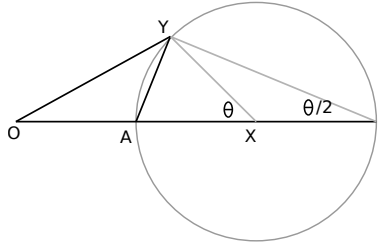}
\caption{Estimate of $|y|$}\label{drawing04}
\end{figure}
On the other hand, Figure \ref{drawing04} shows
\[
 |y|= |OY| \geq \sqrt{|OA|^2 + |AY|^2} = \sqrt{(|x|-r)^2 + 4 r^2 \sin^2 (\theta/2)} \geq
 \sqrt{(|x|-r)^2 + \frac{4r^2 \theta^2}{\pi^2}}
\]
Thus we have
\begin{equation} \label{intlin1}
 \int_{|y-x|=r} |y|^{-\kappa} dS(y)
  \leq 2 \int_0^\pi \left((|x|-r)^2 + \frac{4 r^2 \theta^2}{\pi^2}\right)^{-\kappa/2} r d\theta.
\end{equation}
If $\kappa > 1$, applying change of variable on the right hand of the inequality above, we obtain
\begin{align}
 \int_{|y-x|=r} |y|^{-\kappa} dS(y)
 & \leq \frac{\pi}{(|x|-r)^{\kappa-1}} \int_0^\pi \left(1 + \frac{4 r^2 \theta^2}{\pi^2 (|x|-r)^2}\right)^{-\kappa/2} \frac{2r}{\pi (|x|-r)} d\theta \nonumber\\
 & \leq \frac{\pi}{(|x|-r)^{\kappa-1}} \int_{0}^\infty (1+\tau^2)^{-\kappa/2} d \tau \nonumber\\
 & \leq \frac{C(\kappa)}{(|x|-r)^{\kappa -1}}. \label{intes2}
\end{align}
On the other hand, if $0 < \kappa < 1$, the inequality \eqref{intlin1} yields
\begin{equation} \label{intes3}
 \int_{|y-x|=r} |y|^{-\kappa} dS(y) \leq 2 \int_0^\pi \left(\frac{4 r^2 \theta^2}{\pi^2}\right)^{-\kappa/2} r d\theta
 \leq 2^{1-\kappa} \pi^{\kappa} r^{1-\kappa} \int_0^\pi \theta^{-\kappa} d \theta
 \leq C(\kappa) |x|^{1-\kappa}.
\end{equation}
Combining the inequalities \eqref{intes1}, \eqref{intes2} and \eqref{intes3}, we finish the proof.
\end{proof}

\begin{lemma} \label{lm03}
Assume that the constants $\kappa_1, \kappa_2, r_1, r_2 > 0$ satisfy the conditions
\[
 r_1 < r_2;\;\;\;\;\; \kappa_1 < 1;\;\;\;\;\; \kappa_1 + \kappa_2 > 1.
\]
Then we have
\[
 \int_0^{r_1} (r_1 -r)^{-\kappa_1}(r_2-r)^{-\kappa_2} dr \leq C (r_2-r_1)^{1-\kappa_1-\kappa_2}.
\]
Here the constant $C$ can be chosen as
\[
 C = \frac{1}{1 -\kappa_1} +  \frac{1}{\kappa_1 +\kappa_2 -1}.
\]
\end{lemma}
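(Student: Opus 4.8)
The plan is to move the singularity of $(r_1-r)^{-\kappa_1}$ to the origin and isolate the role of the gap $h := r_2 - r_1 > 0$. Substituting $s = r_1 - r$ turns the integral into
\[
 \int_0^{r_1} (r_1-r)^{-\kappa_1}(r_2-r)^{-\kappa_2}\,dr = \int_0^{r_1} s^{-\kappa_1}(h+s)^{-\kappa_2}\,ds,
\]
so the claim reduces to proving $\int_0^{r_1} s^{-\kappa_1}(h+s)^{-\kappa_2}\,ds \leq C\,h^{1-\kappa_1-\kappa_2}$. The key observation is that near $s=0$ the integrand is governed by the integrable singularity $s^{-\kappa_1}$ (this is where $\kappa_1<1$ enters), whereas for large $s$ it is governed by the decay $(h+s)^{-\kappa_2}$, whose integral converges only because $\kappa_1+\kappa_2>1$. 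I would therefore split the domain of integration at the natural scale $s = h$.

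On the inner region $0 < s < h$ I bound $(h+s)^{-\kappa_2} \leq h^{-\kappa_2}$, leaving a pure power to integrate; since $\kappa_1 < 1$,
\[
 \int_0^{h} s^{-\kappa_1}(h+s)^{-\kappa_2}\,ds \leq h^{-\kappa_2}\int_0^{h} s^{-\kappa_1}\,ds = \frac{1}{1-\kappa_1}\,h^{1-\kappa_1-\kappa_2}.
\]
On the outer region $s > h$ I instead use $h+s \geq s$ to get $(h+s)^{-\kappa_2} \leq s^{-\kappa_2}$, and then enlarge the interval to $(h,\infty)$; since $\kappa_1+\kappa_2>1$ the tail converges and
\[
 \int_{h}^{r_1} s^{-\kappa_1}(h+s)^{-\kappa_2}\,ds \leq \int_{h}^{\infty} s^{-\kappa_1-\kappa_2}\,ds = \frac{1}{\kappa_1+\kappa_2-1}\,h^{1-\kappa_1-\kappa_2}.
\]
Adding the two contributions produces exactly the constant $C = \tfrac{1}{1-\kappa_1} + \tfrac{1}{\kappa_1+\kappa_2-1}$ asserted in the statement.

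There is no genuine obstacle in this argument; the estimate is elementary once the scale of the split is correctly chosen, and both exponent hypotheses are used precisely to make the two pieces finite. The only point deserving a word of care is that the split point $s=h$ may exceed the upper limit $r_1$: when $r_1 \leq h$ the outer region is empty and the entire integral is controlled by the inner estimate alone, whose bound already has the desired form, while when $r_1 > h$ both estimates apply as written, after noting that replacing the upper limit $r_1$ by $\infty$ in the outer piece only increases the integral. Hence the same constant $C$ works in every case, which completes the proof.
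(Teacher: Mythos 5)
Your proof is correct, and since the paper states this lemma without giving any proof, your argument is evidently the intended one: the substitution $s = r_1 - r$ followed by splitting the integral at the scale $s = r_2 - r_1$ reproduces exactly the constant $C = \frac{1}{1-\kappa_1} + \frac{1}{\kappa_1+\kappa_2-1}$ asserted in the statement, the first term coming from the inner piece (where $\kappa_1 < 1$ is used) and the second from the tail (where $\kappa_1 + \kappa_2 > 1$ is used). Your remark on the degenerate case $r_1 \leq r_2 - r_1$, where the outer region is empty and the inner estimate alone suffices, correctly closes the only loose end.
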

More generally, we have
\begin{lemma} \label{lm2} Assume that the constants $\kappa_1, \kappa_2, \kappa_3, r_1, r_2, r_3 > 0$ satisfy the conditions
\[
  r_1 < r_2 \leq r_3;\;\;\; \kappa_1 + \kappa_2 < 1;\;\;\; \kappa_1 + \kappa_2 + \kappa_3 > 1.
\]
Then
\[
 \int_0^{r_1} (r_1 -r)^{-\kappa_1}(r_2-r)^{-\kappa_2}(r_3-r)^{-\kappa_3} dr \leq C (r_3-r_1)^{1-\kappa_1-\kappa_2-\kappa_3}.
\]
Here the constant $C$ can be chosen as
\[
 C = \frac{1}{1-\kappa_1-\kappa_2} +  \frac{1}{\kappa_1 +\kappa_2 +\kappa_3 -1}.
\]
\end{lemma}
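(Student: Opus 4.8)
The plan is to reduce this three-factor estimate to the two-factor estimate of Lemma \ref{lm03} by a single monotonicity observation, rather than redoing the endpoint analysis from scratch. On the domain of integration $r \in [0,r_1]$ the only genuine singularity comes from the factor $(r_1-r)^{-\kappa_1}$ at the upper endpoint $r=r_1$; the remaining factors $(r_2-r)^{-\kappa_2}$ and $(r_3-r)^{-\kappa_3}$ stay bounded there, since $r_2,r_3 > r_1 \ge r$. This suggests absorbing the middle factor into the endpoint singularity and leaving $(r_3-r)^{-\kappa_3}$ as the sole ``far'' factor.

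First I would note that, because $r_2 \ge r_1$, we have $r_2 - r \ge r_1 - r \ge 0$ for every $r \in [0,r_1]$, and hence (as $\kappa_2 > 0$ and $t \mapsto t^{-\kappa_2}$ is decreasing on $(0,\infty)$)
\[
 (r_1-r)^{-\kappa_1}(r_2-r)^{-\kappa_2} \le (r_1-r)^{-(\kappa_1+\kappa_2)}
\]
for almost every such $r$. This collapses the three-factor integrand into a two-factor one,
\[
 \int_0^{r_1} (r_1-r)^{-\kappa_1}(r_2-r)^{-\kappa_2}(r_3-r)^{-\kappa_3}\,dr
 \le \int_0^{r_1} (r_1-r)^{-(\kappa_1+\kappa_2)}(r_3-r)^{-\kappa_3}\,dr.
\]

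Then I would apply Lemma \ref{lm03} to the right-hand side with the replacements $\kappa_1 \rightsquigarrow \kappa_1+\kappa_2$, $\kappa_2 \rightsquigarrow \kappa_3$, $r_1 \rightsquigarrow r_1$, $r_2 \rightsquigarrow r_3$. The hypotheses of Lemma \ref{lm03} transfer exactly: $r_1 < r_3$ follows from $r_1 < r_2 \le r_3$; the requirement that the endpoint exponent be $<1$ becomes precisely $\kappa_1+\kappa_2 < 1$; and the requirement that the two exponents sum to more than $1$ becomes precisely $\kappa_1+\kappa_2+\kappa_3 > 1$. Lemma \ref{lm03} then produces the bound $C\,(r_3-r_1)^{1-\kappa_1-\kappa_2-\kappa_3}$ with
\[
 C = \frac{1}{1-(\kappa_1+\kappa_2)} + \frac{1}{(\kappa_1+\kappa_2)+\kappa_3-1},
\]
which is exactly the claimed constant.

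There is essentially no obstacle here: the whole content is spotting the monotonicity inequality $(r_2-r)^{-\kappa_2} \le (r_1-r)^{-\kappa_2}$ that merges the first two factors, after which Lemma \ref{lm03} applies verbatim and even reproduces the stated constant without any recomputation. The single point worth verifying with care is that the three transplanted hypotheses and the resulting constant all match, which the computation above confirms. (Should one prefer to avoid invoking the earlier lemma, one could instead reprove the bound directly by splitting $[0,r_1]$ at a point between $r_1$ and $r_3$ and estimating the near-singularity and far-singularity pieces separately, mirroring the proof of Lemma \ref{lm03}; but the reduction above is cleaner and keeps the constant explicit.)
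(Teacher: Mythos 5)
Your proof is correct: the monotonicity bound $(r_2-r)^{-\kappa_2} \le (r_1-r)^{-\kappa_2}$ holds on $[0,r_1)$ since $r_1 < r_2$ and $\kappa_2>0$, the three transplanted hypotheses of Lemma \ref{lm03} (namely $r_1<r_3$, $\kappa_1+\kappa_2<1$, and $\kappa_1+\kappa_2+\kappa_3>1$) all follow from the stated assumptions, and the resulting constant is exactly the one claimed. The paper gives no explicit proof of this lemma -- it is stated as the ``more general'' companion of Lemma \ref{lm03} -- and your reduction is precisely the argument that the stated constant points to, so this is essentially the intended approach.
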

\begin{proof}[\textbf{Proof of lemma \ref{lm1}}] Since the conclusion holds automatically at $t=0$ if the constant $C \geq 1$, let us assume $t>0$ and $|x| > t +R$. First of all, we can write $u(x,t)$ explicitly in terms of $u_0$, $u_1$ and $F(x,t)$ as
\begin{align*}
 u(x,t) = & \frac{1}{2 \pi t^2} \int_{B(x,t)} \frac{t u_0(y) + t^2 u_1 (y)+  t \nabla u_0 (y) \cdot (y-x)}{[t^2 -|y-x|^2]^{1/2}} dy\\
  & \qquad + \frac{1}{2 \pi} \int_0^t \int_{B(x,t-s)} \frac{F(y,s)}{\left[(t-s)^2 -|y-x|^2\right]^{1/2}} dy ds.
\end{align*}
As a result, we obtain
\begin{align*}
 |u(x,t)| \leq & \frac{1}{2 \pi t} \int_{B(x,t)} \frac{|u_0(y)|}{[t^2 -|y-x|^2]^{1/2}} dy\;\;\; +
 \frac{1}{2\pi} \int_{B(x,t)} \frac{|u_1(y)| + |\nabla u_0 (y)|}{[t^2 -|y-x|^2]^{1/2}} dy\\
 & + \frac{1}{2 \pi} \int_0^t \int_{B(x,t-s)} \frac{|F(y,s)|}{\left[(t-s)^2 -|y-x|^2\right]^{1/2}} dy ds\\
 = & I_1+ I_2 + I_3.
\end{align*}
Let us start with $I_3$:
\begin{align*}
 I_3  = &\frac{1}{2 \pi} \int_0^t \int_{B(x,t-s)} \frac{B |y|^{-5/2} (|y|-s)^{-\beta}}{\left[(t-s)^2 -|y-x|^2\right]^{1/2}} dy ds\\
  = & \frac{1}{2 \pi} \int_0^t \int_{0}^{t-s} \int_{|y-x|=r} \frac{B |y|^{-5/2} (|y|-s)^{-\beta}}{\left[(t-s)^2 -r^2\right]^{1/2}} dS(y) dr ds\\
 \leq &\frac{1}{2 \pi} \int_0^t \int_{0}^{t-s} \left[ \frac{B (|x|-r-s)^{-\beta}}{\left[(t-s)^2 -r^2\right]^{1/2}} \int_{|y-x|=r} |y|^{-5/2} dS(y)\right] dr ds\\
 \lesssim & \int_0^t \int_{0}^{t-s} \left[ \frac{B (|x|-r-s)^{-\beta}}{\left[(t-s)^2 -r^2\right]^{1/2}} \min
 \left\{\frac{1}{(|x|-r)^{3/2}}, \frac{r}{(|x|-r)^{5/2}}\right\} \right] dr ds\\
   \leq & \int_0^{\max\{t-|x|/2,0\}} \int_{0}^{t-s} \left[ \frac{B (|x|-r-s)^{-\beta}}{\left[(t-s)^2 -r^2\right]^{1/2}}\cdot \frac{1}{(|x|-r)^{3/2}} \right] dr ds\\
 & + \int_{\max\{t-|x|/2,0\}}^t \int_{0}^{t-s} \left[ \frac{B (|x|-r-s)^{-\beta}}{\left[(t-s)^2 -r^2\right]^{1/2}} \cdot \frac{r}{(|x|-r)^{5/2}} \right] dr ds\\
 = & I_{3,1} + I_{3,2},
\end{align*}
where we used Lemma \ref{sphere} with $k=\frac{5}{2}$.

The first term $I_{3,1}$ is trivial unless $t > |x|/2$. Thanks to Lemma \ref{lm2}, we obtain
\begin{align*}
  I_{3,1}& \lesssim \frac{B}{|x|^{1/2}} \int_0^{t-|x|/2} \int_{0}^{t-s} \left[ \frac{ (|x|-r-s)^{-\beta}}{\left(t-s-r\right)^{1/2}}\cdot \frac{1}{(|x|-r)^{3/2}} \right] dr ds\\
 & \leq \frac{ B}{|x|^{1/2}} \int_0^{t-|x|/2} \int_{0}^{t-s} \left[ (t-s-r)^{-1/2} (|x|-s-r)^{-\beta}  (|x|-r)^{-3/2} \right] dr ds\\
 & \lesssim_\beta \frac{B}{|x|^{1/2}} \int_0^{t-|x|/2} (|x|- t+ s)^{-1-\beta} ds\\
 & \lesssim_{\beta} B |x|^{-1/2} (|x|-t)^{-\beta}.
\end{align*}
On the other hand, we have
\begin{align*}
 I_{3,2} & \lesssim \frac{ B (|x|-t)^{-\beta}}{|x|^{5/2}} \int_{\max\{t-|x|/2,0\}}^t \int_{0}^{t-s}  \frac{r}{\left[(t-s)^2 -r^2\right]^{1/2}} dr ds\\
 & \leq \frac{ B (|x|-t)^{-\beta}}{|x|^{5/2}} \int_0^t \int_0^{t-s} \frac{r}{\left[(t-s)^2 -r^2\right]^{1/2}} dr ds\\
 & \leq \frac{ B (|x|-t)^{-\beta}}{|x|^{5/2}} \int_0^t (t-s) ds\\
 & \leq B |x|^{-1/2} (|x|-t)^{-\beta}.
\end{align*}
Combining these estimates above for $ I_{3,1}$ and  $I_{3,2}$, we obtain $I_3 \lesssim_\beta B |x|^{-1/2} (|x|-t)^{-\beta}$.
Next let us consider $I_2$.
\begin{align*}
 I_2 & = \frac{1}{2 \pi} \int_{B(x,t)} \frac{|u_1(y)| + |\nabla u_0 (y)|}{[t^2 -|y-x|^2]^{1/2}} dy \\
 & \lesssim \int_{B(x,t)} \frac{A |y|^{-3/2-\alpha}}{[t^2 -|y-x|^2]^{1/2}} dy\\
 & \leq  A \int_{0}^{t} \int_{|y-x|=r} \frac{|y|^{-3/2-\alpha}}{(t^2 -r^2)^{1/2}} dS(y) dr\\
 & \lesssim  A \int_{0}^{t} \frac{1}{(t^2 -r^2)^{1/2}} \min \left\{\frac{1}{(|x|-r)^{1/2+\alpha}}, \frac{r}{(|x|-r)^{3/2+\alpha}} \right\} dr,
\end{align*}
where again we used Lemma \ref{sphere} with $\kappa =\frac{3}{2} +\alpha$.
If $t > |x|/2$, then we have
\begin{align*}
 I_2 & \lesssim A \int_{0}^t \frac{1}{(t^2 -r^2)^{1/2}}\cdot \frac{1}{(|x|-r)^{1/2+\alpha}} dr\\
 & \lesssim A |x|^{-1/2}\int_{0}^t {(t -r)^{-1/2}} {(|x|-r)^{-1/2-\alpha}} dr\\
 & \lesssim_\alpha A  |x|^{-1/2} (|x|-t)^{-\alpha},
\end{align*}
where we applied Lemma \ref{lm03}. If $t \leq |x|/2$, then we have
\begin{align*}
 I_2 & \lesssim A \int_0^{t} \frac{1}{(t^2 -r^2)^{1/2}} \frac{r}{(|x|-r)^{3/2+\alpha}} dr\\
 & \lesssim A {|x|^{-3/2 -\alpha}}\int_0^{t} \frac{r}{(t^2 -r^2)^{1/2}} dr\\
 & \leq A {|x|^{-3/2 -\alpha}}t\\
 & \leq A |x|^{-1/2} (|x|-t)^{-\alpha}.
\end{align*}
Finally, we can estimate $I_1$ by
\begin{align*}
 I_1 & = \frac{1}{2 \pi t} \int_{B(x,t)} \frac{|u_0(y)|}{[t^2 -|y-x|^2]^{1/2}} dy\\
  & \leq \frac{1}{t} \int_{B(x,t)} \frac{A |y|^{-1/2-\alpha}}{[t^2 -|y-x|^2]^{1/2}} dy\\
  & \leq \frac{A}{t} \int_0^t \int_{|y-x|=r} \frac{|y|^{-1/2-\alpha}}{(t^2 -r^2)^{1/2}} dS(y) dr\\
  & \lesssim_\alpha \frac{A}{t} \int_0^t  \frac{1}{(t^2 -r^2)^{1/2}} \min \left\{|x|^{1/2-\alpha}, \frac{r}{(|x|-r)^{1/2+\alpha}} \right\} dr,
\end{align*}
where we use Lemma \ref{sphere} with $\kappa = \frac{1}{2} + \alpha < 1$. If $t > |x|/2$, then we have
\[
 I_1 \lesssim_\alpha \frac{A}{|x|} \int_0^t  \frac{|x|^{1/2-\alpha}}{(t^2 -r^2)^{1/2}}  dr \lesssim A |x|^{-1/2-\alpha} \leq A |x|^{-1/2} (|x|-t)^{-\alpha}.
\]
On the other hand, if $t \leq |x|/2$, we obtain
\begin{align*}
 I_1 &\lesssim_\alpha \frac{A}{t} \int_0^t  \frac{1}{(t^2 -r^2)^{1/2}}\frac{r}{(|x|-r)^{1/2+\alpha}} dr\\
  & \lesssim \frac{A}{t |x|^{1/2+\alpha}} \int_0^t  \frac{r}{(t^2 -r^2)^{1/2}} dr\\
  & = A |x|^{-1/2-\alpha} \leq A |x|^{-1/2} (|x|-t)^{-\alpha}.
\end{align*}
Combining the estimates for $I_1$, $I_2$ and $I_3$, we finish the proof.
\end{proof}

\begin{lemma}
 Let $(u_0,u_1)$ be initial data as in Theorem \ref{main1}, then the solution $u$ to the equation  \eqref{(CP2)} exists globally in time.
\end{lemma}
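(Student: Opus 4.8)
The plan is to reduce global existence to a uniform-in-time bound coming from the conservation of energy, and then invoke the finite-time blow-up criterion. By Lemma \ref{finite time criterion}, the forward maximal time $T_+$ can be finite only if $\|u\|_{L^6 L^6([0,T_+)\times\Rm^2)}=\infty$; hence it suffices to show that the $L^6L^6$ norm of $u$ is finite on every bounded subinterval of the lifespan. The key observation is that the space-time exponent $6$ of the scattering norm coincides with the spatial exponent appearing in the energy \eqref{def of energy a}, so control of $\|u(\cdot,t)\|_{L^6(\Rm^2)}$ that is merely uniform in $t$ already yields a finite space-time norm once we integrate over a bounded time interval.

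First I would record that the data has finite energy. This is exactly Remark \ref{H12A}: the pointwise bounds of Theorem \ref{main1} give $(u_0,u_1)\in \dot{H}^{1/2}\times\dot{H}^{-1/2}(\Rm^2)$ together with $E(u_0,u_1)\lesssim A^2+A^6<\infty$. Next I would establish conservation of energy on the open lifespan $(-T_-,T_+)$. Since the quintic nonlinearity is energy-subcritical in dimension two, one has persistence of regularity: on any compact $J\subset(-T_-,T_+)$ the local theory guarantees $\|u\|_{L^6L^6(J\times\Rm^2)}<\infty$, and a standard Strichartz bootstrap at the $\dot{H}^1\times L^2$ level (splitting $J$ into pieces on which $\|u\|_{L^6L^6}$ is small and estimating $\nabla(|u|^4u)\sim |u|^4\nabla u$) upgrades the solution to $(u,\partial_t u)\in C(J;\dot{H}^1\times L^2)$. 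With this regularity in hand, energy conservation $E(u(\cdot,t),\partial_t u(\cdot,t))=E(u_0,u_1)$ for all $t\in(-T_-,T_+)$ follows from a smoothing/approximation argument identical in spirit to the one used for \eqref{(CP1)} in the proof of Theorem \ref{Morawetz2}. In particular the defocusing sign forces $\tfrac16\|u(\cdot,t)\|_{L^6(\Rm^2)}^6\le E(u_0,u_1)$, so $\|u(\cdot,t)\|_{L^6(\Rm^2)}^6\le 6E(u_0,u_1)$ uniformly in $t$.

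I would then close the argument by contradiction. Suppose $T_+<\infty$. Using the uniform $L^6$ bound above,
\[
 \|u\|_{L^6L^6([0,T_+)\times\Rm^2)}^6=\int_0^{T_+}\|u(\cdot,t)\|_{L^6(\Rm^2)}^6\,dt\le 6\,E(u_0,u_1)\,T_+<\infty,
\]
which contradicts Lemma \ref{finite time criterion}. Hence $T_+=\infty$, and since the equation is time-reversible the same reasoning applied to the backward Cauchy problem gives $T_-=\infty$; the solution exists globally.

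The only genuinely nontrivial point, and the step I would expect to require the most care, is the justification of energy conservation at the critical regularity level: the local solution is produced only in $\dot{H}^{1/2}\times\dot{H}^{-1/2}$, so one must first propagate the extra $\dot{H}^1\times L^2$ regularity (persistence of regularity) and then push the formal differentiation of the energy through a smoothing limit, exactly the delicate bookkeeping carried out for the hyperbolic equation earlier in the paper. Everything else --- the finiteness of the energy and the integration of the uniform $L^6$ bound over a finite time interval --- is immediate. I would emphasize that the pointwise decay estimate of Lemma \ref{lm1} is \emph{not} needed for global existence; its role is reserved for the subsequent scattering analysis, where the decay of $u$ is used to transfer the data to the hyperbolic plane via \eqref{(CP1star)}.
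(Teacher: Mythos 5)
Your proof is correct and takes essentially the same route as the paper's: assume $T_+<\infty$, use energy conservation to get the uniform bound $\|u(\cdot,t)\|_{L^6(\Rm^2)}^6\le 6E(u_0,u_1)$, integrate over the bounded interval $[0,T_+)$ to obtain a finite $L^6L^6$ norm, and contradict the blow-up criterion (with time reversal handling $T_-$). The only difference is cosmetic: since Theorem \ref{main1} assumes smooth data, the paper simply invokes energy conservation directly, without the persistence-of-regularity and smoothing bookkeeping you describe.
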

\begin{proof}
By contradiction we assume that $T_+ (u_0,u_1) < \infty$. By the energy conservation law, we always have
\[
 \int_{\Rm^2} |u(x,t)|^6 dx \leq  6 E(u_0,u_1)
\]
for any $t \in [0,T_+)$. This implies that
\[
 \|u\|_{L^6 L^6(\Rm^2 \times [0,T_+))} \leq (6E T_+)^{1/6} < \infty,
\]
which contradicts the finite time blow-up criterion.
\end{proof}

\begin{proposition} \label{pointwise estimate}
 Let $(u_0,u_1)$ and $A, \eps$ be the initial data and positive constants as in Theorem \ref{main1}. Fix any constant
$\delta < \min\{\eps, 1/{10}\}$. Then there exist constants $B_1= B_1 (\delta) >0$ and $R= R (\delta,\eps, A)> 1$, such that the following inequality holds
\begin{equation} \label{es1}
 |u(x,t)| \leq B_1 |x|^{-1/2} (|x|-t)^{-\delta}\;\;\; \hbox{if}\; t\geq 0\; \hbox{and}\; |x|> t + R.
\end{equation}
\end{proposition}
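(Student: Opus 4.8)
The plan is to run a continuity (bootstrap) argument in the exterior region $\{(x,t):t\ge 0,\ |x|>t+R\}$, feeding a bound on $u$ into Lemma \ref{lm1} through the Duhamel (forcing) term $F(u)=-|u|^4u$ and recovering an improved bound on $u$. First I would recast the hypotheses of Theorem \ref{main1} into the form required by Lemma \ref{lm1} with exponent $\alpha=\delta$. Since $\delta<\eps$ and $|x|>R>1$, we have $A(|x|+1)^{-1/2-\eps}\le A|x|^{-1/2-\delta}$ and $A(|x|+1)^{-3/2-\eps}\le A|x|^{-3/2-\delta}$, so $(u_0,u_1)$ satisfies the data decay hypotheses of Lemma \ref{lm1} with exponent $\delta$ on $\{|x|>R\}$. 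Global existence of $u$ has already been established, and by persistence of regularity $u$ is continuous and decaying, so all the quantities below are finite.

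Then I would encode the target through the nondecreasing, continuous quantity
\[
 \Phi(T)=\sup\Big\{\,|u(y,s)|\,|y|^{1/2}(|y|-s)^{\delta}\;:\;s\in[0,T],\ |y|>s+R\,\Big\},
\]
which satisfies $\Phi(0)\le A$ because $|u_0(y)|\le A|y|^{-1/2-\delta}$. The heart of the argument is the self-improving inequality. If $\Phi(T)\le z$, then $|u(y,s)|\le z|y|^{-1/2}(|y|-s)^{-\delta}$ on the whole region $\{|y|>s+R,\ s\in[0,T]\}$, so the forcing obeys $|F(u)(y,s)|=|u(y,s)|^5\le z^5|y|^{-5/2}(|y|-s)^{-5\delta}$ there. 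Because $\delta<1/10$ forces $\beta:=5\delta<\tfrac12$, Lemma \ref{lm1} applies with $\alpha=\delta$, $\beta=5\delta$, $B=z^5$ and yields, at any evaluation point $(x,t)$ with $t\le T$, $|x|>t+R$,
\[
 |u(x,t)|\le C|x|^{-1/2}\big[A(|x|-t)^{-\delta}+z^5(|x|-t)^{-5\delta}\big].
\]
Here the geometry is consistent: the backward light cone of $(x,t)$ stays inside $\{|y|>s+R\}$, since $|y|\ge|x|-(t-s)>R+s$, which is exactly the region on which the forcing bound was verified. Using $|x|-t>R>1$ and $5\delta>\delta$ to write $(|x|-t)^{-5\delta}\le R^{-4\delta}(|x|-t)^{-\delta}$, and taking the supremum over $(x,t)$, I obtain
\[
 \Phi(T)\le CA+CR^{-4\delta}\,\Phi(T)^5\qquad\text{for all }T\ge 0,
\]
with $C=C(\delta)$ the constant of Lemma \ref{lm1}.

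It remains to close this by a barrier argument. I would set $B_1=2CA$ and then choose $R$ so large (depending on $\delta,\eps,A$) that $R^{4\delta}>32\,C^5A^4$, which forces $g(2CA)<2CA$ for $g(z)=CA+CR^{-4\delta}z^5$. Since $\Phi$ is continuous, $\Phi(0)\le A<2CA$, and $\Phi(T)\le g(\Phi(T))$, the intermediate value theorem forbids $\Phi$ from ever reaching the value $2CA$: at a first crossing time $T_0$ we would get $2CA=\Phi(T_0)\le g(2CA)<2CA$, a contradiction. Hence $\Phi(T)\le B_1$ for every $T\ge 0$, which is precisely the estimate \eqref{es1}.

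The step deserving the most care — and the one I expect to be the main obstacle — is the bootstrap estimate itself: confirming that the forcing hypothesis of Lemma \ref{lm1} holds on exactly the region swept out by the backward light cones of exterior points, and then tracking exponents so the quintic Duhamel contribution $(|x|-t)^{-5\delta}$ is reabsorbed into $(|x|-t)^{-\delta}$ with the small gain $R^{-4\delta}$. The requirement $\beta=5\delta<\tfrac12$ is exactly what makes Lemma \ref{lm1} applicable to the quintic nonlinearity, so this is where the hypothesis $\delta<\min\{\eps,1/10\}$ is consumed; note the resulting constant is $B_1\sim A$ (a $\delta$-only dependence could be arranged by first normalizing $A$ via the scaling of \eqref{(CP2)}).
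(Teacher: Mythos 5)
Your core self-improvement step is the same as the paper's: feed the bound $|u|\leq z|y|^{-1/2}(|y|-s)^{-\delta}$ into the quintic forcing, apply Lemma \ref{lm1} with $\alpha=\delta$, $\beta=5\delta<1/2$, and reabsorb $(|x|-t)^{-5\delta}$ into $(|x|-t)^{-\delta}$ using $|x|-t>R$. Where you diverge is the closure mechanism, and that is where there is a genuine gap. Your continuity argument is run on the actual solution $u$ through the quantity $\Phi(T)$, and it requires two qualitative facts that you assert but do not prove: that $\Phi(T)<\infty$ for every $T$, and that $T\mapsto\Phi(T)$ is continuous. Neither follows from ``persistence of regularity'': smoothness and even finite energy give Sobolev-type control, not weighted pointwise decay as $|y|\to\infty$ uniformly on $[0,T]$. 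The best a priori pointwise decay available for a radial finite-energy solution in $\Rm^2$ (a Strauss-type estimate using $\|u\|_{L^6}$ and $\|\nabla u\|_{L^2}$) is $|u(y,s)|\lesssim |y|^{-1/4}$, which makes $\Phi(T)=\sup |u|\,|y|^{1/2}(|y|-s)^{\delta}$ formally $+\infty$; the finiteness of $\Phi$ is essentially the statement you are trying to prove, so assuming it makes the bootstrap circular. A spatial truncation does not rescue this: the backward cone of a point with $|x|\leq\rho$ reaches radii up to $\rho+T$, so a truncated sup does not control the forcing that Lemma \ref{lm1} needs. This is precisely why the paper does not bootstrap on $u$ itself but runs a ``double induction'' on the Picard iterates: on each time interval where the solution is constructed as an $L^6L^6$ fixed point, the iterates start from a function already known to satisfy \eqref{es1} (zero, extended by the previously controlled solution), each iterate inherits the bound from Lemma \ref{lm1} by induction, and the estimate passes to the limit. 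That device supplies exactly the qualitative finiteness your argument is missing.

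A second, more cosmetic, discrepancy: your constant $B_1=2CA$ depends on $A$, whereas the proposition asserts $B_1=B_1(\delta)$. The scaling fix you sketch does not work cleanly (the scaling $u\mapsto\lambda^{1/2}u(\lambda x,\lambda t)$ also rescales the exterior region and reintroduces an $A$-dependence through $\lambda^{\delta}$). The paper's fix is simpler and you should adopt it: since $\delta<\eps$, choosing $R$ large depending on $(A,\eps-\delta)$ lets you replace the data constant $A$ by an arbitrarily small constant $A_1(\delta)$ on $\{|x|>R\}$, via $A|x|^{-1/2-\eps}\leq AR^{-(\eps-\delta)}|x|^{-1/2-\delta}$; then your barrier argument closes with thresholds depending only on $\delta$, and all $A$-dependence is pushed into $R$, as the statement requires.
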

\begin{proof}
Let $C = C(\delta, 5\delta)$ be the constant as in the conclusion of Lemma \ref{lm1}. We can always find two small positive constants $A_1 = A_1 (\delta)$ and $B_1 = B_1 (\delta)$, such that
\[
 B_1 > C (A_1 + B_1^5).
\]
Since $\delta < \eps$, we can always find a large constant $R = R(A, \eps - \delta) > 1$, such that if $|x| > R$, then
\begin{align*}
 |u_1(x)|, |\nabla u_0 (x)| & < A_1 |x|^{-3/2-\delta};\\
 |u_0(x)| & < A_1 |x|^{-1/2-\delta}.
\end{align*}
Let us check these constants $B_1$ and $R$ work. The idea is a ``double induction'' as below. By our local theory, given any small constant $\eps_1>0$, the interval $[0,T_+)$ can be broken into
\[
 [0,T_+) = [0,t_1] \cup [t_1, t_2] \cup [t_2, t_3] \cup \cdots \cup [t_m, t_{m+1}] \cup \cdots
\]
so that the $L^6 L^6$ norm of $u$ in each sub-interval satisfies (Let $t_0 =0$)
\[
 \|u\|_{L^6 L^6 (\Rm^2 \times [t_m, t_{m+1}])} \leq \eps_1;
\]
By the Strichartz estimates, we have
\begin{align*}
 \|S(t-t_m)(u(\cdot, t_m), & \partial_t u(\cdot, t_m))\|_{L^6 L^6 (\Rm^2 \times [t_m, t_{m+1}])}\\
  &\lesssim \|u\|_{L^6 L^6 (\Rm^2 \times [t_m, t_{m+1}])} + \|F(u)\|_{L^{6/5} L^{6/5} (\Rm^2 \times [t_m, t_{m+1}])}\\
 & \leq \|u\|_{L^6 L^6 (\Rm^2 \times [t_m, t_{m+1}])} + \|u\|_{L^6 L^6 (\Rm^2 \times [t_m, t_{m+1}])}^5\\
 & \leq \eps_1 + \eps_1^5.
\end{align*}
As a result, if $\eps_1$ is sufficiently small, the solution $u$ in the time interval $[t_m,t_{m+1}]$ can be obtained by a fixed-point argument. More precisely, the restriction of $u$ in the time interval $[t_m, t_{m+1}]$ is the limit of $\tilde{u}_{m,n}$ in $L^6 L^6 (\Rm^2 \times [t_m, t_{m+1}])$ as $n \rightarrow \infty$ if we set $\tilde{u}_{m,0} =0$ and define
\[
 \tilde{u}_{m, n+1} (t) = S(t-t_m)(u(\cdot, t_m),\partial_t u(\cdot, t_m)) + \int_{t_m}^t \frac{\sin ((t-\tau)\sqrt{-\Delta})}{\sqrt{-\Delta}} F(\tilde{u}_{m,n} (\tau)) d\tau.
\]
This then implies the restriction of $u$ in the time interval $[0, t_{m+1}]$ is the limit of $u_{m,n}$ in $L^6 L^6 (\Rm^2 \times [0, t_{m+1}])$ as $n \rightarrow \infty$ if we set
\[
 u_{m,0}(x,t) = \left\{\begin{array}{ll} u(x,t),& t \in [0,t_m];\\
 0, & t\in (t_m,t_{m+1}];
 \end{array}\right.
\]
and define
\[
 u_{m,n+1} (t) = S(t)(u_0,u_1) + \int_{0}^t \frac{\sin ((t-\tau)\sqrt{-\Delta})}{\sqrt{-\Delta}} F(u_{m,n} (\tau)) d\tau.
\]
Now let us show that the solution $u$ satisfies (\ref{es1}) in the time interval $[0,t_m]$ for all nonnegative integer $m$ by an induction. If $m=0$, this is trivial. Let us assume $(\ref{es1})$ holds for time $t \in [0,t_m]$. Then it is clear that $u_{m,0}$ satisfies the same inequality for $t \in [0,t_{m+1}]$. If we assume that $u_{m,n} (x,t)$ satisfies the inequality (\ref{es1}) when $t\in [0,t_{m+1}]$, then Lemma \ref{lm1} gives
\begin{align*}
 |u_{m,n+1} (x,t)| &\leq C |x|^{-1/2}\left[ A_1 (|x|-t)^{-\delta} +  B_1^5 (|x|-t)^{-5 \delta} \right]\\
  &\leq C (A_1 + B_1^5) |x|^{-1/2} (|x|-t)^{-\delta} \leq B_1 |x|^{-1/2} (|x|-t)^{-\delta},
\end{align*}
if $|x| > R +t$ and $t \in [0,t_{m+1}]$. By  induction we obtain that the inequality (\ref{es1}) holds for any $u_{m,n}$ with this particular $m$ and an arbitrary nonnegative integer $n$. Passing to the limit, we obtain the estimate (\ref{es1}) for $u$ if $t \in [0, t_{m+1}]$. This finishes the proof by induction.
\end{proof}
Note at this point that none of the arguments above use the radial assumption. However, the following propositions do require a radial assumption. The letter $r$ below is the radius $r = |x|$.
\begin{proposition} \label{lm3}
 Let $(u_0,u_1)$ be initial data as in Theorem \ref{main1} and the constants $\delta$, $B_1$, $R$ be as in the Proposition \ref{pointwise estimate}. Then if $t\geq 0$ and $r> t + R$ we have the following estimates for the solution $u$
\begin{align}\label{ineq1}
 |(\sqrt{r}u)_t + (\sqrt{r}u)_r| & \lesssim_{\delta} r^{-1-\delta}; \\\label{ineq2}
 |u_t + u_r| & \lesssim_{\delta} r^{-3/2};\\\label{ineq3}
 |u_t - u_r| & \lesssim_{\delta} r^{-1/2}.
\end{align}
\end{proposition}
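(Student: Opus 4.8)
The plan is to pass to the radial reduction variable $w = \sqrt{r}\,u$ and exploit the fact that in one spatial dimension the d'Alembertian factors into two transport operators. First I would record that for a radial function equation \eqref{(CP2)} reads $u_{tt} - u_{rr} - \tfrac1r u_r = F(u)$, and a direct computation shows that $w = \sqrt{r}\,u$ then satisfies
\[
 w_{tt} - w_{rr} = \frac{1}{4r^2}\,w - \frac{1}{r^2}|w|^4 w =: G,
\]
where I have used $F(u) = -|u|^4 u$ and $u = r^{-1/2}w$. Writing $P = w_t + w_r$ and $Q = w_t - w_r$ and using $\partial_t^2 - \partial_r^2 = (\partial_t - \partial_r)(\partial_t + \partial_r) = (\partial_t+\partial_r)(\partial_t-\partial_r)$, these two quantities satisfy the transport equations $(\partial_t - \partial_r)P = G$ and $(\partial_t + \partial_r)Q = G$. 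Since the first target estimate \eqref{ineq1} is exactly $|P| \lesssim_\delta r^{-1-\delta}$, the whole proposition reduces to controlling $P$ and $Q$ along characteristics, after which \eqref{ineq2} and \eqref{ineq3} follow from the algebraic identities $u_t + u_r = r^{-1/2}P - \tfrac12 r^{-1}u$ and $u_t - u_r = r^{-1/2}Q + \tfrac12 r^{-1}u$.

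The two ingredients feeding the transport estimates are the pointwise bound on $u$ and the decay of the initial data. From Proposition \ref{pointwise estimate} I get $|w| = \sqrt{r}\,|u| \leq B_1 (r-t)^{-\delta}$ in the region $r > t + R$, hence
\[
 |G| \lesssim r^{-2}\left[(r-t)^{-\delta} + (r-t)^{-5\delta}\right].
\]
At $t = 0$ the hypotheses on $(u_0,u_1)$ together with $w_t = \sqrt{r}\,u_t$ and $w_r = \sqrt{r}\,u_r + \tfrac12 r^{-1/2}u$ give $|P(0,r)|, |Q(0,r)| \lesssim r^{-1-\delta}$. I note that in the exterior region $r > t + R$ the solution is smooth and these manipulations are legitimate, since the data are assumed smooth and, by finite speed of propagation, the solution there depends only on data in $\{|y| > R\}$; persistence of regularity makes the classical computation above rigorous.

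For \eqref{ineq1} I would integrate $(\partial_t - \partial_r)P = G$ along the incoming characteristic $t + r = t_0 + r_0$ from $t=0$ to the target point $(t_0,r_0)$, obtaining
\[
 P(t_0,r_0) = P(0, t_0 + r_0) + \int_0^{t_0} G\big(s,\, t_0 + r_0 - s\big)\,ds .
\]
Along this line the radius $r(s) = t_0 + r_0 - s \geq r_0$, so the factor $r^{-2}$ in $G$ contributes $r_0^{-2}$, while $r - t = t_0 + r_0 - 2s$ and the change of variables $\tau = t_0+r_0-2s$ turns $\int_0^{t_0}(r-t)^{-\delta}\,ds$ into a quantity $\lesssim_\delta r_0^{1-\delta}$ (and $\lesssim_\delta r_0^{1-5\delta}$ for the quintic piece). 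The balance $r_0^{-2}\cdot r_0^{1-\delta} = r_0^{-1-\delta}$ is precisely the claimed rate, and the boundary term is $\lesssim (t_0+r_0)^{-1-\delta} \leq r_0^{-1-\delta}$; this proves \eqref{ineq1}, and then $u_t + u_r = r^{-1/2}P - \tfrac12 r^{-1}u$ is $\lesssim r^{-3/2-\delta} + r^{-3/2} \lesssim r^{-3/2}$, which is \eqref{ineq2}. For \eqref{ineq3} I integrate $(\partial_t + \partial_r)Q = G$ along the outgoing characteristic $t - r = t_0 - r_0$, on which $r - t \equiv r_0 - t_0 > R$ is constant; here the $G$-integral is only $\lesssim (r_0-t_0)^{-1-\delta} \lesssim 1$ and the boundary term is $\lesssim 1$, so merely $|Q| \lesssim 1$, whence $u_t - u_r = r^{-1/2}Q + \tfrac12 r^{-1}u \lesssim r^{-1/2}$.

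The main obstacle is obtaining the sharp exponent in \eqref{ineq1}: one must recover exactly $r^{-1-\delta}$, and this works only because the incoming characteristic keeps $r$ bounded below by $r_0$, so the strong $r^{-2}$ weight in $G$ survives the integration against the comparatively mild weight $(r-t)^{-\delta}$. The asymmetry between the incoming and outgoing directions is also the structural reason the three estimates carry different decay rates: the outgoing characteristics fix $r - t$ but let $r \to \infty$, which is why $Q$, and hence $u_t - u_r$, only enjoys the weaker bound $r^{-1/2}$.
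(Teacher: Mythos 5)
Your proposal is correct and follows essentially the same route as the paper's own proof: reduce to the one-dimensional equation $(\partial_t^2-\partial_r^2)(\sqrt{r}\,u)=G$, bound $G$ by $r^{-2}(r-t)^{-\delta}$ via Proposition \ref{pointwise estimate}, and integrate the two transport equations along the incoming characteristic (for \eqref{ineq1}--\eqref{ineq2}) and the outgoing characteristic (for \eqref{ineq3}), with the boundary terms controlled by the decay of the data. The only differences are cosmetic, such as naming $P,Q$ and keeping the quintic contribution to $G$ as a separate term.
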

\begin{proof}
A basic computation shows that $\sqrt{r} u$ satisfies the 1-dimensional wave equation
\[
 (\partial_t^2 - \partial_r^2) (\sqrt{r} u) = \sqrt{r}(\partial_t^2 - \Delta_x) u + \frac{1}{4} r^{-3/2} u
 = -\sqrt{r} |u|^4 u + (1/4)r^{-3/2} u \doteq G(r,t).
\]
By the inequality (\ref{es1}), we can estimate the non-linear term $G(r,t)$ by
\[
 |G(r,t)| = |-\sqrt{r} |u|^4 u + (1/4)r^{-3/2} u| \lesssim_{\delta} r^{-2} (r-t)^{-\delta}
\]
if $r > t +R$ and $t\geq 0$. Assume that the pair $(r_0,t_0)$ satisfies $r_0 > t_0 +R$ and $t_0\geq 0$. Then by the identity
\[
 (\partial_t - \partial_r) ((\sqrt{r}u)_t + (\sqrt{r}u)_r) = G(r,t),
\]
\begin{figure}[h]
\centering
\includegraphics[scale=.80]{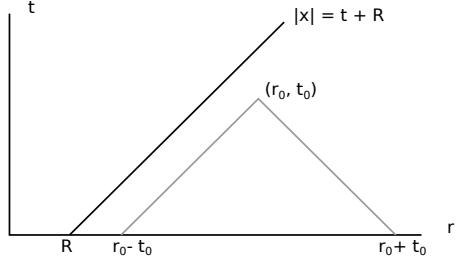}
\caption{Integral Paths}\label{drawing03}
\end{figure}
we obtain (see the integral path shown in Figure \ref{drawing03})
\begin{align*}
 |(\sqrt{r}u)_t + (\sqrt{r}u)_r|_{(r,t)=(r_0,t_0)} & \leq |(\sqrt{r}u)_t + (\sqrt{r}u)_r|_{(r,t)=(r_0+t_0,0)} +
 \left|\int_0^{t_0} G(r_0+t_0-s,s) ds\right|\\
 & \leq \left|\sqrt{r}u_1 + \sqrt{r} \partial_r u_0 + \frac{1}{2}r^{-1/2}u_0\right|_{r=r_0+t_0}\!\! +
 \int_0^{t_0} |G(r_0+t_0-s,s)| ds\\
 & \lesssim_\delta (r_0 +t_0)^{-1-\delta} + \int_0^{t_0}(r_0 + t_0 -s)^{-2} \left[(r_0 +t_0 -s)- s\right]^{-\delta} ds\\
 & \lesssim r_0^{-1-\delta} + r_0^{-2} \int_0^{t_0} (r_0 + t_0 - 2 s)^{-\delta} ds\\
 & \lesssim r_0^{-1-\delta}.
\end{align*}
This gives the first inequality \eqref{ineq1}. By the identity $(\sqrt{r}u)_t + (\sqrt{r}u)_r = r^{1/2} u_t + r^{1/2} u_r +(1/2) r^{-1/2}u$, we have 
\[
 |r^{1/2} u_t + r^{1/2} u_r|_{(r,t)=(r_0,t_0)} \leq |(\sqrt{r}u)_t + (\sqrt{r}u)_r|_{(r,t)=(r_0,t_0)} + (1/2) r_0^{-1/2} |u(r_0,t_0)| \lesssim_\delta r_0^{-1}.
\]
This is in fact equivalent to the second inequality \eqref{ineq2}. Similarly the identity
\[
 (\partial_t + \partial_r) ((\sqrt{r}u)_t - (\sqrt{r}u)_r) = G(r,t)
\]
implies
\begin{align*} 
 |(\sqrt{r}u)_t - (\sqrt{r}u)_r|_{(r,t)=(r_0,t_0)} & \leq |(\sqrt{r}u)_t - (\sqrt{r}u)_r|_{(r,t)=(r_0-t_0,0)} +
 \left|\int_0^{t_0} G(r_0 -t_0 + s, s) ds\right|\\
 & \leq \left|\sqrt{r}u_1 - \sqrt{r} \partial_r u_0 - \frac{1}{2}r^{-1/2}u_0\right|_{r=r_0-t_0} \! + \int_0^{t_0} |G(r_0- t_0 + s,s)| ds\\
 & \lesssim_\delta (r_0 - t_0)^{-1-\delta} +\int_0^{t_0} (r_0 - t_0 + s)^{-2} \left[(r_0 -t_0 + s)-s\right]^{-\delta} ds\\
 & \leq 1 + \int_0^{t_0} (r_0 - t_0 + s)^{-2} ds\\
 & \lesssim 1.
\end{align*}
By the identity $(\sqrt{r}u)_t - (\sqrt{r}u)_r = r^{1/2} u_t - r^{1/2} u_r - (1/2) r^{-1/2}u$, we have
\[
 |r^{1/2} u_t - r^{1/2} u_r|_{(r,t)=(r_0,t_0)} \leq |(\sqrt{r}u)_t - (\sqrt{r}u)_r|_{(r,t)=(r_0,t_0)} + (1/2) r_0^{-1/2} |u(r_0,t_0)| \lesssim_\delta 1.
\]
This finishes the proof of the last inequality \eqref{ineq3}.
\end{proof}
\begin{remark} \label{ut ur estimate}
Proposition \ref{lm3} implies $u_r, u_t \lesssim_\delta r^{-1/2}$ if $r > t + R$ and $t\geq 0$.
\end{remark}
\subsection{Translation Between Two Equations}
Let us start by explaining the transformation between a light cone in $\Rm^2 \times \Rm$ and the space-time $\Hm^2 \times \Rm$. We follow the same method used in \cite{tataru}. Let $(x,t)$ be the Cartesian coordinates in the Minkowski space $\Rm^2 \times \Rm$ 
and $t_0$ be a fixed time. In the forward light cone $\{(x,t): t-t_0 > |x|\}$ we can introduce new coordinates $(\tau, s, \Theta) \in \Rm \times [0,\infty) \times {\mathbb S}^1$ by
\begin{align*}
 x & = \Theta e^\tau \sinh s,\\
 t & = t_0 + e^\tau \cosh s.
\end{align*}
We interpret $(s, \Theta)$ as the polar coordinates in the hyperbolic plane $\Hm^2$ and $\tau$ as the substitute for time. The surface with a constant value $\tau = \tau_0$ is exactly the upper sheet of the hyperboloid $(t-t_0)^2 - |x|^2 = e^{2\tau_0}$ in the Minkowski space. Let $d \mu$ be the volume element in the hyperbolic plane $\Hm^2$. Change of variables gives
\[
 dx dt = e^{3\tau} d\mu d\tau .
\]
A simple computation gives the identity
\[
 e^{\rho \tau} (-\partial_t^2 + \Delta_x ) e^{-\rho \tau} = e^{-2\tau} (-\partial_\tau^2 + \Delta_{\Hm} + \rho^2),
\]
and as a direct corollary we have the following proposition.
\begin{proposition} \label{translation}
 If $u(x,t)$ is a solution to the wave equation \eqref{(CP2)} in $\Rm^2 \times (t_0, \infty)$, then the function $v (s, \Theta, \tau) = e^{\rho \tau} u (\Theta e^\tau \sinh s, t_0 + e^\tau \cosh s)$ is a solution to the following shifted wave equation \eqref{(CP1star)} in $\Hm^2 \times \Rm$
 \[
 \partial_\tau^2 v - (\Delta_{\Hm^2} + \rho^2) v = - |v|^4 v. \]
\end{proposition}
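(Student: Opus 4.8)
The plan is to reduce everything to the intertwining identity
\[
 e^{\rho \tau} (-\partial_t^2 + \Delta_x) e^{-\rho \tau} = e^{-2\tau} (-\partial_\tau^2 + \Delta_{\Hm} + \rho^2)
\]
displayed just above, so that the only genuine content beyond it is the bookkeeping of the nonlinearity. First I would record that the map $(\tau,s,\Theta) \mapsto (x,t) = (\Theta e^\tau \sinh s,\, t_0 + e^\tau \cosh s)$ is a diffeomorphism from $\Hm^2 \times \Rm$ (with $(s,\Theta)$ the polar coordinates in $\Hm^2$ and $\tau$ playing the role of time) onto the forward light cone $\{(x,t): t - t_0 > |x|\}$, so that $v$ is well defined on all of $\Hm^2 \times \Rm$ whenever $u$ is defined on the cone. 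Then, writing $u = e^{-\rho\tau} v$ (equivalently $v = e^{\rho\tau} u$), I would apply the conjugation identity to the function $v$: since $e^{-\rho\tau} v = u$, its left-hand side becomes $e^{\rho\tau}(-\partial_t^2 + \Delta_x)u$, while its right-hand side is $e^{-2\tau}(-\partial_\tau^2 + \Delta_{\Hm} + \rho^2)v$.

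The next step is to insert the equation \eqref{(CP2)}. Because $u$ solves $\partial_t^2 u - \Delta_x u = -|u|^4 u$, we have $(-\partial_t^2 + \Delta_x)u = |u|^4 u$, so the left-hand side equals $e^{\rho\tau}|u|^4 u$. Substituting $u = e^{-\rho\tau} v$ and collecting the exponentials gives
\[
 e^{\rho\tau}|u|^4 u = e^{\rho\tau} e^{-5\rho\tau}|v|^4 v = e^{-4\rho\tau}|v|^4 v.
\]
Here the special structure of the problem enters: in $\Hm^2$ one has $\rho = (n-1)/2 = 1/2$, so $e^{-4\rho\tau} = e^{-2\tau}$, which is exactly the prefactor appearing on the right-hand side of the identity. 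Equating the two sides and canceling the common factor $e^{-2\tau}$ yields $(-\partial_\tau^2 + \Delta_{\Hm} + \rho^2)v = |v|^4 v$, that is $\partial_\tau^2 v - (\Delta_{\Hm^2} + \rho^2) v = -|v|^4 v$, which is precisely \eqref{(CP1star)}.

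I do not expect a serious obstacle, since the operator identity is supplied; the one point deserving care is the exponent matching in the nonlinear term, which is precisely the reason the quintic power is the correct one. A nonlinearity $|u|^{p-1}u$ would transform into $e^{-(p-1)\rho\tau}|v|^{p-1}v$, and this matches the geometric factor $e^{-2\tau}$ if and only if $(p-1)\rho = 2$, i.e. $p = 5$ when $\rho = 1/2$. Thus the transformation is conformal exactly for the quintic equation on $\Rm^2$. For completeness I would remark that applying the second-order operators termwise is justified by the smoothness of $u$ (hence of $v$) in the interior of the cone together with the diffeomorphism property of the change of variables, so no boundary terms or distributional subtleties arise.
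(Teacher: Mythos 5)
Your proof is correct and follows essentially the same route as the paper: the paper also deduces the proposition as a direct corollary of the conjugation identity $e^{\rho \tau} (-\partial_t^2 + \Delta_x ) e^{-\rho \tau} = e^{-2\tau} (-\partial_\tau^2 + \Delta_{\Hm} + \rho^2)$ stated just before it. Your bookkeeping of the nonlinearity — the factor $e^{-4\rho\tau}=e^{-2\tau}$ matching precisely because $\rho=1/2$ and $p=5$ — is exactly the computation the paper leaves implicit, so no further comparison is needed.
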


\subsection{Proof of the Scattering Theory}
As we mentioned at the beginning of this section, the idea is to consider the solution $v = e^{\rho \tau} u$ to the shifted wave equation \eqref{(CP1star)} on the hyperbolic plane $\Hm^2$ and see what the Morawetz-type inequality obtained in Sections \ref{section4} and \ref{section5} gives when we come back to the Minkouski space $\Rm \times \Rm^2$. Since the initial data $(u_0,u_1)$ have been assumed to be smooth in Theorem \ref{main1}, we only need to consider smooth solution $u$ in the argument below.

\begin{figure}[h]
\centering
\includegraphics[scale=.60]{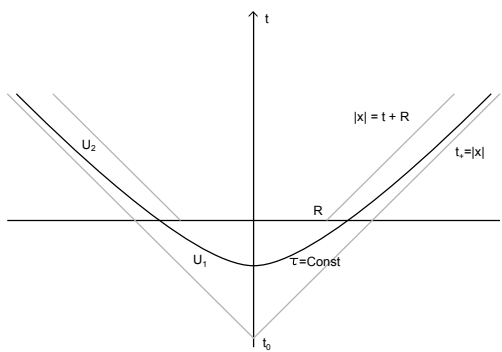}
\caption{Illustration of the transformation}\label{drawing02}
\end{figure}

\paragraph{Set-up} Let $R$ be the constant in Proposition \ref{pointwise estimate}. We choose a time $t_0 < -\sqrt{R^2 + 1}$ and consider the solution
\[
 v(s, \Theta, \tau) = e^{\rho \tau} u (\Theta e^\tau \sinh s, t_0 + e^\tau \cosh s)
\]
to the shifted wave equation \eqref{(CP1star)} on $\Hm^2$. Figure \ref{drawing02} illustrates the transformation from the light cone $\{(x,t): t-t_0 > |x|\}$ to $\Hm^2 \times \Rm$. As in the figure, we call $t_+ = t -t_0$ below. Using  the radial condition, we can calculate the partial derivatives of $v$ as follows:
\begin{align*}
 \partial_\tau v &= e^{\rho \tau} (\rho u + u_r e^\tau \sinh s + u_t e^\tau \cosh s);\\
 \partial_s v &= e^{\rho \tau} (u_r e^\tau \cosh s + u_t e^\tau \sinh s).
\end{align*}
The value of $u$ or its derivatives are taken at $(\Theta e^\tau \sinh s, t_0 + e^\tau \cosh s)$.

\paragraph{Energy at time $\tau$} Let us first fix a time $\tau$. As we mentioned earlier, the data of function $v$ at time $\tau$ are determined by the data of $u$ on the upper sheet of the hyperboloid $(t-t_0)^2 - |x|^2 = e^{2\tau}$. In order to avoid any point on this hyperboloid to fall in the region $\{(x,t)||x|<t+R,t>0\}$, where we can not apply our estimate on $u$ found in Section \ref{sec:pointwisees}, we restrict our choice of $\tau$ in the interval $[-1,0]$. If $U \subseteq \Hm^2$, we can consider the local energy of $v$ in $U$ at time $\tau$ given by the integral
\begin{align}
 2 \EE_{U} = &\int_{U} \left[|\partial_\tau v|^2 + |\partial_s v|^2 - \rho^2 v^2 + (1/3)|v|^6\right] d\mu \nonumber\\
 = &\int_U e^{2\rho \tau} \left[(\rho u + u_r e^\tau \sinh s + u_t e^\tau \cosh s)^2 - \rho^2 u^2
  + (u_r e^\tau \cosh s + u_t e^\tau \sinh s)^2\right] d\mu \nonumber\\
  &+ \frac{1}{3}\int_{U} e^{3\tau}|u|^6 d\mu \nonumber\\
  = &\int_U e^{2\rho\tau} \left[(u_r^2 +u_t^2)e^{2\tau}(\cosh^2 s + \sinh^2 s) + 4 u_r u_t e^{2\tau}\sinh s \cosh s
  \right] d\mu\nonumber\\
  & + \int_U e^{2\rho\tau}\left[2\rho u u_r e^{\tau} \sinh s + 2\rho u u_t e^{\tau} \cosh s\right] d\mu
  + \frac{1}{3}\int_{U} e^{3\tau}|u|^6 d\mu. \label{local energy1}
\end{align}
Let us consider two different types of subsets $U$.

\paragraph{Center Disk} The first type of subset $U$ is the disk
\[
 U_1(\tau) =  \{(s,\Theta) \in \Hm^2: s < s_\tau\}.
\]
The upper bound $s_\tau = \cosh^{-1}(-e^{-\tau} t_0)$ corresponds to the value of $t=0$. We define $J_1(\tau)$ to be the local energy of $v$ in the region $U_1 (\tau)$ at time $\tau$. By the calculation above, we have
\begin{align}
 2J_1 (\tau) = &\int_{U_1(\tau)} e^{2\rho\tau} \left[(u_r^2 +u_t^2)e^{2\tau}(\cosh^2 s + \sinh^2 s) + 4 u_r u_t e^{2\tau}\sinh s \cosh s
  \right] d\mu \nonumber \\
  & + \int_{U_1 (\tau)} e^{2\rho\tau}\left[2\rho u u_r e^{\tau} \sinh s + 2\rho u u_t e^{\tau} \cosh s\right] d\mu
  + \frac{1}{3}\int_{U_1 (\tau)} e^{3\tau}|u|^6 d\mu. \label{def of J1}
\end{align}
This is clearly finite for each given $\tau$ since we know $u(x,t)$ is smooth.

\paragraph{Large Rings} The second type of region $U_2$ is given by
\[
 U_2(\tau, s_0) = \{(s,\Theta) \in \Hm^2: s_\tau < s < s_0\}.
\]
As we did for the region $U_1$, we define $J_2 (\tau, s_0)$ to be the local energy of $v$ in $U_2 (\tau, s_0)$ at the time $\tau$. By rewriting the integral (\ref{local energy1}) into polar coordinates, we can give a formula for $J_2$ as
\begin{align*}
 2 J_2 (\tau,s_0) = & c \int_{s_\tau}^{s_0} \left[(u_r^2 +u_t^2)e^{2\tau}(\cosh^2 s + \sinh^2 s) + 4 u_r u_t e^{2\tau}\sinh s \cosh s
  \right] (e^\tau \sinh s)^{2 \rho} ds\\
  & +  c \int_{s_\tau}^{s_0} \left[2\rho u u_r e^{\tau} \sinh s + 2\rho u u_t e^{\tau} \cosh s\right] (e^\tau\sinh s)^{2\rho} ds
  + \frac{c}{3}\int_{s_\tau}^{s_0} e^{3\tau}|u|^6 \sinh s ds.
\end{align*}
Applying the change of variables $r = e^\tau \sinh s$ and the substitution
\[
 t_+ = e^\tau \cosh s\qquad \Rightarrow \qquad dr = e^{\tau} \cosh s ds = t_+ ds;
\]
we obtain
\begin{align*}
 2 J_2 (\tau,s_0) = & c \int_{\sqrt{t_0^2- e^{2\tau}}}^{e^\tau \sinh s_0} \left[(u_r^2 +u_t^2)(t_+^2 + r^2) + 4 u_r u_t t_+ r
  + 2\rho u u_r r + 2\rho u u_t t_+ \right] \frac{r^{2\rho}}{t_+} dr\\
  & + \frac{c}{3}\int_{\sqrt{t_0^2- e^{2\tau}}}^{e^\tau \sinh s_0} e^{2\tau}|u|^6 \frac{r}{t_+} dr.\\
  = & c \int_{\sqrt{t_0^2- e^{2\tau}}}^{e^\tau \sinh s_0} g(\tau,r) dr,
\end{align*}
where the integrand $g(\tau,r)$ is given by
\[
 g(\tau,r) = \frac{r}{t_+} \left[(u_r^2 +u_t^2)(t_+^2 + r^2) + 4 u_r u_t t_+ r
  + 2\rho u u_r r + 2\rho u u_t t_+ + \frac{1}{3} e^{2\tau} |u|^6\right].
\]
The points $(r,t) = (r, t_0 + \sqrt{r^2 + e^{2\tau}})$, where the function $u$ and its derivatives are evaluated in the formula above satisfy
\[
 r - t = r - (t_0 + \sqrt{r^2 + e^{2\tau}}) \geq \sqrt{t_0^2 -e^{2\tau}} - t_0 + t_0 > R
\]
and $t \geq 0$ by our choice of $t_0$ and $\tau$. Thus one can always apply to $g$ the estimates on $u$ obtained in Proposition \ref{pointwise estimate} and \ref{lm3}.

\paragraph{Estimate of $g(\tau,r)$ } Let us recall $\tau\in[-1,0]$. The function $g(\tau,r)$ can be written as the following sum
\begin{align*}
 g(\tau, r) = &  \left[2r^2 (u_r^2 + u_t^2) + 4r^2 u_r u_t + 2\rho u u_r r + 2\rho u u_t r\right]
 + \frac{e^{2\tau}}{3}\frac{r}{t_+}|u|^6\\ & + \left(\frac{r(r^2 + t_+^2)}{t_+} - 2r^2\right)(u_r^2 + u_t^2)
 + \left(\frac{r^2}{t_+}-r\right)(2\rho u u_r) \\
 = & g_1 (\tau,r) + g_2 (\tau,r) + g_3 (\tau,r) + g_4 (\tau,r).
\end{align*}
The main contribution comes from the first term
\begin{align*}
 |g_1 (\tau,r)|& = \left|  2r \left(\sqrt{r} u_t + \sqrt{r} u_r + \frac{\rho u}{\sqrt{r}}\right)\left(\sqrt{r} u_t + \sqrt{r} u_r \right) \right|\\
 & = \left| 2r \left((\sqrt{r} u)_t + (\sqrt{r} u)_r \right)\cdot\sqrt{r} \left(u_t +  u_r \right) \right|\\
 & \leq  2r^{3/2} \left|(\sqrt{r} u)_t + (\sqrt{r} u)_r \right| \left|u_t + u_r\right| \\
 & \lesssim_\delta  2 r^{3/2} \cdot r^{-1-\delta} \cdot r^{-3/2} \\
 & \lesssim   r^{-1-\delta},
\end{align*}
where we used Proposition \ref{lm3}.
We now observe that
\[
 \left|\frac{r(r^2 + t_+^2)}{t_+} - 2r^2\right| = \frac{r}{t_+}\left|2r^2 + e^{2\tau}- 2r t_+\right| \leq \frac{e^{4\tau}}{2 r^2 + e^{2\tau} + 2r t_+} \leq r^{-2};
\]
\[
 \left|\frac{r^2}{t_+}-r\right| = \frac{r}{t_+} \cdot \frac{e^{2\tau}}{r + t_+} \leq r^{-1};
\]
and substitute $u$, $u_r$, $u_t$ with their upper bounds we found earlier in Propositions \ref{pointwise estimate}, \ref{lm3} and Remark \ref{ut ur estimate}. As a result, we obtain
\begin{align*}
 |g_2 (\tau,r)| & \lesssim_\delta 1 \cdot r^{-3} = r^{-3};\\
 |g_3 (\tau,r)| & \lesssim_\delta r^{-2} \cdot r^{-1} = r^{-3};\\
 |g_4 (\tau,r)| & \lesssim_\delta r^{-1} \cdot r^{-1} = r^{-2}.
\end{align*}
Adding these up, we have $|g(\tau, r)| \lesssim_\delta r^{-1-\delta}$. Therefore
\[
 |J_2 (\tau,s_0)| \lesssim_\delta \int_{\sqrt{t_0^2- e^{2\tau}}}^{e^\tau \sinh s_0} r^{-1-\delta} dr
 \lesssim_\delta 1.
\]
In other words, $J_2$ has an upper bound which is independent of $\tau \in [-1,0]$ and $s_0\gg 1$.

\paragraph{$L^6 L^6$ Norm in the cone} We would like to conclude that the solution $v$ to \eqref{(CP1star)} has a finite energy by using the fact that $J_2(\tau,s_0)$ is uniformly bounded as $s_0 \rightarrow \infty$. However, the negative term $-\rho^2 |v|^2$ in the energy imposes a technical difficulty in the limit process. We solve this problem by applying smooth cut-off techniques. Let $\phi: \Rm \rightarrow [0,1]$ be a smooth cut-off function such that
\[
 \phi(s)=\left\{\begin{array}{ll}1, & \hbox{if}\;\;s<0;\\ 0, & \hbox{if}\;\; s>1. \end{array}\right.
\]
Let us define the cut-off version of initial data of $v$ at time $\tau$ as
\[
 v_{0,s_0} (s, \Theta) = \phi(s-s_0) v(\tau,s,\Theta);\;\;\; v_{1,s_0}(s,\Theta) = \phi(s-s_0) \partial_\tau v(\tau,s,\Theta)
\]
This pair of initial data is smooth and compact-supported. Observing that
\[
 \partial_s v_{0,s_0}(s,\Theta) = \phi'(s-s_0) v(\tau,s,\Theta) + \phi (s-s_0) \partial_s v(\tau,s,\Theta),
\]
we obtain
\begin{equation}\label{ineq4}
 |\nabla v_{0,s_0}(s,\Theta)|^2 \leq C |v(\tau,s,\Theta)|^2 + 2 |\partial_s v(\tau,s,\Theta)|^2.
\end{equation}
Here the constant $C$ depends on $\phi$ only. Using \eqref{ineq4} we have the following estimate on the energy
\begin{align*}
 \EE(v_{0,s_0} &, v_{1,s_0}) - J_1 (\tau) - J_2 (\tau, s_0)\\
& = \int_{s_0<s <s_0+1} \left[\frac{1}{2}|\nabla v_{0,s_0}|^2+ \frac{1}{2}|v_{1,s_0}|^2 - \frac{\rho^2}{2} |v_{0,s_0}|^2 + \frac{1}{6}|v_{0,s_0}|^6  \right] d\mu \\
 &\leq \int_{s_0<s <s_0+1} \left[\frac{C}{2}|v(\tau,\cdot)|^2 + |\partial_s v(\tau,\cdot)|^2 + \frac{1}{2}|\partial_\tau v(\tau,\cdot)|^2+ \frac{1}{6}|v(\tau,\cdot)|^6 \right] d\mu\\
  &\leq 2\left[J_2 (\tau, s_0+1)-J_2 (\tau, s_0)\right] + (C/2 + \rho^2) \int_{s_0<s <s_0+1} |v(\tau,\cdot)|^2 d\mu\\
 &\lesssim_\delta 1 + \int_{s_0}^{s_0+1} |v(\tau,\cdot)|^2 \sinh s ds\\
& = 1 + \int_{e^\tau \sinh s_0}^{e^\tau \sinh (s_0+1)} |u|^2 \frac{r}{t_+} dr\\
 &\lesssim_\delta 1 + \int_{e^\tau \sinh s_0}^{e^\tau \sinh (s_0+1)} r^{-1} dr \lesssim 1.
\end{align*}
This gives us an upper bound of the energy, which does not depend on $s_0$. Namely, for each $\tau\in [-1,0]$
\begin{equation}\label{ineq5}
 \EE(v_{0,s_0}, v_{1,s_0}) < \EE_0 \doteq J_1 (\tau) + C_1 (\delta).
\end{equation}
Let us consider the solution $v_{s_0}$ to the shifted wave equation \eqref{(CP1star)} with initial data $(v_{0,s_0}, v_{1,s_0})$ at time $\tau$. Proposition \ref{scatteringn2pgeq5} guarantees its maximal lifespan is $\Rm$ and the key estimate immediately follows by Theorem \ref{Morawetz2}
\[
 \iint_{\Hm^2 \times \Rm} |v_{s_0}|^6 d\mu d\tau' \leq 6 \EE_0 < \infty.
\]
By finite speed of propagation, we know $v_{s_0} = v$ in the region
\[
 K_{\tau, s_0} = \left\{(\tau', s, \Theta)\in \Rm \times \Hm^2 : s + |\tau' - \tau| < s_0 \right\}.
\]
As a result, we have the inequality
\[
 \iint_{K_{\tau,s_0}} |v|^6 d\mu d\tau' \leq 6 \EE_0.
\]
Letting $s_0 \rightarrow \infty$, we obtain the estimate
\[
 \int_{\Hm^2 \times \Rm} |v|^6  d\mu d\tau' \leq 6 \EE_0.
\]
By a change of variables
\[
 |v|^6 d\mu d\tau' = |e^{\rho \tau'}u|^6 d\mu d\tau' = |u|^6 e^{3\tau'} d\mu d\tau' = |u|^6 dx dt,
\]
we obtain
\[
 \iint_{t_+ >|x|} |u|^6 dx dt \leq 6 \EE_0.
\]
The integral region contains the set $\{(t,x): |x|\leq t+R, t>0\}$, see Figure \ref{drawing02} above. Therefore we have
\begin{equation}\label{ineq6}
 \iint_{|x|\leq t + R, t>0} |u|^6 dx dt \leq 6 \EE_0.
\end{equation}
On the other hand, we can apply the point-wise estimate of $u$ given by Proposition \ref{pointwise estimate} if $|x| > t + R$ and $t>0$,
\begin{align}\notag
 \iint_{|x| > t + R, t> 0} |u(x,t)|^6 dx dt & \lesssim_\delta \iint_{|x| > t + R, t> 0} |x|^{-3}(|x|-t)^{-6\delta} dx dt\\\notag
 & \leq \int_{|x|>R} \left(\int_{0}^{|x|-R} |x|^{-3}(|x|-t)^{-6\delta} dt\right) dx\\\notag
 & \lesssim \int_{|x|>R} |x|^{-3} |x|^{1-6\delta} dx\\\notag
 & \lesssim \int_R^\infty r^{-2-6\delta} r dr\\\label{ineq7}
 & \lesssim_\delta R^{-6\delta} \leq 1.
\end{align}
In summary, we have $\|u\|_{L^6 L^6 (\Rm^+ \times \Rm^2)} < \infty$. Thus we obtain the scattering in the positive time direction. Since the wave equation is time-reversible, the scattering in negative time direction can be proved in the same way.

\subsection{Further Improvement}

In this subsection, we first show that there exists an upper bound of the norm $\|u\|_{L^6 L^6 (\Rm \times \Rm^2)}$, which depends only on the explicit parameters $A$ and $\eps$ in Theorem \ref{main1}. As a result, we can show the smoothness condition is not necessary by smooth approximation.

\paragraph{The upper bound of the $L^6 L^6$ norm} By collecting the upper bounds found in \eqref{ineq5},  \eqref{ineq6} and \eqref{ineq7} in the previous subsection, we obtain
\begin{proposition}
Let $u$ be a solution to the equation \eqref{(CP2)} as in Theorem \ref{main1}. Then the $L^6 L^6$ norm of $u$ has an upper bound of the form
\begin{equation} \label{upper bound}
 \|u\|_{L^6 L^6 (\Rm^+ \times \Rm^2)}^6 \leq C_2 (\delta) + 6 \inf_{\tau \in [-1,0]}  J_1 (\tau),
\end{equation}
where the constant $C_2$ depends on $\delta$ only.
\end{proposition}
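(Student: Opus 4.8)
The plan is to split the forward region $\Rm^+ \times \Rm^2$ along the shifted light cone $|x| = t + R$ and to estimate the $L^6 L^6$ mass on each of the two pieces by one of the bounds already established in the previous subsection, and then to optimize the resulting inequality over the free parameter $\tau \in [-1,0]$.

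First I would write
\[
 \|u\|_{L^6 L^6 (\Rm^+ \times \Rm^2)}^6 = \iint_{|x| \leq t + R, \, t>0} |u|^6 \, dx\, dt + \iint_{|x| > t + R, \, t>0} |u|^6 \, dx \, dt.
\]
The exterior piece is handled by the pointwise decay estimate of Proposition \ref{pointwise estimate}: the chain of inequalities culminating in \eqref{ineq7} bounds it by a constant $C(\delta)$ depending only on $\delta$, since $R^{-6\delta} \leq 1$ when $R>1$. The interior piece is precisely the integral appearing on the left of \eqref{ineq6}, which is controlled by $6 \EE_0 = 6\left(J_1 (\tau) + C_1(\delta)\right)$, with $\EE_0$ as defined in \eqref{ineq5}.

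The crucial observation is that the cut-off data $(v_{0,s_0},v_{1,s_0})$, the solutions $v_{s_0}$, and the energy bound \eqref{ineq5} were all constructed for an arbitrary $\tau \in [-1,0]$, whereas the interior integral on the left of \eqref{ineq6} does not depend on $\tau$ at all. Hence the inequality $\iint_{|x| \leq t+R, t>0}|u|^6 \leq 6 J_1(\tau) + 6 C_1(\delta)$ holds simultaneously for every admissible $\tau$, and I may therefore replace $J_1(\tau)$ on the right by its infimum over $\tau \in [-1,0]$. Adding the two pieces and setting $C_2(\delta) = 6 C_1(\delta) + C(\delta)$ yields exactly \eqref{upper bound}.

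Since all of the analytic content---the pointwise bounds of Propositions \ref{pointwise estimate} and \ref{lm3}, the local energy formula for $J_2$, the cut-off energy estimate \eqref{ineq5}, and the change of variables producing \eqref{ineq6}---is already in place, there is no genuine obstacle: the proposition is essentially a bookkeeping step. The only point requiring care is the legitimacy of passing to the infimum over $\tau$. One must check that the bound $6\EE_0$ in \eqref{ineq6} was obtained uniformly for each fixed $\tau$, which it was, because the Morawetz inequality of Theorem \ref{Morawetz2} applied to $v_{s_0}$ produces a constant independent of both $s_0$ and $\tau$; this is what allows the $\tau$-independent left-hand side to be estimated by the smallest available right-hand side.
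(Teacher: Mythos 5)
Your proposal is correct and follows the same route as the paper: the paper's proof is precisely the observation that \eqref{ineq6} (interior of the shifted cone, bounded by $6\EE_0 = 6J_1(\tau) + 6C_1(\delta)$ for each fixed $\tau \in [-1,0]$) and \eqref{ineq7} (exterior, bounded by a constant depending only on $\delta$) sum to the claimed bound, after taking the infimum over $\tau$ since the left-hand side is $\tau$-independent. Your additional check that the Morawetz constant from Theorem \ref{Morawetz2} is uniform in $s_0$ and $\tau$ is exactly the point that makes this bookkeeping legitimate.
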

\paragraph{Estimate of $J_1 (\tau)$} We start by showing the parameter $\tau$ in the upper bound found in (\ref{upper bound}) can be chosen so that the local energy $J_1 (\tau)$ is dominated by a constant determined solely by the parameters $A$ and $\eps$ in Theorem \ref{main1}. The idea is to integrate $J_1$ in $\tau$. By the identity (\ref{def of J1}) we have
\begin{align*}
 2\inf J_1 (\tau)
\leq &\int_{K} e^{2\rho\tau} \left[(u_r^2 +u_t^2)e^{2\tau}(\cosh^2 s + \sinh^2 s) + 4 u_r u_t e^{2\tau}\sinh s \cosh s \right] d\mu d\tau\\
& + \int_{K} e^{2\rho\tau}\left[2\rho u u_r e^{\tau} \sinh s + 2\rho u u_t e^{\tau} \cosh s\right] d\mu d\tau
  + \frac{1}{3}\int_{K} e^{3\tau}|u|^6 d\mu d\tau,
\end{align*}
where the  region of integration  is defined as $K = \{(y,\tau)\in \Hm^2 \times \Rm: \tau \in [-1,0], y \in U_1 (\tau)\}$. A change of variables $dx dt = e^{3\tau} d\mu d\tau$ shows
\begin{align*}
 2\inf J_1 (\tau)
\leq &\int_{K'} e^{-2\tau} \left[(u_r^2 +u_t^2)(|x|^2 + t_+^2) + 4 u_r u_t |x| t_+ + 2\rho u u_r |x| + 2\rho u u_t t_+\right] dx dt\\
  & + \frac{1}{3}\int_{K'}|u|^6 dx dt,
\end{align*}
where the  region $K' = \{(x,t): e^{-2} < (t-t_0)^2 - |x|^2 < 1, t_0 < t < 0\}$, see Figure \ref{drawing05}.
\begin{figure}[h]
\centering
\includegraphics[scale=.60]{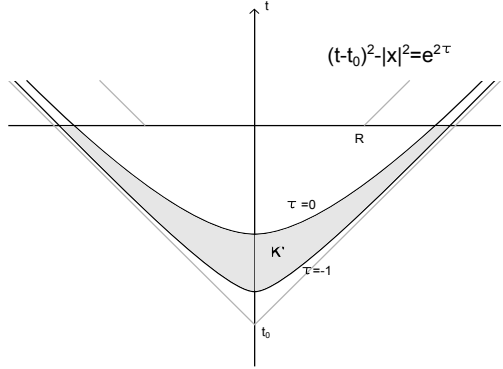}
\caption{The region for integral}\label{drawing05}
\end{figure}

Since this region is compactly supported, we are able to find easily an upper bound:
\begin{align*}
 2 \inf J_1 (\tau) \lesssim & \int_{K'} \left[(u_r^2 +u_t^2)(|x|^2 + t_+^2) + 4 |u_r| |u_t| |x| t_+ + 2\rho |u| |u_r| |x| + 2\rho |u| |u_t| t_+ + |u|^6\right] dx dt\\
 \lesssim & \int_{K'} \left[(u_r^2 +u_t^2) t_+^2 + |u|^2 + |u|^6\right] dx dt\\
 \lesssim & \int_{t_0+ e^{-1}}^0 \int_{e^{-2} < (t-t_0)^2 -|x|^2 < 1} \left[(u_r^2 +u_t^2) t_0^2 + 1 + |u|^6\right] dx dt\\
 \lesssim & \int_{t_0+ e^{-1}}^0 \left[t_0^2 E(u_0,u_1) + 1\right] dt\\
 \leq & |t_0|^3 E(u_0,u_1) + |t_0|.
\end{align*}
According to Remark \ref{H12A}, we also have that the energy $E(u_0,u_1)$ is dominated by
\[
 E(u_0,u_1) \lesssim A^2 + A^6.
\]
As a result, we have $\inf J_1 (\tau) \lesssim |t_0| + |t_0|^3 (A^2 + A^6)$. By (\ref{upper bound}), we obtain
\begin{equation}
 \|u\|_{L^6 L^6 (\Rm^+ \times \Rm^2)}^6 \lesssim C_2(\delta)+ |t_0| + |t_0|^3 (A^2 + A^6).
\end{equation}
A quick review of the calculations we performed reveals that $\delta$ and $t_0$ can be selected in a way that they are uniquely determined by $A$ and $\eps$. As a consequence we have the stronger estimate
\begin{equation}
 \|u\|_{L^6 L^6 (\Rm \times \Rm^2)} \leq C(\eps, A). \label{improved L6 bound on smooth}
\end{equation}
Here we have substituted $\Rm_t^+$ by $\Rm$, because the negative time direction can be handled in the same manner.
\paragraph{Final results} The improved version of the main Theorem \ref{main1} now becomes
\begin{theorem} \label{improved main theorem}
Let $(u_0,u_1)$ be a pair of radial initial data satisfying the following inequalities
\begin{align*}
 |\nabla u_0|, |u_1| & \leq A (1+|x|)^{-\frac{3}{2} -\eps};\\
 |u_0| & \leq A (1+|x|)^{-\frac{1}{2}-\eps};
\end{align*}
with two positive constants $A$ and $\eps$. Then the solution $u$ of the wave equation \eqref{(CP2)}  exists globally in time and scatters with a finite space-time norm
 \[
  \|u\|_{L^6 L^6 (\Rm \times \Rm^2)} < C(\eps,A) <\infty.
 \]
Moreover there exists two pairs $(u_0^\pm, u_1^\pm) \in \dot{H}^{\frac{1}{2}} \times \dot{H}^{-\frac{1}{2}} (\Rm^2)$, such that
 \[
  \lim_{t \rightarrow \pm \infty} \left\|\left(u(t) - S(t)(u_0^\pm, u_1^\pm),
 \partial_t u(t) - \partial_t S(t)(u_0^\pm, u_1^\pm)\right)\right\|_{\dot{H}^{\frac{1}{2}} \times \dot{H}^{-\frac{1}{2}}} = 0.
 \]
\end{theorem}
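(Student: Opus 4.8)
The plan is to remove the smoothness hypothesis by a density argument that leans entirely on the uniform bound \eqref{improved L6 bound on smooth}, whose right-hand side $C(\eps,A)$ depends only on $A$ and $\eps$. Given general radial data $(u_0,u_1)$ satisfying the stated decay inequalities, I would first construct a sequence of \emph{smooth} radial data $(u_{0,k},u_{1,k})$ that (i) satisfy the same three pointwise decay bounds with a uniform constant $CA$ in place of $A$, and (ii) converge to $(u_0,u_1)$ in the critical space $\dot H^{1/2}\times\dot H^{-1/2}(\Rm^2)$. Recall from Remark \ref{H12A} that the decay hypotheses already force $(u_0,u_1)\in\dot H^{1/2}\times\dot H^{-1/2}$ with finite energy, so this is the correct topology in which to approximate.

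The natural construction is mollification by a fixed smooth, radial, compactly supported approximate identity $\phi_{1/k}$: set $u_{0,k}=u_0\ast\phi_{1/k}$ and $u_{1,k}=u_1\ast\phi_{1/k}$. Radiality is preserved because $\phi_{1/k}$ is radial. The decay bounds are preserved up to a constant because the weights $(1+|x|)^{-s}$ are slowly varying: for $|y|\le 1/k\le 1$ one has $(1+|x-y|)^{-s}\lesssim(1+|x|)^{-s}$, so $|\nabla u_{0,k}|=|(\nabla u_0)\ast\phi_{1/k}|\le A\int(1+|x-y|)^{-3/2-\eps}\phi_{1/k}(y)\,dy\lesssim A(1+|x|)^{-3/2-\eps}$, and likewise for $u_{1,k}$ and $u_{0,k}$. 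Convergence in the homogeneous space follows on the Fourier side from dominated convergence, since $\|u_{0,k}-u_0\|_{\dot H^{1/2}}^2=\int|\xi|\,|\hat\phi(\xi/k)-1|^2|\hat u_0|^2\,d\xi\to0$, and analogously $\|u_{1,k}-u_1\|_{\dot H^{-1/2}}\to 0$.

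Each smooth datum $(u_{0,k},u_{1,k})$ now falls under Theorem \ref{main1}, so the corresponding solution $u_k$ is global and, by \eqref{improved L6 bound on smooth} applied with constant $CA$, obeys the uniform bound $\|u_k\|_{L^6L^6(\Rm\times\Rm^2)}\le M:=C(\eps,CA)$, a constant depending only on $\eps$ and $A$. The remaining step is to pass to the limit using the long-time perturbation theory, Proposition \ref{perturbation theory}, on the interval $I=\Rm$. Taking the exact solution $u_k$ as the approximate solution $\tilde u$ (so $e\equiv0$), the only smallness requirement is $\|S(t)(u_0-u_{0,k},u_1-u_{1,k})\|_{L^6L^6(\Rm\times\Rm^2)}\le\eps$, which by the Strichartz estimate of Lemma \ref{strichartz} is controlled by $\|(u_0-u_{0,k},u_1-u_{1,k})\|_{\dot H^{1/2}\times\dot H^{-1/2}}\to0$. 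Fixing $M$ and choosing $k$ large enough that this quantity is below $\eps_0(M)$, Proposition \ref{perturbation theory} produces the global solution $u$ with data $(u_0,u_1)$ together with $\|u-u_k\|_{L^6L^6}\le C(M)\eps$; hence $\|u\|_{L^6L^6(\Rm\times\Rm^2)}\le M+C(M)\eps$, and letting $k\to\infty$ yields $\|u\|_{L^6L^6(\Rm\times\Rm^2)}\le C(\eps,A)$. Finiteness of this norm then gives scattering and the asymptotic data $(u_0^\pm,u_1^\pm)\in\dot H^{1/2}\times\dot H^{-1/2}$, exactly as in the finite-norm-implies-scattering part of the local theory developed in Subsection \ref{sec:localtheory}.

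I expect the main obstacle to be the approximation step: one must verify simultaneously that mollification preserves all three pointwise decay rates with a constant \emph{independent of} $k$ and that it converges in the critical homogeneous space. This is precisely what guarantees that the single uniform constant $M$ governs every $u_k$ and that the perturbative smallness can be forced. Everything downstream is a direct application of the uniform estimate \eqref{improved L6 bound on smooth} and the perturbation theory, both already established earlier in the paper.
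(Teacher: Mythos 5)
Your proposal is correct and follows essentially the same route as the paper's own proof: mollify the data with a radial approximate identity, check that the pointwise decay bounds survive with a uniform constant (so Theorem \ref{main1} and the uniform bound \eqref{improved L6 bound on smooth} apply to every regularized solution), verify convergence in $\dot H^{1/2}\times\dot H^{-1/2}$, and conclude via the long-time perturbation theory of Proposition \ref{perturbation theory}. The only differences are cosmetic: the paper proves the critical-space convergence through $L^{4/3}$ convergence of mollifications plus Sobolev embedding rather than your Fourier-side dominated convergence, and it phrases the final step as a contradiction on the maximal lifespan followed by the blow-up criterion rather than a direct application of the perturbation theorem on $I=\Rm$.
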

\begin{remark} \label{H12}
 As in Remark \ref{H12A}, basic computation shows that the initial data in Theorem \ref{improved main theorem} satisfy
 \begin{align*}
  (u_0,u_1) & \in \dot{W}^{1, \frac{4}{3}} \times L^{\frac{4}{3}} (\Rm^2) \hookrightarrow \dot{H}^{\frac{1}{2}} \times \dot{H}^{-\frac{1}{2}} (\Rm^2);\\
  E(u_0,u_1) & \lesssim A^2 + A^6 < \infty.
 \end{align*}
\end{remark}
\begin{proof}[Proof of theorem \ref{improved main theorem}]
 Let us choose a nonnegative, smooth, radial cut-off function $\phi(x)$ compactly supported in the unit disk with $\int_{\Rm^2} \phi(x) dx =1$ and define ($0 < \lambda < 1$)
 \[
  \phi_\lambda(x) = \frac{1}{\lambda^2}\phi(\frac{x}{\lambda}).
 \]
 Then we can smooth out the initial data by defining
 \[
  (u_{0,\lambda}, u_{1,\lambda}) = \phi_\lambda \ast (u_0,u_1).
 \]
 These pairs are obviously smooth and radial with the gradient
 \[
  \nabla u_{0,\lambda} = \phi_\lambda \ast \nabla u_0.
 \]
 We can estimate
 \begin{align*}
  |\nabla u_{0,\lambda}(x_0)| &\leq \left(\phi_\lambda \ast |\nabla u_0|\right)(x_0) \leq \left(\phi_\lambda \ast A (1+|x|)^{-\frac{3}{2}-\eps} \right)(x_0) \leq C(\eps) A (|x_0|+1)^{-\frac{3}{2}-\eps};\\
  |u_{1,\lambda}(x_0)| &\leq \left(\phi_\lambda \ast |u_1|\right)(x_0) \leq \left(\phi_\lambda \ast A (1+|x|)^{-\frac{3}{2}-\eps} \right)(x_0) \leq C(\eps) A (|x_0|+1)^{-\frac{3}{2}-\eps};\\
  |u_{0,\lambda}(x_0)| &\leq \left(\phi_\lambda \ast |u_0|\right)(x_0) \leq \left(\phi_\lambda \ast A (1+|x|)^{-\frac{1}{2}-\eps} \right)(x_0) \leq C(\eps) A (|x_0|+1)^{-\frac{1}{2}-\eps}.
 \end{align*}
 By Theorem \ref{main1} and the $L^6$ norm estimate (\ref{improved L6 bound on smooth}), these inequalities guarantee the existence of a universal constant $C = C(\eps, A)$, such that for any $\lambda \in (0,1)$, the solution $u_\lambda$ to \eqref{(CP2)} with initial data $(u_{0,\lambda}, u_{1,\lambda})$ exists for all $t\in \Rm$ and satisfies
 \begin{equation} \label{unibound}
  \|u_\lambda\|_{L^6 L^6 (\Rm \times \Rm^2)} < C.
 \end{equation}
On the other hand, we know both $|\nabla u_0|$ and $u_1$ are in the space $L^{\frac{4}{3}} (\Rm^2)$ by Remark \ref{H12}. This implies
 \begin{align*}
  \|\nabla (u_{0,\lambda} - u_0)\|_{L^{\frac{4}{3}}(\Rm^2)} & = \|\phi_{\lambda} \ast \nabla u_0 - \nabla u_0\|_{L^{\frac{4}{3}}(\Rm^2)} \rightarrow 0;\\
  \| (u_{1,\lambda} - u_1)\|_{L^{\frac{4}{3}}(\Rm^2)} &=\|\phi_{\lambda} \ast u_1 - u_1\|_{L^{\frac{4}{3}}(\Rm^2)}\rightarrow 0;
 \end{align*}
 as $\lambda \rightarrow 0$. Sobolev embedding then gives
 \begin{equation} \label{convergence of philambda}
  \|(u_{0,\lambda}, u_{1,\lambda})-(u_0,u_1)\|_{\dot{H}^{\frac{1}{2}} \times \dot{H}^{-\frac{1}{2}} (\Rm^2)} \rightarrow 0.
 \end{equation}
 Now we are able to conclude that $\|u\|_{L^6 L^6 ((-T_-, T_+)\times \Rm^2)} \leq C$. If this were not true, we could obtain a contradiction by  (\ref{unibound}), (\ref{convergence of philambda}) and applying long-time perturbation theory, see Proposition \ref{perturbation theory}. The last step is to apply the finite time blow-up criterion Lemma \ref{finite time criterion} and then conclude that the maximal lifespan of $u$ is $\Rm$.
\end{proof}

\section*{Appendix}

\subsection*{Focusing equation and blow up}
\begin{proposition} Assume $2 \leq n \leq 6$ and $1 < p < p_c$. Let $(u_0,u_1)$ be a pair of nonzero initial data in $H^1 \times H^{\frac{1}{2}, -\frac{1}{2}}(\Hm^n)$ so that the energy associated to the solution to \eqref{(CP1)} in the focusing case is
\[
 E(u_0,u_1) = \int_{\Hm^n} \left(\frac{1}{2} |\nabla u_0|^2 -\frac{\rho^2}{2}|u_0|^2 + \frac{1}{2}|u_1|^2 - \frac{1}{p+1} |u_0|^{p+1} \right) d\mu \leq 0.
\]
Then the solution to the focusing \eqref{(CP1)} with initial data $(u_0,u_1)$ blows up in finite time in both positive and negative  time directions.
\end{proposition}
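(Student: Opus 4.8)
The plan is to run a Levine-type concavity argument on the squared $L^2$ norm of the solution. First observe that the hypotheses place $u$ in $L^2(\Hm^n)$: the assumption $u_0\in H^1$ gives $u_0\in L^2$, while the multiplier $\lambda^{-1/2}(\lambda^2+\rho^2+1)^{1/4}$ defining the $H^{1/2,-1/2}$ norm is bounded below by a positive constant, so $H^{1/2,-1/2}(\Hm^n)\hookrightarrow L^2(\Hm^n)$ and $u_1\in L^2$ as well. Set $F(t)=\|u(\cdot,t)\|_{L^2(\Hm^n)}^2$. I would justify every differentiation below exactly as in Section \ref{section4}: replace $u$ by the smoothed, spatially localized solutions $u_\eps$ tested against $\psi_\delta$, differentiate at that regularized level, and pass to the limit using Lemma \ref{convergenceueps} and the conservation of energy in the focusing case (Remark \ref{invariance of energy focus}).

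Formally $F'(t)=2\int_{\Hm^n}u\,\partial_t u\,d\mu$, and using $\partial_t^2u=(\Delta_{\Hm^n}+\rho^2)u+|u|^{p-1}u$ together with the identity $\int_{\Hm^n}u(\Delta_{\Hm^n}+\rho^2)u\,d\mu=-\|Du\|_{L^2}^2$ one gets
\[
F''(t)=2\|\partial_t u\|_{L^2}^2-2\|Du\|_{L^2}^2+2\|u\|_{L^{p+1}}^{p+1}.
\]
Eliminating $\|Du\|_{L^2}^2$ through the conserved energy $E=\tfrac12\|Du\|_{L^2}^2+\tfrac12\|\partial_t u\|_{L^2}^2-\tfrac1{p+1}\|u\|_{L^{p+1}}^{p+1}$ yields
\[
F''(t)=4\|\partial_t u\|_{L^2}^2-4E+\frac{2(p-1)}{p+1}\|u\|_{L^{p+1}}^{p+1}.
\]
Now $E\le0$ enters twice. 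Discarding the nonnegative terms gives $F''\ge-4E\ge0$, so $F$ is convex; and the inequality $\tfrac1{p+1}\|u\|_{L^{p+1}}^{p+1}\ge\tfrac12\|\partial_t u\|_{L^2}^2$ (again from $E\le0$, dropping $\|Du\|_{L^2}^2\ge0$) upgrades this to the quantitative bound $F''\ge(p+3)\|\partial_t u\|_{L^2}^2$. Combined with Cauchy--Schwarz, $(F')^2=4\big(\int u\,\partial_t u\big)^2\le4\|u\|_{L^2}^2\|\partial_t u\|_{L^2}^2=4F\|\partial_t u\|_{L^2}^2$, this produces $F\,F''\ge\tfrac{p+3}{4}(F')^2$. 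Hence, with $\beta=\tfrac{p-1}{4}>0$ (here $p>1$ is essential),
\[
\big(F^{-\beta}\big)''=-\beta F^{-\beta-2}\Big(F\,F''-(\beta+1)(F')^2\Big)\le0,
\]
so $G:=F^{-\beta}$ is concave on the maximal interval $(-T_-,T_+)$.

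To conclude, I would first record $F(t)>0$ throughout: if $F(t_*)=0$ then $u(\cdot,t_*)\equiv0$, and since $E\le0$ forces $\tfrac12\|\partial_t u(t_*)\|_{L^2}^2\le0$, also $\partial_t u(\cdot,t_*)\equiv0$, whence $u\equiv0$, contradicting the nonzero data. Thus $G>0$ is concave; since $F''\ge0$, $F'$ is nondecreasing. At any $t_0$ with $F'(t_0)>0$ one has $G'(t_0)<0$, so concavity gives $G(t)\le G(t_0)+G'(t_0)(t-t_0)$, which would force $G=0$ (i.e. $F\to\infty$) at a finite time $t\le t_0+G(t_0)/|G'(t_0)|$; this is incompatible with $T_+=\infty$, so $T_+<\infty$. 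When $E<0$ the sharper bound $F''\ge4|E|>0$ gives $F'(t)\ge F'(0)+4|E|t\to+\infty$ and $F'(t)\le F'(0)+4|E|t\to-\infty$, so $F'$ is eventually strictly positive \emph{and} eventually strictly negative; the displayed argument then forces $T_+<\infty$, and its mirror image (or time reversibility of the equation) forces $T_-<\infty$, giving blow-up in both directions.

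I expect the main obstacle to be the threshold case $E=0$. There $F''\ge-4E$ degenerates to $F''\ge0$, so the strict convexity that delivered two-sided blow-up for $E<0$ is lost. One still has $F''>0$ strictly at $E=0$ (it vanishes only where $u\equiv0$, which never occurs for nonzero data with $E\le0$), hence $F'$ is strictly increasing and has at most one zero; if that zero lies in the interior of $(-T_-,T_+)$ both directions blow up as above, but when $F'$ keeps one sign throughout one must separately exclude a solution that is global and $L^2$-decaying in the opposite time direction. Handling this borderline carefully, together with the rigorous justification of the $F'$ and $F''$ computations at the low regularity $H^{1/2,-1/2}$ of $u_1$ via the smoothing apparatus of Section \ref{section4}, is where the real work lies.
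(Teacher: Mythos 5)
Your strategy is the paper's own: a Levine-type concavity argument applied to $F(t)=\|u(\cdot,t)\|_{L^2(\Hm^n)}^2$, and your computations are correct as far as they go --- the identity $F''=4\|\partial_t u\|_{L^2}^2-4\EE+\frac{2(p-1)}{p+1}\|u\|_{L^{p+1}}^{p+1}$, the bound $FF''\geq\frac{p+3}{4}(F')^2$, the positivity of $F$, and the two-sided blow-up when $\EE<0$ all match the paper's argument. But the gap you flag in your final paragraph is genuine, and it is not a removable technicality of the write-up: when $\EE=0$, pointwise strict positivity $F''(t)>0$ is not enough, because nothing you proved rules out the scenario $F'<0$ on all of $[0,\infty)$ with $F''$ decaying integrably fast, $F'\nearrow 0$ and $F$ decreasing to a positive limit. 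So your argument, as written, proves the proposition only when $\EE<0$, together with the sub-case of $\EE=0$ in which $F'$ happens to change sign inside the lifespan.

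The paper closes exactly this hole with one ingredient you never invoke: the Sobolev embedding $H^{0,1}(\Hm^n)\hookrightarrow L^{p+1}(\Hm^n)$ of Proposition \ref{Sobolevem1}. Applied to the conserved energy it gives
\[
 C\|u(\cdot,t)\|_{L^{p+1}}^{2}+\frac{1}{2}\|\partial_t u(\cdot,t)\|_{L^{2}}^{2}-\frac{1}{p+1}\|u(\cdot,t)\|_{L^{p+1}}^{p+1}\leq \EE\leq 0,
\]
hence $C\|u(\cdot,t)\|_{L^{p+1}}^{2}\leq\frac{1}{p+1}\|u(\cdot,t)\|_{L^{p+1}}^{p+1}$; since $\|u(\cdot,t)\|_{L^{p+1}}$ can never vanish (by the same uniqueness observation you used to show $F>0$), this yields the uniform-in-time lower bound $\|u(\cdot,t)\|_{L^{p+1}}\gtrsim_p 1$. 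Feeding this into your formula for $F''$ gives $F''(t)\geq c>0$ uniformly on the whole lifespan, for $\EE=0$ as well as $\EE<0$. Consequently $F'$ increases at a uniform linear rate, so it is eventually positive as $t\to+\infty$ (and, by time reversibility, the mirrored argument works backwards in time), and your concavity step --- equivalently the paper's observation that $\MM(t)/\MM'(t)$ is positive yet decreases at rate at most $\frac{1-p}{4}<0$ --- then forces finite-time blow-up in both directions. Adding this single Sobolev-embedding estimate is all that is needed to complete your proof, after which it coincides with the paper's.
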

\begin{proof}
Let us assume $T_+ =\infty$ and deduce a contradiction. The negative time direction can be dealt with in the same manner. First of all, the same argument as in the proof of Lemma \ref{CH1} gives
\[
 (u, \partial_t u) \in C((-T_-,T_+); H^1(\Hm^n) \times L^2 (\Hm^n)),
\]
and the energy conservation law ($t_0 \in (-T_-,T_+)$) reads
\begin{equation}
\frac{1}{2}\|u(\cdot,t_0)\|_{H^{0,1}(\Hm^n)}^2 + \frac{1}{2}\|\partial_t u(\cdot,t_0)\|_{L^2(\Hm^n)}^2 - \frac{1}{p+1}\|u(\cdot,t_0)\|_{L^{p+1}(\Hm^n)}^{p+1} = \EE(u_0,u_1) \leq 0, \label{energycsv}
\end{equation}
which implies
\begin{equation}
 \frac{1}{p+1} \|u(\cdot, t_0)\|_{L^{p+1}(\Hm^n)}^{p+1} \geq \frac{1}{2}\|u(\cdot,t_0)\|_{H^{0,1}(\Hm^n)}^2 + \frac{1}{2}\|\partial_t u(\cdot,t_0)\|_{L^2(\Hm^n)}^2. \label{lp1}
\end{equation}
Furthermore, if we apply the Sobolev embedding $H^{0,1} \hookrightarrow L^{p+1}$ given in Proposition \ref{Sobolevem1} to (\ref{energycsv}), we have
\[
 C \|u(\cdot, t_0)\|_{L^{p+1}(\Hm^n)}^2 + \frac{1}{2}\|\partial_t u(\cdot,t_0)\|_{L^2(\Hm^n)}^2 - \frac{1}{p+1}\|u(\cdot,t_0)\|_{L^{p+1}(\Hm^n)}^{p+1} \leq \EE \leq 0.
\]
This implies $\|u(\cdot, t_0)\|_{L^{p+1}} \gtrsim 1$ for all  $t_0 \in (-T_-,T_+)$. Let us define
\[
 \MM(t) = \int_{\Hm^n} |u(x,t)|^2 d\mu(x).
\]
Applying integration by parts as well as necessary smoothing and truncation techniques as we did in the proof of Morawetz inequality, we obtain
\begin{align}
 \MM'(t) & = 2 \int_{\Hm^n} u \partial_t u d\mu;\nonumber \\
 \MM''(t) & = 2 \int_{\Hm^n} \left(|\partial_t u|^2 -|\nabla u|^2 + \rho^2 |u|^2 + |u|^{p+1}\right) d\mu\nonumber \\
 & = - 4 \EE + \int_{\Hm^n} \left(4 |\partial_t u|^2 + \frac{2(p-1)}{p+1} |u|^{p+1}\right) d\mu.\label{esmpp1}\\
 & \geq - 4\EE + \int_{\Hm^n} (p+3) |\partial_t u|^2 d\mu.\nonumber
\end{align}
In the last step above we use the inequality (\ref{lp1}). Combining (\ref{esmpp1}), the assumption $\EE \leq 0$ and the fact $\|u\|_{L^{p+1}(\Hm^n)} \gtrsim 1$, we obtain the inequality $\MM''(t) \gtrsim 1$. This implies $\MM'(t)$ will be eventually positive as $t \rightarrow \infty$. Let us assume $\MM(t), \MM'(t) > 0$ as $t > t_1$. Using the inequality
\[
 \MM(t) \MM''(t) \geq \left(\int_{\Hm^n} |u|^2 d\mu\right)\left(\int_{\Hm^n} (p+3) |\partial_t u|^2 d\mu\right) \geq \frac{p+3}{4} [\MM'(t)]^2,
\]
we obtain
\[
 \frac{d}{dt}\left(\frac{\MM(t)}{\MM'(t)}\right) = \frac{[\MM'(t)]^2 - \MM''(t) \MM(t)}{[\MM'(t)]^2} \leq \frac{1-p}{4} < 0
\]
for all $t > t_1$. This is a contradiction since we have assumed $\MM(t)/\MM'(t) > 0$ for all $t > t_1$.
\end{proof}
\subsection*{Visual rendering of the Strichartz estimates}
\begin{figure}[h]
\centering
\includegraphics[scale=.50]{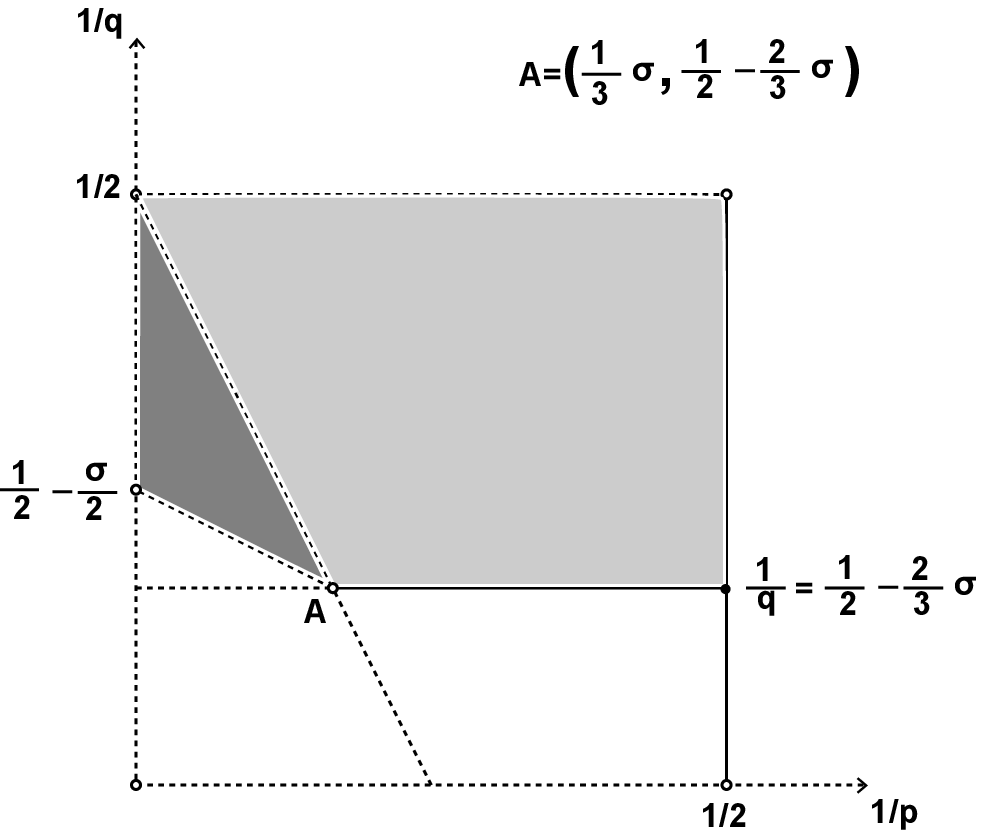}
\caption{Admissible pair $(p_1,q_1)$ in dimension 2, with $\sigma < 3/4$}\label{drawingd21}
\end{figure}
\begin{figure}[h]
\centering
\includegraphics[scale=.50]{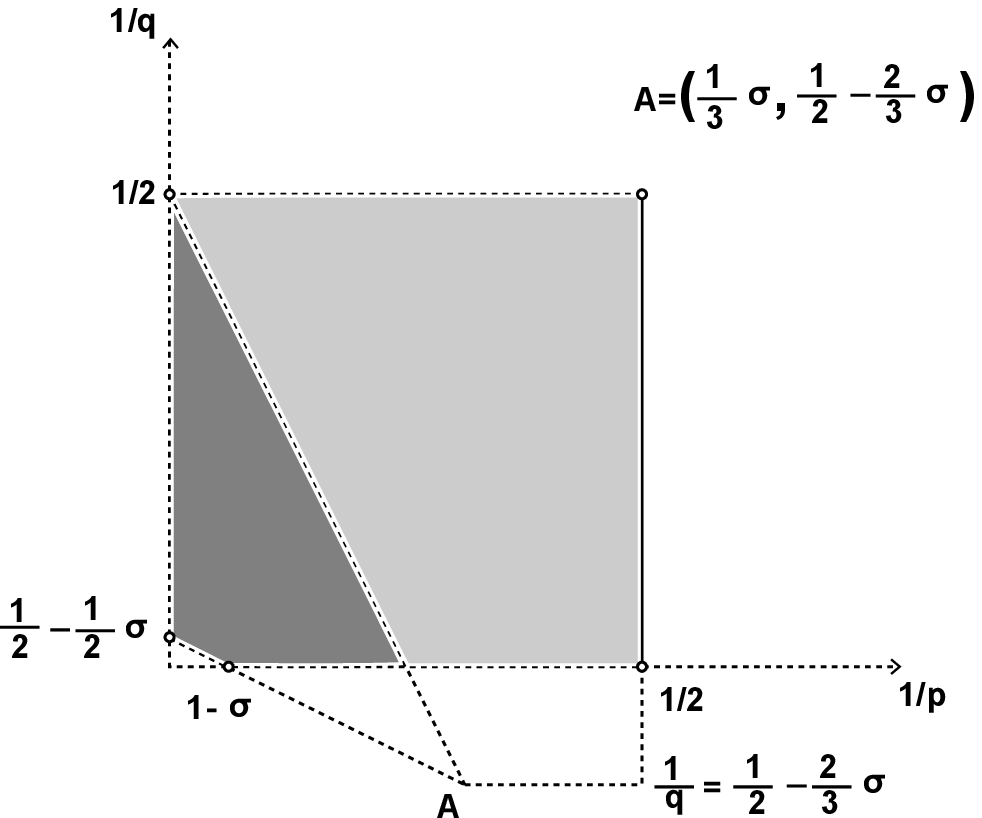}
\caption{Admissible pair $(p_1,q_1)$ in dimension 2, with $\sigma \geq 3/4$}\label{drawingd22}
\end{figure}
\begin{figure}[h]
\centering
\includegraphics[scale=.50]{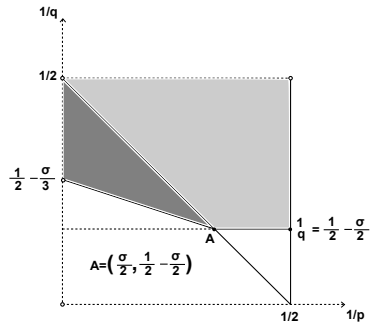}
\caption{Admissible pair $(p_1,q_1)$ in dimension 3}\label{drawingd3}
\end{figure}
\begin{figure}[h]
\centering
\includegraphics[scale=.50]{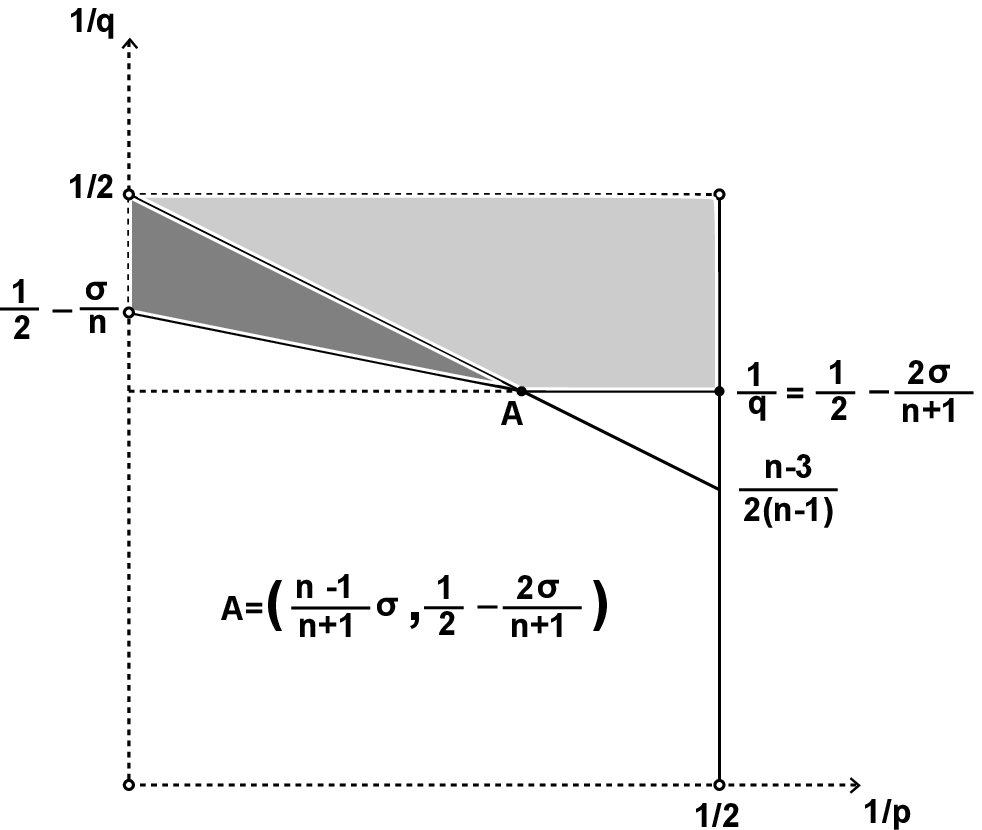}
\caption{Admissible pair $(p_1,q_1)$ in dimension 4 or higher, with $\sigma < \frac{n+1}{2(n-1)}$}\label{drawingd41}
\end{figure}
\begin{figure}[h]
\centering
\includegraphics[scale=.50]{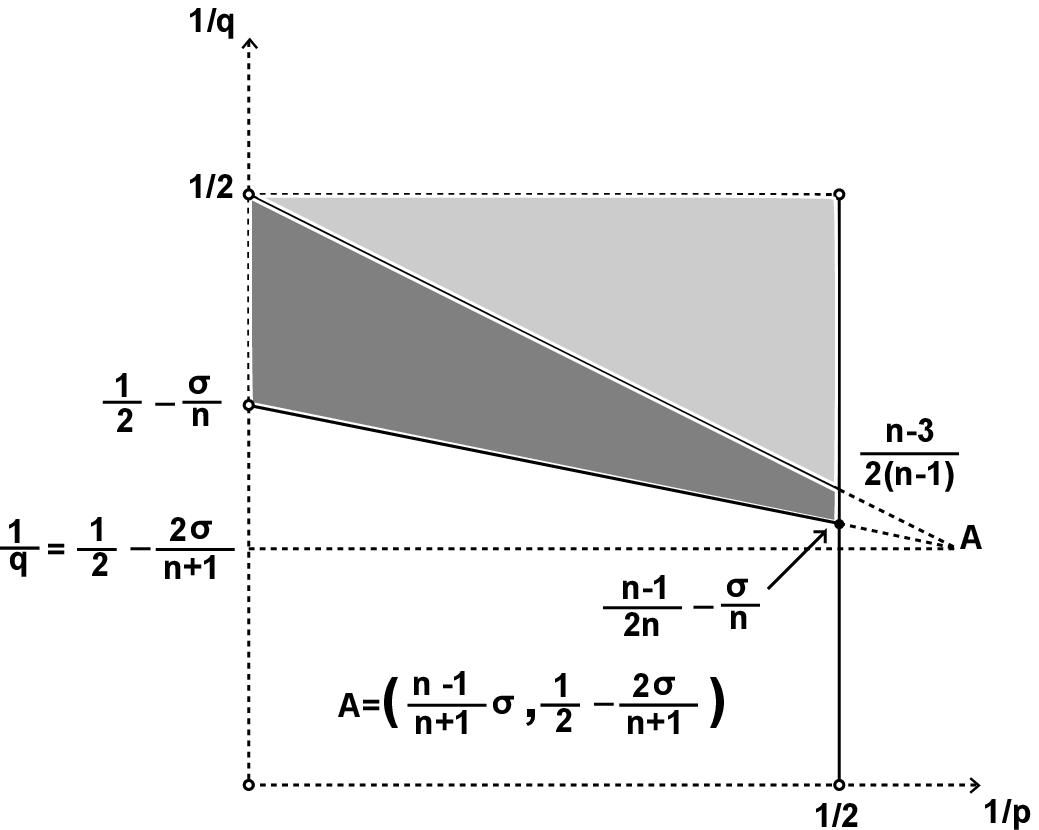}
\caption{Admissible pair $(p_1,q_1)$ in dimension 4 or higher, with $\sigma \geq \frac{n+1}{2(n-1)}$}\label{drawingd42}
\end{figure}

\newpage

\end{document}